\theoremstyle{plain} 
\newtheorem{thm}{Theorem}[section]
\newtheorem{prop}[thm]{Proposition}
\newtheorem{cor}[thm]{Corollary}
\theoremstyle{definition}
\newtheorem{lem}[thm]{Lemma}
\newtheorem{dfn}[thm]{Definition}
\newtheorem{eg}[thm]{Example}
\newtheorem{discussion}[thm]{Discussion}
\newtheorem*{prb}{Problem}
\newtheorem{rmk}[thm]{Remark}
\newtheorem{chunk}[thm]{\hspace*{-1.065ex}\bf}
\numberwithin{equation}{section}
\newcommand{\fm}{\mathfrak{m}}
\newcommand{\fn}{\mathfrak{n}}
\newcommand{\fp}{\mathfrak{p}}
\newcommand{\fq}{\mathfrak{q}}
\newcommand{\fa}{\mathfrak{a}}
\newcommand{\up}[1]{{{}^{#1}\!}}
\newcommand{\RR}{\mathbb{R}}
\newcommand{\ZZ}{\mathbb{Z}}
\newcommand{\NN}{\mathbb{N}}
\newcommand{\lo}{\longrightarrow}
\DeclareMathOperator{\ann}{ann}
\DeclareMathOperator{\Ass}{Ass}
\DeclareMathOperator{\CI}{\textnormal{CI-dim}}
\DeclareMathOperator{\G-dim}{\textnormal{G-dim}}
\DeclareMathOperator{\gr}{\textnormal{grade}}
\DeclareMathOperator{\cx}{cx}
\DeclareMathOperator{\Tr}{Tr}
\DeclareMathOperator{\X}{X}
\DeclareMathOperator{\V}{\mathcal{V}}
\DeclareMathOperator{\mz}{\mathcal{Z}}
\DeclareMathOperator{\mhom}{\mathcal{H}om}
\DeclareMathOperator{\coker}{coker}
\DeclareMathOperator{\depth}{depth}
\DeclareMathOperator{\Ext}{Ext}
\DeclareMathOperator{\T}{T}
\DeclareMathOperator{\Ht}{ht}
\DeclareMathOperator{\hh}{H}
\DeclareMathOperator{\E}{E}
\DeclareMathOperator{\Hom}{Hom}
\DeclareMathOperator{\im}{im}
\DeclareMathOperator{\pd}{pd}
\DeclareMathOperator{\Md}{Mod}
\DeclareMathOperator{\md}{mod}
\DeclareMathOperator{\Supp}{Supp}
\DeclareMathOperator{\Spec}{Spec}
\DeclareMathOperator{\Sing}{Sing}
\DeclareMathOperator{\Syz}{\Omega}
\DeclareMathOperator{\Tor}{Tor}
\DeclareMathOperator{\cod}{codim}
\DeclareMathOperator{\NF}{NF}
\def\urltilda{\kern -.15em\lower .7ex\hbox{\~{}}\kern .04em}
\def\urldot{\kern -.10em.\kern -.10em}\def\urlhttp{http\kern -.10em\lower -.1ex
\hbox{:}\kern -.12em\lower 0ex\hbox{/}\kern -.18em\lower 0ex\hbox{/}}
\begin{document}

\title[A study of the cohomological rigidity property ]{A study of the cohomological rigidity property }

\author[Asgharzadeh, Celikbas, Sadeghi]{Mohsen Asgharzadeh, Olgur Celikbas, Arash Sadeghi}

\address{Mohsen Asgharzadeh\\
Hakimieh, 16599-19556, Tehran, Iran}
\email{mohsenasgharzadeh@gmail.com}

\address{Olgur Celikbas\\
	Department of Mathematics \\
	West Virginia University\\
	Morgantown, WV 26506-6310, U.S.A}
\email{olgur.celikbas@math.wvu.edu}

\address{Arash Sadeghi\\
	 Kooy-e-Eram, 14847-76771, Tehran, Iran} \email{sadeghiarash61@gmail.com}

 \subjclass[2010]{13D45, 13D07, 13C14, 13C15, 14Fxx}

\keywords{arithmetically Cohen-Macaulay, complete intersections, complexity, depth formula, local cohomology, Serre's condition, specialization-closed subsets, vanishing of Ext and Tor, vector bundle, torsion theory.}

\begin{abstract} In this paper, motivated by a work of Luk and Yau, and Huneke and Wiegand, we study various aspects of the cohomological rigidity property of tensor product of modules over commutative Noetherian rings. We determine conditions under which the vanishing of a single local cohomology module of a tensor product implies the vanishing of all the lower ones, and obtain new connections between the local cohomology modules of tensor products and the Tate homology. Our argument yields bounds for the depth of tensor products of modules, as well as criteria for freeness of modules over complete intersection rings. Along the way, we also give a splitting criteria for vector bundles on smooth complete intersections.
\end{abstract}

\maketitle{}

\setcounter{tocdepth}{1}
\tableofcontents

\section{Introduction}

A vector bundle $\mathcal{E}$ on a projective scheme $X$ equipped with a very ample line bundle $\mathcal{O}(1)$ is said to have the \emph{cohomological rigidity property} if there is a positive integer $i$ such that the vanishing of
\begin{equation}\notag{}
\hh^i_\ast(X,\mathcal{\mathcal{E}}\otimes \mathcal{\mathcal{\mathcal{E}}}^\ast):=\bigoplus_{n\in\mathbb{Z}}\hh^i(X,\mathcal{E}\otimes \mathcal{E}^\ast(n))
\end{equation}
implies that $\mathcal{E}$ is trivial, i.e., isomorphic to a direct sum of line bundles. A paradigm for this rigidity property is a result of  Luk and Yau \cite{Luk}, which is concerned with $(\mathbb{P}^n_{\mathbb{C}},\mathcal{O}(1))$. 

Our motivation in this paper comes from a beautiful work of Huneke and Wiegand \cite{HW1}, which reproves and extends the aforementioned rigidity result of Luk and Yau via the machinery of commutative algebra. Huneke and Wiegand investigates suitable conditions under which if one local cohomology module of a tensor product of finitely generated modules vanishes, then all lower ones vanish. As a consequence, Huneke and Wiegand \cite{HW1} obtained remarkable results that relate Serre's conditions  to the vanishing of a single local cohomology module of a tensor product.

In this paper, following the work of Huneke and Wiegand \cite{HW1}, we study the (non) vanishing of local cohomology modules, and investigate depth and torsion properties of tensor products of modules. The new advantage we have is that we work with local cohomology functors with respect to specialization-closed subsets of $\Spec R$; this allows us to generalize the results of Huneke and Wiegand in this direction, as well as various results from the literature, especially those stated in terms of Serre's conditions. 

One of our main results is Theorem \ref{t2}, which can be considered as a generalization of rigidity theorem of Huneke and Wiegand \cite[2.4]{HW1}. A special case of Theorem \ref{t2} can be stated as follows; see Proposition \ref{c4}.

\begin{thm}\label{it2} Let $R$ be a regular local ring and let $M$ and $N$ be non-zero finitely generated $R$-modules. Let $\mz\subset\Spec R $ be a specialization-closed subset and let $n\geq 0$ be an integer. Assume the following hold:
\begin{enumerate}[\rm(i)]
\item $\NF(M)\cap\NF(N)\subseteq\mz$.
\item $\hh_{\mz}^n(M\otimes_RN)=0$.
\item $\gr_R(\mz,M)\geq n$ and $\gr_R(\mz,N)\geq n$.
\end{enumerate}
Then it follows that $\hh_{\mz}^i(M\otimes_RN)=0$ for all $i=0, \ldots, n$, and $\Tor_j^R(M,N)=0$ for all $j\geq 1$.
\end{thm}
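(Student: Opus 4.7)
The plan is to first establish that $\Tor_j^R(M,N) = 0$ for every $j \geq 1$, and then to use a depth formula for specialization-closed subsets to extend the single vanishing in (ii) to all degrees $\leq n$. The first key observation is that hypothesis (i) forces $\Supp_R \Tor_j^R(M,N) \subseteq \mz$ for every $j \geq 1$: at any $\fp \notin \mz$, one of $M_\fp$, $N_\fp$ is $R_\fp$-free, hence the positive Tors vanish after localization. This containment is what will allow local cohomology with respect to $\mz$ to detect the Tors.

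For the Tor vanishing, I would analyze the hyperhomology spectral sequence obtained by applying $R\Gamma_\mz$ to a projective resolution of $M$ tensored with $N$, whose second page has the form $\hh^p_\mz(\Tor_q^R(M,N))$ and which converges to the local cohomology of $M \ltensor_R N$ at $\mz$. Hypothesis (ii) kills the bottom-row entry corresponding to $\hh^n_\mz(M \otimes_R N)$, while (iii) controls the low columns via the grade bounds on $M$ and $N$, and the support statement above constrains the rows $q \geq 1$. Combined with Auslander--Lichtenbaum rigidity of Tor over the regular local ring $R$, a diagonal chase along total degree $n$ should pinpoint a single $\Tor_j^R(M,N)$ that must vanish, and rigidity then propagates this to $\Tor_j^R(M,N) = 0$ for every $j \geq 1$. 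Once that vanishing is in place, $M \ltensor_R N \simeq M \otimes_R N$, and a depth formula relative to $\mz$ (the natural analogue of the classical Auslander depth formula listed as ``Depth Formula'' in the paper's preamble) yields $\gr_R(\mz, M \otimes_R N) \geq n+1$, which is precisely the desired vanishing $\hh^i_\mz(M \otimes_R N) = 0$ for $i = 0, \ldots, n$.

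The main obstacle I anticipate is the spectral sequence step: one has to carefully examine which differentials on the $n$-th total-degree diagonal could cancel the bottom-row entry $\hh^n_\mz(M \otimes_R N)$ at the $E_\infty$ page, and rule these out under the grade and support constraints so that the vanishing of that $E_\infty$ entry forces a genuine Tor vanishing. Making this diagonal analysis work uniformly over arbitrary specialization-closed $\mz$, rather than only for the maximal ideal as in the classical Huneke--Wiegand setup, is the technical heart of the argument.
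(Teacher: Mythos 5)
Your opening observation is right and is exactly how the paper uses hypothesis (i): at any $\fp\notin\mz$ one of $M_\fp,N_\fp$ is free, so $\Supp_R(\Tor_j^R(M,N))\subseteq\mz$ for $j\geq1$. But the spectral-sequence step, which is where the Tor-vanishing is supposed to come from, does not work. Precisely because each $\Tor_q^R(M,N)$ with $q\geq1$ is $\mz$-torsion, one has $\hh^p_{\mz}(\Tor_q^R(M,N))=0$ for all $p\geq1$, so the $E_2$-page of your hyperhomology spectral sequence consists only of the bottom row $\hh^p_{\mz}(M\otimes_RN)$ and the single column of entries $\Tor_q^R(M,N)$ sitting in cohomological degree $-q$. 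Consequently the diagonal of total degree $n\geq1$ contains the single entry $\hh^n_{\mz}(M\otimes_RN)$ and no Tor module at all; the Tors live on the negative-degree diagonals. The only differentials linking them to the bottom row map $\Tor_q^R(M,N)$ \emph{into} a subquotient of $\hh^{q+1}_{\mz}(M\otimes_RN)$, so hypothesis (ii) merely forces the differential out of $\Tor_{n-1}^R(M,N)$ to have zero target --- which gives no information about its source. There is no entry whose vanishing at $E_\infty$ pins down a Tor, hence nothing for Auslander--Lichtenbaum rigidity to act on; note also that the grade bounds in (iii) on $M$ and $N$ individually never enter this spectral sequence. The missing idea is a mechanism converting the vanishing of a local cohomology module of the tensor product into the vanishing of a Tor against $N$. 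The paper supplies it through the Auslander--Bridger machinery: the evaluation sequence identifies $\hh^i_{\mz}(M\otimes_RN)$ with $\Ext^{i+1}_R(\Tr M,N)$ (Lemma \ref{l2}), the four-term sequence (\ref{a1}.5) turns $\Ext^{n+1}_R(\Tr M,N)=0$ into $\Tor_1^R(\Omega^{-(n+1)}M,N)=0$, and rigidity is applied to the cosyzygy pair $(\Omega^{-(n+1)}M,N)$ rather than to $(M,N)$; this yields $\Ext^{n+1}_R(\Tr M,R)=0$, allowing the descent $\Omega^{-n}M\approx\Omega\Omega^{-(n+1)}M$ and the downward induction of Lemma \ref{l1} and Theorem \ref{t2} that produces both conclusions at once.

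Your final step is also not correct as stated. Granting Tor-independence, the depth formula localized at $\fp\in\mz$ gives $\depth_{R_\fp}((M\otimes_RN)_\fp)=\depth_{R_\fp}(M_\fp)+\depth_{R_\fp}(N_\fp)-\depth R_\fp$, and (iii) only guarantees this is at least $2n-\depth R_\fp$, which need not be $\geq n+1$ (take $\mz=\V(\fm)$ and modules of depth $n$ over a regular ring of large dimension: Tor-independence would force the tensor product to have small depth, not depth $\geq n+1$). So the inequality you want goes the wrong way, and one cannot deduce $\gr_R(\mz,M\otimes_RN)\geq n+1$ from Tor-independence plus (iii); in the paper the lower vanishing is obtained inside the induction, via $\hh^{n-1}_{\mz}(M\otimes_RN)\cong\Ext^{n}_R(\Tr M,R)\otimes_RN$ (equivalently $\cong\hh^{n-1}_{\mz}(M)\otimes_RN$) together with the $n$-torsion-freeness of $M$ forced by $\gr_R(\mz,M)\geq n$ (Proposition \ref{Serre}(iii)), not via a depth formula for the tensor product.
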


In Theorem \ref{it2}, $\NF(M)$ denotes the set $\{\fp\in \Spec(R): M_{\fp} \text{ is not free over } R_{\fp} \}$, i.e., the \emph{non-free locus} of $M$. Recall that a subset $\mz$ of $\Spec R$ is called \emph{specialization-closed} if every prime ideal of $R$ containing some prime ideal in $\mz$ belongs to $\mz$. Clearly, every closed subset of $\Spec R$ in the Zariski topology is specialization-closed. For an integer $n$, we denote by $\hh^n_{\mz}(-)$ the $n$-th local cohomology functor with respect to $\mz$. Moreover, the \emph{grade of $M$ with respect to  $\mz$} is denoted by $\gr_R(\mz,M)$, and is defined as the infimum of the set of integers $n$ such that $\hh^n_{\mz}(M)$ is non-zero; see \ref{lsc} and \ref{gsc} for  further details.

The torsion in tensor product of modules was initially studied by Auslander in his seminal paper ``Modules over unramified regular local rings" \cite{A2}. For nonzero finitely generated modules $M$ and $N$ over an unramified (or equi-characteristic) regular local ring, Auslander proved that $M$ and $N$ are torsion-free and $\Tor_i^R(M,N)=0$ for all $i\geq 1$, provided that $M\otimes_RN$ is torsion-free; subsequently, the ramified case of Auslander's result was established by Lichtenbaum \cite{L}. Theorem \ref{it2} provides a generalization of the aforementioned results of Auslander and Lichtenbaum; this is because the torsion submodule of a module $M$ can be characterized by the $\mz$-torsion submodule $\Gamma_{\mz}(M)$ of $M$ for a suitable choice of $\mz$; see Proposition \ref{Serre} and Corollary \ref{AL}. To the best of our knowledge, the conclusion of Theorem \ref{it2} is new even for the closed subsets of $\Spec(R)$.

In Section $3$ we give a proof of Theorem \ref{it2}. Sections 4 and 5 are devoted to several applications; see, for example, Corollaries \ref{c1c}, \ref{h1zero} and \ref{t10}. 
In Section 6 we apply our results and study the vanishing of local cohomology modules of the Frobenius powers over local rings of prime characteristic; see Theorem \ref{tt}. We make use of the fact that the Frobenius endomorphism $\up{\varphi^r}R$ is Tor-rigid over such complete intersection rings, and obtain the following as an application of Theorem \ref{tt}; see Corollary \ref{Fr}.

\begin{thm} \label{ft}Let $(R,\fm)$ be a $d$-dimensional local complete intersection ring of prime characteristic. Then $R$ is regular provided that at least one of the following conditions hold:
\begin{enumerate}[\rm(i)]
\item $R$ is reduced and $\hh^{d-1}_{\fm}(\up{\varphi^r}R\otimes_R\up{\varphi^s}R)=0$ for some integers $r,s\geq 1$.
\item $R$ is normal and $\hh^{d-2}_{\fm}(\up{\varphi^r}R\otimes_R\up{\varphi^s}R)=0$ for some integers $r,s \geq 1$.
\end{enumerate}
\end{thm}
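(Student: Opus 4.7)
The plan is to derive the conclusion from Theorem \ref{tt} together with the classical characterization of regularity via the Frobenius endomorphism. I would take $M = \up{\varphi^r}R$ and $N = \up{\varphi^s}R$, and invoke Theorem \ref{tt} with $\mz = V(\fm)$ and $n = d-1$ in case~(i) (respectively $n = d-2$ in case~(ii)), aiming to deduce $\Tor_j^R(\up{\varphi^r}R,\up{\varphi^s}R) = 0$ for every $j \geq 1$.

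To verify the hypotheses: since $R$ is a complete intersection, it is Cohen-Macaulay of depth $d$, and Frobenius preserves $R$-depth, so $\gr_R(\mz,\up{\varphi^t}R) = \depth_R \up{\varphi^t}R = d$ for all $t \geq 1$, which comfortably meets the grade condition. By Kunz's theorem, $\NF(\up{\varphi^t}R) = \Sing R$ for each $t \geq 1$; and over a Cohen-Macaulay ring ``reduced'' and ``normal'' are equivalent to Serre's conditions $(R_0)$ and $(R_1)$ respectively, so that $\cod \Sing R \geq 1$ in case~(i) and $\cod \Sing R \geq 2$ in case~(ii). These codimension bounds match exactly the indices $n = d-1$ and $n = d-2$ in Theorem \ref{tt}, so that the non-free-locus hypothesis can be met after passing to a suitable specialization-closed refinement of $\{\fm\}$. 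Combined with the Tor-rigidity of Frobenius powers over complete intersection rings (highlighted in the paragraph preceding Theorem \ref{ft}), these checks complete the input needed for Theorem \ref{tt}.

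Once the vanishing $\Tor_j^R(\up{\varphi^r}R,\up{\varphi^s}R) = 0$ for all $j \geq 1$ is in hand, I would invoke Herzog's theorem (and its refinements due to Koh-Lee and Avramov-Miller), which asserts that for a Noetherian local ring of prime characteristic such vanishing forces $\pd_R \up{\varphi^s}R < \infty$. By Kunz's theorem, a Noetherian local ring of characteristic $p$ is regular if and only if some Frobenius power of $R$ has finite flat (equivalently, projective) dimension as an $R$-module; hence $R$ is regular. The main technical obstacle lies in the first step: verifying that the codimension bounds on $\Sing R$ implied by reducedness or normality interact correctly with the vanishing of a single local cohomology module at $\fm$ so as to satisfy the non-free-locus hypothesis of Theorem \ref{tt}. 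It is precisely this interplay that forces the distinguished indices $d-1$ and $d-2$ to appear in the two cases, and makes Theorem \ref{tt} tailor-made for the Frobenius application.
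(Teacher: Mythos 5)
Your overall skeleton coincides with the paper's: Theorem \ref{ft} is exactly the case $n=1,2$ of Corollary \ref{Fr}, which the paper proves by applying Theorem \ref{tt} to $M=\up{\varphi^r}R$, $N=\up{\varphi^s}R$ and finishing with the Frobenius characterization of regularity (\cite{Ku}, \cite{Rod}). However, you misquote Theorem \ref{tt} in both its hypotheses and its conclusion. That theorem has no specialization-closed subset $\mz$, no grade hypothesis and no Tor-rigidity hypothesis; its inputs are that $M$ satisfies $(S_n)$ and is locally free on $\X^{d-n-1}(R)$, and its output is $\pd_R(M)<d-n$, not the vanishing of $\Tor_j^R(\up{\varphi^r}R,\up{\varphi^s}R)$ for all $j\geq 1$. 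The verifications that actually matter are the ones you gesture at: by Kunz, $\NF(\up{\varphi^r}R)=\Sing R$, so reduced (resp.\ normal) gives local freeness on $\X^{0}(R)$ (resp.\ $\X^{1}(R)$), and maximal Cohen--Macaulayness gives $(S_n)$; with the theorem's true conclusion, the detour through Herzog/Koh--Lee/Avramov--Miller is unnecessary, since finite projective dimension of a Frobenius power plus \cite{Ku}, \cite{Rod} already yields regularity. On the other hand, if you genuinely intend the route your wording describes --- $\mz=\V(\fm)$, Tor-rigidity, and a conclusion of Tor-vanishing, i.e.\ Theorem \ref{t2} or Proposition \ref{c4} --- the argument breaks down: their hypothesis (ii) requires the vanishing of $\widehat{\Tor}$ (equivalently, of $\Tor_i$ for $i\gg0$) at every prime outside $\mz$, which by \cite{AvMi} (cf.\ Theorem \ref{Tate2}) forces $R_\fp$ to be regular for all $\fp\neq\fm$, i.e.\ an isolated singularity; reducedness or normality only bound the codimension of $\Sing R$ by $1$ or $2$, and the phrase ``a suitable specialization-closed refinement of $\{\fm\}$'' does not repair this, because enlarging $\mz$ destroys the hypothesis $\hh^n_{\mz}(M\otimes_RN)=0$, which is only given at $\fm$.

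There is a second genuine gap: Theorem \ref{tt} is stated for $M\in\md(R)$, and $\up{\varphi^r}R$ is a finitely generated $R$-module only when $R$ is F-finite. The paper's proof of Corollary \ref{Fr} spends most of its effort precisely here: it passes to the faithfully flat F-finite extension $(S,\fn)$ of Remark \ref{FF}, transfers the vanishing of $\hh^{d-n}_{\fm}$ by flat base change and independence of the base, and checks that the required local flatness of Frobenius in small codimension ascends via $\up{\varphi}(S_{P})\cong\up{\varphi}(R_{\fp})\otimes_{R_{\fp}}S_{P}$ for $P\in\X^{n-1}(S)$, before applying Theorem \ref{tt} over $S$ and descending regularity from $S$ to $R$. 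Your proposal silently treats $\up{\varphi^r}R$ as a finitely generated module and omits this reduction; as written this is a real gap, though it is repaired exactly by the argument in the paper.
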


In Section 7 we determine a new connection between local cohomology of tensor products of modules and the Tate homology $\widehat{\Tor}_{}$; see \ref{Tate} for the definition. The next result follows from Theorem \ref{t12}; it is the second main theorem of this paper besides Theorem \ref{t2}.

\begin{thm} \label{7.1intro} Let $R$ be a Gorenstein local ring, and let $M$ and $N$ be finitely generated $R$-modules. Assume $\mz\subset\Spec R$ is a specialization-closed subset, and that the following conditions hold:
\begin{enumerate}[\rm(i)]
\item $\NF(M)\cap\NF(N)\subseteq\mz$. 
\item $\depth_{R_\fp}(M_\fp)+\depth_{R_\fp}(N_\fp)\geq\depth(R_{\fp})+n$ for each $\fp\in\mz$ and for some integer $n\geq 0$.
\end{enumerate}	
Then it follows that:
\begin{enumerate}[\rm(1)]
\item $\hh^i_{\mz}(M\otimes_RN)\cong\widehat{\Tor}_{-i}^R(M,N)$ for all $i=0, \ldots, n-1$. 
\item There is an injection $\widehat{\Tor}_{-n}^R(M,N)\hookrightarrow\hh^n_{\mz}(M\otimes_RN)$.
\end{enumerate}	
\end{thm}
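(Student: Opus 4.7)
The plan is to identify both sides of the claimed comparison as cohomologies arising in a single long exact sequence, constructed from a complete resolution of $M$ and the functor $\mathbb{R}\Gamma_{\mz}$.

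First, I would use hypothesis (i) to reduce $\Tor_i^R(M,N)$ for $i\geq 1$ and $\widehat{\Tor}_j^R(M,N)$ for all $j\in\ZZ$ to $\mz$-torsion modules. Indeed, for $\fp\notin\mz$, either $M_\fp$ or $N_\fp$ is $R_\fp$-free, so both Tor and Tate Tor vanish upon localizing. Since these modules are finitely generated (as homology of finitely generated complexes coming from resolutions of $M$), being supported in $\mz$ is equivalent to being annihilated by some ideal $\fa$ with $V(\fa)\subseteq\mz$; in particular, $\hh^p_{\mz}(-)=0$ for $p\geq 1$ and $\Gamma_{\mz}$ acts as the identity on them.

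Next, take a complete resolution $T_\bullet\to M$ (available since $R$ is Gorenstein), let $P_\bullet:=T_{\geq 0}$ be the associated projective resolution, and consider the short exact sequence of complexes $0\to T_{<0}\to T_\bullet\to P_\bullet\to 0$. Tensoring with $N$ preserves exactness (the sequence is termwise split), and applying $\mathbb{R}\Gamma_{\mz}$ produces a triangle in the derived category. The hypercohomology spectral sequences $E_2^{p,q}=\hh^p_{\mz}(\widehat{\Tor}_{-q}^R(M,N))$ for the Tate complex and $E_2^{p,q}=\hh^p_{\mz}(\Tor_{-q}^R(M,N))$ for the projective one, combined with the previous paragraph, identify the cohomology of $\mathbb{R}\Gamma_{\mz}(T_\bullet\otimes_R N)$ with $\widehat{\Tor}_{-i}^R(M,N)$ in degree $i$, and the cohomology of $\mathbb{R}\Gamma_{\mz}(P_\bullet\otimes_R N)$ with $\hh^i_{\mz}(M\otimes_R N)$ in degree $i\geq 1$ (and $\Gamma_{\mz}(M\otimes_R N)$ in degree zero). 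The long exact sequence from the triangle then connects $\widehat{\Tor}_{-i}^R(M,N)$ and $\hh^i_{\mz}(M\otimes_R N)$ through the cohomology of the intermediate term $\mathbb{R}\Gamma_{\mz}(T_{<0}\otimes_R N)$.

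Finally---and this is the main obstacle---hypothesis (ii) is deployed to force $\hh^i(\mathbb{R}\Gamma_{\mz}(T_{<0}\otimes_R N))=0$ in the range $0\leq i\leq n$, which by the long exact sequence yields the claimed isomorphisms for $0\leq i\leq n-1$ and the injection at $i=n$. Locally at each $\fp\in\mz$, the depth inequality $\depth_{R_\fp}(M_\fp)+\depth_{R_\fp}(N_\fp)\geq\depth(R_\fp)+n$ together with Gorenstein duality controls the depth of the co-syzygy modules $\Omega^{-r}M$, which govern the cohomology of $T_{<0}$, after tensoring with $N$; the expected approach is an inductive devissage---base case $n=0$ being essentially formal, and the inductive step using short exact sequences of syzygies of $M$ to propagate the bound. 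The delicate issue is carrying out this devissage uniformly over all $\fp\in\mz$, which presumably leans on the machinery for specialization-closed subsets developed earlier in the paper.
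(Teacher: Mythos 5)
Your reduction of the Tor and Tate Tor modules to $\mz$-torsion modules is fine, but there are two problems with the rest. The first is concrete: for a complete resolution $T\to P\to M$ the truncation $T_{\geq 0}$ is a projective resolution of $\coker(T_1\to T_0)$, a totally reflexive module which coincides with $M$ only when $M$ is itself totally reflexive; in general the comparison $\tau_i$ is an isomorphism only for $i\gg 0$, so your termwise split sequence $0\to T_{<0}\to T\to T_{\geq0}\to 0$, after tensoring with $N$ and applying $\mathbb{R}\Gamma_{\mz}$, computes the local cohomology of the tensor product of the wrong module exactly in the low degrees $0,\dots,n$ that the theorem is about. To repair this you would have to work with the mapping cone of $\tau\colon T\to P$, or do what the paper does: replace your triangle by the finite approximation $0\to M\to X\to G\to 0$ with $\pd_R(X)<\infty$ and $G$ totally reflexive, which serves the same purpose without the truncation issue (and also sidesteps the convergence questions for a hypercohomology spectral sequence over the unbounded complex $T\otimes_RN$, which you do not address).

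The more serious gap is your final paragraph: the vanishing of $H^i\bigl(\mathbb{R}\Gamma_{\mz}(T_{<0}\otimes_RN)\bigr)$ for $0\leq i\leq n$ is precisely the substance of the theorem, and you leave it as a hoped-for devissage ("presumably leans on the machinery..."). In the paper this is where all the work happens: after reducing to $\mz\subseteq\Supp_R(M)$, one (1) proves $\Tor_i^R(X,N)=0$ for all $i>0$ by choosing an associated prime $\fp\in\mz$ of the top nonvanishing Tor and contradicting hypothesis (ii) via Auslander's depth equality; (2) applies the depth formula (Theorem \ref{t4}) together with Proposition \ref{grz} to obtain $\hh^i_{\mz}(X\otimes_RN)=0$ for $i<n$, which is what makes the four-term sequence $0\to\Tor_1^R(G,N)\to M\otimes_RN\to X\otimes_RN\to G\otimes_RN\to 0$ usable; and (3) identifies $\hh^{j-1}_{\mz}(G\otimes_RN)\cong\Ext^j_R(\Tr G,N)\cong\widehat{\Tor}_{-j+1}^R(G,N)$ for $1\leq j\leq n$ via Lemma \ref{l2}, whose hypothesis $\gr_R(\mz,N)\geq n$ must itself be extracted from (ii) by the Auslander--Bridger formula and Proposition \ref{grz}. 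None of these steps is a formal consequence of the depth inequality "propagating along cosyzygies," so as it stands the proposal records the correct shape of a long exact sequence argument but omits the argument that makes hypothesis (ii) bite.
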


As far as we know, Theorem \ref{7.1intro} is new, even for closed subsets of $\Spec(R)$. Various applications of Theorem \ref{7.1intro} corroborating the literature include Corollaries \ref{cxmm}, \ref{c2} and \ref{hhyper}. Furthermore, Theorem \ref{7.1intro} determines a useful bound on depth of tensor products $M\otimes_RN$ of certain modules $M$ and $N$ over complete intersection rings:

\begin{cor}\label{cd} Let $R$ be a local complete intersection ring of codimension $c$, and let $M$ and $N$ be nonzero finitely generated $R$-modules, each of which is locally free on the punctured spectrum of $R$.  If  $\depth_R(M)+\depth_R(N)-\depth(R) \geq c$, then $\depth_R(M\otimes_RN)\leq \depth_R(M)+\depth_R(N)-\depth(R)$.
\end{cor}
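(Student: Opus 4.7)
The plan is to apply Theorem \ref{7.1intro} with the specialization-closed subset $\mz = \{\fm\}$ and the integer $n := \depth_R(M) + \depth_R(N) - \depth(R)$. Because $M$ and $N$ are each locally free on the punctured spectrum of $R$, we have $\NF(M) \cap \NF(N) \subseteq \{\fm\} = \mz$, so hypothesis (i) of that theorem is immediate. The only prime in $\mz$ is $\fm$, at which hypothesis (ii) is exactly the defining equality $\depth_R(M) + \depth_R(N) = \depth(R) + n$; the standing assumption $\depth_R(M) + \depth_R(N) - \depth(R) \geq c$ translates to $n \geq c$. Theorem \ref{7.1intro} then yields isomorphisms $\hh^i_\fm(M \otimes_R N) \cong \widehat{\Tor}_{-i}^R(M,N)$ for $i = 0, \ldots, n-1$, together with an injection $\widehat{\Tor}_{-n}^R(M,N) \hookrightarrow \hh^n_\fm(M \otimes_R N)$.

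Next I would argue by contradiction: suppose $\depth_R(M \otimes_R N) > n$, so that $\hh^i_\fm(M \otimes_R N) = 0$ for every $i \in \{0, 1, \ldots, n\}$. The isomorphisms and injection above force $\widehat{\Tor}_{-i}^R(M,N) = 0$ for all $i \in \{0, 1, \ldots, n\}$, producing $n + 1 \geq c + 1$ consecutive vanishings of Tate homology over the complete intersection $R$. By the Avramov--Buchweitz rigidity principle for Tate (co)homology over complete intersections of codimension $c$, any $c+1$ consecutive vanishings force the whole Tate module to vanish, so $\widehat{\Tor}_j^R(M, N) = 0$ for every $j \in \bZ$. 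Combining this with the agreement of Tate and ordinary Tor in positive degrees on sufficiently high syzygies of $M$ (which are maximal Cohen--Macaulay and hence totally reflexive), one extracts $\Tor_i^R(M, N) = 0$ for every $i \geq 1$.

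With all higher Tors vanishing, the classical depth formula (valid over any complete intersection once $\Tor_{\geq 1}$ vanishes) gives $\depth_R(M \otimes_R N) = \depth_R(M) + \depth_R(N) - \depth(R) = n$, contradicting $\depth_R(M \otimes_R N) > n$. Hence $\depth_R(M \otimes_R N) \leq n$, as desired. The principal obstacle is the rigidity step --- converting the $(c+1)$ consecutive vanishings of Tate homology into full $\Tor$-vanishing in positive degrees --- which is precisely where the complete-intersection hypothesis bites; the remainder is a routine combination of Theorem \ref{7.1intro} with the depth formula.
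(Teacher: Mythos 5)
Your overall route is the one the paper itself takes: the paper deduces Corollary \ref{cd} from Corollary \ref{c} together with the bound $\cx_R(M,N)\leq \cod R=c$ from (\ref{cx}.1), and Corollary \ref{c} is proved by contradiction through Corollary \ref{t11}, i.e., through Theorem \ref{t12} (your Theorem \ref{7.1intro}) with $\mz=\V(\fm)$, a consecutive-vanishing rigidity statement (the paper uses \cite[3.5]{CST} after replacing $M$ by a totally reflexive module; Theorem \ref{Tate2} alone only treats vanishing for $i\ll 0$), Jorgensen's dependency formula (\ref{depen}.1), and finally the Araya--Yoshino depth formula (Theorem \ref{t4}). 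You have essentially unfolded this chain, with the codimension $c$ standing in for $\cx_R(M,N)$, which is harmless.

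There is, however, a genuine gap at the step where you pass from $\widehat{\Tor}^R_j(M,N)=0$ for all $j\in\ZZ$ to $\Tor_i^R(M,N)=0$ for all $i\geq 1$. The agreement of Tate and ordinary Tor on high syzygies only yields $\Tor_i^R(\Omega^tM,N)\cong\widehat{\Tor}^R_i(\Omega^tM,N)\cong\widehat{\Tor}^R_{i+t}(M,N)=0$ for $i\geq 1$, that is, $\Tor_j^R(M,N)=0$ for $j>t$ (equivalently for $j>\G-dim_R(M)$, by Theorem \ref{Tate hom}(iii) and (iv)); it says nothing about the low-degree Tor modules. Indeed, total vanishing of Tate homology can never force low-degree Tor vanishing by itself: any $M$ of finite projective dimension has $\widehat{\Tor}^R_j(M,N)=0$ for all $j$, while $\Tor_i^R(M,N)$ may well be nonzero for small $i\geq 1$. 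Since the depth formula of Theorem \ref{t4} requires Tor-independence in \emph{all} positive degrees, your final contradiction is not yet justified. The missing ingredient is exactly what the paper supplies at the end of the proof of Corollary \ref{t11}: once $\Tor_i^R(M,N)=0$ for $i\gg 0$, apply the dependency formula (\ref{depen}.1) (legitimate because $\CI_R(M)<\infty$ over a complete intersection); its right-hand side is at most $0$, since at every $\fp\neq\fm$ both $M_\fp$ and $N_\fp$ are free, and at $\fm$ the relevant quantity is $\depth R-\depth_R(M)-\depth_R(N)=-n\leq 0$. Hence $\Tor_i^R(M,N)=0$ for all $i\geq 1$, and only then does Theorem \ref{t4} give $\depth_R(M\otimes_RN)=n$, completing your contradiction. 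With this step inserted (and the rigidity citation made precise), your argument coincides with the paper's proof.
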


The conclusion of Corollary \ref{cd} seems interesting to us since  it does not assume the vanishing of Tor modules; see Corollary \ref{c} and cf. \cite[3.1]{CST}.

Our aim in Section 8 is, motivated by the work of Huneke and Wiegand \cite{HW1}, to determine some new criteria for freeness of modules in terms of the vanishing of local cohomology.  More precisely, we  are concerned with the cohomological rigidity property of tensor products in the sense that the freeness of a module $M$ follows from the vanishing of $ \hh^{i}_{\fm}(M\otimes_RM^\ast)$ for some integer $i$. Our work extends several results from the literature. For example, as a consequence of Theorem \ref{t7}, we proved the following result in Corollary \ref{gor}; it extends \cite[3.9]{CeD} which establishes the case where $n=0$:

\begin{cor} Let $R$ be a local complete intersection ring of even dimension $d$, and let $M$ be a maximal Cohen-Macaulay $R$-module which is locally free on the punctured spectrum of $R$. If $\hh^n_{\fm}(M\otimes_RM^*)=0$ for some integer $n$ where $0\leq n<d$, then $M$ is free.
\end{cor}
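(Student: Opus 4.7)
The plan is to apply Theorem \ref{7.1intro} with $N := M^*$ and specialization-closed subset $\mz := \{\fm\}$, and then convert the resulting Tate-homology vanishing into freeness of $M$ via the rigidity of Tate (co)homology over a complete intersection (as encoded in Theorem \ref{t7}).

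First I verify the hypotheses of Theorem \ref{7.1intro}. Since $M$ is locally free on the punctured spectrum, $\NF(M)\subseteq\{\fm\}$, and hence $\NF(M)\cap\NF(M^*)\subseteq\{\fm\}=\mz$. The ring $R$ is Gorenstein (being a complete intersection), so the $R$-dual of a maximal Cohen-Macaulay module is again maximal Cohen-Macaulay; in particular $\depth_R(M^*)=d$. The depth condition (ii) of Theorem \ref{7.1intro} need only be checked at the single prime $\fp=\fm$ of $\mz$, where it reads $\depth_R(M)+\depth_R(M^*)=2d\geq d+n$, and this holds because $0\leq n\leq d-1$. Theorem \ref{7.1intro}(2) then produces an injection
\[
\widehat{\Tor}_{-n}^R(M,M^*)\hookrightarrow\hh^n_{\fm}(M\otimes_R M^*),
\]
and the assumed vanishing of the right-hand side forces $\widehat{\Tor}_{-n}^R(M,M^*)=0$.

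Next I invoke Theorem \ref{t7} (the general freeness criterion behind the corollary). The relevant mechanism is well-suited to the complete intersection setting: Tate (co)homology over a CI ring is $2$-periodic modulo complexity (via the action of cohomology operators of degree two), and there is a duality identifying Tate Tor of $M$ against $M^*$ with a shift of Tate cohomology of $M$ against itself. Consequently the vanishing of $\widehat{\Tor}_{-n}^R(M,M^*)$ propagates across its parity class, and the hypothesis that $d$ is even is what aligns the degree $-n$ coming out of Theorem \ref{7.1intro} with the parity class in which such a vanishing, combined with self-duality, suffices to trivialize the support variety $\V(M)$. A trivial support variety gives $\pd_R(M)<\infty$, and since $M$ is MCM over the Gorenstein local ring $R$, the Auslander--Buchsbaum formula yields $\pd_R(M)=0$, i.e., $M$ is free.

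The principal obstacle is the propagation step that turns a single Tate vanishing into vanishing of complexity; this is precisely where the even-dimensional hypothesis enters, ensuring that the degree produced by Theorem \ref{7.1intro} matches up with the parity structure of Tate cohomology over the CI ring $R$. The $n=0$ case handled earlier in the literature (the case $\hh^0_{\fm}(M\otimes_R M^*)=0$) avoids these subtleties, since vanishing of $\hh^0_{\fm}(M\otimes_R M^*)$ is just torsion-freeness of $M\otimes_R M^*$, and the classical Huneke--Wiegand rigidity gives freeness directly; extending to general $0\leq n<d$ genuinely requires the Tate-theoretic bridge supplied by Theorem \ref{7.1intro} together with CI-periodicity.
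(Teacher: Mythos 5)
Your first step is fine: with $N=M^*$ both modules are maximal Cohen--Macaulay, so $\depth_R(M)+\depth_R(M^*)=2d\geq\depth R+n$ for every $n\leq d$, and Theorem \ref{7.1intro} with $\mz=\V(\fm)$ even gives isomorphisms $\hh^i_{\fm}(M\otimes_RM^*)\cong\widehat{\Tor}^R_{-i}(M,M^*)$ for all $i<d$, in particular $\widehat{\Tor}^R_{-n}(M,M^*)=0$. The gap is in the second half, the bridge from this single Tate vanishing to freeness. Theorem \ref{t7} is not such a criterion (it is the $\hh^1_{\mz}$ splitting statement), and the mechanism you describe does not work: over a complete intersection of codimension at least $2$, Tate (co)homology is not $2$-periodic, and vanishing in one degree does not propagate through a parity class. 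More to the point, the duality you invoke identifies $\widehat{\Tor}^R_{-n}(M,M^*)$ with $\widehat{\Ext}^{\,n-1}_R(M,M)$ (for $M$ totally reflexive), so the parity of the degree in which you obtain vanishing is governed by $n$, not by $d$; when $n$ is even this is an \emph{odd}-degree self-Ext vanishing, which holds for non-free periodic modules (e.g.\ $\widehat{\Ext}^{\mathrm{odd}}_R(M,M)=0$ for $M=R/(x)$ over $R=k[\![x,y]\!]/(xy)$), so it cannot trivialize the support variety. The hypothesis that $d$ is even never actually enters your argument in a usable way, yet it is exactly the even-$n$ case where it is needed.

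What is missing is a duality that exchanges the cohomological degree $n$ with a degree complementary with respect to $d$. This is how the paper argues: for even $n$ (and $n\geq2$) Proposition \ref{tomax} (local/Matlis duality, using that $M$ is locally free on the punctured spectrum and reflexive) turns $\hh^{n}_{\fm}(M\otimes_RM^*)=0$ into $\hh^{d-n+1}_{\fm}(M\otimes_RM^*)=0$; then Lemma \ref{l2}, applied with $M^*$ in the first slot and $M\approx\Omega^2\Tr M^*$, yields $\Ext^{d-n}_R(M,M)\cong\Ext^{d-n+2}_R(\Tr M^*,M)=0$, and since $d-n$ is even (this is where evenness of $d$ is used) the Avramov--Buchweitz rigidity \cite[4.2]{AvBu} gives $\pd_R(M)<\infty$, hence $M$ is free. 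The odd-$n$ case is Corollary \ref{c1} and needs no parity assumption on $d$, the $n=1$ case is Corollary \ref{c7}, and the $n=0$ case is \cite[3.9]{CeD} --- it is not ``classical Huneke--Wiegand rigidity'' and it, too, requires $d$ even (cf.\ \cite[3.12]{CeD}). If you want to salvage your Tate-theoretic route, you must supply the Tate analogue of this local duality, e.g.\ a statement of the form $\widehat{\Ext}^{\,i}_R(M,M)^{\vee}\cong\widehat{\Ext}^{\,d-1-i}_R(M,M)$ for modules locally free on the punctured spectrum; with $d$ even the degrees $n-1$ and $d-n$ have opposite parities, so one of them is even and \cite[4.2]{AvBu} applies. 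That duality is precisely the ingredient your proposal asserts but never proves, and it is essentially equivalent to Proposition \ref{tomax}.
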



The cohomological rigidity property on hypersurfaces of odd dimension was first studied by Dao \cite{D1}: a vector bundle $\mathcal{E}$ on an odd dimensional hypersurface of dimension at least $3$ splits if and only if $\hh^1 (X,\mathcal{E}\otimes\mathcal{E}^\ast(i))=0$ for all $i\in\ZZ$ \cite[1.5]{D1}. This result has been recently studied by \v{C}esnavi\v{c}ius, who proved that a vector bundle $\mathcal{E}$ on a smooth complete intersection of dimension at least $ 3$ splits into a sum of line bundles if and only if $\hh^1 (X,\mathcal{E}\otimes\mathcal{E}^\ast(i))=0=\hh^2 (X,\mathcal{E}\otimes\mathcal{E}^\ast(i))$ for all $i\in\ZZ$ \cite[1.2]{ces}. As an application of our study, for an arithmetically Cohen--Macaulay vector bundle on a smooth complete intersection of odd dimension we obtain a stronger result; see Corollary \ref{artCM}:

\begin{cor}
	Let $k$ be a field and let $X\subset \mathbb{P}^n_k$ be a globally complete intersection of odd dimension $d$. Assume $\mathcal{E}$ is an arithmetically Cohen--Macaulay vector bundle and that $\hh^{i}(X,\mathcal{E}\otimes\mathcal{E}^*(j))=0$ for all $j\in\ZZ$ and for some $i$ where $0<i<d$. Then $\mathcal{E}$ is a direct sum of powers of $\mathcal{O}(1)$.
\end{cor}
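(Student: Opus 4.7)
The plan is to translate the geometric statement into graded commutative algebra and then invoke the preceding corollary on even-dimensional local complete intersections. Let $R=k[x_0,\dots,x_N]/(f_1,\dots,f_c)$ be the homogeneous coordinate ring of the globally complete intersection $X\subset\mathbb{P}^N_k$, with irrelevant maximal ideal $\fm=(x_0,\dots,x_N)R$. Then $R_\fm$ is a local complete intersection of dimension $d+1$; since $d$ is odd, $\dim R_\fm=d+1$ is even. Smoothness of $X$ implies that $R_\fp$ is regular at every $\fp\neq\fm$, so the singular locus of $R_\fm$ is contained in $\{\fm R_\fm\}$.

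Next I would associate to $\mathcal{E}$ the saturated graded $R$-module $M=\bigoplus_{j\in\ZZ}\hh^0(X,\mathcal{E}(j))$, so that $\wt{M}\cong\mathcal{E}$. Arithmetic Cohen--Macaulayness of $\mathcal{E}$ translates, via the standard graded-sheaf/local-cohomology dictionary, to $\hh^i_\fm(M)=0$ for $i<d+1$, i.e., $M$ is a maximal Cohen--Macaulay $R$-module. Since $X$ is smooth and $\mathcal{E}$ is locally free, $M$ is locally free on the punctured spectrum $\Spec(R)\setminus\{\fm\}$; in particular $M_\fm$ is locally free on the punctured spectrum of $R_\fm$. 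Because $M$ is finitely presented and $\mathcal{E}$ is locally free, sheafification yields $\wt{M^*}\cong\mathcal{E}^*$ and $\wt{M\otimes_R M^*}\cong\mathcal{E}\otimes\mathcal{E}^*$. The comparison isomorphism
\begin{equation*}
\bigoplus_{j\in\ZZ}\hh^i\bigl(X,\wt{N}(j)\bigr)\;\cong\;\hh^{i+1}_\fm(N),\qquad i\geq 1,
\end{equation*}
applied to $N=M\otimes_R M^*$, turns the hypothesis into $\hh^{n}_\fm(M\otimes_R M^*)=0$ with $n=i+1$ satisfying $1<n\leq d<d+1=\dim R_\fm$.

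At this point the preceding corollary applies verbatim: $R_\fm$ is a local complete intersection of even dimension $d+1$, the module $M_\fm$ is maximal Cohen--Macaulay and locally free on the punctured spectrum, and $\hh^n_{\fm R_\fm}(M_\fm\otimes_{R_\fm} M_\fm^*)=0$ for some $n$ in the range $0\leq n<d+1$; hence $M_\fm$ is free over $R_\fm$. A standard graded Nakayama argument (finitely generated graded modules over the graded Noetherian ring $R$ with $R_0=k$ a field have the projective dimension of their localization at $\fm$) upgrades this to a graded isomorphism $M\cong\bigoplus_{t=1}^{r} R(n_t)$, and sheafifying yields $\mathcal{E}\cong\bigoplus_{t=1}^{r}\mathcal{O}(n_t)$. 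The main obstacle, given the machinery already established, is purely bookkeeping: verifying the ACM$\leftrightarrow$MCM dictionary, keeping track of the dimension shift $\dim R=\dim X+1$ (which converts the odd-dimensional condition on $X$ into the even-dimensional hypothesis of the preceding corollary), and confirming the identification $\mathcal{E}^*\cong\wt{M^*}$ needed to pull the hypothesis back into local cohomology.
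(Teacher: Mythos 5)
Your proposal is correct and follows essentially the same route as the paper: pass to $M=\Gamma_\ast(\mathcal{E})$ over the (graded) coordinate ring, use the dictionary identifying arithmetic Cohen--Macaulayness with $M_\fm$ being maximal Cohen--Macaulay and $\bigoplus_j\hh^{i}(X,\mathcal{E}\otimes\mathcal{E}^\ast(j))$ with $\hh^{i+1}_{\fm}(M\otimes_RM^\ast)$, apply the even-dimensional freeness criterion (the preceding corollary, i.e.\ Theorem \ref{gor}) to the local complete intersection $R_\fm$ of even dimension $d+1$, and then upgrade freeness of $M_\fm$ to a graded splitting and sheafify. One small remark: smoothness of $X$ is neither assumed nor needed, since local freeness of $M$ away from $\fm$ already follows from $\mathcal{E}$ being a vector bundle.
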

Furthermore, for an arithmetically Cohen--Macaulay vector bundle on a hypersurface we have:
\begin{cor}
	Let $k$ be a field and let $X\subset \mathbb{P}^n_k$ be a hypersurface. Assume $\mathcal{E}$ is an arithmetically Cohen-Macaulay vector bundle such that 
	$\hh^i(X,\mathcal{E}\otimes\mathcal{E}^*(j))=0$ for all $j\in\ZZ$ and for some even integer $i$ where $0<i<\dim X$. Then $\mathcal{E}$ is direct sum of powers of  $\mathcal{O}(1)$.
\end{cor}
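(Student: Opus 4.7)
The plan is to translate the geometric vanishing hypothesis into a Tor-vanishing statement via Theorem~\ref{7.1intro}, use the $2$-periodicity of Tate homology over a hypersurface to promote single-index vanishing to full Tor vanishing, and then invoke the Huneke-Wiegand freeness criterion. Set $R = k[x_0,\dots,x_n]/(f)$, the homogeneous coordinate ring of $X$, with irrelevant maximal ideal $\mathfrak{m}$, and let $M = \bigoplus_{j\in\mathbb{Z}} H^0(X, \mathcal{E}(j))$. Because $\mathcal{E}$ is arithmetically Cohen-Macaulay and a vector bundle, $M$ is a graded MCM $R$-module that is locally free on $\Spec R\setminus\{\mathfrak{m}\}$; the same applies to $M^*$. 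The Serre-Grothendieck isomorphism $H^{\ell+1}_\mathfrak{m}(N) \cong \bigoplus_j H^\ell(X, \widetilde N(j))$ for $\ell \geq 1$, applied to $N = M\otimes_R M^*$, converts the hypothesis into $H^n_\mathfrak{m}(M\otimes_R M^*)=0$ where $n:=i+1$ is odd and $3 \leq n \leq \dim R - 1$.

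With this in hand, I would apply Theorem~\ref{7.1intro} to the pair $(M, M^*)$ with $\mathcal{Z}=\{\mathfrak{m}\}$. Condition (i), $\NF(M)\cap\NF(M^*)\subseteq\mathcal{Z}$, is automatic, and condition (ii) at $\mathfrak{m}$ reads $2\dim R \geq \dim R + n$, which holds since $n \leq \dim R - 1$. The theorem then gives $\widehat{\Tor}^R_{-n}(M,M^*)\hookrightarrow H^n_\mathfrak{m}(M\otimes_R M^*) = 0$, so $\widehat{\Tor}^R_{-n}(M,M^*)=0$, along with $H^j_\mathfrak{m}(M\otimes_R M^*)\cong \widehat{\Tor}^R_{-j}(M,M^*)$ for $0\leq j\leq n-1$.

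The hypersurface structure now enters through $2$-periodicity of the eventually $2$-periodic complete resolution of $M$: one has $\widehat{\Tor}^R_j(M,M^*)\cong\widehat{\Tor}^R_{j+2}(M,M^*)$ for all $j\in\mathbb{Z}$. Since $-n$ is odd, every odd-indexed $\widehat{\Tor}^R_j(M,M^*)$ vanishes; in particular $\Tor^R_1(M,M^*)=0$, because $\widehat{\Tor}$ agrees with $\Tor$ in positive degrees when $M$ is MCM over a Gorenstein ring. Huneke-Wiegand's Tor-rigidity for hypersurfaces then upgrades this to $\Tor^R_j(M,M^*)=0$ for all $j\geq 1$, and the depth formula yields $\depth_R(M\otimes_R M^*)=\dim R$, i.e., $M\otimes_R M^*$ is MCM. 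Finally, the higher-dimensional Huneke-Wiegand freeness criterion from \cite{HW1} (and its extensions) forces $M$ to be free, and sheafifying shows that $\mathcal{E}$ decomposes as a direct sum of twists of $\mathcal{O}(1)$.

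The main obstacle I anticipate is the step from odd-index Tate vanishing to full ordinary Tor vanishing: one must leverage both hypersurface periodicity (to collapse the infinite tower of odd $\widehat{\Tor}$ groups into a single constraint) and Tor-rigidity (to propagate to all higher degrees), and the two ingredients have to mesh for odd $n$ only. Secondary technical points are the graded-to-local transition in applying Theorem~\ref{7.1intro}, cleanly handled by localizing at $\mathfrak{m}$, and identifying the precise form of Huneke-Wiegand in dimension greater than one, where MCM-ness of $M\otimes_R M^*$, rather than mere reflexivity, is what is typically needed to force freeness.
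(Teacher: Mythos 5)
Your set-up is fine: the graded translation $\hh^{\,i+1}_{\fm}(M\otimes_RM^*)\cong\bigoplus_j\hh^i(X,\mathcal{E}\otimes\mathcal{E}^*(j))=0$ with $M=\Gamma_*(\mathcal{E})$ maximal Cohen--Macaulay and locally free on the punctured spectrum, the application of Theorem \ref{7.1intro} to get $\widehat{\Tor}^R_{-n}(M,M^*)=0$ for the odd integer $n=i+1$, and the $2$-periodicity giving $\widehat{\Tor}^R_{j}(M,M^*)=0$ for all odd $j$ (hence $\Tor^R_j(M,M^*)=0$ for all odd $j\geq 1$) are all correct. The gap is the next sentence: there is no ``Tor-rigidity for hypersurfaces'' that upgrades $\Tor_1^R(M,M^*)=0$ to full vanishing. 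Single-index rigidity fails over hypersurfaces even for exactly this kind of pair: in Example \ref{t3ri} of the paper, $R=k[\![x,y,u,v]\!]/(xy-uv)$, $M=(x,u)$, $N=M^*$ are maximal Cohen--Macaulay and locally free on the punctured spectrum, yet $\Tor_2^R(M,N)=0$ while $\Tor_1^R(M,N)\cong\Tor_3^R(M,N)\cong k\neq0$. Nor does vanishing of one parity force the other: by periodicity all even positive Tors are isomorphic to $\widehat{\Tor}_0^R(M,M^*)$, which your hypothesis never touches, and Corollary \ref{hhyper} records precisely the phenomenon that over a hypersurface one parity of these groups can vanish while the other is nonzero throughout. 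So the bridge from odd-degree vanishing to $\Tor_j^R(M,M^*)=0$ for all $j\geq1$ is a genuine missing step, and it is exactly where the evenness of $i$ has to be exploited through the self-dual shape of the pair $(M,M^*)$, not through rigidity. (A secondary point: even granting full Tor-vanishing, you would not need the depth formula or a ``MCM tensor product implies free'' statement, which in higher dimension is delicate; \cite[1.9]{HW1} plus Auslander--Buchsbaum already finishes.)

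The way to repair your argument, which is also how the paper proves this statement (Corollary \ref{artCM}(i), resting on Corollary \ref{c1}), is to convert the vanishing into an \emph{even-degree self-Ext} rather than into Tor: since $M$ is reflexive, $M\approx\Omega^2\Tr M^*$, and Lemma \ref{l2} turns $\hh^{\,n}_{\fm}(M\otimes_RM^*)=0$ into $\Ext^{\,n-1}_R(M,M)\cong\Ext^{\,n+1}_R(\Tr M^*,M)=0$ with $n-1=i$ even and positive; equivalently, in your Tate language, Theorem \ref{Tate hom}(ii) gives $\widehat{\Tor}_{-n}^R(M,M^*)\cong\widehat{\Ext}^{\,n-1}_R(M^*,M^*)\cong\Ext^{\,n-1}_R(M^*,M^*)$. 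Then Avramov--Buchweitz \cite[4.2]{AvBu} (vanishing of a single even-degree self-Ext over a complete intersection forces finite projective dimension) yields $\pd_R(M)<\infty$, and maximal Cohen--Macaulayness gives freeness via Auslander--Buchsbaum; your final graded and sheafification steps then go through unchanged. So your route agrees with the paper's up to the extraction of $\widehat{\Tor}_{-n}(M,M^*)=0$, but it must pass through even self-Ext vanishing and \cite[4.2]{AvBu} rather than through a rigidity statement for Tor, which is unavailable over hypersurfaces.
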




\section{Notations and preliminary results}


Throughout, $R$, $\Md(R)$, $\md(R)$ and $\mathcal{P}(R)$ denote a commutative Noetherian ring, the category of $R$-modules, the  category of finitely generated $R$-modules, and the subcategory of $\md(R)$ of finitely generated projective $R$-modules, respectively.

If $R$ is a local ring, i.e., a commutative Noetherian local ring, then $\fm$ denotes the unique maximal ideal of $R$, and $k$ denotes the residue field of $R$.

$(-)^*$ stands for the algebraic dual $\Hom_R(-,R)$, and $e_M$ is the natural map $M\to M^{**}$.

$M,N \in \Md(R)$ are said to be \emph{stably isomorphic}, denoted by $M\approx N$, provided that $M\oplus P\cong N\oplus Q$ for some $P, Q\in\mathcal{P}(R)$. 

\begin{chunk}\textbf{Right and left projective approximations.} An $R$-module homomorphism $f:X\to M$ (respectively, $f:M\to X$) with $X\in\mathcal{P}(R)$ is called a \emph{right} (respectively, \emph{left}) \emph{projective approximation} of $M$ provided that every $R$-module homomorphism $g:Y\to M$ (respectively, $g:M\to Y$) with $Y\in\mathcal{P}(R)$ factors through $f$, that is, $g=f\circ h$ (respectively, $g=h\circ f$) for some $R$-module homomorphism $h:Y\to X$ (respectively, $h:X\to Y$). 

A right (respectively, left)  projective approximation $f:X\to M$ (resp. $f:M\to X$) is called \emph{minimal} if every endomorphism $g:X\to X$ satisfying $f = f\circ g$ (resp. $f = g\circ f$) is an automorphism. 

Note that a right  projective approximation (respectively,  a minimal right projective approximation) is nothing but a surjective homomorphism from a projective $R$-module (respectively, a projective cover). Note also that, if $M\in \md(R)$ and $\phi:P\twoheadrightarrow M^*$ is an epimorphism with $P\in\mathcal{P}(R)$, then $e_M\circ\phi^*$ is a left projective approximation of $M$.
\end{chunk}
\begin{chunk}\label{a1}\textbf{Auslander transpose and (co)syzygy.}
Let $M\in \md(R)$ that has a right projective approximation  $P_0\overset{\partial_0}{\to} M$. Then the kernel of $\partial_0$ is called the first
\emph{syzygy} of $M$; it is denoted by $\Omega^1M$ and unique up to projective equivalence. Inductively,
we define the $n$-th syzygy module of $M$ as $\Omega^nM:= \Omega^1(\Omega^{n-1}M)$ for all $n\geq 1$. We set, by convention, $\Omega^0M=M$.

Let  $P_1\overset{\partial_1}{\rightarrow}P_0 \overset{\partial_0}{\to} M\to0$ be a finite projective presentation of $M$. Then the \emph{transpose} of $M$, denoted by $\Tr M$, is $\coker {\partial_1}^*$ given in the following exact sequence
\begin{equation} \tag{\ref{a1}.1}
0\rightarrow M^*\rightarrow P_0^*\overset{\partial_1^{\ast}}{\rightarrow} P_1^*\rightarrow \Tr M\rightarrow 0.
\end{equation}
Note that $M^*\approx\Omega^2\Tr M$. Note also that $\Tr M$  is unique, up to projective equivalence, and the minimal projective
presentations of $M$ represent isomorphic transposes of $M$. 

For every $M\in \md(R)$  and $N\in\Md(R)$, there exists the following exact sequence:
\begin{equation}\tag{\ref{a1}.2}
0\rightarrow\Ext^1_R(\Tr M,N)\rightarrow M\otimes_RN\rightarrow\Hom_R(M^*,N)\to\Ext^2_R(\Tr M,N)\to0,
\end{equation}
where the middle map is the evaluation map \cite[2.6]{AB}. In  particular, setting $N=R$, we see that the canonical
map $M\to M^{**}$ is part of the exact sequence
\begin{equation}\tag{\ref{a1}.3}
0\rightarrow\Ext^1_R(\Tr M,R)\rightarrow M\rightarrow M^{**}\to\Ext^2_R(\Tr M,R)\to0.
\end{equation}
Also, there is a $4$-term exact sequence \cite[2.8]{AB}:
\begin{equation}\tag{\ref{a1}.4}
\Tor_2^R(\Tr\Omega^nM,N)\rightarrow\Ext^n_R(M,R)\otimes_RN\rightarrow\Ext^n_R(M,N)\rightarrow\Tor_1^R(\Tr\Omega^nM,N)\rightarrow0.
\end{equation}

Suppose $M\in\md(R)$ equipped with a left  projective approximation $M\overset{\partial_{-1}}{\to} P_{-1}$. Then we call the cokernel of $\partial_{-1}$ the first
\emph{cosyzygy} of $M$ and denote it by $\Omega^{-1}M$. Inductively, we define the $n$-th cosyzygy module
of $M$ as $\Omega^{-n}M:= \Omega^{-1}(\Omega^{-(n-1)}M)$ for all $n\geq 1$. It follows that $\Omega^{-i}M\approx\Tr\Omega^i\Tr M$ for all $i\geq 1$; see \cite[2.4]{ST}. Hence, by replacing $M$ with $\Tr M$ in  (\ref{a1}.4), we obtain the following exact sequence:
\begin{equation}\tag{\ref{a1}.5}
\Tor_2^R(\Omega^{-n}M,N)\rightarrow\Ext^n_R(\Tr M,R)\otimes_RN\rightarrow\Ext^n_R(\Tr M,N)\rightarrow\Tor_1^R(\Omega^{-n}M,N)\rightarrow0.
\end{equation}

For each $M\in\md(R)$ and integer $i\geq 1$, it follows by the definition that there exists an exact sequence:
\begin{equation}\tag{\ref{a1}.6}
0\to\Ext^i_R(M,R)\to\Tr\Omega^{i-1}M\to X\to0,
\end{equation}
where $X\approx\Omega\Tr\Omega^{i}M$. By replacing $M$ with $\Tr M$ in  (\ref{a1}.6) and using the fact that $\Omega^{-i}M\approx\Tr\Omega^i\Tr M$ for all $i\geq 1$, we obtain the 
following fact that will be used throughout the paper:
\begin{equation}\tag{\ref{a1}.7}
\text{If } \Ext^i_R(\Tr M,R)=0 \text{ for some } i\geq 1, \text{ then } \Omega^{-(i-1)}M\approx\Omega\Omega^{-i}M.
\end{equation}
\end{chunk}

\begin{chunk}\textbf{Complete intersection and Gorenstein dimensions.}\label{HD} The notion of Gorenstein dimension was initially introduced by Auslander \cite{A1} and subsequently developed by Auslander and Bridger in \cite{AB}. 
	
An $R$-module $M$ is called \emph{totally reflexive} provided that the natural map $M\rightarrow M^{**}$ is an isomorphism and $\Ext^i_R(M,R)=0=\Ext^i_R(M^*,R)$ for all $i\geq 1$. The \emph{Gorenstein dimension} of $M$, denoted $\G-dim_R(M)$, is defined to be the infimum of all nonnegative integers $n$, such that there exists an exact sequence $$0\rightarrow G_n\rightarrow\cdots\rightarrow G_0\rightarrow  M \rightarrow 0,$$ in which each $G_i$ is a totally reflexive $R$-module.

Every finitely generated module over a Gorenstein ring has finite Gorenstein dimension. Moreover, if $R$ is local and $\G-dim_R(M)<\infty$, then it follows that $\G-dim_R(M)=\depth R-\depth_R(M)$; see \cite[4.13]{AB}.

A \emph{quasi-deformation} of $R$ is a diagram $R\rightarrow A\twoheadleftarrow Q$ of local homomorphisms, in which $R\rightarrow A$ is faithfully flat, and $A\twoheadleftarrow Q$ is surjective with kernel generated by a regular sequence. The \emph{complete intersection dimension} of $M$, introduced by Avramov, Gasharov and Peeva \cite{AGP}, is:
	$$\CI_R(M)=\inf\{\pd_Q(M\otimes_RA)-\pd_Q(A)\mid R\rightarrow A\twoheadleftarrow Q \text{ is a quasi-deformation}\}.$$
Therefore, an $R$-module $M$ has finite complete intersection dimension if there exists a quasi-deformation $R \rightarrow A \twoheadleftarrow Q$ for which $\pd_Q(M\otimes_RA)$ is finite.

Note that, if $M$ is finitely generated $R$-module, then it follows $\G-dim_R(M)\leq \CI_R(M)\leq \pd_R(M)$, and $\CI_R(M)<\infty$ if $R$ is a complete intersection ring. Moreover, if $R$ is local and $\CI_R(M)<\infty$, then one has by \cite[1.4]{AGP} that
\begin{equation}\tag{\ref{HD}.1}
\G-dim_R(M)=\CI_R(M)=\depth R-\depth_{R}(M).
\end{equation}
	
	
\end{chunk}

\begin{chunk}\label{cx}\textbf{Complexity. }
Assume $(R,\fm)$ is local and $M,N\in\md(R)$. Then the \emph{complexity} of the pair $(M,N)$, defined by Avramov and Buchweitz \cite{AvBu}, is:
$$\cx_R(M,N)=\inf\{ b\in\NN \mid \exists a\in\RR \text{ such that } \nu_R(\Ext^n_R(M,N))\leq a n^{b-1} \text{ for all } n\gg 0\},$$
where $\nu_R(-)$ denotes the minimal number of generators. Accordingly, the complexity $\cx_R(M)$ of $M$, initially introduced by Avramov \cite{Av1} in local algebra, can be given as $\cx_R(M,k)$. 

Avramov, Gasharov and Peeva  \cite[5.3]{AGP} proved that every module of finite complete intersection dimension also has finite complexity.

Note, $\cx_R(M,N)=0$ if and only if $\Ext^i_R(M,N)=0$ for all $i\gg0$. Also, it follows by the definition that $\pd_R(M)<\infty$ if and only if $\cx_R(M)=0$, and $M$ has bounded Betti numbers if and only if $\cx_R(M)\leq1$. 

If $R$ is a complete intersection, then one has \cite[5.7]{AvBu}:
\begin{equation}\tag{\ref{cx}.1}
\cx_R(M,N)=\cx_R(N,M)\leq\min\{\cx_R(M),\cx_R(N)\}\leq\cod R.
\end{equation}

\end{chunk}

\begin{chunk}\label{df}\textbf{Depth formula.}  If $R$ is local, then a pair $(M,N)$ in $\md(R)$ is said to satisfy the \emph{depth formula} provided that the following equality holds:
$$\depth_R(M)+\depth_R(N)=\depth R+\depth_R(M\otimes_RN).$$
\end{chunk}

Auslander \cite[1.2]{A2} proved that, if $(M,N)$ is Tor-independent (i.e., $\Tor_i^R(M,N)=0$ for all $i\geq 1$) and $\pd_R(M)<\infty$, then the depth formula holds for $(M,N)$.
Auslander's result has been extended by Huneke and Wiegand for complete intersection rings: Tor-independent modules over complete intersection rings satisfy the depth formula; see \cite[2.5]{HW}.
More generally, one has: 

\begin{thm}\label{t4}(Araya and Yoshino \cite{AY}) Assume $R$ is a local ring and $M, N\in\md(R)$. If $\CI_R(M)<\infty$ and $\Tor_i^R(M,N)=0$ for all $i\geq 1$, then $(M,N)$ satisfies the depth formula; see \cite[2.5]{AY}.
\end{thm}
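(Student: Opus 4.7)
The plan is to exploit the structure of the complete intersection dimension via a quasi-deformation, and to reduce the desired depth formula to a derived version of Auslander's original depth formula over the deformation ring $Q$, where $M$ lifts to a module of finite projective dimension.

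First, I would choose a quasi-deformation $R\to A\twoheadleftarrow Q$ witnessing $\CI_R(M)<\infty$, and set $M'=M\otimes_RA$, $N'=N\otimes_RA$; then $\pd_Q(M')<\infty$, and by faithful flatness of $R\to A$ we have $\Tor_i^A(M',N')\cong\Tor_i^R(M,N)\otimes_RA=0$ for all $i\geq1$. Each of the four depths entering the desired formula receives the same additive correction $\depth(A/\fm A)$ upon passing from $R$ to $A$, so the depth formula for $(M,N)$ over $R$ is equivalent to the depth formula for $(M',N')$ over $A$. Hence it suffices to establish the result for $A=Q/(\mathbf{x})$ with $\mathbf{x}=x_1,\dots,x_c$ a $Q$-regular sequence, and for finitely generated $A$-modules $M',N'$ satisfying $\pd_Q(M')<\infty$ and $\Tor_i^A(M',N')=0$ for $i\geq 1$.

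The key observation is that in the derived category over $Q$ the complex $A\ltensor_Q N'$ is computed by the Koszul complex $K(\mathbf{x};N')$, whose differentials vanish since $\mathbf{x}$ annihilates $N'$; hence $A\ltensor_Q N'\simeq\bigoplus_{i=0}^{c}N'^{\binom{c}{i}}[i]$. Combining this with $M'\ltensor_A N'\simeq M'\otimes_A N'$ from the Tor-vanishing hypothesis yields
\[
M'\ltensor_Q N'\;\simeq\;\bigoplus_{i=0}^{c}(M'\otimes_A N')^{\binom{c}{i}}[i].
\]
Now apply the derived form of Auslander's depth formula, valid because $\pd_Q(M')<\infty$:
\[
\depth_Q(M')+\depth_Q(N')=\depth(Q)+\depth_Q(M'\ltensor_QN').
\]
Since $\mathbf{x}$ is $Q$-regular and annihilates $M'$, $N'$, and $M'\otimes_AN'$, the depth of each of these as a $Q$-module equals its depth as an $A$-module, and $\depth(Q)=\depth(A)+c$. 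Moreover, the depth of a bounded direct sum of shifts of a module $X$ is computed as $\min_i(\depth_R(X)-i)$, so here the complex $M'\ltensor_QN'$ has depth $\depth_A(M'\otimes_AN')-c$, realized at $i=c$. Substituting these identities into Auslander's formula, the $c$'s cancel and one obtains $\depth_A(M')+\depth_A(N')=\depth(A)+\depth_A(M'\otimes_AN')$, as required.

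The main obstacle is the derived-category bookkeeping: one must invoke the derived form of Auslander's depth formula (due to Foxby/Iyengar) for a finite projective dimension module paired with an unbounded-in-homology complex, and carefully track the convention that suspension $[i]$ decreases depth by $i$ so that the shifted sum above indeed has depth $\depth_A(M'\otimes_AN')-c$. Alternatively, one can bypass derived categories by resolving $M'$ by a finite projective $Q$-complex $P_\bullet$, writing $P_\bullet\otimes_QN'\cong(P_\bullet\otimes_QA)\otimes_AN'$, and invoking the change-of-rings spectral sequence $E_2^{p,q}=\Tor_p^A(\Tor_q^Q(A,N'),M')\Rightarrow\Tor_{p+q}^Q(M',N')$, which collapses to an edge computation thanks to the $\Tor^A$-vanishing and allows one to rerun the depth bookkeeping above module-theoretically via the Auslander-Buchsbaum formula over $Q$.
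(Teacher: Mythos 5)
Your proof is correct, but note that the paper itself does not prove this statement: it is imported from Araya--Yoshino \cite[2.5]{AY}, so the relevant comparison is with their original argument. Araya and Yoshino work entirely at the module level: after the same quasi-deformation and faithfully flat reduction, they handle the deformation $A=Q/(\mathbf{x})$ by induction on the length of the regular sequence, using change-of-rings exact sequences over a single hypersurface section. You instead reduce everything to the derived depth formula of Foxby--Iyengar (depth formula for $M'\ltensor_Q N'$ when $\pd_Q(M')<\infty$, with no Tor-independence hypothesis), together with the Koszul splitting $A\ltensor_Q N'\simeq\bigoplus_{i=0}^{c}N'^{\binom{c}{i}}[i]$ and derived associativity, which identifies $M'\ltensor_Q N'$ with a finite sum of shifts of $M'\otimes_A N'$; the depth bookkeeping you carry out ($\depth Q=\depth A+c$, shift drops depth by one, the sum is nonzero by Nakayama when $M,N\neq0$) is then exact. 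This route is shorter and conceptually cleaner, but it outsources the substance to the Foxby--Iyengar theorem, which is proved independently of Araya--Yoshino, so there is no circularity; Araya--Yoshino's proof has the advantage of being self-contained at the level of finitely generated modules. One caveat about your closing ``module-theoretic bypass'': after the change-of-rings spectral sequence collapses one gets $\Tor_n^Q(M',N')\cong(M'\otimes_AN')^{\binom{c}{n}}$ for all $n$, so $(M',N')$ is \emph{not} Tor-independent over $Q$ (unless $c=0$), and the Auslander--Buchsbaum formula alone does not finish; one would need Auslander's finer result on the top nonvanishing Tor (his Theorem 1.2 in \cite{A2}), whose depth hypothesis is not available here, or else one is driven back to the hyperhomology/depth computation that is precisely your derived argument. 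The primary derived-category proof, however, is complete as written.
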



\begin{chunk}\label{depen}\textbf{Dependency formula of Jorgensen.} Assume $R$ is local and $M,N\in\md(R)$. If $\CI_R(M)<\infty$ and $\Tor_{i}^R(M,N)=0$ for $i\gg0$, then it follows from  \cite[2.2]{J2} that: 
	\begin{equation}\tag{\ref{depen}.1}
		\sup\{i\mid\Tor_{i}^R(M,N)\neq0\}=\sup\{\depth R_\fp-\depth_{R_{\fp}}(M_\fp)-\depth_{R_{\fp}}(N_\fp)\mid\fp\in\Spec R\}.	
	\end{equation}
	\end{chunk}

 
\begin{chunk}\label{lsc}\textbf{Specialization-closed subsets of the spectrum \cite{sga2}.} A subset $\mz\subset\Spec R$ is called \emph{specialization-closed} provided that the following condition  holds: 

If $\fp, \fq \in \Spec(R)$, where $\fp \in \mz$ and $\fp \subseteq \fq$, then it follows that $\fq\in \mz$.
\end{chunk}

We collect some  examples of specialization-closed subsets of $\Spec R$:

\begin{eg} $\phantom{}$ 
		\begin{enumerate}[\rm(i)]
			\item{Every closed subset of $\Spec(R)$ with respect to Zariski toplogy is specialization-closed.}
			\item{If $R$ is a domain, then $\Spec(R)\setminus\{0\}$ is specialization-closed subset of $\Spec(R)$.}
			\item{ Let $0\leq n\leq\dim R$. Then $\{\fp\in\Spec(R)\mid\Ht(\fp)\geq n\}$ is a specialization-closed subset of $\Spec(R)$.}
		\end{enumerate}
\end{eg}

One of the aims of this paper is to extend the following result of Auslander and Lichtenbaum \cite{A2, L}: if $R$ is a regular local ring, $M, N\in\md(R)$ and $M\otimes_RN$ is torsion-free, then $M$ and $N$ are Tor-independent torsion-free modules.  Our technique relies upon using local cohomology theory with respect to specialization-closed subsets \cite{sga2}.

\begin{chunk}\label{LC}\textbf{Local cohomology with respect to specialization-closed subsets.} Let $\mz$ be a specialization-closed subset of $\Spec R$. For each $M\in\Md(R)$, we define the following submodule of $M$:
$$\Gamma_{\mz}(M)=\{m\in M\mid\Supp_R(Rm)\subseteq\mz\}.$$
For $i\in\NN_0$, the $i$-th right derived functor of $\Gamma_{\mz}(-)$, denoted
by $\hh^i_{\mz}(-)$, is referred to as the $i$-th \emph{local cohomology functor with respect to $\mz$}. 

If $\mz$ is a closed subset of $\Spec R$, i.e., $\mz=\V(\fa)$ for some ideal $\fa$ of $R$, then we denote $\Gamma_{\mz}(-)$ (respectively, $\hh^i_{\mz}(-)$) by $\Gamma_{\fa}(-)$ (respectively, $\hh^i_{\fa}(-)$). 

$M\in\Md(R)$ is called \emph{torsion-free with respect to $\mz$} provided that $\Gamma_{\mz}(M)=0$, and is called \emph{ torsion with respect to $\mz$} precisely
when $\Gamma_{\mz}(M)=M$. Note that $M$ is torsion with respect to $\mz$ if and only if $\Supp_R(M)\subseteq\mz$.
\end{chunk}

We set, for a specialization-closed subset $\mz$ of $\Spec R$, that $\Sigma=\{\fa\triangleleft R\mid\V(\fa)\subseteq\mz\}$, and proceed by collecting some of the basic properties of local cohomology modules with respect to specialization-closed subsets in the following; for details, we refer the reader to \cite{Bsh,sga2}.

\begin{thm} \label{LC} Let $\mz$ and $\mathcal{W}$ be specialization-closed subsets of $\Spec(R)$. Then,
\begin{enumerate}[\rm(i)]
	\item $\Gamma_{\mz}(M)=\bigcup_{\fa\in\Sigma}\Gamma_{\fa}(M)$. 
	\\
	\item Each short exact sequence $0\to M_1\to M_2\to M_3\to 0$ in $\Md(R)$ yields a  long exact sequence: $$\cdots\to \hh^i_{\mz}(M_1)\to \hh^i_{\mz}(M_2)\to \hh^i_{\mz}(M_3)\to \hh^{i+1}_{\mz}(M_1)\to \cdots.$$ 
	\item If $M$ is torsion with respect to $\mz$, i.e., $\Gamma_{\mz}(M)=M$, then $\hh^i_{\mz}(M)=0$ for all $i\geq 1$.	
	\item $\Gamma_{\mz}(\Gamma_{\mathcal{W}}(M))=\Gamma_{\mz\cap\mathcal{W}}(M)=\Gamma_{\mathcal{W}}(\Gamma_{\mz}(M))$.
\end{enumerate}	
\end{thm}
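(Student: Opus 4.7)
The plan is to verify the four assertions directly from the definition of $\Gamma_{\mz}$ together with the defining property of specialization-closed subsets. For (i), I would unpack the definitions: $m\in\Gamma_{\mz}(M)$ holds iff $\Supp_R(Rm)=\V(\ann_R(m))\subseteq\mz$, which is exactly the condition $\ann_R(m)\in\Sigma$. Since $m$ is annihilated by $\ann_R(m)$, this gives $m\in\Gamma_{\ann_R(m)}(M)\subseteq\bigcup_{\fa\in\Sigma}\Gamma_{\fa}(M)$. Conversely, if $m\in\Gamma_{\fa}(M)$ for some $\fa\in\Sigma$, then $\fa^n\subseteq\ann_R(m)$ for some $n\geq 1$, so $\V(\ann_R(m))\subseteq\V(\fa)\subseteq\mz$, placing $m$ in $\Gamma_{\mz}(M)$. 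For (ii), I would first observe that $\Gamma_{\mz}$ is a left-exact additive subfunctor of the identity on $\Md(R)$, immediate from the stability of the support condition under subobjects. Since $\Md(R)$ has enough injectives, the standard formalism of right derived functors produces the long exact sequence; alternatively, one can appeal to (i) to write $\Gamma_{\mz}=\dirlim_{\fa\in\Sigma}\Gamma_{\fa}$ and pass to the filtered colimit of the classical long exact sequences for ordinary local cohomology, using exactness of filtered colimits in $\Md(R)$.

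For (iii), the core step is to construct an injective resolution of $M$ whose terms are all $\mz$-torsion. Assuming $M=\Gamma_{\mz}(M)$, let $E(M)$ denote its injective envelope. Any $\fp=\ann_R(x)\in\Ass_R(M)$ satisfies $\V(\fp)=\Supp_R(Rx)\subseteq\mz$, so $\Ass_R(E(M))=\Ass_R(M)\subseteq\mz$; the Matlis decomposition $E(M)\cong\bigoplus E(R/\fp)$ (with $\fp$ running through $\Ass_R(M)$ with multiplicity) then shows that every summand $E(R/\fp)$ has $\Supp_R(E(R/\fp))=\V(\fp)\subseteq\mz$, using the specialization-closed property of $\mz$. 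Hence $E(M)$ is $\mz$-torsion, and since $\Supp_R(E(M)/M)\subseteq\Supp_R(E(M))\subseteq\mz$ the cokernel is again $\mz$-torsion; iterating yields an injective resolution $M\to I^{\bullet}$ with $\Gamma_{\mz}(I^i)=I^i$ for every $i$. Therefore $\hh^i_{\mz}(M)=\hh^i(\Gamma_{\mz}(I^{\bullet}))=\hh^i(I^{\bullet})=0$ for all $i\geq 1$.

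For (iv), note first that $\mz\cap\mathcal{W}$ is itself specialization-closed, and then check the equalities elementwise: $m\in\Gamma_{\mz}(\Gamma_{\mathcal{W}}(M))$ holds precisely when $\Supp_R(Rm)\subseteq\mz$ and $\Supp_R(Rm)\subseteq\mathcal{W}$, i.e., when $\Supp_R(Rm)\subseteq\mz\cap\mathcal{W}$, i.e., when $m\in\Gamma_{\mz\cap\mathcal{W}}(M)$; the outer equality follows by the symmetric argument. None of these assertions presents a serious obstacle; the most delicate point is the injective-resolution argument in (iii), where one must combine the Matlis structure theorem for injective modules over a Noetherian ring with the specialization-closed hypothesis on $\mz$ to guarantee that every successive cosyzygy stays within the $\mz$-torsion subcategory.
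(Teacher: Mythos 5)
Your proposal is correct in all four parts. Note, however, that the paper does not prove Theorem \ref{LC} at all: it is stated as a collection of known facts and the reader is referred to \cite{Bsh,sga2}, so there is no internal argument to compare against; what you have done is supply the standard verifications that those references contain. Your part (i) is the expected unpacking of $\Supp_R(Rm)=\V(\ann_R(m))$; part (ii) follows from left exactness of the torsion functor plus the derived-functor formalism (your colimit alternative also works, but it needs the extra, standard, observation that the derived functors commute with the directed colimit over $\Sigma$, so the derived-functor route is the cleaner one); part (iv) is a correct elementwise check, together with the remark that $\mz\cap\mathcal{W}$ is again specialization-closed. The only genuinely substantive point is (iii), and your treatment is right: $\Ass_R(E(M))=\Ass_R(M)\subseteq\mz$ by essentiality, the Matlis decomposition together with $\Supp_R(E(R/\fp))\subseteq\V(\fp)$ and the specialization-closed hypothesis shows $E(M)$ is $\mz$-torsion, and the cokernel inherits this, so one builds an injective resolution by $\mz$-torsion injectives on which $\Gamma_{\mz}$ acts as the identity. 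It is worth noticing that this is exactly the mechanism the authors themselves use later, in the proof of Proposition \ref{grz}, where $\Gamma_{\mz}(E(R/\fq))$ is computed to be $E(R/\fq)$ or $0$ according to whether $\fq\in\mz$; so your argument is fully consistent in spirit with the techniques of the paper, and I see no gap.
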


The following results are used throughout the paper; although they are well-known, we recall them for the convenience of the reader:

\begin{chunk} \textbf{The local duality and Grothendieck's (non-)vanishing theorem.}\label{LD}
\begin{thm}\cite[Theorem 3.5.7]{BH}\label{ld1}
Let $(R,\fm)$ be a local ring and let $M\in\md(R)$ be a module of depth $t$ and dimension $d$. Then
\begin{enumerate}[\rm(i)]
	\item $\hh^i_{\fm}(M)=0$ for $i<t$ and $i>d$.
	\item $\hh^t_{\fm}(M)\neq0$ and $\hh^d_{\fm}(M)\neq0$.
\end{enumerate}
\end{thm}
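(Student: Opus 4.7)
The plan is to reduce both statements to Čech cohomology and then to routine induction arguments, since this is the classical Grothendieck (non)vanishing result. First I would identify $\hh^i_\fm(M)$ with the cohomology of the Čech complex $\check{C}(\underline{x}; M)$ for a generating set $\underline{x} = x_1, \ldots, x_n$ of $\fm$ (or of any ideal with $\sqrt{(\underline{x})} = \fm$). This translates both claims into assertions about a finite complex, which is the right shape for induction on either $t = \depth_R(M)$ or $d = \dim_R(M)$.

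For part (i), the vanishing $\hh^i_\fm(M) = 0$ for $i > d$ is the harder of the two vanishings. I would choose $\underline{x}$ to be a system of parameters for $M$, so $|\underline{x}| = d$, and use the fact that local cohomology depends only on the radical of the defining ideal (a standard argument via the direct limit description $\hh^i_\fm(M) = \varinjlim \Ext^i_R(R/\fm^n, M)$ and Noetherian cofinality). Then $\hh^i_\fm(M) = H^i(\check{C}(\underline{x}; M)) = 0$ for $i > d$ because the complex has length $d$. For $\hh^i_\fm(M) = 0$ when $i < t$, I would induct on $t$: the base case $t = 0$ is vacuous, and for $t \geq 1$ pick an $M$-regular element $x \in \fm$ and apply the long exact sequence from Theorem \ref{LC}(ii) to $0 \to M \xrightarrow{x} M \to M/xM \to 0$; since $\depth_R(M/xM) = t - 1$, the inductive hypothesis kills $\hh^{i-1}_\fm(M/xM)$ and $\hh^i_\fm(M/xM)$ for $i < t$, so multiplication by $x$ is an automorphism of $\hh^i_\fm(M)$, which combined with the $\fm$-torsion nature of $\hh^i_\fm(M)$ (Theorem \ref{LC}(iii) applied to the image, together with Nakayama-type reasoning) forces it to be zero.

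For part (ii), the non-vanishing $\hh^t_\fm(M) \neq 0$ again proceeds by induction on $t$. The base case $t = 0$ uses that $\fm \in \Ass_R(M)$, which provides an injection $R/\fm \hookrightarrow M$ whose image lies in $\Gamma_\fm(M) = \hh^0_\fm(M)$. For the inductive step, pick an $M$-regular $x \in \fm$, observe via the long exact sequence that $\hh^t_\fm(M) \twoheadrightarrow \ker(\hh^t_\fm(M) \xrightarrow{x} \hh^t_\fm(M))$ factors through $\hh^{t-1}_\fm(M/xM)$; since the latter is nonzero by induction and $\hh^t_\fm(M)$ is $\fm$-torsion, it cannot vanish.

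The main obstacle is the non-vanishing $\hh^d_\fm(M) \neq 0$. For this I would induct on $d$; the case $d = 0$ is handled because then $M$ has finite length, so $M = \Gamma_\fm(M) = \hh^0_\fm(M) \neq 0$. For $d \geq 1$, using a prime filtration of $M$ and the long exact sequence, it suffices to prove the claim when $M = R/\fp$ for a prime $\fp$ with $\dim R/\fp = d$. Choose $x \in \fm \setminus \fp$; then $\dim(R/(\fp + xR)) = d - 1$, and the short exact sequence $0 \to R/\fp \xrightarrow{x} R/\fp \to R/(\fp + xR) \to 0$ yields via the long exact sequence a surjection $\hh^{d-1}_\fm(R/(\fp + xR)) \twoheadrightarrow \ker(\hh^d_\fm(R/\fp) \xrightarrow{x} \hh^d_\fm(R/\fp))$; the source is nonzero by the inductive hypothesis (since depth and dimension agree with $d-1$ after passing to a suitable quotient by a further prime filtration argument), and the $\fm$-torsion of $\hh^d_\fm(R/\fp)$ then prevents it from vanishing. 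Making this last step fully honest requires an additional small induction or a direct Čech-cohomology computation for $R/\fp$, which is where the classical proof in Bruns--Herzog spends most of its effort.
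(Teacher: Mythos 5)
The paper itself does not prove this statement; it simply quotes it from [BH, Theorem 3.5.7], so your proposal has to be measured against the classical argument. Your treatment of the vanishing statements and of $\hh^{t}_{\fm}(M)\neq 0$ is essentially that standard argument and is fine up to small repairs: for $i>d$ you need that local cohomology depends only on the radical of $\fa+\ann_R(M)$ (a system of parameters of $M$ generates an ideal whose radical is $\fm$ only modulo $\ann_R(M)$, not in $R$); in the step killing $\hh^{i}_{\fm}(M)$ for $i<t$ the long exact sequence gives injectivity of multiplication by $x$, not an automorphism, which together with $\fm$-torsionness is what you actually use; and in the non-vanishing at $i=t$ the connecting map $\hh^{t-1}_{\fm}(M/xM)\to\hh^{t}_{\fm}(M)$ is injective because $\hh^{t-1}_{\fm}(M)=0$ by the already-proved vanishing part --- the torsion remark alone does not give this.

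The genuine gap is the non-vanishing $\hh^{d}_{\fm}(M)\neq 0$. In your inductive step for $M=R/\fp$ you produce, from $0\to R/\fp\xrightarrow{x}R/\fp\to R/(\fp+xR)\to 0$, a surjection $\hh^{d-1}_{\fm}(R/(\fp+xR))\twoheadrightarrow\ker\bigl(x\colon\hh^{d}_{\fm}(R/\fp)\to\hh^{d}_{\fm}(R/\fp)\bigr)$ and argue that non-vanishing of the source plus torsionness of the target forces $\hh^{d}_{\fm}(R/\fp)\neq 0$. This does not follow: the connecting map $\hh^{d-1}_{\fm}(R/(\fp+xR))\to\hh^{d}_{\fm}(R/\fp)$ can perfectly well be zero, since its kernel is the image of $\hh^{d-1}_{\fm}(R/\fp)$, and $R/\fp$ need not be Cohen--Macaulay, so $\hh^{d-1}_{\fm}(R/\fp)$ need not vanish (unlike the step at $i=t$, where $\hh^{t-1}_{\fm}(M)=0$ saves you). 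If $\hh^{d}_{\fm}(R/\fp)=0$, the long exact sequence merely gives $\hh^{d-1}_{\fm}(R/(\fp+xR))\cong\hh^{d-1}_{\fm}(R/\fp)/x\,\hh^{d-1}_{\fm}(R/\fp)$, which is not a contradiction. The prime-filtration reduction has the same defect: for a submodule $R/\fp\subseteq M$ the induced map $\hh^{d}_{\fm}(R/\fp)\to\hh^{d}_{\fm}(M)$ need not be injective, again because the relevant $(d-1)$-st cohomology of the cokernel need not vanish. This is exactly why the classical proofs do something structurally different here: either pass to $R/\ann_R(M)$ and its completion, present that ring as a quotient of a regular (hence Gorenstein) local ring $S$, and deduce $\hh^{d}_{\fm}(M)\neq 0$ from $\Ext^{\dim S-d}_S(M,S)\neq 0$ via duality; or reduce to a complete local domain, take a Noether normalization $A\subseteq R$ with $A$ regular of dimension $d$, compute $\hh^{d}_{\fm_A}(A)\neq 0$ explicitly from the \v{C}ech complex, and transfer non-vanishing using that $\hh^{d}$ is the top functor. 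Describing this as ``an additional small induction'' understates it; as written, your induction does not close.
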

\begin{thm}\cite[Corollary 3.5.9]{BH}\label{Ld}
	Let $(R,\fm,k)$ be a Cohen-Macaulay local ring of dimension $d$ with canonical module $\omega_R$. Then for all $M\in\md(R)$ and all integers $i$ there
	exist natural isomorphisms $$\hh^i_{\fm}(M)\cong\Hom_R(\Ext^{d-i}_R(M,\omega_R),\E_R(k)),$$
	where $\E_R(k)$ denotes the injective envelope of the reside field.
\end{thm}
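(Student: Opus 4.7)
The plan is to establish the derived-category identity
$$R\Gamma_{\fm}(M) \simeq \Hom_R(\Rhom_R(M, \omega_R),\, \E_R(k))[-d]$$
and then extract $i$-th cohomology; because Matlis duality $D := \Hom_R(-, \E_R(k))$ is exact ($\E_R(k)$ being injective), $H^i$ of the right-hand side is exactly $D(\Ext^{d-i}_R(M, \omega_R))$, which matches the claim.

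The decisive input is the computation $R\Gamma_\fm(\omega_R) \simeq \E_R(k)[-d]$, i.e.\ $\hh^d_\fm(\omega_R) \cong \E_R(k)$ and $\hh^i_\fm(\omega_R) = 0$ for $i \neq d$. The vanishing part follows from Theorem~\ref{ld1} applied to the Cohen--Macaulay module $\omega_R$, whose depth and dimension both equal $d$. The identification of the top cohomology I would prove by induction on $d$. For $d = 0$, $\omega_R \cong \E_R(k)$ by construction and $\hh^0_\fm(\omega_R) = \omega_R$ since $\omega_R$ is $\fm$-torsion. For $d \geq 1$, pick $x \in \fm$ regular on $R$ and on $\omega_R$; then $\omega_R/x\omega_R$ is canonical for $R/xR$, and the long exact sequence of Theorem~\ref{LC}(ii) applied to $0 \to \omega_R \xra{x} \omega_R \to \omega_R/x\omega_R \to 0$, combined with the inductive hypothesis, collapses to
$$0 \to \E_{R/xR}(k) \to \hh^d_\fm(\omega_R) \xra{x} \hh^d_\fm(\omega_R) \to 0.$$
A direct-limit argument over the powers of $x$, together with the identification $\E_R(k) = \bigcup_n (0 :_{\E_R(k)} x^n)$, pins $\hh^d_\fm(\omega_R)$ down as $\E_R(k)$.

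For the general derived identity, take a free resolution $F_\bullet \to M$ and a bounded injective resolution $\omega_R \to J^\bullet$ of length $d$ (which exists by finiteness of the injective dimension of $\omega_R$). Each $J^i$ decomposes into indecomposable injectives $\E_R(R/\fp)$, of which only those with $\fp = \fm$ survive $\Gamma_\fm$. Processing the double complex $\Gamma_\fm\Hom_R(F_\bullet, J^\bullet)$ with its two spectral sequences, using the key computation above to evaluate $R\Gamma_\fm$ on each $J^i$, and using the tensor/\v{C}ech identity $R\Gamma_\fm(-) \simeq - \ltensor_R \check{C}^\bullet(\underline{x})$ for a system of parameters $\underline{x}$ on the other filtration, realises both sides of the derived identity as limits of the same abutment; matching gives the quasi-isomorphism.

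The main obstacle is the careful bookkeeping in the spectral sequences: one must verify $\Gamma_\fm(\E_R(R/\fp)) = 0$ for $\fp \neq \fm$, track how the Bass numbers $\mu_i(\fp, \omega_R)$ of the injective resolution of $\omega_R$ interact with Ext, and confirm that naturality in $M$ survives the comparison. A slicker alternative is to interpret $\omega_R[d]$ as a normalised dualising complex and invoke Grothendieck's general local-duality theorem in the derived category over the completion of $R$, at the cost of more categorical machinery.
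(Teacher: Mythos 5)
This statement is quoted background: the paper does not prove it, it simply cites \cite[Corollary 3.5.9]{BH} (Grothendieck/Bruns--Herzog local duality) and records it as Theorem \ref{Ld} for later use. So there is no in-paper argument to compare against; what you have written is an attempt to reprove the classical theorem, and it should be judged against the standard proof in [BH, 3.5.8--3.5.9].

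The first half of your plan is sound. The vanishing $\hh^i_{\fm}(\omega_R)=0$ for $i\neq d$ is immediate from Theorem \ref{ld1}, and your induction for $\hh^d_{\fm}(\omega_R)\cong \E_R(k)$ can be made rigorous: from the sequence $0\to \E_{R/xR}(k)\to \hh^d_{\fm}(\omega_R)\xra{x}\hh^d_{\fm}(\omega_R)\to 0$ one checks that $T=\hh^d_{\fm}(\omega_R)$ is $\fm$-torsion with simple socle, hence an essential extension of $k$ embedding into $\E_R(k)$; co-Hopfianness of the Artinian module $(0:_{\E_R(k)}x)$ forces $(0:_Tx)=(0:_{\E_R(k)}x)$, and $x$-divisibility of $T$ then gives $(0:_Tx^n)=(0:_{\E_R(k)}x^n)$ for all $n$, so $T=\E_R(k)$.

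The gap is in the comparison step, which is where the actual content of the theorem sits. The double complex $\Gamma_{\fm}\Hom_R(F_\bullet,J^\bullet)$ you propose does not compute the stated isomorphism: each $\Gamma_{\fm}(J^q)$ is injective, so the filtration by columns collapses the rows to $\Gamma_{\fm}\Hom_R(M,J^\bullet)$, while the other filtration, using your key computation, collapses to $\Hom_R(M,\E_R(k))$ concentrated in total degree $d$. What you obtain is therefore the \emph{dual} form $\hh^i\bigl(\Gamma_{\fm}\Hom_R(M,J^\bullet)\bigr)\cong\Hom_R(M,\E_R(k))$ for $i=d$ and $0$ otherwise, i.e. $R\Gamma_{\fm}\Rhom_R(M,\omega_R)[d]\simeq\Hom_R(M,\E_R(k))$, not $\hh^i_{\fm}(M)\cong\Hom_R(\Ext^{d-i}_R(M,\omega_R),\E_R(k))$. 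Passing from one to the other requires a double Matlis dual on the Ext side, and $\Hom_R(\Hom_R(\Ext^{d-i}_R(M,\omega_R),\E_R(k)),\E_R(k))$ is the $\fm$-adic completion of that Ext module, not the module itself, unless $R$ is complete (or unless one instead dualizes $\hh^i_{\fm}(M)$, using that it is Artinian and hence Matlis-reflexive over $\widehat{R}$) --- exactly the point where [BH] proves the complete case 3.5.8 first and deduces 3.5.9 by completion. Equivalently, the derived identity you aim for, $R\Gamma_{\fm}(M)\simeq\Hom_R(\Rhom_R(M,\omega_R),\E_R(k))[-d]$, reduces via a free resolution of $M$ and the \v{C}ech identity to the single input $R\Gamma_{\fm}(R)\simeq\Hom_R(\omega_R,\E_R(k))[-d]$, i.e. $\hh^d_{\fm}(R)\cong\Hom_R(\omega_R,\E_R(k))$; this is the Matlis-dual statement of the one you actually proved ($\hh^d_{\fm}(\omega_R)\cong\E_R(k)$), and converting between them again needs the completion/Artinian bookkeeping your sketch defers. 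So the skeleton is the standard one, but the sentence ``matching gives the quasi-isomorphism'' hides the step that would fail as written; either carry out the reduction to the complete case explicitly, or show directly that the Matlis dual of the \v{C}ech complex is an injective resolution of $\omega_{\widehat{R}}$, or simply cite [BH, 3.5.9] as the paper does.
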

\end{chunk}


\begin{chunk}\label{gsc}\textbf{Grade of a module with respect to a specialization-closed subset.}

Let $\mz\subset \Spec R$ be a specialization-closed subset and let $M\in\Md(R)$. Then the {\em  grade of $M$ with respect to $\mz$} is:
$$\gr_R(\mz,M)=\inf\{i\in \NN_{0}\mid\hh_{\mz}^{i}(M)\neq0\}.$$	
If $\mz=\V(\fa)$ for some ideal $\fa$ of $R$, then we denote $\gr_R(\mz,M)$ by $\gr_R(\fa,M)$.  Note that, if $M\in\md(R)$, then 
$\gr(\fa,M)$ is equal to the maximal length of an $M$-regular sequence contained in $\fa$.
\end{chunk}


The next result, for the case where $\mz$ is a closed subset of $\Spec R$, was initially proved by Grothendieck; see \cite[III.2.9]{sga2}. This fact can be viewed as a generalization of \cite[4.1]{TTY}.
As the result plays an important role for our arguments in this paper, we provide the details.

\begin{prop}\label{grz} Let $\mz\subset \Spec R$ be  specialization-closed and $M\in\md(R)$. Then it follows that
	$$\gr_R(\mz,M)=\inf\{\depth(M_{\fp})| \fp\in\mz\}.$$
\end{prop}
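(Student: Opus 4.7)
The plan is to establish the two inequalities $\gr_R(\mz,M)\geq d$ and $\gr_R(\mz,M)\leq d$ separately, where $d$ denotes the infimum $\inf\{\depth(M_\fp)\mid\fp\in\mz\}$, reducing both to the classical closed-support case proved by Grothendieck in \cite[III.2.9]{sga2} via Theorem~\ref{LC}(i).

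For the lower bound I would first pass to filtered colimits: since $\Gamma_\mz=\bigcup_{\fa\in\Sigma}\Gamma_\fa$ by Theorem~\ref{LC}(i) and directed colimits are exact,
\[
\hh^i_{\mz}(M)\;\cong\;\varinjlim_{\fa\in\Sigma}\hh^i_{\fa}(M).
\]
For each $\fa\in\Sigma$ the inclusion $\V(\fa)\subseteq\mz$ and the classical Grothendieck identity give $\gr_R(\fa,M)=\inf\{\depth(M_\fp)\mid\fp\in\V(\fa)\}\geq d$, so $\hh^i_\fa(M)=0$ for $i<d$ and passing to the colimit forces $\gr_R(\mz,M)\geq d$.

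For the upper bound I may assume $d<\infty$ and choose $\fp\in\mz$ with $\depth(M_\fp)=d$. Since $\mz$ is specialization-closed, every prime above $\fp$ lies in $\mz$, so $\gr_R(\fp,M)\geq d$ by the classical theorem; conversely any $M$-regular sequence in $\fp$ remains $M_\fp$-regular after localizing, hence $\gr_R(\fp,M)\leq\depth_{R_\fp}(M_\fp)=d$. Therefore $\gr_R(\fp,M)=d$ and $\hh^d_\fp(M)\neq 0$. To push this nonvanishing into $\hh^d_\mz(M)$, my key tool will be the Grothendieck composition spectral sequence attached to the factorization $\Gamma_\fp=\Gamma_\fp\circ\Gamma_\mz$ (valid because $\V(\fp)\subseteq\mz$):
\[
E_2^{p,q}=\hh^p_{\fp}(\hh^q_{\mz}(M))\;\Longrightarrow\;\hh^{p+q}_{\fp}(M).
\]
The lower bound already proved kills $\hh^q_\mz(M)$ for $q<d$, so on the total-degree-$d$ antidiagonal only $E_2^{0,d}=\Gamma_\fp(\hh^d_\mz(M))$ can be nonzero, and every differential out of $E_r^{0,d}$ for $r\geq 2$ lands in a vanishing term. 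A standard edge-map argument then gives $\hh^d_\fp(M)\cong\Gamma_\fp(\hh^d_\mz(M))$, and the left-hand side being nonzero forces $\hh^d_\mz(M)\neq 0$.

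The only nonobvious ingredient, and the place I expect to spend the most care, is the composition spectral sequence, which demands that $\Gamma_\mz$ carry injective $R$-modules to $\Gamma_\fp$-acyclics. I would justify this via Matlis' structure theorem: every injective $R$-module decomposes as $\bigoplus_{\fq}\E_R(R/\fq)^{(I_\fq)}$, and since $\V(\fq)\subseteq\mz$ is equivalent to $\fq\in\mz$ by specialization-closedness, $\Gamma_\mz$ acts as projection onto the summands indexed by $\fq\in\mz$; in particular it preserves injectivity, which is more than enough.
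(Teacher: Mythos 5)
Your proof is correct, but it follows a genuinely different route from the paper's. You reduce everything to the classical closed-support case: for the vanishing below $d=\inf\{\depth(M_{\fp})\mid\fp\in\mz\}$ you write $\hh^i_{\mz}(M)\cong\varinjlim_{\fa\in\Sigma}\hh^i_{\fa}(M)$ (the system is indeed directed, since $\fa,\fb\in\Sigma$ gives $\fa\fb\in\Sigma$), and for the non-vanishing at $i=d$ you pick $\fp\in\mz$ with $\depth(M_\fp)=d$, note $\gr_R(\fp,M)=d$ because $\V(\fp)\subseteq\mz$, and transport $\hh^d_{\fp}(M)\neq0$ into $\hh^d_{\mz}(M)$ via the composed-functor spectral sequence $\hh^p_{\fp}(\hh^q_{\mz}(M))\Rightarrow\hh^{p+q}_{\fp}(M)$, which degenerates on total degree $d$ to give $\hh^d_{\fp}(M)\cong\Gamma_{\fp}(\hh^d_{\mz}(M))$; your justification that $\Gamma_{\mz}$ preserves injectives (Matlis decomposition plus $\Gamma_{\mz}(E_R(R/\fq))=E_R(R/\fq)$ or $0$ according to whether $\fq\in\mz$) is exactly what the spectral sequence needs. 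The paper instead argues directly and self-containedly: it applies $\Gamma_{\mz}$ to a \emph{minimal} injective resolution of $M$, uses Roberts' theorem \cite{Ro} on Bass numbers ($\mu_i(\fp,M)=0$ for $i<\depth(M_\fp)$ and $\mu_d(\fp_0,M)\neq0$) to kill the first $d$ terms and keep the $d$-th, and then gets non-vanishing from the essential-extension property of $E^d(M)$ over $\im\partial_{d-1}$. What your approach buys is conceptual economy — the proposition becomes a formal consequence of the known closed case, with no appeal to Bass numbers, minimality, or essentiality; what the paper's approach buys is a self-contained argument at the level of the resolution, which in particular exhibits $\hh^d_{\mz}(M)$ concretely as $\im\partial_{d-1}\cap\Gamma_{\mz}(E^d(M))$. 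Both rest on the same structural input (the decomposition of injectives and $\Gamma_{\mz}(E_R(R/\fq))$ being all or nothing), so neither is more general, but your reduction and the paper's direct computation are distinct proofs.
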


\begin{proof}
	Let $0\to M\to E^0(M)\overset{\partial_0}{\to}E^1(M)\overset{\partial_{1}}{\to}\cdots$ be a minimal injective resolution of $M$, where we have that $E^i(M)= \underset{\fq\in\Spec R}{\bigoplus} E(R/\fq)^{\mu_i(\fq,M)}$. Apply the functor $\Gamma_{\mz}(-)$ to the above resolution, we get the complex
	\begin{equation}\tag{\ref{grz}.1}
	0\longrightarrow\Gamma_{\mz}(M)\longrightarrow\Gamma_{\mz}(E^0(M))
	\overset{\Gamma_{\mz}(\partial_0)}{\longrightarrow}\Gamma_{\mz}(E^1(M))
	\overset{\Gamma_{\mz}(\partial_1)}{\longrightarrow}\cdots.
	\end{equation}
	Set $n:=\inf\{\depth(M_{\fp})| \fp\in\mz\}$. Hence, there exists a prime ideal $\fp_0\in\mz$ such that $\depth_{R_{\fp_0}}(M_{\fp_0})=n$. It follows from \cite[Theorem 2]{Ro} that
\begin{equation}\tag{\ref{grz}.2}
\mu_n(\fp_0,M)\neq0=\mu_i(\fp,M) \text{ for all } \fp\in\mz \text{ and } i<n.
\end{equation}
Note that $\Supp_R(E(R/\fp))\subseteq\V(\fp)$ for all $\fp\in\Spec R$. As $\mz$ is specialization-closed, we see that
    \begin{equation}\tag{\ref{grz}.3}
	\Gamma_{\mz}(E(R/\fq))=\left\lbrace
	\begin{array}{cl}
	E(R/\fq)\ \ & \text{ if}\ \ \fq\in\mz\\
	0\ \ & \text{ if }\ \ \fq\notin\mz.\\
	\end{array}\right.
	\end{equation}
	It follows from (\ref{grz}.3) that
	\begin{equation}\tag{\ref{grz}.4}
	\Gamma_{\mz}(E^i(M))\cong\bigoplus_{\fq\in\Spec R}\Gamma_{\mz}(E(R/\fq))^{\mu_i(\fq,M)}\cong\bigoplus_{\fq\in\mz}\Gamma_{\mz}(E(R/\fq))^{\mu_i(\fq,M)}.
	\end{equation}
	Hence, by (\ref{grz}.2) and (\ref{grz}.4) we have
	\begin{equation}\tag{\ref{grz}.5}
	\Gamma_{\mz}(E^n(M))\neq0=\Gamma_{\mz}(E^i(M)) \ \text{ for  all } i<n.
	\end{equation}
	Therefore, by (\ref{grz}.1) and (\ref{grz}.5) we see that $\hh_{\mz}^{i}(M)\cong\ker(\Gamma_{\mz}(\partial_i))/\im(\Gamma_{\mz}(\partial_{i-1}))=0$ for all $i<n$. In other words,
	$\gr_R(\mz,M)\geq n=\inf\{\depth(M_{\fp})| \fp\in\mz\}$. Thus, it is enough to show that $\hh^n_{\mz}(M)\neq0$. By using (\ref{grz}.1) and (\ref{grz}.5) we obtain the following exact sequence
	\begin{equation}\tag{\ref{grz}.6}
	0\lo \hh_{\mz}^{n}(M)\lo \Gamma_{\mz}(E^n(M))\overset{\Gamma_{\mz}(\partial_{n})}{\lo} \Gamma_{\mz}(E^{n+1}(M)).
	\end{equation}
	Note that $E^n(M)$ is an essential extension of $\im\partial_{n-1}$. In other words, $\im\partial_{n-1}\cap L\neq0$ for any non-zero submodule $L$ of 
	$E^n(M)$. It follows from (\ref{grz}.5) that $\Gamma_{\mz}(E^n(M))$ is a non-zero submodule of $E^n(M)$. Therefore, by the exact sequence (\ref{grz}.6) we obtain the following:
	$$\hh_{\mz}^{n}(M)=\ker(\Gamma_{\mz}(\partial_{n}))=\ker\partial_{n}\cap(\Gamma_{\mz}(E^n(M)))=\im\partial_{n-1}\cap(\Gamma_{\mz}(E^n(M)))\neq0,$$
	as desired.
\end{proof}

The next result can be found in \cite[1.4.19]{BH} when $\mz$ is a closed subset of $\Spec R$.

\begin{lem}\label{gr}
Let $\mz\subset \Spec R$ be a specialization-closed subset, and let $M\in\md(R)$ and $N\in\Md(R)$. Then it follows that $\gr_R(\mz,\Hom_R(M,N))\geq\min\{2,\gr_R(\mz,N)\}$.	
\end{lem}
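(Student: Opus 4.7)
The plan is to mimic the classical proof of \cite[1.4.19]{BH}, which handles the case when $\mz$ is closed, by exploiting only two features of local cohomology with respect to $\mz$: left-exactness of $\Gamma_{\mz}$, and the existence of the long exact sequence in Theorem \ref{LC}(ii). Since $M\in\md(R)$, I would begin by choosing a finite free presentation
$$R^m\xrightarrow{\ \phi\ }R^n\longrightarrow M\longrightarrow 0,$$
set $K=\operatorname{im}(\phi)$, and dualize the short exact sequence $0\to K\to R^n\to M\to 0$ into the left-exact sequence
$$0\longrightarrow\Hom_R(M,N)\longrightarrow N^n\longrightarrow\Hom_R(K,N).$$
The surjection $R^m\twoheadrightarrow K$ dualizes to an injection $\Hom_R(K,N)\hookrightarrow N^m$, so if $L$ denotes the image of $N^n\to\Hom_R(K,N)$, there is a short exact sequence
$$0\longrightarrow\Hom_R(M,N)\longrightarrow N^n\longrightarrow L\longrightarrow 0,$$
together with an inclusion $L\hookrightarrow N^m$.

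Next, I would feed this short exact sequence into the long exact sequence of Theorem \ref{LC}(ii), yielding
$$0\to\Gamma_{\mz}(\Hom_R(M,N))\to\Gamma_{\mz}(N)^n\to\Gamma_{\mz}(L)\to\hh^1_{\mz}(\Hom_R(M,N))\to\hh^1_{\mz}(N)^n,$$
which uses the evident fact that $\Gamma_{\mz}$ and $\hh^1_{\mz}$ commute with finite direct sums (immediate from the definition of $\Gamma_{\mz}$, or from Theorem \ref{LC}(i)). The embedding $L\hookrightarrow N^m$ and the left-exactness of $\Gamma_{\mz}$ force the auxiliary inclusion $\Gamma_{\mz}(L)\hookrightarrow\Gamma_{\mz}(N)^m$, which will be the crucial ingredient for controlling the cokernel term.

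Finally, I would split into cases according to the value of $t:=\gr_R(\mz,N)$. If $t=0$, then $\min\{2,t\}=0$ and there is nothing to prove. If $t\geq 1$, then $\Gamma_{\mz}(N)=0$, and the displayed sequence immediately gives $\Gamma_{\mz}(\Hom_R(M,N))=0$, so $\gr_R(\mz,\Hom_R(M,N))\geq 1$. If moreover $t\geq 2$, then $\hh^1_{\mz}(N)=0$ as well, and the inclusion $\Gamma_{\mz}(L)\hookrightarrow\Gamma_{\mz}(N)^m=0$ forces $\Gamma_{\mz}(L)=0$; reading off the long exact sequence then yields $\hh^1_{\mz}(\Hom_R(M,N))=0$, so $\gr_R(\mz,\Hom_R(M,N))\geq 2=\min\{2,t\}$.

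There is no substantial obstacle here: the argument is a direct translation of the classical proof, and the only point worth checking is that the two tools actually used ($\Gamma_{\mz}$ is left-exact and commutes with finite direct sums, and there is a functorial long exact sequence in $\hh^i_{\mz}$) are valid for specialization-closed $\mz$, both of which are recorded in \ref{LC} and Theorem \ref{LC}.
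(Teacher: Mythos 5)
Your proposal is correct and follows essentially the same route as the paper: dualize a finite free presentation of $M$, break the resulting left-exact sequence into a short exact sequence $0\to\Hom_R(M,N)\to N^n\to L\to0$ with $L$ embedded in a finite free power of $N$, and read off the grade bound from the long exact sequence of $\hh^i_{\mz}$ (your $L$ is exactly the module $X$ in the paper's argument). Your explicit case analysis on $\gr_R(\mz,N)$ just spells out what the paper leaves as "the assertion follows easily."
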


\begin{proof}
Let	$F_1\to F_0\to M\to0$ be a free presentation of $M$.
Applying the functor $\Hom_R(-,N)$ to the above exact sequence, we obtain the following exact sequence $0\to\Hom_R(M,N)\to\Hom_R(F_0,N)\to\Hom_R(F_1,N)$ which induces the following exact sequences: 
$$0\to\Hom_R(M,N)\to\Hom_R(F_0,N)\to X\to0\ \ \text{and}\ \ 0\to X\to\Hom_R(F_1,N).$$
By Theorem \ref{LC}(ii), the above exact sequences induce the following exact sequences:
\begin{equation}\tag{\ref{gr}.1}
\Gamma_{\mz}(\Hom_R(M,N))\hookrightarrow\Gamma_{\mz}(\Hom_R(F_0,N))\to\Gamma_{\mz}(X)\to\hh^1_{\mz}(\Hom_R(M,N))\to\hh^1_{\mz}(\Hom_R(F_0,N)),
\end{equation}
\begin{equation}\tag{\ref{gr}.2}
0\to\Gamma_{\mz}(X)\to\Gamma_{\mz}(\Hom_R(F_1,N)).
\end{equation}
Now the assertion follows easily from   (\ref{gr}.1) and (\ref{gr}.2).
\end{proof}


\begin{chunk}\label{SL}{\bf Serre's condition and torsion submodule.} If $M\in\md(R)$ and $n\geq 0$ is an integer, then $M$ is said to satisfy \emph{Serre's condition} $(S_n)$ provided that $\depth_{R_{\fp}}(M_\fp)\geq\min\{n,\Ht\fp\}$ for all $\fp\in\Spec R$ (recall that, by convention, $\depth_R(0)=\infty$.)

If $N\in\Md(R)$, then the \emph{torsion submodule} of $N$, denoted by $\T(N)$, is the kernel of the natural homomorphism $N\to Q(R)\otimes_RN$ where $Q(R)$ is the total quotient ring of $R$. The module $N$ is said to be \emph{torsion-free} if $\T(N)=0$, and  \emph{torsion} if $\T(N)=N$. Note that, if $R$ is unmixed, i.e., all associated prime ideals of $R$ are minimal, e.g., $R$ is reduced, and $N\in \md R$, then $\T(N)=0$ if and only if $N$ satisfies $(S_1)$. 

If $M\in\Md(R)$ and $n\geq 1$, then $M$ is called $n$-\emph{torsion-free} if $\Ext^i_R(\Tr M,R)=0$ for all $i=1, \ldots, n$.

\end{chunk}

In the following we investigate the relation between Serre's condition and the vanishing of local cohomology with respect to specialization-closed subsets of $\Spec R$.

\begin{prop}\label{Serre} Let $M\in\Md(R)$ and let $N\in\md(R)$. 
\begin{enumerate}[\rm(i)]
\item If $R$ is unmixed and $\mz=\{\fp\in\Spec(R) \mid \Ht\fp \geq 1\}$, then it follows that $\Gamma_{\mz}(M)=\T(M)$.
\item Let $\mz=\{\fp\in\Spec R\mid\Ht\fp\geq n\}$ for some integer $n\geq 1$. Then $N$ satisfies $(S_n)$ if and only if $\hh^i_{\mz}(N)=0$ for all $i<n$ and $N_\fp$ is a maximal Cohen-Macaulay $R_\fp$-module for all $\fp\in\Spec R\setminus\mz$. 
 \item Assume $R$ is local and Cohen-Macaulay, $\mz$ is a specialization-closed subset of $\Spec R$, and $n\geq 1$ is an integer. Assume further that $\Supp_R(\Ext^i_R(\Tr N,R))\subseteq\mz$ for $1\leq i\leq n$ (e.g., $N_\fp$ is totally reflexive for all $\fp\in\Supp_R(N)\setminus\mz$.) If $\gr_R(\mz,N)\geq n$, then $N$ is $n$-torsion-free. 
\end{enumerate}	
\end{prop}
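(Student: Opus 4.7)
For part (i), the unmixed hypothesis gives $\Ass(R) = \Min(R)$, so the total quotient ring is $Q(R) = \prod_{\fp \in \Min R} R_\fp$. An element $m \in M$ lies in $\T(M)$ iff it vanishes in each height-zero localization $M_\fp$, iff $\Supp(Rm) \subseteq \{\fp : \Ht\fp \geq 1\} = \mz$, iff $m \in \Gamma_\mz(M)$. For part (ii), Proposition \ref{grz} identifies $\gr_R(\mz, N)$ with $\inf\{\depth N_\fp : \fp \in \mz\}$; condition $(S_n)$ then yields $\depth N_\fp \geq n$ on $\mz$ and $\depth N_\fp \geq \Ht\fp = \dim R_\fp$ off $\mz$, forcing $N_\fp$ to be maximal Cohen--Macaulay there, and the converse reverses these inequalities.

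For part (iii), the plan is to induct on $n$. The base case $n = 1$ is immediate from (\ref{a1}.3), which embeds $\Ext^1_R(\Tr N, R)$ into $N$; the support hypothesis places this submodule inside $\Gamma_\mz(N)$, which vanishes by $\gr_R(\mz, N) \geq 1$. For the inductive step, assume the result for $n - 1$, so $\Ext^i_R(\Tr N, R) = 0$ for $1 \leq i \leq n - 1$, and apply (\ref{a1}.6) with $M = \Tr N$ and $i = n$; the identity $\Omega^{-k} N \approx \Tr \Omega^k \Tr N$ then yields an exact sequence
$$0 \to K \to W \to X \to 0,$$
where $K = \Ext^n_R(\Tr N, R)$, $W = \Tr \Omega^{n-1} \Tr N$ is stably isomorphic to the cosyzygy $\Omega^{-(n-1)} N$, and $X \approx \Omega \Omega^{-n} N$.

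To finish, suppose for contradiction $K \neq 0$ and choose a minimal prime $\fp$ of $\Supp K \subseteq \mz$; then $K_\fp$ has finite length over $R_\fp$, and its nonvanishing forces $N_\fp \neq 0$ (since otherwise $\Tr N$ is stably zero at $\fp$). Proposition \ref{grz} gives $\depth N_\fp \geq n$, and the Cohen--Macaulayness of $R$ yields $\depth R_\fp = \Ht\fp \geq n$. The inductive hypothesis combined with (\ref{a1}.7) makes $\Omega^{-k} N$ torsion-free for $0 \leq k \leq n - 2$ (since $\Ext^1_R(\Tr \Omega^{-k} N, R) \cong \Ext^{k+1}_R(\Tr N, R)$), so the cosyzygy sequences $0 \to \Omega^{-k} N \to P_{-k-1} \to \Omega^{-(k+1)} N \to 0$ are short exact; iterating the depth lemma over $R_\fp$ gives
$$\depth_{R_\fp}(\Omega^{-(n-1)} N)_\fp \geq \min\{\depth R_\fp, \depth N_\fp - (n-1)\} \geq 1,$$
and since $W_\fp$ differs from $(\Omega^{-(n-1)} N)_\fp$ only by free $R_\fp$-summands of depth $\geq 1$, the same bound holds for $\depth W_\fp$. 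But then $\Gamma_{\fp R_\fp}(W_\fp) = 0$, contradicting the embedding $K_\fp \hookrightarrow W_\fp$ of a nonzero finite-length submodule.

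The main obstacle is the stable-isomorphism gap between $W = \Tr \Omega^{n-1} \Tr N$ and the cosyzygy $\Omega^{-(n-1)} N$: free summands in $W$ can contribute to $\Gamma_\mz(W)$ at height-zero primes of $\mz$ where $N$ vanishes, so a direct ``$\Gamma_\mz(W) = 0$'' argument fails. Descending to a minimal prime of $\Supp K$ sidesteps this by converting the global vanishing question into a local finite-length versus positive-depth contradiction at a prime where $N$ is necessarily nonzero.
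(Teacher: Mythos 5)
Your proof is correct and, for part (iii), follows essentially the same route as the paper: both embed $\Ext^n_R(\Tr N,R)$ into the $(n-1)$-st cosyzygy via (\ref{a1}.6), localize at a prime of its support (which lies in $\mz$), and derive a depth contradiction from $\gr_R(\mz,N)\geq n$ using Proposition \ref{grz} and (\ref{a1}.7), with parts (i) and (ii) handled exactly as in the paper. The differences are only cosmetic: you induct on $n$, take a minimal prime of the support, and bound the depth of the cosyzygies from below with the depth lemma (handling the free-summand ambiguity explicitly), whereas the paper takes the least nonvanishing index $t$ and an associated prime and syzygies back down to compute $\depth_{R_\fp}(N_\fp)=t-1$ exactly.
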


\begin{proof} Note that part (i) follows easily form the definition, and part (ii) follows from Proposition \ref{grz}. Hence we proceed to prove part (iii). 

Set $t=\inf\{i\geq 1 \mid\Ext^i_R(\Tr N,R)\neq0\}$ and let $\fp\in\Ass_R(\Ext^t_R(\Tr N,R))$. Assume contrarily that $t\leq n$. Hence by our assumption $\fp\in\mz$. As $\Ext^i_R(\Tr N,R)_\fp=0$ for all $0<i<t$, we have by (\ref{a1}.7) that
$\Omega^{-(i-1)}_{R_\fp}N_\fp\approx\Omega_{R_\fp}\Omega^{-i}_{R_\fp}N_\fp$ for all $0<i<t$. Hence, we obtain the following
\begin{equation}\tag{\ref{Serre}.1}
N_\fp\approx\Omega^{t-1}_{R_\fp}\Omega^{-(t-1)}_{R_\fp}N_\fp.
\end{equation}
It follows from the inclusion $\Ext^t_R(\Tr N,R)\hookrightarrow\Omega^{-(t-1)}N$ (see (\ref{a1}.6)) that $\fp\in\Ass_R(\Omega^{-(t-1)}N)$. Now it is easy to see that $\depth_{R_{\fp}}(\Omega^{t-1}_{R_\fp}\Omega^{-(t-1)}_{R_\fp}N_\fp)=t-1$ and so by (\ref{Serre}.1) $\depth_{R_{\fp}}(N_\fp)=t-1$. This is a contradiction because Proposition \ref{grz} implies that $t\leq n\leq\gr_R(\mz,N)\leq\depth_{R_\fp}(N_\fp)$.
\end{proof}	


\begin{chunk}\label{Tate}{\bf Tate (co)homology.} We call a (homologically indexed) complex \emph{acyclic} if it has zero homology. An acyclic complex $T$ of free $R$-modules is called \emph{totally
acyclic} if the dual complex $\Hom_R(T,R)$ is also acyclic. For $M\in\md(R)$, it follows that $M$ is totally reflexive if and only if there is a totally acyclic complex $T$ with $M\cong\coker(T_1\to T_0)$. 

A \emph{complete resolution} of $M\in \md(R)$ is a diagram $T\overset{\tau}{\to}P\overset{\simeq}{\to} M$, where $P\overset{\simeq}{\to}M$ is a projective resolution, $T$ is a totally acyclic complex of free $R$-modules, and $\tau_i$ is an isomorphism for all $i\gg 0$.  It is known that $M$ has finite Gorenstein dimension if and only if it has a complete resolution \cite{AvM}.

Suppose $M\in \md(R)$ is equipped with a complete resolution $T\to P\to M$. For each $N\in \Md(R)$ and for each $i\in\ZZ$, the \emph{Tate (co)homology} of $M$ and $N$ is defined as:
$$\widehat{\Tor}^R_i(M,N)=\hh_i(T\otimes_RN)\ \ \text{ and } \ \ \widehat{\Ext}^i_R(M,N)=\hh^i(\Hom_R(T,N)).$$

In the following we catalog some basic properties of Tate (co)homology; for details, please see \cite{AvBu,AvM,CJ}.

\begin{thm}\label{Tate hom} Let $M \in \md(R)$, $N \in \Md(R)$, and assume $\G-dim_R(M)<\infty$. 
\begin{enumerate}[\rm(i)]	
\item{If $\pd_R(M)<\infty$, then $\widehat{\Tor}_i^R(M,N)=0=\widehat{\Ext}^i_R(M,N)$ for all $i\in\ZZ$.}
\item{If $\G-dim_R(M)=0$, i.e., if $M$ is totally reflexive, then $\widehat{\Tor}_i^R(M,N)\cong\widehat{\Ext}^{-i-1}_R(M^*,N)$ for all $i\in\ZZ$.}
\item{$\widehat{\Tor}^R_{i+n}(M,N)\cong\widehat{\Tor}^R_i(\Omega^n M,N)$ and $\widehat{\Ext}^{i+n}_R(M,N)\cong\widehat{\Ext}^i_R(\Omega^n M,N)$ for all $i\in\ZZ$ and for all $n\geq0$.}
\item {$\widehat{\Tor}^R_i(M,N)\cong\Tor_i^R(M,N)$ and $\widehat{\Ext}^i_R(M,N)\cong\Ext^i_R(M,N)$ for all $i>\G-dim_R(M).$
	}
\end{enumerate}	
\end{thm}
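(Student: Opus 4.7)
The statement is a compendium of four standard properties of Tate homology and cohomology, each of which I would derive by directly manipulating complete resolutions rather than by invoking any deep machinery. The plan is to fix, for the finite-Gorenstein-dimension module $M$, a complete resolution $T \xrightarrow{\tau} P \xrightarrow{\simeq} M$, and to inspect how $T$ transforms under truncation, shifting, and $R$-dualization; the four conclusions then fall out by applying $-\otimes_R N$ or $\Hom_R(-,N)$ and reading off homology.

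For (i), suppose $\pd_R(M)=d<\infty$. One may then choose $P$ so that $P_i=0$ for $i>d$; since $\tau_i$ is an isomorphism for $i\gg0$ by definition of a complete resolution, this forces $T_i=0$ for all $i\gg 0$. A totally acyclic complex of finitely generated free modules that vanishes in high degree is contractible (one splits off summands from the top), so one may in fact take $T=0$ as a complete resolution of $M$. Applying $-\otimes_R N$ or $\Hom_R(-,N)$ to a contractible complex yields a contractible complex, so every $\widehat{\Tor}^R_i(M,N)$ and $\widehat{\Ext}^i_R(M,N)$ vanishes.

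For (iii), if $T\to P\to M$ is a complete resolution, then the shifted complex $T[n]$ (with $T[n]_i=T_{i-n}$) is again totally acyclic, and together with the shifted truncation of $P$ it furnishes a complete resolution of $\Omega^n M$; taking homology of $T[n]\otimes_R N$ and $\Hom_R(T[n],N)$ produces the two shift isomorphisms. Part (iv) is then immediate from the definition: when $i>\G-dim_R(M)$ the map $\tau_i\colon T_i\to P_i$ can be taken to be an isomorphism (this is ultimately why $\G-dim_R(M)$ controls the transition), so $T$ agrees with $P$ in that range, and Tate (co)homology collapses to the usual one.

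The only part requiring genuine care, and where I expect the real bookkeeping, is (ii). Here I would use that dualizing the totally acyclic complex $T$ against $R$ yields another totally acyclic complex $T^{*}=\Hom_R(T,R)$ of finitely generated free modules, but with indexing reversed. When $M$ is totally reflexive, $M\cong\coker(T_1\to T_0)$ gives $M^{*}\cong\ker(T_0^{*}\to T_1^{*})$, and a degree-tracking argument shows that a suitable reindexing of $T^{*}$ serves as a complete resolution of $M^{*}$ with an inherent shift by $-1$. The desired isomorphism $\widehat{\Tor}^R_i(M,N)\cong\widehat{\Ext}^{-i-1}_R(M^{*},N)$ then follows from the natural adjunction $T\otimes_R N\cong\Hom_R(T^{*},N)$, valid because each $T_j$ is finitely generated free, once the index shift is accounted for. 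Since all of this is standard, for the actual write-up I would simply cite the relevant passages of \cite{AvBu}, \cite{AvM}, and \cite{CJ} rather than reproduce the verifications in full.
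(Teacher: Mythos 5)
The paper never proves this statement: Theorem \ref{Tate hom} is presented as a catalog of known facts, with the reader referred to \cite{AvBu,AvM,CJ}, so there is no internal argument to compare yours against; your plan of sketching the standard complete-resolution manipulations and then citing those sources is exactly what the paper itself does. Your sketches are correct where they matter: in (i) the splitting from the top is justified because the cokernels of a totally acyclic complex are totally reflexive, hence have no $\Ext^1$ against free modules; in (ii) the reindexed dual $\Hom_R(T,R)$ is a complete resolution of $M^*$ and the isomorphism $T_j\otimes_RN\cong\Hom_R(\Hom_R(T_j,R),N)$ for finitely generated free $T_j$ yields precisely the shift $-i-1$ (the bookkeeping checks out); and in (iii)--(iv) the comparison works because homology in degree $i$ strictly above the Gorenstein dimension only involves degrees where $\tau$ may be taken to be the identity. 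So the proposal is sound, and citing \cite{AvBu,AvM,CJ} for the details, as the paper does, is the appropriate write-up.
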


Next is a property for the vanishing of Tate homology for modules of finite complete intersection dimension:

\begin{thm}\label{Tate2}(\cite[4.9]{AvBu}) Let $M\in\md(R)$ and let $N\in\Md(R)$. Assume $\CI_R(M)<\infty$. Then,
$$\widehat{\Tor}_i^R(M,N)=0 \ \text{ for all } i\in\ZZ \Longleftrightarrow
\widehat{\Tor}_i^R(M,N)=0\ \text{ for all } i\ll0 \Longleftrightarrow
\Tor_i^R(M,N)=0 \ \text{ for all } i\gg0.$$
\end{thm}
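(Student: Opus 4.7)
The implication $(1) \Rightarrow (2)$ is trivial, and $(1) \Rightarrow (3)$ follows from Theorem \ref{Tate hom}(iv) combined with the inequality $\G-dim_R(M) \leq \CI_R(M) < \infty$, which forces the natural map $\widehat{\Tor}_i^R(M,N) \to \Tor_i^R(M,N)$ to be an isomorphism for all $i > \G-dim_R(M)$. The real content is therefore to establish the reverse implications $(2) \Rightarrow (1)$ and $(3) \Rightarrow (1)$, i.e., that vanishing of the doubly-graded family $\{\widehat{\Tor}_i^R(M,N)\}_{i \in \ZZ}$ on a single tail --- either $i \ll 0$ or $i \gg 0$ --- propagates to every $i \in \ZZ$. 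The plan is to give $\bigoplus_{i \in \ZZ} \widehat{\Tor}_i^R(M,N)$ the structure of a finitely generated graded module over a polynomial ring of cohomology operators and then exploit the resulting rigidity.

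To access this structure I would first reduce to a complete intersection base ring. Finiteness of $\CI_R(M)$ supplies a quasi-deformation $R \to A \twoheadleftarrow Q$ in which $A = Q/(f_1, \ldots, f_c)$ for a $Q$-regular sequence $f_1, \ldots, f_c$ and $\pd_Q(M \otimes_R A) < \infty$. Since $R \to A$ is faithfully flat, the natural base-change maps yield isomorphisms $\widehat{\Tor}_i^R(M,N) \otimes_R A \cong \widehat{\Tor}_i^A(M \otimes_R A, N \otimes_R A)$ and analogously for ordinary Tor, so the three conditions transfer losslessly; replacing $R, M, N$ by their base changes, we may assume $R = A = Q/(f_1, \ldots, f_c)$ is a complete intersection and $\pd_Q M < \infty$. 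In this setting, the Shamash/Eisenbud machinery produces a complete resolution of $M$ over $A$ carrying chain-level cohomology operators $\chi_1, \ldots, \chi_c$, and a Gulliksen-type finite generation theorem makes $\bigoplus_{i \in \ZZ} \widehat{\Tor}_i^A(M,N)$ a finitely generated graded module over the polynomial ring $S = A[\chi_1, \ldots, \chi_c]$, with each $\chi_j$ shifting the grading by a fixed nonzero amount.

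The conclusion is then formal. For $|i|$ sufficiently large, each $\chi_j$ acts by isomorphism on the corresponding graded pieces (eventual $2$-periodicity of the Tate resolution in the complete intersection case), so if $\widehat{\Tor}_i^A(M,N)$ vanishes for $i \gg 0$ --- which is condition $(3)$ after using Theorem \ref{Tate hom}(iv) --- the $\chi$-action propagates the vanishing through the stable range and, combined with finite $S$-generation, to all of $\ZZ$; the argument for $(2)$ is symmetric. Descent along $R \to A$ then recovers $(1)$ over $R$. The principal obstacle is the setup rather than the conclusion: constructing the chain-level cohomology operators via Shamash's complete resolution and establishing finite generation of $\widehat{\Tor}_*^A(M,N)$ over $S$ is the technical heart, while the rigidity statement becomes a mild consequence of graded module theory once that structure is in hand.
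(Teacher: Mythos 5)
You should first note what you are being compared against: the paper does not prove Theorem \ref{Tate2} at all --- it is recalled from Avramov--Buchweitz \cite[4.9]{AvBu} --- so the judgement is about whether your argument itself holds up. Your easy implications and the reduction along a quasi-deformation $R\to A\twoheadleftarrow Q$ are fine (complete resolutions consist of finitely generated free modules, so they base change along the faithfully flat map $R\to A$, and vanishing descends by faithful flatness). The gap is in the propagation mechanism. First, the claim that $\bigoplus_{i\in\ZZ}\widehat{\Tor}_i^A(M,N)$ is a finitely generated graded module over $S=A[\chi_1,\dots,\chi_c]$ is false, not merely unproved: the operators all shift the homological degree by the same fixed amount, so a finitely generated graded $S$-module is bounded in one direction, whereas already for a nonfree maximal Cohen--Macaulay module $M$ over a hypersurface and $N=k$ one has $\widehat{\Tor}_i^A(M,k)\neq 0$ for \emph{every} $i\in\ZZ$. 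Moreover, Gulliksen-type finiteness theorems require $N$ to be finitely generated, while the statement allows an arbitrary $N\in\Md(R)$. Second, the assertion that each $\chi_j$ acts bijectively on $\widehat{\Tor}_i$ for $|i|\gg 0$ fails whenever $\cx_R(M)\geq 2$: testing against $N=k$ and using $\widehat{\Tor}_i\cong\Tor_i$ for $i\gg 0$ (Theorem \ref{Tate hom}(iv)), eventual bijectivity of every $\chi_j$ would force the Betti numbers of $M$ to be eventually constant, i.e.\ $\cx_R(M)\leq 1$. So the ``formal conclusion'' you invoke has nothing to stand on once the codimension (or complexity) is at least two.

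What does work --- and is essentially the route taken in the literature --- keeps your reduction but replaces finite generation by induction on $c$. Write $A=A'/(f_c)$ with $A'=Q/(f_1,\dots,f_{c-1})$; the change-of-rings (Gulliksen-type) long exact sequence for stable homology exhibits the single operator $\chi_c$ as a map $\widehat{\Tor}_{i+2}^A(M,N)\to\widehat{\Tor}_i^A(M,N)$ whose third terms are the groups $\widehat{\Tor}_{\ast}^{A'}(M,N)$. Vanishing of $\widehat{\Tor}^A$ on one tail forces vanishing of $\widehat{\Tor}^{A'}$ on a tail; the inductive hypothesis (base case $c=0$, where $\pd_Q(M\otimes_RA)<\infty$ and Tate homology vanishes identically by Theorem \ref{Tate hom}(i)) gives $\widehat{\Tor}^{A'}\equiv 0$, hence $\chi_c$ is bijective in \emph{all} degrees, hence $\widehat{\Tor}_{\ast}^A(M,N)$ is two-periodic, and a single vanishing tail then kills it in every degree; the equivalence with $\Tor_i^R(M,N)=0$ for $i\gg 0$ again comes from Theorem \ref{Tate hom}(iv). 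In short, your skeleton (quasi-deformation plus Eisenbud operators) is the right one, but the finiteness-plus-eventual-bijectivity step must be replaced by this periodicity/induction argument, which in particular is what accommodates arbitrary, possibly infinitely generated, $N$.
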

\end{chunk}


\section{Tor-rigidity and the vanishing of local cohomology}

In this section we prove one of the main results of this paper, namely Theorem \ref{t2}, and extend the following celebrated result of Auslander and Lichtenbaum:

\begin{thm}(Auslander and Lichtenbaum \cite{A2, L})\label{AuLi}
Let $R$ be a regular local ring and let $M, N\in\md(R)$. If $M\otimes_RN$ is nonzero and torsion-free, then $M$ and $N$ are torsion-free and Tor-independent. 	 
\end{thm}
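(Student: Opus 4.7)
The plan is to deduce Theorem \ref{AuLi} as a quick consequence of Theorem \ref{it2} together with the depth formula of Theorem \ref{t4}, by applying Theorem \ref{it2} to the specialization-closed subset $\mz=\{\fp\in\Spec R\mid \Ht\fp\geq 1\}$ with $n=0$, and then upgrading the resulting Tor-independence to torsion-freeness of each factor via a depth computation.

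First I would verify the three hypotheses of Theorem \ref{it2} for this choice of data. Since $R$ is a regular local domain with unique minimal prime $(0)$ and $R_{(0)}$ a field, both $M_{(0)}$ and $N_{(0)}$ are free, so the non-free loci $\NF(M)$ and $\NF(N)$ are contained in $\mz$, giving (i). Since $R$ is reduced, hence unmixed, Proposition \ref{Serre}(i) identifies $\Gamma_{\mz}(M\otimes_R N)$ with the torsion submodule $\T(M\otimes_R N)$, which vanishes by hypothesis; this yields (ii). Hypothesis (iii) is automatic at $n=0$. Hence Theorem \ref{it2} delivers $\Tor_j^R(M,N)=0$ for all $j\geq 1$.

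To upgrade this to torsion-freeness of $M$ and $N$, I would argue as follows. Because $M\otimes_R N$ is nonzero and torsion-free, the injection
\[
M\otimes_R N\hookrightarrow Q(R)\otimes_R(M\otimes_R N)\cong (Q(R)\otimes_R M)\otimes_{Q(R)}(Q(R)\otimes_R N)
\]
forces both $M$ and $N$ to have positive generic rank, whence $\Supp_R M=\Supp_R N=\Spec R$ since $R$ is a domain. Suppose, for contradiction, that $\T(M)\neq 0$, and pick $\fp\in\Ass_R M$ with $\fp\neq(0)$; then $\Ht\fp\geq 1$, $\depth_{R_\fp} M_\fp=0$, and $M_\fp$, $N_\fp$, $(M\otimes_R N)_\fp$ are all nonzero. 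Since $R_\fp$ is regular, $\CI_{R_\fp}(M_\fp)=\pd_{R_\fp}(M_\fp)<\infty$, and the Tor-independence localizes, so Theorem \ref{t4} provides the depth formula
\[
\depth_{R_\fp} M_\fp+\depth_{R_\fp} N_\fp=\depth R_\fp+\depth_{R_\fp}(M\otimes_R N)_\fp.
\]
By Proposition \ref{Serre}(i), $M\otimes_R N$ satisfies $(S_1)$, so $\depth_{R_\fp}(M\otimes_R N)_\fp\geq 1$. Substituting $\depth_{R_\fp} M_\fp=0$ yields $\depth_{R_\fp} N_\fp\geq \Ht\fp+1$, contradicting the trivial bound $\depth_{R_\fp} N_\fp\leq \dim R_\fp=\Ht\fp$. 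By symmetry the same argument precludes $\T(N)\neq 0$, completing the deduction.

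The main obstacle lies not in any of the verifications above but rather in Theorem \ref{it2} itself, whose rigidity conclusion is the genuine engine of the proof; once that result is in hand, the reduction via the specialization-closed subset of positive-height primes together with the depth formula is essentially formal.
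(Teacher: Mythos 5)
Your proposal is correct, and its first half is exactly the paper's route: the paper proves Theorem \ref{AuLi} by taking $\mz=\Spec(R)\setminus\{0\}$ (the positive-height primes, since $R$ is a domain), identifying $\Gamma_{\mz}$ with the torsion functor via Proposition \ref{Serre}(i), and feeding this into Proposition \ref{c4} (of which Theorem \ref{it2} is the special case you invoke) with $n=0$ to get Tor-independence. Where you diverge is in extracting torsion-freeness of $M$ and $N$ themselves: the paper gets this directly from Lemma \ref{l1}(a), which under $\gr_R(\mz,R)\geq 1$ yields $\hh^0_{\mz}(M)=\T(M)=0$ (and symmetrically for $N$, since every finitely generated module over a regular local ring is Tor-rigid), whereas you re-derive it by localizing the already-established Tor-independence at a hypothetical positive-height associated prime of $M$ and running the Araya--Yoshino depth formula (Theorem \ref{t4}) into the contradiction $\depth_{R_\fp}N_\fp\geq\Ht\fp+1>\dim R_\fp$. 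Both arguments are sound; the paper's is shorter because Lemma \ref{l1}(a) already packages the vanishing $\hh^n_{\mz}(M)=0$ as a byproduct of the rigidity machinery, while your version has the modest advantage of showing that torsion-freeness of the factors is a formal consequence of Tor-independence plus torsion-freeness of the product via the classical depth-formula argument, independent of the internal structure of Lemma \ref{l1}. Your hypothesis checks (non-free loci inside $\mz$ via freeness at the generic point, $\Supp M=\Supp N=\Spec R$ from nonvanishing of the generic rank, and $(S_1)$ localizing correctly) are all in order.
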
	

Besides the generalization we obtain in Theorem \ref{t2}, a consequence of our argument yields a new proof of Theorem \ref{AuLi}, which we record at the end of this section.

The proof of Theorem \ref{t2} requires substantial preparation; we proceed and prove several lemmas prior to giving a proof for Theorem \ref{t2}. 

The first item of the following lemma is to be compared with \cite[4.1]{bv}.
\begin{lem}\label{t1} Let $\mz\subset \Spec R$ be a specialization-closed subset, $N\in\Md(R)$ and let $M\in\md(R)$. Assume $t$ is an integer such that $2\leq t\leq\gr_R(\mz,N)$ 	
	and $\Supp_R(\Ext^i_R(M,N))\subseteq\mz$ for all $i=1, 
\ldots, t-1$. Then the following hold:
	\begin{enumerate}[\rm(i)]
		\item
		$\Ext^i_R(M,N)\cong\hh_{\mz}^{i+1}(\Hom_R(M,N))$ for all $i=1, \ldots, t-2$.
		\item There is an injection $\Ext^{t-1}_R(M,N)\hookrightarrow\hh_{\mz}^{t}(\Hom_R(M,N))$.
	\end{enumerate}
\end{lem}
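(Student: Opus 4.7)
The plan is an induction on $t \geq 2$ after reducing $M$ by one syzygy. Pick a short exact sequence $0\to K\to F\to M\to 0$ with $F$ finitely generated free; since $R$ is Noetherian, $K\in\md(R)$, and $\Ext^i_R(K,N)\cong\Ext^{i+1}_R(M,N)$ for every $i\geq 1$. Applying $\Hom_R(-,N)$ and factoring the resulting four-term exact sequence through the image $D$ of $\Hom_R(F,N)\to\Hom_R(K,N)$ gives
\[
(A):\ 0\to\Hom_R(M,N)\to\Hom_R(F,N)\to D\to 0,
\]
\[
(B):\ 0\to D\to\Hom_R(K,N)\to\Ext^1_R(M,N)\to 0.
\]

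Two observations drive the argument. First, $\Hom_R(F,N)$ is a finite direct sum of copies of $N$, so the grade hypothesis forces $\hh^j_{\mz}(\Hom_R(F,N))=0$ for all $j<t$; second, $\Ext^1_R(M,N)$ is $\mz$-torsion, so by Theorem \ref{LC}(iii) $\hh^j_{\mz}(\Ext^1_R(M,N))=0$ for all $j\geq 1$. Feeding these into the local cohomology long exact sequences attached to $(A)$ and $(B)$ produces
\[
\hh^j_{\mz}(D)\cong\hh^{j+1}_{\mz}(\Hom_R(M,N))\ \ (0\leq j\leq t-2),\quad \hh^{t-1}_{\mz}(D)\hookrightarrow\hh^t_{\mz}(\Hom_R(M,N))
\]
from $(A)$, and $\hh^j_{\mz}(D)\cong\hh^j_{\mz}(\Hom_R(K,N))$ for all $j\geq 2$ from $(B)$. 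Moreover, Lemma \ref{gr} yields $\gr_R(\mz,\Hom_R(K,N))\geq\min\{2,t\}=2$, so $\hh^0_{\mz}(\Hom_R(K,N))=0=\hh^1_{\mz}(\Hom_R(K,N))$, and the low-degree portion of the sequence from $(B)$ collapses to the identification $\Ext^1_R(M,N)\cong\hh^1_{\mz}(D)$.

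The base case $t=2$ is then immediate: (i) is vacuous, and (ii) follows from $\Ext^1_R(M,N)\cong\hh^1_{\mz}(D)\hookrightarrow\hh^2_{\mz}(\Hom_R(M,N))$. For the inductive step $t\geq 3$, I observe that $K$ satisfies the hypotheses with $t-1$ in place of $t$: $\Ext^i_R(K,N)\cong\Ext^{i+1}_R(M,N)$ is supported on $\mz$ for $1\leq i\leq(t-1)-1$, and $\gr_R(\mz,N)\geq t\geq t-1$. The inductive hypothesis applied to $K$ therefore yields $\Ext^i_R(K,N)\cong\hh^{i+1}_{\mz}(\Hom_R(K,N))$ for $1\leq i\leq t-3$ and an injection $\Ext^{t-2}_R(K,N)\hookrightarrow\hh^{t-1}_{\mz}(\Hom_R(K,N))$. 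Splicing these through the comparisons $\hh^{j+1}_{\mz}(\Hom_R(M,N))\cong\hh^j_{\mz}(D)\cong\hh^j_{\mz}(\Hom_R(K,N))$ for $2\leq j\leq t-2$ and the injection at $j=t-1$, together with the $i=1$ case already dispatched, delivers (i) and (ii) for $M$.

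The main obstacle is the careful bookkeeping of index ranges so that each segment of the long exact sequences produces an isomorphism in the interior and an injection only at the top: a slight miscount at either end collapses the isomorphism range or destroys the injection. The linchpin of the low-degree analysis is Lemma \ref{gr}, which supplies the grade-two bound on $\Hom_R(K,N)$ uniformly in $t$ and thereby underwrites both the base case and the $i=1$ case of the inductive step.
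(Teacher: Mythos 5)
Your proof is correct: the two sequences $(A)$ and $(B)$, the vanishing of $\hh^{j}_{\mz}(\Hom_R(F,N))$ for $j<t$, the $\mz$-torsionness of $\Ext^1_R(M,N)$, and the grade-two bound from Lemma \ref{gr} do exactly what you claim, and the index ranges check out at both ends (in particular at $t=3$, where part (i) is just the $i=1$ identification $\Ext^1_R(M,N)\cong\hh^1_{\mz}(D)\cong\hh^2_{\mz}(\Hom_R(M,N))$, which does not use the induction hypothesis).

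Your route is, however, organized differently from the paper's. The paper argues in one pass, with no induction: it dualizes an entire free resolution of $M$ into $N$, breaks the resulting complex into short exact sequences involving the cokernels $T_i$ and the images $X_i$, and chains the identifications $\Ext^i_R(M,N)\cong\Gamma_{\mz}(T_i)\cong\hh^{1}_{\mz}(X_{i-1})\cong\cdots\cong\hh^{i-1}_{\mz}(T_1)\cong\hh^{i+1}_{\mz}(\Hom_R(M,N))$, using only that each $\Hom_R(F_j,N)\cong N^{b_j}$ has grade at least $t$ with respect to $\mz$ and that the intermediate $\Ext$ modules are $\mz$-torsion; the low-degree vanishings it needs (e.g.\ $\Gamma_{\mz}(X_i)=0$) come for free from embeddings $X_i\hookrightarrow\Hom_R(F_{i+1},N)$, so Lemma \ref{gr} never enters. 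You instead shift by a single syzygy and induct on $t$; the price is that $\Hom_R(K,N)$, which is not the dual of a free module, appears in the low-degree analysis of $(B)$, and you pay it by invoking Lemma \ref{gr} to kill its $\hh^0_{\mz}$ and $\hh^1_{\mz}$. What your packaging buys is a shorter argument with only two short exact sequences to track and a transparent separation of where $\gr_R(\mz,N)\geq t$ versus the support hypothesis on each $\Ext^i_R(M,N)$ is used; what the paper's buys is independence from Lemma \ref{gr} inside this proof and an explicit chain of isomorphisms along the whole dualized resolution. Both are legitimate, and since Lemma \ref{gr} is proved earlier in the paper, your reliance on it creates no circularity.
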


\begin{proof}
	We prove both part (i) and part (ii) simultaneously. 

Recall that $M$ admits a free resolution
	$$\cdots\longrightarrow F_{1}\overset{\partial_{1}}{\longrightarrow} F_0\overset{\partial_0}{\longrightarrow}M\longrightarrow0,$$
	where $F_i$ is a finitely generated free $R$-module for all $i\geq0$. Applying the functor $(-)^{\triangledown}:=\Hom_R(-,N)$ to the above resolution we obtain the following complex:
	$$0\longrightarrow M^{\triangledown}\overset{\partial^{\triangledown}_0}{\longrightarrow}(F_0)^{\triangledown}\overset{\partial^{\triangledown}_1}{\longrightarrow}\cdots
	\overset{\partial^{\triangledown}_{i-1}}{\longrightarrow}(F_{i-1})^{\triangledown}\overset{\partial^{\triangledown}_i}
	{\longrightarrow}(F_{i})^{\triangledown}\overset{\partial^{\triangledown}_{i+1}}{\longrightarrow}(F_{i+1})^{\triangledown}\longrightarrow\cdots,$$
	which induces the following exact sequences:
	$$0\longrightarrow\Ext^i_R(M,N)\longrightarrow T_i\longrightarrow X_i\longrightarrow0,$$
	$$0\longrightarrow X_i\longrightarrow (F_{i+1})^{\triangledown}\longrightarrow T_{i+1}\longrightarrow0,$$
	where $T_i:=\frac{(F_i)^{\triangledown}}{\im(\partial_i^{\triangledown})}$ and $X_i:=\im(\partial_{i+1}^{\triangledown})$ for all $i>0$. For each $i\geq0$, we note that $\gr_R(\mz,(F_i)^{\triangledown})=\gr_R(\mz,N)\geq t$. By our assumption, $\Ext^i_R(M,N)$ is torsion with respect to $\mz$ for all $0<i<t$. Then
	$\Ext^i_R(M,N)=\Gamma_{\mz}(\Ext^i_R(M,N))$ and $\hh^j_{\mz}(\Ext^i_R(M,N))=0$ for all $j>0$ and $0<i<t$ (see Theorem \ref{LC}(iii)).
	Applying the functor $\Gamma_{\mz}(-)$ to the above exact sequences, we get the isomorphisms
	\begin{equation}\tag{\ref{t1}.1}
	\Ext^i_R(M,N)=\Gamma_{\mz}(\Ext^i_R(M,N))\cong\Gamma_{\mz}(T_i) \text{ for all } 0<i<t,
	\end{equation}
	\begin{equation}\tag{\ref{t1}.2}
	\hh_{\mz}^j(T_i)\cong\hh_{\mz}^{j}(X_i) \text{ for all } 0<i<t\textit{ and }j>0,
	\end{equation}
	\begin{equation}\tag{\ref{t1}.3}
	\hh_{\mz}^j(T_{i+1})\cong\hh_{\mz}^{j+1}(X_i) \text{ for all } 0\leq j<n-1\textit{ and }i\geq0.
	\end{equation}
	It follows from (\ref{t1}.1), (\ref{t1}.2) and (\ref{t1}.3) that
	\begin{align*}\tag{\ref{t1}.4}
	\Ext^i_R(M,N)&\cong\Gamma_{\mz}(T_i)\cong\hh_{\mz}^{1}(X_{i-1})\cong\hh_{\mz}^{1}(T_{i-1})\\
	\cong\hh_{\mz}^{2}(X_{i-2})
	&\cong\hh_{\mz}^{2}(T_{i-2})
	\cong\cdots\cong\hh_{\mz}^{i-1}(T_1),
	\end{align*}
	for each $0<i<t$.
	Applying the functor $\Gamma_{\mz}(-)$ to the exact sequences,  $0\rightarrow M^{\triangledown}\rightarrow (P_0)^{\triangledown}\rightarrow X_0\rightarrow0$ and $0\rightarrow X_0\rightarrow(P_1)^{\triangledown}\rightarrow T_1\rightarrow0$, we obtain the isomorphism
	\begin{equation}\tag{\ref{t1}.5}
	\hh_{\mz}^{i-1}(T_1)\cong\hh_{\mz}^{i+1}(M^{\triangledown}) \text{ for all } 0<i<n-1.
	\end{equation}
	Also, there is an injection
	\begin{equation}\tag{\ref{t1}.6}
	0\longrightarrow\hh_{\mz}^{t-1}(X_0)\longrightarrow\hh_{\mz}^{t}(M^\triangledown).
	\end{equation}
	The first assertion follows from (\ref{t1}.4) and (\ref{t1}.5). Note that by (\ref{t1}.3) and (\ref{t1}.4)
	\begin{align*}\tag{\ref{t1}.7}
	\Ext^{t-1}_R(M,N)\cong\hh_{\mz}^{t-2}(T_1)
	\cong\hh_{\mz}^{t-1}(X_0).
	\end{align*}
	The second assertion follows from (\ref{t1}.6) and (\ref{t1}.7).
\end{proof}

\begin{rmk} The injection $\Ext^{t-1}_R(M,N)\hookrightarrow\hh_{\mz}^{t}(\Hom_R(M,N))$ from Lemma \ref{t1} is not  necessarily an isomorphism: to  see this, we consider $R=k[[x,y]]$, $\fm=(x,y)$, $M=N=R$ and $\mz=\V(\fm)$. Then it follows $t=2$ and $0=\Ext^{1}_R(R,R)=\Ext^{t-1}_R(M,N)\hookrightarrow\hh_{\fm}^{t}(\Hom_R(M,N))=\hh_{\fm}^{2}(R)\neq 0$. \pushQED{\qed} 
\qedhere
\popQED	
\end{rmk}

For $M\in \md(R)$, we denote by $\NF(M)$ the \emph{non-free locus} of $M$. It is well-known that $\NF(M)$ is a closed subset of $\Spec R$. In passing, we record an immediate consequence of Lemma \ref{t1}:

\begin{cor} Let $M\in\md(R)$, $N\in\Md(R)$, and let $\fa$ be an ideal of $R$. Assume $\NF(M)\subseteq\V(\fa)$, e.g., $\fa$ is a defining ideal of $\NF(M)$. If $n$ is an integer such that $2\leq n\leq\gr_R(\fa,N)$, then the following hold:
	\begin{enumerate}[\rm(i)]
		\item$\Ext^i_R(M,N)\simeq\hh_{\fa}^{i+1}(\Hom_R(M,N))$ for all $i=1, \ldots, n-2$.
		\item There is an injection $\Ext^{n-1}_R(M,N)\hookrightarrow\hh_{\fa}^{n}(\Hom_R(M,N))$.
	\end{enumerate}
\end{cor}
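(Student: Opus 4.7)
The statement is the specialization of Lemma \ref{t1} to the case where the specialization-closed subset is the closed subset $\mz=\V(\fa)$. So my plan is to reduce directly to Lemma \ref{t1} by taking $\mz=\V(\fa)$ and verifying that its hypotheses are satisfied with $t=n$.

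First, note that $\mz=\V(\fa)$ is closed in the Zariski topology, hence specialization-closed, and by the convention $\hh^{i}_{\V(\fa)}(-)=\hh^{i}_{\fa}(-)$ and $\gr_R(\V(\fa),N)=\gr_R(\fa,N)\geq n\geq 2$, so the numerical hypothesis of Lemma \ref{t1} is met. The only remaining input is the support condition
\[
\Supp_R\bigl(\Ext^{i}_R(M,N)\bigr)\subseteq\V(\fa)\qquad\text{for } 1\leq i\leq n-1,
\]
which in fact I claim holds for every $i\geq 1$. The key step is to use the hypothesis $\NF(M)\subseteq\V(\fa)$: for any prime $\fp\notin\V(\fa)$, we have $\fp\notin\NF(M)$, so $M_\fp$ is free over $R_\fp$, and consequently
\[
\Ext^{i}_R(M,N)_\fp\;\cong\;\Ext^{i}_{R_\fp}(M_\fp,N_\fp)\;=\;0\qquad\text{for all } i\geq 1.
\]
This shows $\Supp_R(\Ext^{i}_R(M,N))\subseteq\V(\fa)$ for all $i\geq 1$, which is exactly the support hypothesis of Lemma \ref{t1} (indeed more than enough, as we only need it for $i\leq n-1$).

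With the two hypotheses of Lemma \ref{t1} verified for $\mz=\V(\fa)$ and $t=n$, the conclusions of Lemma \ref{t1} translate directly to the two statements of the corollary: the isomorphisms $\Ext^{i}_R(M,N)\cong\hh^{i+1}_{\mz}(\Hom_R(M,N))$ for $1\leq i\leq n-2$ yield (i), and the injection $\Ext^{n-1}_R(M,N)\hookrightarrow\hh^{n}_{\mz}(\Hom_R(M,N))$ yields (ii). No step is really an obstacle here; the only mildly subtle point is the small bookkeeping check that $\NF(M)\subseteq\V(\fa)$ forces all higher Ext modules to be supported in $\V(\fa)$, which follows from flatness of localization and the characterization of $\NF(M)$ in terms of local freeness.
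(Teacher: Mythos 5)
Your proposal is correct and matches the paper's intent exactly: the paper records this corollary as an immediate consequence of Lemma \ref{t1}, obtained precisely by taking $\mz=\V(\fa)$ and using $\NF(M)\subseteq\V(\fa)$ to see that each $\Ext^i_R(M,N)$ ($i\geq 1$) is supported in $\V(\fa)$, since $\Ext$ of a finitely generated module commutes with localization and $M_\fp$ is free for $\fp\notin\V(\fa)$. Your verification of the hypotheses (including $\gr_R(\V(\fa),N)=\gr_R(\fa,N)\geq n$) is exactly the bookkeeping the paper leaves implicit.
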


Next we use Lemma \ref{t1} and obtain:

\begin{lem}\label{l2} Let $\mz\subset \Spec R$ be a specialization-closed subset, $M\in\md(R)$ and let $N\in\Md(R)$. Assume $n$ is an integer such that $0\leq n\leq\gr_R(\mz,N)$ and $\Supp_R(\Ext^i_R(\Tr M,N))\subseteq\mz$ for all $i=1, \ldots, n+1$. Then the following hold:
\begin{enumerate}[\rm(i)]
	\item $\hh^{i}_{\mz}(M\otimes_RN)\cong\Ext^{i+1}_R(\Tr M,N)$ for all $i=0, \ldots, n-1$.
	\item There is an injection
	$\Ext^{n+1}_R(\Tr M,N)\hookrightarrow\hh^{n}_{\mz}(M\otimes_RN)$.
\end{enumerate}
\end{lem}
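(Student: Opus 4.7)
My plan is to exploit the four-term Auslander--Bridger sequence (\ref{a1}.2) by splitting it, through the image $K := \im\bigl(M \otimes_R N \to \Hom_R(M^*,N)\bigr)$, into two short exact sequences
\begin{gather*}
0 \to \Ext^1_R(\Tr M, N) \to M \otimes_R N \to K \to 0, \\
0 \to K \to \Hom_R(M^*,N) \to \Ext^2_R(\Tr M, N) \to 0.
\end{gather*}
Under the support hypothesis, $\Ext^1_R(\Tr M, N)$ and (once $n \geq 1$) $\Ext^2_R(\Tr M, N)$ are $\mz$-torsion, so their higher local cohomology vanishes by Theorem \ref{LC}(iii). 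Consequently, the two associated long exact sequences will collapse the comparison of $\hh^j_{\mz}(M \otimes_R N)$ to that of $\hh^j_{\mz}(\Hom_R(M^*,N))$, up to low-degree corrections recorded precisely by $\Ext^1_R(\Tr M, N)$ and $\Ext^2_R(\Tr M, N)$ themselves.

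The heart of the argument is then an application of Lemma \ref{t1} to the pair $(M^*, N)$. Using the stable equivalence $M^* \approx \Omega^2 \Tr M$ -- or, explicitly, the exact sequence $0 \to M^* \to F_0^* \to F_1^* \to \Tr M \to 0$ with $F_0, F_1$ finitely generated free -- a standard double dimension shift through the two projectives yields $\Ext^i_R(M^*, N) \cong \Ext^{i+2}_R(\Tr M, N)$ for every $i \geq 1$. The support hypothesis on the $\Ext^i_R(\Tr M, N)$ therefore translates to $\Supp_R \Ext^i_R(M^*,N) \subseteq \mz$ for $i = 1,\ldots, n-1$, which is exactly what Lemma \ref{t1} requires with $t = n$ (assuming $n \geq 2$). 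That lemma then produces $\hh^{j}_{\mz}(\Hom_R(M^*,N)) \cong \Ext^{j+1}_R(\Tr M, N)$ for $j = 2, \ldots, n-1$, together with an injection $\Ext^{n+1}_R(\Tr M, N) \hookrightarrow \hh^n_{\mz}(\Hom_R(M^*,N))$.

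To assemble these pieces, I would invoke Lemma \ref{gr}, which (granted $\gr_R(\mz, N) \geq 2$) forces $\Gamma_{\mz}(\Hom_R(M^*,N)) = 0 = \hh^1_{\mz}(\Hom_R(M^*,N))$. Feeding this into the long exact sequence from the second short sequence above identifies $\Gamma_{\mz}(K) = 0$ and $\hh^1_{\mz}(K) \cong \Ext^2_R(\Tr M, N)$, while the first short sequence then yields $\Gamma_{\mz}(M \otimes_R N) \cong \Ext^1_R(\Tr M, N)$ and $\hh^j_{\mz}(M \otimes_R N) \cong \hh^j_{\mz}(K)$ for all $j \geq 1$. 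Splicing the three identifications together proves (i) for $i = 0, \ldots, n-1$ and delivers (ii). The boundary cases $n = 0$ (where (i) is vacuous and (ii) is merely the inclusion from (\ref{a1}.2), which factors through $\Gamma_{\mz}(M \otimes_R N)$ since its source is $\mz$-torsion) and $n = 1$ should be handled directly, as Lemma \ref{t1} requires $t \geq 2$; in the latter case only Lemma \ref{gr} in its weaker form $\gr \geq 1$ is available, which suffices to kill $\Gamma_{\mz}(\Hom_R(M^*,N))$ and close out the low-degree chase. The main obstacle is not conceptual but the careful bookkeeping of grade, support, and index conditions needed to make Lemmas \ref{t1} and \ref{gr} apply uniformly; once the reduction to $(M^*, N)$ and the shift $\Ext^i_R(M^*, N) \cong \Ext^{i+2}_R(\Tr M, N)$ are in place, the remainder is a diagram chase.
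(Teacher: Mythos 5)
Your proposal is correct and follows essentially the same route as the paper's proof: split the four-term sequence (\ref{a1}.2) into two short exact sequences, kill the higher local cohomology of the $\mz$-torsion Ext modules via Theorem \ref{LC}(iii), use Lemma \ref{gr} to control $\Gamma_{\mz}$ and $\hh^1_{\mz}$ of $\Hom_R(M^*,N)$, and then transfer the problem to the pair $(M^*,N)$ via $M^*\approx\Omega^2\Tr M$ and Lemma \ref{t1}, treating $n=0$ and $n=1$ directly. The index bookkeeping you outline (the shift $\Ext^i_R(M^*,N)\cong\Ext^{i+2}_R(\Tr M,N)$ for $i\geq 1$ and the application of Lemma \ref{t1} with $t=n$) matches the paper's argument.
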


\begin{proof}
Consider the exact sequence (\ref{a1}.2):
\begin{equation}\tag{\ref{l2}.1}
0\rightarrow\Ext^1_R(\Tr M,N)\rightarrow M\otimes_RN\rightarrow\Hom_R(M^*,N)\rightarrow\Ext^2_R(\Tr M,N)\rightarrow0.
\end{equation}
By applying the functor $\Gamma_{\mz}(-)$ to the 4-term exact sequence (\ref{l2}.1)
and noting that $\Ext^1_R(\Tr M,N)$ is torsion with respect to $\mz$ we get the injection
$\Ext^{1}_R(\Tr M,N)\hookrightarrow\Gamma_{\mz}(M\otimes_RN)$. Hence, from now on we may assume that $n>0$. The exact sequence (\ref{l2}.1) induces the following short exact sequences:
\begin{equation}\tag{\ref{l2}.2}
0\rightarrow\Ext^1_R(\Tr M,N)\rightarrow M\otimes_RN\rightarrow X\rightarrow0,
\end{equation}
\begin{equation}\tag{\ref{l2}.3}
0\rightarrow X\rightarrow\Hom_R(M^*,N)\rightarrow\Ext^2_R(\Tr M,N)\rightarrow0.
\end{equation}
As $\Ext^i_R(\Tr M,N)$ is torsion with respect to $\mz$ for all $1\leq i\leq n$, by Theorem \ref{LC}(ii) we have
\begin{equation}\tag{\ref{l2}.4}
\Gamma_{\mz}(\Ext^i_R(\Tr M,N))=\Ext^i_R(\Tr M,N) \text{ and } \hh^j_{\mz}(\Ext^i_R(\Tr M,N))=0 \text{ for all } j>0 \text{ and } 1\leq i\leq n.
\end{equation}
By Lemma \ref{gr}, $\gr_R(\mz,\Hom_R(M^*,N))\geq\min\{2,\gr_R(\mz,N)\}>0$. 
It follows from the exact sequence (\ref{l2}.3) that $\gr_R(\mz,X)>0$. In other words, $\Gamma_{\mz}(X)=0$. Applying the functor $\Gamma_{\mz}(-)$ to the exact sequences (\ref{l2}.2), (\ref{l2}.3) and using (\ref{l2}.4) we get the following isomorphisms and exact sequence:
\begin{equation}\tag{\ref{l2}.5}
\Gamma_{\mz}(M\otimes_RN)\cong\Gamma_{\mz}(\Ext^1_R(\Tr M,N))=\Ext^1_R(\Tr M,N) \ \text{ and }\ \hh^1_{\mz}(X)\cong\hh^1_{\mz}(M\otimes_RN),
\end{equation}
\begin{equation}\tag{\ref{l2}.6}
\Ext^2_R(\Tr M,N)=\Gamma_{\mz}(\Ext^2_R(\Tr M,N))\hookrightarrow\hh^1_{\mz}(X)\rightarrow\hh^1_{\mz}(\Hom_R(M^*,N)),
\end{equation}
\begin{equation}\tag{\ref{l2}.7}
\hh^{i}_{\mz}(M\otimes_RN)\cong\hh^i_{\mz}(\Hom_R(M^*,N)) \text{ for all } i>1.
\end{equation}
The case $n=1$ is clear by (\ref{l2}.5) and (\ref{l2}.6).
Let $n>1$. Therefore, $\gr_R(\mz,\Hom_R(M^*,N))\geq2$ and so $\hh^1_{\mz}(\Hom_R(M^*,N))=0$. It follows from (\ref{l2}.5) and (\ref{l2}.6) that
\begin{equation}\tag{\ref{l2}.8}
\hh^1_{\mz}(M\otimes_RN)\cong\Ext^2_R(\Tr M,N).
\end{equation}
Note that $M^*\approx\Omega^2\Tr M$. Hence, we have the following exact sequence
\begin{equation}\tag{\ref{l2}.9}
\Ext^3_R(\Tr M,N)\cong\Ext^1_R(M^*,N)\hookrightarrow\hh^2_{\mz}(\Hom_R(M^*,N))\cong\hh^2_{\mz}(M\otimes_RN),
\end{equation}
where the injection follows from Lemma \ref{t1} and the last isomorphism follows from (\ref{l2}.7). Now the assertion is clear by (\ref{l2}.5), (\ref{l2}.8) and (\ref{l2}.9) for the case $n=2$. Finally, let $n>2$. By (\ref{l2}.7) and Lemma \ref{t1}, we get the following isomorphisms:
\[\begin{array}{rl}
\hh_{\mz}^{i}(M\otimes_RN)&\cong\hh_{\mz}^{i}(\Hom_R(M^*,N))\\
&\cong\Ext^{i-1}_R(M^*,N)\\
&\cong\Ext^{i+1}_R(\Tr M,N)
\end{array}\]
for all $1<i<n$ and also we obtain an injection $\Ext^{n+1}_R(\Tr M,N)\hookrightarrow\hh^{n}_{\mz}(M\otimes_RN)$ as desired. 
\end{proof}

The next result is used throughout the paper.

\begin{lem}\label{pr} Let $M, N\in\md(R)$, and let $\X\subseteq \Spec R$. Assume $M_\fp$ is totally reflexive over $R_\fp$ for all $\fp\in\X$. Then  the following conditions are equivalent:
\begin{enumerate}[\rm(i)]
\item $\widehat{\Tor}_i^{R_\fp}(M_\fp,N_\fp)=0$ for all $i\in\ZZ$ and for all $\fp\in\X$.
\item $\Supp_R(\Ext^i_R(\Tr M,N))\bigcup\Supp_R(\Tor_i^R(M,N))\subseteq\Spec R\setminus\X$ for all $i\geq 1$.
\end{enumerate}
\end{lem}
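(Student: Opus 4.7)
The plan is to pass from the support statement (ii) to a pointwise claim via localization, then decode Tate homology through the standard dimension-shift and duality machinery recorded in \ref{Tate} and \ref{a1}. Condition (i) is already pointwise on $\X$, while condition (ii) unwinds to the assertion that for every $\fp\in\X$ and every $i\geq 1$, both $\Ext^i_{R_\fp}(\Tr_{R_\fp} M_\fp,N_\fp)$ and $\Tor_i^{R_\fp}(M_\fp,N_\fp)$ vanish. Since $\Tr$ commutes with localization up to projective summands and such summands are harmless for $\Ext^{\geq 1}$ and $\Tor_{\geq 1}$, I would reduce to the following local claim: for a local ring $R$, a totally reflexive $M\in\md(R)$ and $N\in\md(R)$, $\widehat{\Tor}_i^R(M,N)=0$ for all $i\in\ZZ$ if and only if $\Tor_i^R(M,N)=0$ and $\Ext^i_R(\Tr M,N)=0$ for all $i\geq 1$.

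For positive degrees the equivalence is immediate from Theorem \ref{Tate hom}(iv): since $M$ is totally reflexive, $\widehat{\Tor}_i^R(M,N)\cong\Tor_i^R(M,N)$ for every $i\geq 1$. For non-positive degrees my target is the single isomorphism $\widehat{\Tor}_{1-n}^R(M,N)\cong\Ext^n_R(\Tr M,N)$, valid for every $n\geq 1$, which handles all $i\leq 0$ simultaneously. I would produce it in a three-step chain. First, because $M$ is totally reflexive, the cosyzygy $\Omega^{-n}M$ is also totally reflexive and $\Omega^n(\Omega^{-n}M)\approx M$; so Theorem \ref{Tate hom}(iii), applied to $\Omega^{-n}M$, yields $\widehat{\Tor}_{1-n}^R(M,N)\cong\widehat{\Tor}_1^R(\Omega^{-n}M,N)$. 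Second, Theorem \ref{Tate hom}(iv), now applied to $\Omega^{-n}M$, identifies the right-hand side with $\Tor_1^R(\Omega^{-n}M,N)$. Third, total reflexivity of $\Tr M$ forces $\Ext^n_R(\Tr M,R)=0$ for all $n\geq 1$, so the four-term sequence $(\ref{a1}.5)$ collapses to the desired isomorphism $\Tor_1^R(\Omega^{-n}M,N)\cong\Ext^n_R(\Tr M,N)$.

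I do not expect any serious obstruction; the entire argument is an assembly of the facts already tabulated in \ref{Tate} and \ref{a1}. The one delicate point is the bookkeeping in the localization step: $\Tr$ and $\Omega^{-n}$ are defined only up to projective equivalence, and I need to verify that localization preserves these approximations well enough that the reduction from the support statement to the pointwise Tate-homology statement is clean. Once that is made explicit, the three-step chain described above finishes the proof.
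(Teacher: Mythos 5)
Your proposal is correct, and the pointwise reduction you describe (localizing condition (ii), using that $\Tr$ and cosyzygies localize up to projective summands, which is harmless for $\Ext^{\geq 1}$, $\Tor_{\geq 1}$ and $\widehat{\Tor}$) matches what the paper does implicitly. Where you diverge is in how the non-positive Tate homology is decoded. The paper's proof is a one-line chain through Tate \emph{cohomology}: for $i\leq 0$ it uses Theorem \ref{Tate hom}(ii) to write $\widehat{\Tor}_i(M_\fp,N_\fp)\cong\widehat{\Ext}^{-i-1}(M^*_\fp,N_\fp)$, then the shift $M^*\approx\Omega^2\Tr M$ with Theorem \ref{Tate hom}(iii) to get $\widehat{\Ext}^{-i+1}(\Tr M_\fp,N_\fp)$, and finally Theorem \ref{Tate hom}(iv) to identify this with $\Ext^{-i+1}(\Tr M_\fp,N_\fp)$. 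You instead stay entirely on the homology side: you shift via cosyzygies, $\widehat{\Tor}_{1-n}(M,N)\cong\widehat{\Tor}_1(\Omega^{-n}M,N)\cong\Tor_1(\Omega^{-n}M,N)$, and then collapse the four-term sequence (\ref{a1}.5) using $\Ext^n_R(\Tr M,R)=0$ to land on $\Ext^n_R(\Tr M,N)$. Both routes produce the same identification $\widehat{\Tor}_i\cong\Ext^{1-i}(\Tr M,N)$ for $i\leq 0$; yours avoids the $\widehat{\Tor}$--$\widehat{\Ext}$ duality of Theorem \ref{Tate hom}(ii) altogether, at the modest cost of the auxiliary facts that $\Omega^{-n}M$ is totally reflexive and $\Omega^n\Omega^{-n}M\approx M$ (both available from (\ref{a1}.7), or from $\Omega^{-i}M\approx\Tr\Omega^i\Tr M$), together with stable invariance of $\widehat{\Tor}$. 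It is also pleasantly consistent with the rest of the paper, which uses (\ref{a1}.5) in exactly this way in Lemma \ref{l1} and Theorem \ref{t7}; the paper's route is shorter, yours is marginally more self-contained on the homological-algebra side. Do make the two points you flagged explicit in a final write-up: the projective-equivalence bookkeeping under localization, and the justification that $\Tr M$ (hence every $\Omega^{-n}M$) is totally reflexive, e.g.\ by citing \cite[Lemma 4.9]{AB}.
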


\begin{proof}   Let $\fp\in\X$.
	Note that $M^*\approx\Omega^2\Tr M$. We apply this along with parts (ii),(iii) and (iv) of Theorem \ref{Tate hom} to deduce, for all $i\leq 0$, that $$\widehat{\Tor}_i^{R_\fp}(M_\fp,N_\fp)\cong\widehat{\Ext}^{-i-1}_{R_\fp}(M^*_\fp,N_\fp)\cong\widehat{\Ext}^{-i+1}_{R_\fp}(\Tr_{R_\fp} M_\fp,N_\fp)\cong\Ext^{-i+1}_{R_\fp}(\Tr_{R_\fp} M_\fp,N_\fp).$$ Also, since $M_\fp$ is totally reflexive over $R_\fp$,
	we know $\widehat{\Tor}_j^{R_\fp}(M_\fp,N_\fp)\cong\Tor_{j}^{R_\fp}(M_\fp,N_\fp)$ 
	for all $j\geq 1$. Hence the assertion follows.
\end{proof}

Recall that $M\in\Md(R)$  is called \emph{Tor-rigid} provided that the vanishing of a single $\Tor^R_j(M,N)$ for some $N\in\Md(R)$ and for some $j\geq 1$ forces the vanishing of $\Tor^R_i(M,N)$ for all $i\geq j$. Clearly, over a local ring, every syzygy module of the residue field is Tor-rigid. Note that, it follows from Theorem \ref{AuLi} that each finitely generated module over a regular local ring is Tor-rigid. For more examples of Tor-rigid modules, see, for example, \cite{Dao}.

The following is the key for our proof of Theorem \ref{t2}.

\begin{lem}\label{l1} Assume $R$ is local, $M, N\in\md(R)$, and $N$ is Tor-rigid. Assume further, for a specialization-closed subset $\mz\subseteq\Spec R $, that the following conditions hold:
\begin{enumerate}[\rm(i)]
\item{$M_\fp$ is a totally reflexive $R_\fp$-module for all $\fp\in\Supp_R(M)\setminus\mz$ (e.g., $\NF(M)\subseteq\mz$).}
\item {$\widehat{\Tor}_i^{R_\fp}(M_\fp,N_\fp)=0$ for all $\fp\in\Spec R\setminus\mz$ and for $i\in\ZZ$ (e.g., $\NF(M)\subseteq\mz$).}
\item{$\hh^{n}_{\mz}(M\otimes_RN)=0$ for some $n$, where $0\leq n\leq\gr_R(\mz,N)$.}
\end{enumerate}

Then the following hold:
\begin{enumerate}[\rm(a)]
	\item $\Ext^{n+1}_R(\Tr M,R)=0$. Moreover, if $\gr_R(\mz,R)\geq n+1$, then $\hh^{n}_{\mz}(M)=0$.
	\item $\Tor_j^R(\Omega^{-n}M,N)=0$ for all $j\geq 1$.
	\item If $n\geq 1$, then $\hh^{n-1}_{\mz}(M\otimes_RN)\cong\Ext^{n}_R(\Tr M,R)\otimes_RN$. Therefore, if $n\leq\gr_R(\mz,R)$, then $\hh^{n-1}_{\mz}(M\otimes_RN)\cong\hh^{n-1}_{\mz}(M)\otimes_RN$.
	\end{enumerate}
\end{lem}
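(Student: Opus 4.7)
The plan is to anchor everything on Lemma \ref{pr} and Lemma \ref{l2}. Setting $\X:=\Spec R\setminus\mz$, hypotheses (i) and (ii) are precisely what Lemma \ref{pr} requires, so we obtain
$$\Supp_R(\Ext^i_R(\Tr M,N))\cup\Supp_R(\Tor_i^R(M,N))\subseteq\mz\quad\text{for every } i\geq 1.$$
Because $n\leq\gr_R(\mz,N)$, Lemma \ref{l2} applies and supplies, in particular, an injection $\Ext^{n+1}_R(\Tr M,N)\hookrightarrow\hh^n_\mz(M\otimes_R N)$; hypothesis (iii) kills the right-hand side, so $\Ext^{n+1}_R(\Tr M,N)=0$.

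The decisive second step---and the main obstacle---is to upgrade this to the vanishing of $\Ext^{n+1}_R(\Tr M,R)$, and this is where the Tor-rigidity of $N$ is indispensable. Substituting $\Ext^{n+1}_R(\Tr M,N)=0$ into the four-term exact sequence (\ref{a1}.5) at index $n+1$,
$$\Tor_2^R(\Omega^{-(n+1)}M,N)\to\Ext^{n+1}_R(\Tr M,R)\otimes_R N\to\Ext^{n+1}_R(\Tr M,N)\to\Tor_1^R(\Omega^{-(n+1)}M,N)\to 0,$$
yields $\Tor_1^R(\Omega^{-(n+1)}M,N)=0$; Tor-rigidity then propagates this to $\Tor_j^R(\Omega^{-(n+1)}M,N)=0$ for all $j\geq 1$, and in particular $\Tor_2=0$. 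Feeding this back gives $\Ext^{n+1}_R(\Tr M,R)\otimes_R N=0$, and since the tensor product of two nonzero finitely generated modules over a local ring is nonzero (assuming $N\neq 0$, otherwise the statement is vacuous), we conclude $\Ext^{n+1}_R(\Tr M,R)=0$. This is the first assertion of (a). For the ``moreover'' part, observe that condition (i) together with the fact that $M_\fp$ totally reflexive implies $\Tr M_\fp$ totally reflexive, forces $\Supp_R(\Ext^i_R(\Tr M,R))\subseteq\mz$ for every $i\geq 1$; when $\gr_R(\mz,R)\geq n+1$, a second invocation of Lemma \ref{l2} with $N$ replaced by $R$ and integer $n+1$ produces $\hh^n_\mz(M)\cong\Ext^{n+1}_R(\Tr M,R)=0$.

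For (b), since $\Ext^{n+1}_R(\Tr M,R)=0$, the formula (\ref{a1}.7) yields the stable equivalence $\Omega^{-n}M\approx\Omega\,\Omega^{-(n+1)}M$; dimension-shifting through a short exact sequence representing this then gives $\Tor_j^R(\Omega^{-n}M,N)\cong\Tor_{j+1}^R(\Omega^{-(n+1)}M,N)=0$ for every $j\geq 1$. For (c), Lemma \ref{l2} already delivers $\hh^{n-1}_\mz(M\otimes_R N)\cong\Ext^n_R(\Tr M,N)$ when $n\geq 1$, and substituting the Tor-vanishing from (b) into the index-$n$ instance of (\ref{a1}.5) collapses it to an isomorphism $\Ext^n_R(\Tr M,R)\otimes_R N\cong\Ext^n_R(\Tr M,N)$, which establishes the first isomorphism of (c). Finally, if $n\leq\gr_R(\mz,R)$, one more application of Lemma \ref{l2} (with $N:=R$ and integer $n$) gives $\hh^{n-1}_\mz(M)\cong\Ext^n_R(\Tr M,R)$, and substituting recovers the stated $\hh^{n-1}_\mz(M\otimes_R N)\cong\hh^{n-1}_\mz(M)\otimes_R N$.
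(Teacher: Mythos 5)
Your proof is correct and follows essentially the same route as the paper's: Lemma \ref{pr} to place the relevant supports inside $\mz$, Lemma \ref{l2} to identify $\Ext^{n+1}_R(\Tr M,N)$ inside $\hh^{n}_{\mz}(M\otimes_RN)$, the four-term sequence (\ref{a1}.5) combined with Tor-rigidity and Nakayama to kill $\Ext^{n+1}_R(\Tr M,R)$, then (\ref{a1}.7) with dimension shifting for (b), and the index-$n$ instance of (\ref{a1}.5) together with Lemma \ref{l2} for (c). The implicit assumption $N\neq 0$ in your Nakayama step is equally implicit in the paper's own argument, so nothing is lost.
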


\begin{proof}
We prove the statements simultaneously.

Note that by Lemma \ref{pr}, $\Supp_R(\Ext^i_R(\Tr M,N))\subseteq\mz$ for all $i>0$. It follows from Lemma \ref{l2} and assumption (iii) that $\Ext^{n+1}_R(\Tr M,N)=0$.
By (\ref{a1}.5), there exists the following exact sequence
\begin{equation}\tag{\ref{l1}.1}
\Tor_2^R(\Omega^{-(n+1)}M,N)\rightarrow\Ext^{n+1}_R(\Tr M,R)\otimes_RN \rightarrow\Ext^{n+1}_R(\Tr M,N)\twoheadrightarrow
\Tor_1^R(\Omega^{-(n+1)}M,N).
\end{equation}	
It follows from (\ref{l1}.1) that $\Tor_1^R(\Omega^{-(n+1)}M,N)=0$. As $N$ is Tor-rigid, we have 
\begin{equation}\tag{\ref{l1}.2}
\Tor_j^R(\Omega^{-(n+1)}M,N)=0 \text{ for all } j\geq 1.
\end{equation}
 Again, by the exact sequence (\ref{l1}.1), we get that $\Ext^{n+1}_R(\Tr M,R)\otimes_RN=0$ which implies that $\Ext^{n+1}_R(\Tr M,R)=0$. Note that by assumption (i) we have $\Supp_R(\Ext^i_R(\Tr M,R))\subseteq\mz$ for all $i\geq 1$; see \cite[4.9]{AB}. If $\gr_R(\mz,R)\geq n+1$, then $\hh^{n}_{\mz}(M)\cong\Ext^{n+1}_R(\Tr M,R)=0$ by Lemma  \ref{l2} and so the  assertion (a) follows.

It follows from part (a) and (\ref{a1}.7) that $\Omega^{-n}M\approx\Omega\Omega^{-(n+1)}M$. Therefore, by (\ref{l1}.2), we get $\Tor_j^R(\Omega^{-n}M,N)=0$ for all $j>0$ which proves part (b).

Now assume that $n>0$. By part (b) and (\ref{l1}.1), we get the following isomorphism:
\begin{equation}\tag{\ref{l1}.3}
	\Ext^{n}_R(\Tr M,R)\otimes_RN\cong\Ext^{n}_R(\Tr M,N).
\end{equation}
Now the assertion (c) follows from (\ref{l1}.3) and Lemma \ref{l2}.
\end{proof}

Now we are ready to state and prove the main result of this section:

\begin{thm}\label{t2} \label{t23} Assume $R$ is local, $M, N\in\md(R)$, and $N$ is Tor-rigid. Assume further, for an integer $n\geq 0$ and a specialization-closed subset $\mz\subseteq\Spec R$, that the following conditions hold:
\begin{enumerate}[\rm(i)]
\item{$M_\fp$ is totally reflexive over $R_\fp$ for all $\fp\in\Supp_R(M)\setminus\mz$.}
\item {$\widehat{\Tor}_i^{R_\fp}(M_\fp,N_\fp)=0$ for all $i\in\ZZ$ and for all $\fp\in\Spec R\setminus\mz$.}
\item{$\gr_R(\mz,M)\geq n$ and $\gr_R(\mz,N)\geq n$.}
\item{$\hh_{\mz}^n(M\otimes_RN)=0$.}
\end{enumerate}
If, either $n\leq\gr_R(\mz,R)$, or $R$ is Cohen-Macaulay, then $\hh_{\mz}^i(M\otimes_RN)=0$ for all $i\leq n$, and $M$ and $N$ are Tor-independent, i.e., $\Tor_j^R(M,N)=0$ for all $j\geq 1$.
\end{thm}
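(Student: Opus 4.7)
The plan is to apply Lemma~\ref{l1} at level $n$ to extract initial Tor and top-Ext vanishing, promote this to full $n$-torsion-freeness of $M$ via a mild generalization of Proposition~\ref{Serre}(iii), and finally use the $4$-term sequence (\ref{a1}.5) together with Lemma~\ref{l2} to force Tor-independence and the vanishing of all lower local cohomologies of $M\otimes_R N$. As a first step, hypotheses (i), (ii), (iii), (iv) of the theorem match (i), (ii), and ``$0\leq n\leq\gr_R(\mz,N)$ with $\hh^n_\mz(M\otimes_R N)=0$'' of Lemma~\ref{l1}, so parts (a) and (b) of that lemma already yield
\[
\Ext^{n+1}_R(\Tr M,R)=0 \quad\text{and}\quad \Tor_j^R(\Omega^{-n}M,N)=0 \text{ for all } j\geq 1.
\]

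Next I would show that $M$ is $n$-torsion-free, i.e., $\Ext^i_R(\Tr M,R)=0$ for $1\leq i\leq n$. Hypothesis (i) gives $\Supp_R(\Ext^i_R(\Tr M,R))\subseteq\mz$ for every $i\geq 1$, and together with $\gr_R(\mz,M)\geq n$, Proposition~\ref{Serre}(iii) yields this directly whenever $R$ is Cohen--Macaulay. In the alternative case $n\leq\gr_R(\mz,R)$, the argument in the proof of Proposition~\ref{Serre}(iii) goes through verbatim: its only use of the Cohen--Macaulay hypothesis is to secure $\depth R_\fp\geq t$ for an associated prime $\fp\in\mz$ of $\Ext^t_R(\Tr M,R)$ with $t\leq n$, and here Proposition~\ref{grz} provides this via $\depth R_\fp\geq\gr_R(\mz,R)\geq n\geq t$.

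With $n$-torsion-freeness in hand, iterating the syzygy identity (\ref{a1}.7) yields stable isomorphisms $\Omega^{-i}M\approx\Omega^{n-i}\Omega^{-n}M$ for $0\leq i\leq n$, and the standard syzygy shift in Tor then propagates the vanishing of $\Tor_j^R(\Omega^{-n}M,N)$ to $\Tor_j^R(\Omega^{-i}M,N)=0$ for all $j\geq 1$ and $0\leq i\leq n$; specializing to $i=0$ is precisely the Tor-independence of $M$ and $N$. For the remaining local cohomologies, hypotheses (i)--(ii) and Lemma~\ref{pr} give $\Supp_R(\Ext^i_R(\Tr M,N))\subseteq\mz$ for all $i\geq 1$, so Lemma~\ref{l2} identifies $\hh^i_\mz(M\otimes_R N)\cong\Ext^{i+1}_R(\Tr M,N)$ for $0\leq i\leq n-1$. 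At each level $1\leq i\leq n$, the $4$-term sequence (\ref{a1}.5) sandwiches $\Ext^i_R(\Tr M,N)$ between $\Ext^i_R(\Tr M,R)\otimes_R N$ and $\Tor_1^R(\Omega^{-i}M,N)$, both of which are now zero; hence $\Ext^i_R(\Tr M,N)=0$ and $\hh^i_\mz(M\otimes_R N)=0$ for all $0\leq i\leq n$.

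The main obstacle is the $n$-torsion-freeness step in the case $n\leq\gr_R(\mz,R)$, which requires checking that the depth computation in the proof of Proposition~\ref{Serre}(iii) survives the relaxation of the Cohen--Macaulay hypothesis on $R$; this ultimately reduces to the observation that Proposition~\ref{grz} continues to control $\depth R_\fp$ for $\fp\in\mz$. Everything else is a bookkeeping synthesis of Lemmas~\ref{l1}, \ref{l2}, \ref{pr}, the $4$-term sequence (\ref{a1}.5), and the syzygy identity (\ref{a1}.7).
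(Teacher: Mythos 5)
Your argument is correct, but it is organized differently from the paper's. The paper proves the theorem by descending induction on $n$: it reduces everything to showing $\hh^{n-1}_{\mz}(M\otimes_RN)=0$ and gets this from Lemma \ref{l1}(c) — in the branch $n\leq\gr_R(\mz,R)$ via the isomorphism $\hh^{n-1}_{\mz}(M\otimes_RN)\cong\hh^{n-1}_{\mz}(M)\otimes_RN$, which vanishes because $\gr_R(\mz,M)\geq n$, and in the Cohen--Macaulay branch via $\hh^{n-1}_{\mz}(M\otimes_RN)\cong\Ext^{n}_R(\Tr M,R)\otimes_RN$ together with Proposition \ref{Serre}(iii); Tor-independence then drops out of the base case $n=0$ through Lemma \ref{l1}(b). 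You instead argue in a single pass, with no induction: you first prove $n$-torsion-freeness of $M$ in both branches, propagate Lemma \ref{l1}(b) down the cosyzygies using (\ref{a1}.7), and then kill $\Ext^i_R(\Tr M,N)$ for $1\leq i\leq n$ with the four-term sequence (\ref{a1}.5) and read off the cohomology vanishing from Lemma \ref{l2} (via Lemma \ref{pr}). The cost of your route is that in the branch $n\leq\gr_R(\mz,R)$ you must extend Proposition \ref{Serre}(iii) beyond the Cohen--Macaulay hypothesis; your justification is sound, since the only role Cohen--Macaulayness plays in that proof is to guarantee $\depth R_\fp\geq t$ at the chosen associated prime $\fp\in\mz$ (so that the syzygy-depth count giving $\depth_{R_\fp}\bigl(\Omega^{t-1}_{R_\fp}\Omega^{-(t-1)}_{R_\fp}M_\fp\bigr)=t-1$ is valid, none of the intermediate syzygies becoming free by Auslander--Buchsbaum), and Proposition \ref{grz} supplies $\depth R_\fp\geq\gr_R(\mz,R)\geq n\geq t$ in your setting. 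What your route buys is a direct, non-inductive proof that additionally records $\Ext^i_R(\Tr M,N)=0$ for $1\leq i\leq n$; what the paper's route buys is that in the non-Cohen--Macaulay branch it never needs torsion-freeness of $M$ at all, only the isomorphism already contained in Lemma \ref{l1}(c).
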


\begin{proof} We proceed by induction on the integer $n$. 

If $n=0$, then the required result follows from the assumption $\hh_{\mz}^n(M\otimes_RN)=0$ and Lemma \ref{l1}(b).  Hence we assume $n\geq 1$. Note that it suffices to show $\hh^{n-1}_{\mz}(M\otimes_RN)=0$; as then the induction hypothesis implies $\hh_{\mz}^i(M\otimes_RN)=0$ for all $i\leq n-1$ and $\Tor_j^R(M,N)=0$ for all $j\geq 1$, as required.

First suppose $n\leq\gr_R(\mz,R)$. Then, by  Lemma \ref{l1}(c), we have the following isomorphism:
\begin{equation}\tag{\ref{t2}.1}
\hh^{n-1}_{\mz}(M\otimes_RN)\cong\hh^{n-1}_{\mz}(M)\otimes_RN.
\end{equation}
As $\gr_R(\mz,M)\geq n$, we conclude from (\ref{t2}.1) that $\hh^{n-1}_{\mz}(M\otimes_RN)=0$; see \ref{gsc}. 

Next suppose $R$ is Cohen-Macaulay. Then it follows from Lemma \ref{l1}(c) that the following holds:
\begin{equation}\tag{\ref{t2}.2}
\hh^{n-1}_{\mz}(M\otimes_RN)\cong\Ext^{n}_{R}(\Tr M,R)\otimes_RN.
\end{equation}
As $M_\fp$ is totally reflexive over $R_\fp$ for all $\fp\in\Supp_R(M)\setminus\mz$ and $\gr_R(\mz,M)\geq n$, Proposition \ref{Serre}(iii) shows that $M$ is $n$-torsion-free. Thus $\Ext^{n}_{R}(\Tr M,R)=0$ and hence (\ref{t2}.2) implies $\hh^{n-1}_{\mz}(M\otimes_RN)=0$; this completes the proof.
\end{proof}


The following example shows that the Tor-rigidity assumption in Theorem \ref{t2} is necessary:

\begin{eg}\label{t3ri} Let $R=k[\![x,y,u,v]\!]/(xy-uv)$, $M=(x,u)$, $N=M^\ast$, and $\mz=\V(\fm)$. It has been established in \cite[1.8]{HW1} that
$M$ is a maximal Cohen-Macaulay $R$-module which is locally free on the punctured spectrum of $R$. Also, one can check that there is an isomorphism $M\otimes_R N\simeq\fm$. 

We set $n=2$. Then all of the hypotheses of Theorem \ref{t2}, except Tor-rigidity, hold: $\Tor^R_2(M,N)=0$ and $\Tor^R_3(M,N)\simeq\Tor^R_1(M,N)\simeq k\neq 0$.
The desired claim is not true because $\hh_{\fm}^1(M\otimes_RN)\simeq\hh_{\fm}^1(\fm)\neq0$.
\end{eg}

Our next aim is to show that Theorem \ref{t2} yields a new proof for Theorem \ref{AuLi}; first we note:

\begin{lem}\label{l}  Let $M, N\in\md(R)$ and let $\X \subseteq \Spec R$ be a subset. Assume $\CI_{R_\fp}(M_\fp)=0$ for all $\fp\in\X.$ Then the following conditions are equivalent:
\begin{enumerate}[\rm(i)]
	\item $\widehat{\Tor}_i^{R_\fp}(M_\fp,N_\fp)=0$ for all $i\in\ZZ$ and for all $\fp\in\X$.
	\item $\Supp_R(\Tor_i^R(M,N))\subseteq\Spec R\setminus\X$ for all $i\gg0$.
	\item $\Supp_R(\Tor_i^R(M,N))\subseteq\Spec R\setminus \X$ for all $i\geq 1$.
\end{enumerate}	
\end{lem}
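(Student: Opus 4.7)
The plan is to prove the cycle $(\textrm{iii})\Rightarrow(\textrm{ii})\Rightarrow(\textrm{i})\Rightarrow(\textrm{iii})$, localizing at each $\fp\in\X$ and reducing everything to the two ingredients already recorded in the paper: Theorem \ref{Tate hom}(iv) and Theorem \ref{Tate2}. The hypothesis $\CI_{R_\fp}(M_\fp)=0$ for every $\fp\in\X$ is the bridge: combined with the inequality $\G\textrm{-dim}_R\leq\CI_R$ from \ref{HD}, it forces $\G\textrm{-dim}_{R_\fp}(M_\fp)=0$, so Tate homology is defined and agrees with ordinary Tor in positive degrees over $R_\fp$.

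First I would dispense with $(\textrm{iii})\Rightarrow(\textrm{ii})$, which is immediate. For $(\textrm{ii})\Rightarrow(\textrm{i})$, fix $\fp\in\X$. Since $\Tor_i^R(M,N)_\fp\cong\Tor_i^{R_\fp}(M_\fp,N_\fp)$, condition (ii) yields $\Tor_i^{R_\fp}(M_\fp,N_\fp)=0$ for all sufficiently large $i$. As $\CI_{R_\fp}(M_\fp)<\infty$, Theorem \ref{Tate2} applied over $R_\fp$ upgrades this to $\widehat{\Tor}_i^{R_\fp}(M_\fp,N_\fp)=0$ for every $i\in\ZZ$, which is (i).

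For $(\textrm{i})\Rightarrow(\textrm{iii})$, fix $\fp\in\X$ once more. The hypothesis $\CI_{R_\fp}(M_\fp)=0$ gives $\G\textrm{-dim}_{R_\fp}(M_\fp)=0$ (by the chain of inequalities recalled in \ref{HD}), so $M_\fp$ is totally reflexive, and in particular has finite Gorenstein dimension, so Tate homology is defined. Now Theorem \ref{Tate hom}(iv) yields the canonical isomorphism $\widehat{\Tor}_i^{R_\fp}(M_\fp,N_\fp)\cong\Tor_i^{R_\fp}(M_\fp,N_\fp)$ for every $i>\G\textrm{-dim}_{R_\fp}(M_\fp)=0$, that is, for every $i\geq 1$. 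Combining this isomorphism with (i) yields $\Tor_i^{R_\fp}(M_\fp,N_\fp)=0$ for all $i\geq 1$ and all $\fp\in\X$, which is exactly (iii).

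There is no real obstacle here: the argument is a clean localization followed by invocation of the two cited theorems. The only subtlety to flag is the observation that $\CI_{R_\fp}(M_\fp)=0$ forces total reflexivity at $\fp$, since this is what makes Theorem \ref{Tate hom}(iv) applicable already at $i=1$ and thus produces the statement (iii) for every positive index rather than merely for large indices.
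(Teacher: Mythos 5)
Your proposal is correct, and it departs from the paper's own argument in one place. Both proofs use Theorem \ref{Tate2} over $R_\fp$ to pass between the vanishing of $\widehat{\Tor}_i^{R_\fp}(M_\fp,N_\fp)$ for all $i\in\ZZ$ and the vanishing of $\Tor_i^{R_\fp}(M_\fp,N_\fp)$ for $i\gg0$, which settles the equivalence of (i) and (ii). For the remaining equivalence, however, the paper does not go back through Tate homology: it deduces (ii)$\Leftrightarrow$(iii) from (\ref{HD}.1) together with Jorgensen's dependency formula (\ref{depen}.1), i.e.\ from the fact that, over $R_\fp$, the hypothesis $\CI_{R_\fp}(M_\fp)=0$ forces the supremum of the local depth defects to be nonpositive, so eventual vanishing of Tor at $\fp$ already forces vanishing in all positive degrees. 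You instead close the cycle (i)$\Rightarrow$(iii) by observing that $\CI_{R_\fp}(M_\fp)=0$ makes $M_\fp$ totally reflexive and then invoking Theorem \ref{Tate hom}(iv) to identify $\widehat{\Tor}_i^{R_\fp}$ with $\Tor_i^{R_\fp}$ for all $i\geq 1$. Your route stays entirely inside the Tate-homology formalism (it is the same mechanism the paper itself uses in Lemma \ref{pr}) and avoids the dependency formula altogether; the paper's route buys a direct passage from (ii) to (iii) that does not mention Tate homology, at the cost of a depth computation which implicitly uses that complete intersection dimension does not increase under localization. Your localization steps, the reduction of $\CI_{R_\fp}(M_\fp)=0$ to Gorenstein dimension zero via \ref{HD}, and the index bookkeeping ($i>0$ in Theorem \ref{Tate hom}(iv)) are all sound, so the proof goes through as written.
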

\begin{proof}
The equivalence of parts (i) and (ii) is due to Theorem \ref{Tate2}. Moreover, the equivalence of parts (ii) and (iii) follows from (\ref{HD}.1) and the dependency formula \ref{depen}.
\end{proof}

Theorem \ref{it2}, advertised in the introduction, is a special case of the following proposition: thanks to Theorem \ref{t2}, it can now be easily established.

\begin{prop}\label{c4} Assume $R$ is local and a complete intersection, $M, N\in\md(R)$ are nonzero, and  $\mz\subset\Spec R $ is a specialization-closed subset. Assume further $N$ is Tor-rigid and, for an integer $n\geq 0$, the following conditions hold:
\begin{enumerate}[\rm(i)]
	\item{$M_\fp$ is maximal Cohen-Macaulay over $R_\fp$ for all $\fp\in\Supp_R(M)\setminus\mz$.}
	\item {$\Supp_R(\Tor_i^{R}(M,N))\subseteq\mz$ for all $i\gg 0$.}
	\item{$\hh_{\mz}^n(M\otimes_RN)=0$.}
	\item{$\gr_R(\mz,M)\geq n$ and $\gr_R(\mz,N)\geq n$.}
\end{enumerate}
Then $\hh_{\mz}^i(M\otimes_RN)=0$ for all $i\leq n$, and $M$ and $N$ are Tor-independent.
\end{prop}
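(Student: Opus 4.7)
The plan is to reduce directly to Theorem \ref{t2} by translating the hypotheses of the proposition into those of the theorem; once this is carried out the conclusion is immediate. The two hypotheses of Theorem \ref{t2} that require substantive work are condition (i) (total reflexivity of $M_\fp$ on $\Supp_R(M)\setminus\mz$) and condition (ii) (vanishing of all Tate Tor modules $\widehat{\Tor}_i^{R_\fp}(M_\fp,N_\fp)$ on $\Spec R\setminus\mz$). Hypotheses (iii) and (iv) of Theorem \ref{t2} coincide with (iv) and (iii) of the proposition, and the Cohen-Macaulay alternative in Theorem \ref{t2} holds automatically because $R$ is a complete intersection.

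For the total reflexivity step I would use that localizations of a complete intersection remain complete intersections, so $\CI_{R_\fp}(M_\fp)<\infty$ for every $\fp\in\Spec R$. On the locus $\Supp_R(M)\setminus\mz$, hypothesis (i) of the proposition gives $\depth_{R_\fp}(M_\fp)=\depth R_\fp$, so the Auslander-Buchsbaum-type formula (\ref{HD}.1) forces $\G-dim_{R_\fp}(M_\fp)=\CI_{R_\fp}(M_\fp)=0$. Hence $M_\fp$ is totally reflexive for every such $\fp$, which is condition (i) of Theorem \ref{t2}.

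For the Tate Tor vanishing step the plan is to invoke Lemma \ref{l} with the subset $\X=\Supp_R(M)\setminus\mz$. The assumption $\CI_{R_\fp}(M_\fp)=0$ on $\X$ required by the lemma is precisely what was obtained in the previous step. Hypothesis (ii) of the proposition yields $\Supp_R(\Tor_i^R(M,N))\subseteq\mz\subseteq\Spec R\setminus\X$ for all $i\gg 0$, which is condition (ii) of Lemma \ref{l}; the lemma then delivers $\widehat{\Tor}_i^{R_\fp}(M_\fp,N_\fp)=0$ for all $i\in\ZZ$ and all $\fp\in\X$. For primes $\fp\in\Spec R\setminus\mz$ lying outside $\Supp_R(M)$, the localization $M_\fp$ is zero and the Tate Tor modules vanish trivially. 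Combining the two cases produces hypothesis (ii) of Theorem \ref{t2}.

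With all four hypotheses of Theorem \ref{t2} in place, together with the Cohen-Macaulay property of $R$, the theorem applies and yields both conclusions of the proposition simultaneously. I do not expect any serious obstacle in carrying this out: the argument is essentially a bookkeeping translation, and the only mildly delicate point is ensuring the $\CI$-dimension vanishes wherever Lemma \ref{l} is applied, which is why the support of $M$ and its complement are treated separately.
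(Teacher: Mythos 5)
Your proposal is correct and follows essentially the same route as the paper: deduce $\CI_{R_\fp}(M_\fp)=0$ (hence total reflexivity) on $\Supp_R(M)\setminus\mz$ from hypothesis (i) and (\ref{HD}.1), feed hypothesis (ii) into Lemma \ref{l} with $\X=\Supp_R(M)\setminus\mz$ to obtain the Tate Tor vanishing, and conclude via Theorem \ref{t2}. Your explicit remark that $M_\fp=0$ handles the primes of $\Spec R\setminus\mz$ outside $\Supp_R(M)$ merely fills in a detail the paper leaves implicit.
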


\begin{proof} Note, by (\ref{HD}.1) and assumption (i), we have that $\CI_{R_\fp}(M_\fp)=0$ for all $\fp\in\Supp_R(M)\setminus\mz$. 
Thus, letting $\X=\Supp_R(M)\setminus\mz$, we see that $M_\fp$ is totally reflexive over $R_\fp$ for all $\fp\in\Supp_R(M)\setminus\mz$. Moreover, Lemma \ref{l} yields the vanishing of $\Tor_i^{R_\fp}(M_\fp,N_\fp)$ for all $i\in\ZZ$ and for all $\fp\in\X$. So the assertion is clear by Theorem \ref{t23}.
\end{proof}

We can now make use of Proposition \ref{c4}, which is a vast generalization of Theorem \ref{AuLi}, and prove:

\begin{thm}(Auslander and Lichtenbaum)\label{AL} Assume $R$ is local and regular, and let $M, N\in\md(R)$. If $M\otimes_RN$ is nonzero and torsion-free, then $M$ and $N$ are torsion-free and Tor-independent.
\end{thm}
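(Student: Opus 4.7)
The plan is to deduce Theorem \ref{AL} as an immediate application of Proposition \ref{c4} with $n=0$ and the specialization-closed subset
\[
\mz:=\{\fp\in\Spec R\mid\Ht\fp\geq 1\}.
\]
Since $R$ is regular, it is in particular a domain, hence unmixed, so Proposition \ref{Serre}(i) yields $\Gamma_{\mz}(-)=\T(-)$. Consequently, the hypothesis that $M\otimes_RN$ is torsion-free translates precisely to $\hh^0_{\mz}(M\otimes_RN)=0$, which is exactly assumption (iii) of Proposition \ref{c4} for $n=0$.

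Next I would check the remaining hypotheses of Proposition \ref{c4}. The ring $R$ is regular, hence a complete intersection, and $N$ is Tor-rigid by Auslander's classical rigidity theorem over regular local rings. The only prime outside $\mz$ is the generic point $(0)$; there $R_{(0)}$ is a field, so $M_{(0)}$ is either zero or a finite-dimensional vector space, which is trivially maximal Cohen--Macaulay, settling (i). Assumption (ii) is immediate because $\pd_R(M)<\infty$ forces $\Tor_i^R(M,N)=0$ for $i\gg 0$. The grade bounds in (iv) are vacuous for $n=0$. Proposition \ref{c4} then delivers the Tor-independence of $M$ and $N$: $\Tor_i^R(M,N)=0$ for all $i\geq 1$.

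To finish, I would prove that $M$ (and by symmetry $N$) is torsion-free by a short local depth-formula argument. Since $M\otimes_RN$ is nonzero and torsion-free over the domain $R$, it has positive generic rank, forcing $\rank_RM,\rank_RN\geq 1$ and thus $\Supp_R M=\Supp_R N=\Spec R$. Suppose, for contradiction, that $\fp\in\Ass_R(M)$ has $\Ht\fp\geq 1$, so $\depth_{R_\fp}(M_\fp)=0$. Using the just-proved Tor-independence and the depth formula (Theorem \ref{t4}, applicable since $\pd_{R_\fp}(M_\fp)<\infty$), one gets
\[
\depth_{R_\fp}(N_\fp)=\dim R_\fp+\depth_{R_\fp}((M\otimes_RN)_\fp)\geq\dim R_\fp+1,
\]
where the final inequality uses that $(M\otimes_RN)_\fp$ is a nonzero torsion-free module over the regular local ring $R_\fp$ of positive dimension. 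This contradicts the universal bound $\depth_{R_\fp}(N_\fp)\leq\dim R_\fp$, so $M$ (and likewise $N$) is torsion-free.

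There is no serious obstacle once the subset $\mz$ has been chosen so that the vanishing of $\hh^0_{\mz}$ captures torsion-freeness: the Tor-independence is extracted directly from Proposition \ref{c4}, and the passage from Tor-independence plus torsion-freeness of the tensor product to torsion-freeness of each factor is a standard depth argument. The only subtleties to verify are that torsion-freeness persists under localization over a domain and that both $M_\fp$ and $N_\fp$ are nonzero at every prime, which is exactly what the generic-rank computation provides.
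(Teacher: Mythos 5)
Your proof is correct, and its backbone is exactly the paper's route: you choose $\mz=\{\fp\in\Spec R\mid\Ht\fp\geq1\}$ (for the domain $R$ this is the paper's $\Spec R\setminus\{0\}$), identify $\Gamma_{\mz}$ with the torsion functor via Proposition \ref{Serre}(i), and extract the Tor-independence from Proposition \ref{c4} with $n=0$, all of whose hypotheses you verify correctly. The only divergence is how the torsion-freeness of the individual factors is obtained: the paper reads it off from Lemma \ref{l1}(a), which with $n=0$ and $\gr_R(\mz,R)\geq1$ gives $\hh^0_{\mz}(M)=0$ directly (and symmetrically for $N$), whereas you rerun a classical argument, localizing the just-established Tor-independence at a putative associated prime $\fp$ of height at least one and applying the depth formula (Theorem \ref{t4}) to the nonzero torsion-free module $(M\otimes_RN)_\fp$ to force $\depth_{R_\fp}(N_\fp)>\dim R_\fp$, a contradiction. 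Both versions are sound: the paper's is a one-line citation of machinery it has already built, while yours is slightly more self-contained and makes explicit why torsion-freeness of the tensor product descends to each factor once Tor-independence is in hand.
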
	

\begin{proof}
Recall that finitely generated modules are Tor-rigid over regular local rings \cite{A2, L}.
Set $\mz:=\Spec(R)\setminus\{0\}$. By Proposition \ref{Serre}, $\Gamma_{\mz}(L)=\T(L)$ for every $L\in\md(R)$. Now the assertion follows from Lemma \ref{l1}(a), (b) and Proposition \ref{c4}.
\end{proof}


\section{Applications of the main theorem}


In this section we give several examples, and applications of the results we obtain from Section 3.  The first result, Corollary \ref{h1zerocor}, we aim to prove is an application of Theorem \ref{t2}. 

When $R$ is local, we denote by $\md_0(R)$, the subcategory of finitely generated $R$-modules which are locally free on the punctured spectrum of $R$, namely
$$\md_0(R)=\{M \in \md(R) \mid M_{\fp} \text{ is free over } R_{\fp} \text{ for each } \fp \in \Spec(R) -\{\fm\}\}.$$

\begin{prop}\label{h1zero} Assume $R$ is local such that $\depth(R)\geq 1$, and assume $I$ is an $\fm$-primary ideal of $R$ satisfying the following condition: 
\[
\text{ If } X\in \md(R)  \text{ and } \Tor^R_t(X,R/I)=0 \text{ for some integer } t\geq 0, \text{ then it follows that } \pd_R(X)<t. 
\]
Then $\hh^1_{\fm}(M\otimes_RI)\neq0$ for each $M\in\md_0(R)$ such that $0\neq M$ and $\depth_R(M)\geq 1$.
\end{prop}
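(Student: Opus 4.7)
The plan is to argue by contradiction: suppose $\hh^1_\fm(M\otimes_R I)=0$ and derive two incompatible estimates for $\depth_R(M\otimes_R I)$. Specifically, I will apply Theorem \ref{t2} with $\mz=\{\fm\}$, $N=I$, and $n=1$ to force $\depth_R(M\otimes_R I)\geq 2$ together with $\Tor_j^R(M,I)=0$ for every $j\geq 1$, and then use the hypothesis on $R/I$ together with the depth formula to bound $\depth_R(M\otimes_R I)\leq 1$.

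The first substantive point is that the hypothesis on $R/I$ promotes to Tor-rigidity of $I$: if $\Tor_j^R(X,I)=0$, the long exact sequence induced by $0\to I\to R\to R/I\to 0$ produces $\Tor_{j+1}^R(X,R/I)=0$, so $\pd_R(X)\leq j$ by hypothesis, and hence $\Tor_i^R(X,I)=0$ for every $i\geq j$. I next verify the remaining hypotheses of Theorem \ref{t2} with $n=1$: conditions (i) and (ii) hold vacuously because $M\in\md_0(R)$ makes $M_\fp$ free for every $\fp\neq\fm$; for (iii) one has $\gr_R(\{\fm\},M)=\depth_R(M)\geq 1$ by assumption and $\gr_R(\{\fm\},I)=\depth_R(I)\geq 1$ by the standard depth lemma applied to $0\to I\to R\to R/I\to 0$ (using $\depth_R(R/I)=0$, since $I$ is a proper $\fm$-primary ideal and $\depth R\geq 1$); condition (iv) is the contradiction assumption; and the side condition $n\leq\gr_R(\{\fm\},R)=\depth(R)$ holds. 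Theorem \ref{t2} then yields both $\Gamma_\fm(M\otimes_R I)=\hh^1_\fm(M\otimes_R I)=0$ and $\Tor_j^R(M,I)=0$ for all $j\geq 1$; in particular $\depth_R(M\otimes_R I)\geq 2$.

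The contradiction follows quickly. From $\Tor_j^R(M,I)=0$ for $j\geq 1$ we again use the long exact sequence above to obtain $\Tor_i^R(M,R/I)=0$ for all $i\geq 2$, whence the hypothesis on $I$ gives $\pd_R(M)\leq 1$. Since $M\neq 0$ and $\depth_R(M)\geq 1$ there exists $\fp\in\Ass(M)$ with $\fp\neq\fm$; as $I$ is $\fm$-primary we have $I_\fp=R_\fp$, so $(M\otimes_R I)_\fp\cong M_\fp\neq 0$, confirming $M\otimes_R I\neq 0$. Because $\pd_R(M)<\infty$ implies $\CI_R(M)<\infty$, and the pair is Tor-independent, Theorem \ref{t4} yields the depth formula $\depth_R(M\otimes_R I)=\depth_R(M)+\depth_R(I)-\depth(R)$. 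The depth lemma forces $\depth_R(I)=1$, and Auslander--Buchsbaum gives $\depth_R(M)\leq\depth(R)$; hence $\depth_R(M\otimes_R I)\leq 1$, contradicting the lower bound obtained above. The only delicate step in the plan is the passage from rigidity of $R/I$ to rigidity of $I$; the remainder is an assembly of results already established in the paper.
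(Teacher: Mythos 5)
Your proposal is correct and follows essentially the same route as the paper's proof: contradict the vanishing of $\hh^1_{\fm}(M\otimes_RI)$ by applying Theorem \ref{t2} with $N=I$ and $\mz=\V(\fm)$ to get Tor-independence and $\depth_R(M\otimes_RI)\geq 2$, then use the hypothesis on $I$ to get finite projective dimension and invoke the depth formula (Theorem \ref{t4}) together with $\depth_R(I)=1$ to reach a contradiction. The only difference is cosmetic — you phrase the contradiction as $\depth_R(M\otimes_RI)\leq 1$ versus $\geq 2$ while the paper phrases it as $\pd_R(M)<0$ — and you make explicit two points the paper leaves implicit, namely the Tor-rigidity of $I$ deduced from the hypothesis on $R/I$ and the nonvanishing of $M\otimes_RI$.
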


\begin{proof} We assume contrary that $\hh^1_{\fm}(M\otimes_RI)=0$, and seek a contradiction.

Letting $N=I$, we obtain from Theorem \ref{t2} that $\Tor_i^R(M,I)=0$ for all $i\geq 1$,  and $\hh^0_{\fm}(M\otimes_RI)=0$, i.e., $\depth_R(M\otimes_RI)\geq 2$.
Now it follows by our assumption on the ideal $I$ that $\pd_R(M)<\infty$. Therefore Theorem \ref{t4} yields:
\begin{equation}\tag{\ref{h1zero}.1}
\pd_R(M)=\depth R-\depth_R(M)=\depth_R(I)-\depth_R(M\otimes_R I).
\end{equation}
As $I$ is $\fm$-primary, we have that $\depth_R(I)=1$. Thus (\ref{h1zero}) shows $\pd_R(M)<0$, or equivalently, $M=0$. This contradiction shows that $\hh^1_{\fm}(M\otimes_RI)\neq0$.
\end{proof}

Next we recall a class of ideals that enjoys the rigidity property stated in Proposition \ref{h1zero}:

\begin{chunk}(\cite[2.10]{CeTos}) \label{CK} Assume $R$ is local, and let $I$ and $J$ be ideals of $R$. Assume $\depth(R)\geq 1$, and the following conditions hold:
\begin{enumerate}[\rm(i)]
\item $(I:_RJ)=(\fm I:_R \fm J)$.
\item $I$ is $\fm$-primary and $0\neq I \subseteq \fm J$.
\end{enumerate}
If $\Tor^R_t(M,R/I)=0$ for some $M\in \md(R)$ and some integer $t\geq 0$, then it follows that $\pd_R(M)<t$.
\end{chunk}

When $R$ is local ring and $\depth(R)\geq 1$, an integrally closed $\fm$-primary ideal $I$ satisfies the conditions (i) and (ii) of \ref{CK} for the case where $J=R$; see \cite[the paragraph following 2.1 and 2.4]{CeTos}, \cite[2.1 and 2.10]{CSTT} and also \cite[3.3]{CHKV}. On the other hand, if $I$ is as in \ref{CK}, then it does not need to be integrally closed, in general; see \cite[2.7 and 2.8]{CSTT} and \cite[4.3]{CeTos}. Although it is difficult to check whether or not a given ideal is integrally closed, in view of \ref{CK}, one can easily construct examples of ideals that have the rigidity property stated in Proposition \ref{h1zero}:

\begin{rmk} \label{goodrmk} Assume $R$ is local and $\depth(R)\geq 1$, and let $A$ be an $\fm$-primary ideal of $R$ such that $A\neq \fm$. Set $I=(A:_R\fm)$. Then $I$ satisfies the conditions (i) and (ii) of \ref{CK} for the case where $J=R$. Therefore, if $\Tor^R_t(M,R/I)=0$ for some $M\in \md(R)$ and some integer $t\geq 0$, then it follows that $\pd_R(M)<t$; see \cite[2.2]{CSTT} and \cite[2.9]{CeTos}.
\end{rmk}

As a consequence of Proposition \ref{h1zero} and Remark \ref{goodrmk}, we conclude:

\begin{cor} \label{h1zerocor}  Assume $R$ is local and $I$ is an $\fm$-primary ideal of $R$. Set $J=(I:_R\fm)$. If $\depth(R)\geq 1$ and $I\neq \fm$, then $\hh^1_{\fm}(J \otimes_R J)\neq0$.
\end{cor}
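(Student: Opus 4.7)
The plan is to deduce this as a direct application of Proposition \ref{h1zero} with $M = J$ and with $J$ playing the role of the rigid ideal ``$I$'' in that proposition. So the task reduces to verifying the four hypotheses: that $J$ is $\fm$-primary, that $J$ satisfies the rigidity property displayed in \ref{h1zero}, that $J \in \md_0(R)$, and that $J \neq 0$ with $\depth_R(J) \geq 1$.

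First, I would show $J$ is $\fm$-primary. Clearly $I \subseteq J$, so $J \neq 0$ and $\sqrt{J} \subseteq \sqrt{I} = \fm$; the reverse inclusion will reduce to showing $J \subseteq \fm$. If instead $J = R$, then $1 \in (I :_R \fm)$, i.e., $\fm \subseteq I \subseteq \fm$, forcing $I = \fm$, contrary to hypothesis. Hence $J \subsetneq R$, so $J \subseteq \fm$, and $J$ is $\fm$-primary.

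Next I would verify $J \in \md_0(R)$: for any $\fp \in \Spec(R) \setminus \{\fm\}$, since $I$ is $\fm$-primary we have $I_\fp = R_\fp$, and $(I :_R \fm)_\fp = (I_\fp :_{R_\fp} \fm_\fp) = R_\fp$, so $J_\fp = R_\fp$ is free. For $\depth_R(J) \geq 1$: since $\depth(R) \geq 1$, no associated prime of $R$ equals $\fm$, so each $\fp \in \Ass(R)$ is properly contained in $\fm$; but $\sqrt{J} = \fm$ forces $J \not\subseteq \fp$ for any such $\fp$. Prime avoidance then produces a non-zerodivisor on $R$ inside $J$, so $\depth_R(J) \geq 1$.

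Finally, the rigidity property of the ideal $J$ is exactly the content of Remark \ref{goodrmk} applied with $A = I$: it gives that whenever $\Tor^R_t(X, R/J) = 0$ for some $X \in \md(R)$ and some $t \geq 0$, one has $\pd_R(X) < t$. All hypotheses of Proposition \ref{h1zero} (with the ideal taken to be $J$ and with $M = J$) are therefore met, and the proposition delivers $\hh^1_\fm(J \otimes_R J) \neq 0$. There is no real obstacle here; the only subtle point is the depth check, which succeeds precisely because $J$ is $\fm$-primary and $\fm \notin \Ass(R)$.
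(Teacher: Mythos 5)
Your proposal is correct and takes the same route as the paper, which obtains this corollary precisely as a consequence of Proposition \ref{h1zero} and Remark \ref{goodrmk} (applied with $A=I$), leaving implicit the routine checks you supply: $J$ is $\fm$-primary with the rigidity property, $J\in\md_0(R)$, $J\neq 0$, and $\depth_R(J)\geq 1$. One cosmetic slip: $I\subseteq J$ yields $\fm=\sqrt{I}\subseteq\sqrt{J}$, while $J\subseteq\fm$ (from $J\neq R$) yields $\sqrt{J}\subseteq\fm$ — you stated these two inclusions with the roles swapped, but the substance of your argument is right.
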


When the module considered in Proposition \ref{h1zero} has depth at least two, we can say more about $\hh^1_{\fm}(M\otimes_RI)$.

\begin{prop} Assume $R$ is local, $I$ is an $\fm$-primary ideal of $R$, and $M\in\md(R)$ is such that $\depth_R(M)\geq 2$. Then it follows \begin{equation*}
	\hh^i_{\fm}(M\otimes_RI)\simeq\left\{
	\begin{array}{rl}
	\Tor_1^R(M,R/I) & \  \   \   \   \   \ \  \   \   \   \   \ \text{if }\   \   i=0\\
	M/IM& \  \   \   \   \   \ \  \   \   \   \   \ \text{if } \   \ i=1\\
	\hh^i_{\fm}(M)& \  \   \   \   \   \ \  \   \   \   \   \ \text{if }\   \  i>1
	\end{array} \right.
	\end{equation*}
\end{prop}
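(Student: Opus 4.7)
The plan is to tensor the canonical short exact sequence $0\to I\to R\to R/I\to 0$ with $M$ and then chase two long exact sequences of local cohomology. Since $\Tor_1^R(M,R)=0$, tensoring yields the four-term exact sequence
\begin{equation*}
0\longrightarrow \Tor_1^R(M,R/I)\longrightarrow M\otimes_R I\longrightarrow M\longrightarrow M/IM\longrightarrow 0,
\end{equation*}
which I would split, using $K:=\im(M\otimes_R I\to M)=IM$, into the two short exact sequences
\begin{equation*}
(\ast)\quad 0\to \Tor_1^R(M,R/I)\to M\otimes_R I\to IM\to 0,\qquad
(\ast\ast)\quad 0\to IM\to M\to M/IM\to 0.
\end{equation*}

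The first key observation is that, because $I$ is $\fm$-primary, both $\Tor_1^R(M,R/I)$ and $M/IM$ are supported only at $\fm$, hence are $\fm$-torsion. By Theorem \ref{LC}(iii) this gives $\hh^i_{\fm}(\Tor_1^R(M,R/I))=0=\hh^i_{\fm}(M/IM)$ for every $i\geq 1$, while $\Gamma_{\fm}(\Tor_1^R(M,R/I))=\Tor_1^R(M,R/I)$ and $\Gamma_{\fm}(M/IM)=M/IM$. The second key input is $\depth_R(M)\geq 2$, i.e., $\hh^0_{\fm}(M)=\hh^1_{\fm}(M)=0$, which will be applied in the long exact sequence arising from $(\ast\ast)$.

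From the long exact sequence of $(\ast\ast)$, the vanishing $\hh^0_{\fm}(M)=\hh^1_{\fm}(M)=0$ combined with $\Gamma_{\fm}(M/IM)=M/IM$ and $\hh^i_{\fm}(M/IM)=0$ for $i\geq 1$ produces
\begin{equation*}
\hh^0_{\fm}(IM)=0,\qquad \hh^1_{\fm}(IM)\cong M/IM,\qquad \hh^i_{\fm}(IM)\cong \hh^i_{\fm}(M)\ \text{for all}\ i\geq 2.
\end{equation*}
Plugging these into the long exact sequence coming from $(\ast)$ finishes the proof: at $i=0$, the sequence $0\to \Tor_1^R(M,R/I)\to \hh^0_{\fm}(M\otimes_R I)\to \hh^0_{\fm}(IM)=0$ gives the first isomorphism; at $i=1$, the fact that $\hh^1_{\fm}$ and $\hh^2_{\fm}$ of $\Tor_1^R(M,R/I)$ both vanish yields $\hh^1_{\fm}(M\otimes_R I)\cong \hh^1_{\fm}(IM)\cong M/IM$; and for $i\geq 2$, both neighbouring cohomologies of $\Tor_1^R(M,R/I)$ vanish, leaving $\hh^i_{\fm}(M\otimes_R I)\cong \hh^i_{\fm}(IM)\cong \hh^i_{\fm}(M)$.

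There is no genuine obstacle here; the argument is a routine double diagram chase. The only point that requires a brief justification is the $\fm$-torsion property of $\Tor_1^R(M,R/I)$ and $M/IM$, which relies squarely on the $\fm$-primary hypothesis on $I$, and this is what forces all the higher local cohomology contributions from the two outer terms of $(\ast)$ and $(\ast\ast)$ to disappear.
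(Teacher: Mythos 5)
Your proposal is correct and follows essentially the same route as the paper: tensor $0\to I\to R\to R/I\to 0$ with $M$, split the resulting four-term sequence at the image $IM$ (the paper's $X$), use that $\Tor_1^R(M,R/I)$ and $M/IM$ are $\fm$-torsion of finite length, and chase the two local cohomology long exact sequences using $\depth_R(M)\geq 2$. The identifications you obtain at $i=0$, $i=1$, and $i\geq 2$ match the paper's argument step for step.
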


\begin{proof}
   Applying $-\otimes M$ to the exact sequence $0\to I\to R\to R/I\to 0$, we get the exact sequence $$0\lo \Tor_1^R(M,R/I)\lo I\otimes_R M\lo M\lo R/I\otimes_RM\lo 0.$$
	We break it down into the following exact sequences:
	$$(a):\ 0\to \Tor_1^R(M,R/I)\to I\otimes_R M\to X\to0, \ \
	(b):\ 0\to X\to M\to R/I\otimes_RM\to 0.$$
	As $\Tor_1^R(M,R/I)$ has finite length, $\hh^0_{\fm}(\Tor_1^R(M,R/I))=\Tor_1^R(M,R/I)$ and  $\hh^j_{\fm}(\Tor_1^R(M,R/I))=0$ for all $j>0$. It follows from the exact sequence $(b)$ that $\depth(X)>0$. The exact sequence $(a)$ induces the isomorphisms $\hh^0_{\fm}(M\otimes_RI)\simeq\Tor_1^R(M,R/I)$ and
    $\hh^j_{\fm}(I\otimes_R M)\simeq \hh^j_{\fm}( X)$ for all $j>0$. Applying the functor $\Gamma_{\fm}(-)$ to the exact sequence $(b)$ and noting that $\depth_R(M)>1$ we have $$0=\hh^0_{\fm}(M)\lo \hh^0_{\fm}(R/I\otimes_RM)\lo \hh^1_{\fm}(X)\lo \hh^1_{\fm}(M)=0,$$
    and $\hh^i_{\fm}(X)\cong\hh^i_{\fm}(M)$ for all $i>1$.
	Therefore, we obtain the following isomorphisms:
	$$\hh^1_{\fm}( I\otimes_R M)\simeq \hh^1_{\fm}( X)\simeq\hh^0_{\fm}(R/I\otimes_RM)=R/I\otimes_RM\simeq M/IM.$$Also, $\hh^i_{\fm}(I\otimes_RM)\simeq \hh^i_{\fm}( M)$ for all $i>1$.  
\end{proof}

Next our aim is to make use of Theorem \ref{t2} and prove the following:

\begin{thm}\label{c1c} If $R$ is regular of dimension $d$, then, for nonzero $M, N\in\md_0(R)$, the following hold:

	\begin{enumerate}[\rm(i)]
		\item $\depth_R(M\otimes_RN)\leq\min\{\depth_R(M),\depth_R(N)\}$. Furthermore, it follows that
		$\hh^{i}_{\fm}(M\otimes_RN)\neq0$ for each $i$, where $\depth_R(M\otimes_RN)\leq i\leq\min\{\depth_{R}(M), \depth_R(N)\}$.	
		
\item{Assume $\depth_R(M)+\depth_R(N)= d+n$ for some $n\geq 0$. Then, }
\begin{enumerate}[\rm(a)]
\item $\hh^n_{\fm}(M\otimes_RN)\neq0$ so that $\depth_R(M\otimes_RN)\leq n$. 
\item If $\hh^i_{\fm}(M\otimes_RN)=0$ for some $i$, where $0\leq i<n$, then $\depth_R(M\otimes_RN)=n$.
\end{enumerate}

		\item If $\depth_R(M)+\depth_R(N)\leq d$, then $\hh^i_{\fm}(M\otimes_RN)\neq0$ for each $i=0, \ldots, \min\{\depth_R(M),\depth_R(N)\}$. 

		\item If $\depth_R(M)\leq\frac{1}{2} d$, then $\hh^i_{\fm}(M\otimes_RM)\neq0$ for each $i=0, \ldots, \depth_R(M)$.
	\end{enumerate}
\end{thm}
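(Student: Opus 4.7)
The whole proof rests on Theorem \ref{t2} combined with the depth formula (Theorem \ref{t4}) and Grothendieck's non-vanishing theorem (Theorem \ref{ld1}). The first step is to observe that, for $M, N \in \md_0(R)$ over a regular local ring $R$ and any integer $n$ with $0 \le n \le \min\{\depth_R(M), \depth_R(N)\}$, the hypotheses of Theorem \ref{t2} are automatic with $\mz = \V(\fm)$: Tor-rigidity of $N$ comes from Auslander--Lichtenbaum (Theorem \ref{AL}); for each $\fp \ne \fm$, $M_\fp$ is free and hence totally reflexive with vanishing Tate homology (Theorem \ref{Tate hom}(i)); Proposition \ref{grz} gives $\gr_R(\mz, M) = \depth_R(M)$, and $\gr_R(\mz, R) = d$ since $R$ is Cohen--Macaulay. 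In this setting Theorem \ref{t2} reduces to the following implication, which will drive everything: \emph{if $\hh^n_\fm(M\otimes_RN) = 0$ for some such $n$, then $\hh^i_\fm(M\otimes_RN) = 0$ for all $i \le n$, $M$ and $N$ are Tor-independent, and consequently $\depth_R(M\otimes_RN) = \depth_R(M) + \depth_R(N) - d$ by Theorem \ref{t4}.} I refer to this below as the \emph{key implication}.

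For part (i), if $\depth_R(M\otimes_RN) > m := \min\{\depth_R(M), \depth_R(N)\}$, then $\hh^m_\fm(M\otimes_RN) = 0$, and the key implication forces $\depth_R(M\otimes_RN) = \depth_R(M) + \depth_R(N) - d \le m$ (since both depths are at most $d$), a contradiction. The non-vanishing of $\hh^i_\fm(M\otimes_RN)$ for $\depth_R(M\otimes_RN) \le i \le m$ then follows because a hypothetical vanishing in this range would, via the key implication, propagate down to the index $\depth_R(M\otimes_RN)$, contradicting Grothendieck's non-vanishing. For part (ii), the hypothesis $\depth_R(M) + \depth_R(N) = d+n$ forces $n \le \min\{\depth_R(M), \depth_R(N)\}$. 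In (a), if $\hh^n_\fm(M\otimes_RN) = 0$, the key implication plus the depth formula give $\depth_R(M\otimes_RN) = n$, contradicting the very vanishing assumed; so $\hh^n_\fm(M\otimes_RN) \ne 0$, and the bound $\depth_R(M\otimes_RN) \le n$ is immediate. In (b), the assumed vanishing at $i < n$ again triggers the key implication, and the depth formula pins $\depth_R(M\otimes_RN) = n$ exactly.

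For (iii), a hypothesized vanishing $\hh^i_\fm(M\otimes_RN) = 0$ with $0 \le i \le \min\{\depth_R(M), \depth_R(N)\}$ yields, via the key implication, simultaneously $\hh^0_\fm(M\otimes_RN) = 0$ (so $\depth_R(M\otimes_RN) \ge 1$, using $M\otimes_RN \ne 0$ by Nakayama) and $\depth_R(M\otimes_RN) = \depth_R(M) + \depth_R(N) - d \le 0$, a contradiction. Part (iv) is (iii) specialized to $N = M$, since $\depth_R(M) \le \tfrac{1}{2}d$ is exactly $\depth_R(M) + \depth_R(M) \le d$. The only real work is in the reduction of the setup paragraph; no substantial obstacle is expected in any individual part, as each is a one-step contradiction once the key implication is in place.
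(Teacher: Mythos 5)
Your proof is correct and follows essentially the same route as the paper: the paper derives parts (i)–(iv) from Proposition \ref{bcid} and Theorem \ref{3.16}, which are themselves exactly your ``key implication'' (Theorem \ref{t2} with $\mz=\V(\fm)$, Tor-rigidity over regular local rings, plus the depth formula \ref{t4}), so you have merely inlined those intermediate statements. Your hypothesis checks (freeness off $\fm$ giving total reflexivity and vanishing Tate homology, $\gr_R(\V(\fm),-)=\depth_R(-)$ via Proposition \ref{grz}) and the use of Grothendieck non-vanishing to close the contradictions match the paper's argument in substance.
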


The proof of Theorem \ref{c1c} requires some preparation; we start with:

\begin{prop}\label{bcid} Assume $R$ is a local complete intersection and assume $\mz\subset\Spec R $ is a specialization-closed subset. Assume further that $M, N\in\md(R)$ are nonzero such that $\NF(M)\subseteq\mz\subseteq\Supp_R(M)$ and $\gr_R(\mz,N)\leq\gr_R(\mz,M)$. If $N$ is Tor-rigid, then $\gr_R(\mz,M\otimes_RN)\leq\gr_R(\mz,N)$. Moreover, $\hh^{i}_{\mz}(M\otimes_RN)\neq0$ for all $i$, where $\gr_R(\mz,M\otimes_RN)\leq i\leq\gr_R(\mz,N)$.
\end{prop}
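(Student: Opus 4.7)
The plan is to derive both claims from Theorem \ref{t2} by contradiction, using Proposition \ref{grz} and the Araya--Yoshino depth formula (Theorem \ref{t4}). First I would verify that the standing hypotheses of Theorem \ref{t2} are in force for every integer $i$ with $0 \leq i \leq \gr_R(\mz, N)$. Since $\NF(M) \subseteq \mz$, the localisation $M_\fp$ is free over $R_\fp$ for each $\fp \in \Spec R \setminus \mz$; in particular $M_\fp$ is totally reflexive, and $\pd_{R_\fp}(M_\fp)<\infty$ forces $\widehat{\Tor}_j^{R_\fp}(M_\fp,N_\fp) = 0$ for every $j \in \ZZ$ by Theorem \ref{Tate hom}(i). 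Moreover, $R$ being a complete intersection is Cohen--Macaulay, $N$ is Tor-rigid by hypothesis, and $\gr_R(\mz,M) \geq \gr_R(\mz,N) \geq i$, so all hypotheses of Theorem \ref{t2} are met.

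Set $n := \gr_R(\mz, N)$ and, via Proposition \ref{grz}, pick $\fp_0 \in \mz$ with $\depth_{R_{\fp_0}}(N_{\fp_0}) = n$. The first assertion reduces to showing $\hh^n_{\mz}(M \otimes_R N) \neq 0$. Suppose otherwise; Theorem \ref{t2} then yields $\hh^i_{\mz}(M \otimes_R N) = 0$ for every $i \leq n$ together with $\Tor_j^R(M,N) = 0$ for all $j \geq 1$. Because $\mz \subseteq \Supp_R(M)$ we have $M_{\fp_0} \neq 0$, and $N_{\fp_0} \neq 0$ since its depth is finite. As $R_{\fp_0}$ is a complete intersection, $\CI_{R_{\fp_0}}(M_{\fp_0}) < \infty$, so the depth formula at $\fp_0$ gives
\[
\depth_{R_{\fp_0}}\!\bigl((M \otimes_R N)_{\fp_0}\bigr) = \depth_{R_{\fp_0}}(M_{\fp_0}) + n - \depth(R_{\fp_0}) \leq n,
\]
the last inequality holding because $R_{\fp_0}$ is Cohen--Macaulay, whence $\depth_{R_{\fp_0}}(M_{\fp_0}) \leq \depth(R_{\fp_0})$. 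Applying Proposition \ref{grz} once more, $\gr_R(\mz, M \otimes_R N) \leq n$, so some $\hh^i_{\mz}(M \otimes_R N)$ with $i \leq n$ must be nonzero --- contradicting the vanishing above.

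Having established $g := \gr_R(\mz, M \otimes_R N) \leq n$, the moreover part is a direct bootstrap. The case $i = g$ is the definition of grade. For $g < i \leq n$, if $\hh^i_{\mz}(M \otimes_R N) = 0$ then Theorem \ref{t2}, whose hypotheses remain valid for this $i$ since $i \leq n \leq \gr_R(\mz,M)$, forces $\hh^j_{\mz}(M \otimes_R N) = 0$ for every $j \leq i$; taking $j = g$ contradicts the definition of $g$.

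The principal step is the depth-formula computation at $\fp_0$: it converts the vanishing of $\hh^n_{\mz}(M \otimes_R N)$ produced by Theorem \ref{t2} into a numerical inequality that clashes with the corresponding grade bound. Everything else is routine bookkeeping --- verifying the hypotheses of Theorem \ref{t2} and running a contrapositive that propagates non-vanishing throughout the range $[g,n]$.
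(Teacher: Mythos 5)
Your proof is correct and follows essentially the same route as the paper: the paper runs the same contradiction through Corollary \ref{c4} (its complete-intersection specialization of Theorem \ref{t2}) together with the Araya--Yoshino depth formula and (\ref{HD}.1) at a prime in $\mz$ realizing $\gr_R(\mz,N)$, and deduces the nonvanishing range from the same implication you use. Your direct appeal to Theorem \ref{t2} and the bound $\depth_{R_{\fp_0}}(M_{\fp_0})\leq\depth(R_{\fp_0})$ in place of the nonnegativity of $\CI_{R_{\fp_0}}(M_{\fp_0})$ is only a cosmetic difference.
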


\begin{proof}
Set $n=\gr_R(\mz,N)$. Assume that $\hh^{j}_{\mz}(M\otimes_RN)=0$ for some $0\leq j\leq n$. By Corollary \ref{c4},
\begin{equation}\tag{\ref{bcid}.1}
\gr_R(\mz, M\otimes_RN)>j \text{ and also } \Tor_k^R(M,N)=0 \text{ for all } k\geq 1. 
\end{equation}
By Proposition \ref{grz}, $\depth_{R_{\fp}}(N_\fp)=n$ for some $\fp\in\mz\subseteq\Supp_R(M)$.
As $\gr_R(\mz, M\otimes_RN)>j$, again by Proposition \ref{grz}, we conclude that
$\depth_{R_\fp}(M_\fp\otimes_{R_\fp}N_\fp)>j$. Therefore,
in view of Theorem \ref{t4}, (\ref{HD}.1) and (\ref{bcid}.1) we have
$$0\leq\CI_{R_\fp}(M_\fp)=\depth R_\fp-\depth_{R_\fp}(M_\fp)=\depth_{R_\fp}(N_\fp)-\depth_{R_\fp}(M_\fp\otimes_{R_\fp}N_\fp)<n-j.$$
It follows from the above inequality that $j<n$. In other words, $\hh^{n}_{\mz}(M\otimes_RN)\neq0$. In particular, $\gr_R(\mz,M\otimes_RN)\leq\gr_R(\mz,N)$. The second assertion follows from (\ref{bcid}.1).	
\end{proof}


In passing we record:

\begin{cor}\label{greg} Assume $R$ is local and regular, and $\mz\subset\Spec R $ is a specialization-closed subset. Assume further that $M, N\in\md(R)$ are nonzero such that $\NF(M)\cup\NF(N)\subseteq\mz\subseteq\Supp_R(M\otimes_RN)$. Then  $\gr_R(\mz,M\otimes_RN)\leq\min\{\gr_R(\mz,M), \gr_R(\mz,N)\}$. Moreover, $\hh^{i}_{\mz}(M\otimes_RN)\neq0$ for all $i$, where $\gr_R(\mz,M\otimes_RN)\leq i\leq\min\{\gr_R(\mz,M), \gr_R(\mz,N)\}$.
\end{cor}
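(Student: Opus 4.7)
The plan is to deduce this corollary directly from Proposition \ref{bcid}. First I would verify that the hypotheses of that proposition are met. A regular local ring is in particular a complete intersection (of codimension zero), so the ring-theoretic assumption is immediate. Moreover, by the Auslander–Lichtenbaum theorem (invoked in the proof of Theorem \ref{AL}), every finitely generated module over a regular local ring is Tor-rigid; hence both $M$ and $N$ qualify as the "Tor-rigid factor" required by Proposition \ref{bcid}.

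Next I would unpack the support condition. Since $\Supp_R(M\otimes_RN)=\Supp_R(M)\cap\Supp_R(N)$, the hypothesis $\mz\subseteq\Supp_R(M\otimes_RN)$ simultaneously yields $\mz\subseteq\Supp_R(M)$ and $\mz\subseteq\Supp_R(N)$. Combined with $\NF(M)\cup\NF(N)\subseteq\mz$, this gives the two "non-free locus in $\mz$, $\mz$ in support" conditions needed to apply Proposition \ref{bcid} to either of the pairs $(M,N)$ or $(N,M)$.

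Finally, without loss of generality assume $\gr_R(\mz,N)\leq\gr_R(\mz,M)$ (the other case is symmetric, by swapping the roles of $M$ and $N$). All hypotheses of Proposition \ref{bcid} are now in place for the pair $(M,N)$: $\NF(M)\subseteq\mz\subseteq\Supp_R(M)$, $\gr_R(\mz,N)\leq\gr_R(\mz,M)$, and $N$ is Tor-rigid. Applying the proposition gives
\[
\gr_R(\mz,M\otimes_RN)\leq\gr_R(\mz,N)=\min\{\gr_R(\mz,M),\gr_R(\mz,N)\},
\]
together with $\hh^{i}_{\mz}(M\otimes_RN)\neq 0$ for every $i$ in the range $\gr_R(\mz,M\otimes_RN)\leq i\leq\gr_R(\mz,N)$, which is the asserted range. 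There is no substantial obstacle here; the work has already been done in Proposition \ref{bcid}, and the only "content" of the corollary is to notice that regularity both upgrades every module to a Tor-rigid one and forces the support hypothesis to split across the two tensor factors.
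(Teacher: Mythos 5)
Your proposal is correct and follows exactly the paper's own argument: regularity makes every finitely generated module Tor-rigid (Auslander--Lichtenbaum), and the statement is then immediate from Proposition \ref{bcid}, with your unpacking of $\Supp_R(M\otimes_RN)=\Supp_R(M)\cap\Supp_R(N)$ and the WLOG symmetry simply making explicit what the paper leaves to the reader.
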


\begin{proof}
Recall that over regular local rings each finitely generated module is Tor-rigid \cite{A2, L}. Hence the assertion is clear by Proposition \ref{bcid}.
\end{proof}


The non-vanishing results we prove in the next theorem are, to the best of our knowledge, new even over regular local rings.

\begin{thm}\label{3.16} Assume $R$ is a local complete intersection of dimension $d$, $M\in\md_0(R)$ and $N\in\md(R)$ is Tor-rigid. Then, for a nonnegative integer $n$, the following hold:
\begin{enumerate}[\rm(i)]
\item{If $\depth_R(M)+\depth_R(N)= d+n$, then $\hh^n_{\fm}(M\otimes_RN)\neq0$. In particular, $\depth_R(M\otimes_RN)\leq n$. In addition, if $\hh^i_{\fm}(M\otimes_RN)=0$ for some $0\leq i<n$, then $\depth_R(M\otimes_RN)=n$.}
\item{If $\depth_R(M)+\depth_R(N)\leq d$, then $\hh^i_{\fm}(M\otimes_RN)\neq0$ for all $0\leq i\leq\min\{\depth_R(M),\depth_R(N)\}$.}
\item {$\depth_{R}(M\otimes_RN)=\max\{0,\depth_R(M)+\depth_R(N)-d\}. $}
\end{enumerate}
\end{thm}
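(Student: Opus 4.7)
The plan is to fix the specialization-closed subset $\mz=V(\fm)=\{\fm\}$ and apply Theorem~\ref{t2} (together with the Araya--Yoshino depth formula, Theorem~\ref{t4}) at various parameter values. Because $M\in\md_0(R)$, the non-free locus satisfies $\NF(M)\subseteq\mz$ and $M_\fp$ is free (in particular totally reflexive) over the regular local ring $R_\fp$ for every $\fp\neq\fm$; therefore Lemma~\ref{l} (equivalently Theorem~\ref{Tate2}) yields the Tate-Tor vanishing $\widehat{\Tor}^{R_\fp}_i(M_\fp,N_\fp)=0$ for all $i\in\ZZ$ and all $\fp\neq\fm$. Proposition~\ref{grz} identifies $\gr_R(\mz,X)$ with $\depth_R(X)$, and $\CI_R(M)<\infty$ holds since $R$ is a complete intersection, making the depth formula available as soon as all $\Tor^R_j(M,N)$ vanish. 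In particular, Theorem~\ref{t2} is directly applicable at any parameter $0\leq i\leq\min\{\depth_R(M),\depth_R(N)\}$.

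\textbf{Parts (i) and (ii).} For (i), set $n=\depth_R(M)+\depth_R(N)-d\geq0$; since $\depth_R(M),\depth_R(N)\leq d$, one has $n\leq\min\{\depth_R(M),\depth_R(N)\}$, so the grade hypothesis of Theorem~\ref{t2} holds at $n$. Arguing by contradiction, if $\hh^n_\fm(M\otimes_RN)=0$, Theorem~\ref{t2} yields $\Tor^R_j(M,N)=0$ for all $j\geq1$, and then Theorem~\ref{t4} forces $\depth_R(M\otimes_RN)=n$, contradicting the vanishing; hence $\hh^n_\fm(M\otimes_RN)\neq0$ and $\depth_R(M\otimes_RN)\leq n$. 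The addendum follows by running the same argument at any parameter $0\leq i<n$. For (ii), if $\hh^i_\fm(M\otimes_RN)=0$ for some $0\leq i\leq\min\{\depth_R(M),\depth_R(N)\}$, then Theorem~\ref{t2} supplies $\Tor^R_j(M,N)=0$ for $j\geq1$ and $\hh^j_\fm(M\otimes_RN)=0$ for $0\leq j\leq i$; the depth formula then gives $\depth_R(M\otimes_RN)=\depth_R(M)+\depth_R(N)-d\leq0$, so by Nakayama $\depth_R(M\otimes_RN)=0$, which contradicts $\hh^0_\fm(M\otimes_RN)=0$.

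\textbf{Part (iii) and the main obstacle.} I split by the sign of $n:=\depth_R(M)+\depth_R(N)-d$. When $n\leq0$, part (ii) gives $\hh^0_\fm(M\otimes_RN)\neq0$, hence $\depth_R(M\otimes_RN)=0=\max\{0,n\}$. When $n\geq0$, part (i) gives $\depth_R(M\otimes_RN)\leq n$, and I establish the reverse inequality by contradiction: suppose $\depth_R(M\otimes_RN)=g$ with $g<n$. The sub-case $g\geq1$ is immediate---$\hh^{g-1}_\fm(M\otimes_RN)=0$ with $g-1<n$ plus the addendum of (i) forces $\depth_R(M\otimes_RN)=n$, a contradiction. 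The main obstacle is the residual sub-case $g=0$ with $n\geq1$, and I plan to resolve it by combining Tor-rigidity of $N$ with the Jorgensen dependency formula~(\ref{depen}.1). Tor-rigidity guarantees that the vanishing of any single $\Tor^R_j(M,N)$ with $j\geq1$ propagates to $\Tor^R_i(M,N)=0$ for all $i\geq j$, and thus $\Tor^R_i(M,N)=0$ for $i\gg0$; then (\ref{depen}.1) computes $\sup\{i\geq1:\Tor^R_i(M,N)\neq0\}$ as $\sup_\fp\{\depth R_\fp-\depth_{R_\fp}(M_\fp)-\depth_{R_\fp}(N_\fp)\}$. Using $M\in\md_0(R)$, the non-maximal primes contribute at most $-\depth_{R_\fp}(N_\fp)\leq0$ while $\fp=\fm$ contributes $-n\leq0$, so the supremum is bounded by $\max\{-n,0\}=0$; hence $\Tor^R_i(M,N)=0$ for every $i\geq1$, and the depth formula returns $\depth_R(M\otimes_RN)=n\geq1$, contradicting $g=0$. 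Thus $\depth_R(M\otimes_RN)=n$ as claimed.
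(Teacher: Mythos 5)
Your proofs of parts (i) and (ii) are correct and essentially the paper's own argument: fix $\mz=\V(\fm)$, observe that $M\in\md_0(R)$ makes the hypotheses of Theorem \ref{t2} available (via Lemma \ref{l}/Proposition \ref{grz}), and derive a contradiction by feeding the Tor-vanishing output of Theorem \ref{t2} into the depth formula of Theorem \ref{t4}. The paper treats (iii) as an immediate consequence of (i) and (ii); your expansion of that step is where the trouble lies.

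The genuine gap is your treatment of the residual case $\depth_R(M\otimes_RN)=0$ with $n\geq1$. You assert that Tor-rigidity of $N$ ``guarantees that the vanishing of any single $\Tor_j^R(M,N)$ propagates\ldots, and thus $\Tor_i^R(M,N)=0$ for $i\gg0$.'' This is a non sequitur: Tor-rigidity is a purely conditional (propagation) property and provides no unconditional vanishing. For example, $N=k$ is Tor-rigid over every local ring, yet $\Tor_i^R(M,k)\neq0$ for all $i$ whenever $\pd_R(M)=\infty$. In the case you are trying to close you have not established the vanishing of any single $\Tor_j^R(M,N)$ (if you had, Theorem \ref{t4} would finish at once), so Jorgensen's dependency formula (\ref{depen}.1) --- whose hypothesis is precisely that $\Tor_i^R(M,N)=0$ for $i\gg0$, together with $\CI_R(M)<\infty$ --- cannot be invoked, and the contradiction is never reached. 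Your computation of the supremum in (\ref{depen}.1) is fine once eventual vanishing is known; this is exactly why the route works when $R$ is regular or when $\pd_R(N)<\infty$, where eventual vanishing is automatic, but over a general complete intersection the stated hypotheses give no such vanishing for a Tor-rigid $N$ of infinite projective dimension. So this sub-case remains open in your write-up. For comparison, the paper supplies no extra argument here either: its proof of (iii) cites only (i) and (ii), which, as your own case analysis correctly shows, handle $n\leq0$ and the sub-case $\depth_R(M\otimes_RN)\geq1$; you identified the right missing case, but the patch you propose does not work as written.
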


\begin{proof}
(i). As $\depth_R(M)+\depth_R(N)= d+n$, we conclude that $\depth_R(N)\geq n$. Assume contrarily that $\hh^n_{\fm}(M\otimes_RN)=0$. It follows from Theorem \ref{t2} that  $\Tor_i^R(M,N)=0$ for all $i>0$ and $\depth_R(M\otimes_RN)>n$. By Theorem \ref{t4}, $\depth_R(M)+\depth_R(N)=d+\depth_R(M\otimes_RN)>d+n,$ which is a contradiction. The second assertion can be proved similarly.

(ii). Set $n:=\min\{\depth_R(M),\depth_R(N)\}$. Assume contrarily that $\hh^i_{\fm}(M\otimes_RN)=0$ for some $0\leq i\leq n$. By  Theorem \ref{t2}
$\Tor_i^R(M,N)=0$ for all $i>0$ and $\depth_R(M\otimes_RN)>i$. Due to Theorem \ref{t4} $\depth_R(M)+\depth_R(N)=d+\depth_R(M\otimes_RN)>d$ which is a contradiction.

(iii). This is an immediate consequence of parts (i) and (ii).
\end{proof}

We are now ready to give a proof of Proposition \ref{c1c}:

\begin{proof}[Proof of Proposition \ref{c1c}] Recall that every finitely generated  module over a regular local ring is Tor-rigid \cite{A2, L}. Therefore part (i) follows from Proposition \ref{bcid} by setting $\mz=\V(\fm)$. Moreover, parts (ii) and (iii) are  immediate consequence of Theorem \ref{3.16}, and part (iv) is a special case of part (iii).
\end{proof}

We finish this section by giving two examples. Example \ref{e1section4} corroborates Theorem \ref{3.16} and show that our results are sharp. On the other hand, Example \ref{e} provides a detailed computation of the local cohomology of $\Omega^2k \otimes_R \Omega^2k$ over a regular local ring of dimension four; cf. Theorem \ref{c1c}.

In the following, we adopt the  notations of Theorem \ref{3.16} and collect some examples of specialization-closed subsets of $\Spec R$:

\begin{eg} \label{e1section4} $\phantom{}$ 
\begin{enumerate}[\rm(i)] 

\item Assume $R$ is a three-dimensional complete intersection ring and $x\in\fm$ is a non zero-divisor on $R$. Let $N=M=R/xR$. Then $\pd_R(M)=1$ so that $M$ is Tor-rigid.
We consider Theorem \ref{3.16}(i) for the case $n=1$; $\depth_R(M)+\depth_R(N)= d+n = 4$. However, $\hh^1_{\fm}(M\otimes_RN)=0$ since $M\otimes_RM\simeq M$ and $\depth_R(M)=2$. Note that 
$M\notin\md_0(R)$. Hence the example shows that the locally free assumption is necessary for Theorem \ref{3.16}(i).

\item Let $R=k[\![x,y,u,v]\!]/(xy-uv)$, $M=(x,u)$ and let $N=M^\ast$. Recall from Example \ref{t3ri} that
$M$ is locally free and maximal Cohen--Macaulay, and that $M\otimes_R N \cong \fm$.  We consider Theorem \ref{3.16}(i) for the cases where $n=3$ and $i=2$. Then it follows that $\depth_R(M)+\depth_R(N)= d+n=6$. Note that, although $\hh^{2}_{\fm}(M\otimes_R N)\simeq\hh^{2}_{\fm}(\fm)=0$, we have that $\depth_R(M\otimes_RN)=1 \neq n=3$.  As $N$ is not Tor-rigid, this example shows that the Tor-rigidity assumption is crucial for Theorem \ref{3.16}(i).

\item Assume $R$ is a five-dimensional regular local ring, and let $M=\Syz^4k$ and $N=\Syz^2k$. We consider Theorem \ref{3.16}(ii). Note that $M$ and $N$ are locally free on the punctured spectrum of $R$, $N$ is Tor-rigid, and $\depth_R(M)+\depth_R(N)=4+2>5=\dim R$. Moreover,  in view of \cite[5.5(ii)]{finitsup}, we have that $\hh_{\fm}^0(M\otimes_RN)=0$. This example shows that the depth condition in Theorem \ref{3.16}(ii) is necessary.
\end{enumerate}
\end{eg}

In the following, $\beta_n(M)$ denotes the $n$-th \emph{Betti number} of a given module $M$.

\begin{eg} \label{e} Assume $R$ is a four-dimensional regular local ring and let $M=\Omega^2k$. Then $M$ is locally free on the punctured spectrum of $R$, has depth two and
\begin{equation*}
\ell(\hh^i_{\fm}(M\otimes _RM))=\left\{
\begin{array}{rl}
\beta_4(k) & \  \   \   \   \   \ \  \   \   \   \   \ \text{if }\   \   i=0\\
\beta_3(k)& \  \   \   \   \   \ \  \   \   \   \   \ \text{if } \   \ i=1\\
2\beta_1(k)& \  \   \   \   \   \ \  \   \   \   \   \ \text{if }\   \  i=2\\
\beta_0(k)& \  \   \   \   \   \ \  \   \   \   \   \ \text{if }\   \  i=3
\end{array} \right.
\end{equation*} Therefore, we have that $\hh^i_{\fm}(M\otimes_RM)\neq0$ for all $i\leq 4$.
\end{eg}

\begin{proof} It follows from the Theorem \ref{ld1}(i) that $\hh^4_{\fm}(M\otimes_RM) \neq0$. Therefore, we proceed to show the other claims of the example.

Consider the following exact sequences:
$$
(a):\ 0\to \Omega^2k\to R^{\beta_1}\to \Omega^1k\to0, \ \ \ (b):\
0\to\Omega^1k\to R\to k\to 0.$$
The exact sequence (a) induces the following exact sequence:
 $$0\to \Tor_1^R(\Omega^1k,\Omega^2k)\to\Omega^2k\otimes_R\Omega^2k\to R^{\beta_1}\otimes_R\Omega^2k\to \Omega^1k\otimes_R\Omega^2k\to0.$$ We break it down into
$$ (a.1):\ 0\to \Tor_1^R(\Omega^1k,\Omega^2k)\to\Omega^2k\otimes_R\Omega^2k\to X\to0,$$
$$ (a.2):\ 0\to X\to R^{\beta_1}\otimes_R\Omega^2k\to \Omega^1k\otimes_R\Omega^2k\to0.$$
It follows from $(a.2)$ that $\depth(X)>0$. Hence the exact sequence $(a.1)$
induces the isomorphisms
\begin{equation}\tag{\ref{e}.1}
\hh^0_{\fm}(\Syz^2k\otimes_R\Syz^2k)\simeq\hh^0_{\fm}(\Tor_1^R(\Syz^1k,\Syz^2k))\simeq\Tor_1^R(\Syz^1k,\Syz^2k)\simeq\Tor_4^R(k,k).
\end{equation}
Therefore, $\ell(\hh^0_{\fm}(M\otimes_RM))=\beta_4(k)$.
Since $\Tor_1^R(\Syz^1k,\Syz^2k)$ has finite length, $\hh^i_{\fm}(\Tor_1^R(\Syz^1k,\Syz^2k))=0$ for all $i>0$. Thus, the exact sequence $(a.1)$ induces the isomorphism
$\hh^j_{\fm}(\Syz^2k\otimes_R\Syz^2k)\simeq\hh^j_{\fm}(X)$ for all $j>0$.
Note that $\hh^0_{\fm}(\Syz^2k)=\hh^1_{\fm}(\Syz^2k)=0$. Therefore, the exact sequence $(a.2)$ induces the exact sequence: $$0=\hh^0_{\fm}(R^{\beta_1}\otimes_R\Syz^2k)\to\hh^0_{\fm}(\Syz^2k\otimes_R \Syz^1k)\to\hh^1_{\fm}(X)\to\hh^1_{\fm}(R^{\beta_1}\otimes_R\Syz^2k)=0.$$
Hence, we get the following isomorphisms
\begin{equation}\tag{\ref{e}.2}
\hh^1_{\fm}(\Syz^2k\otimes_R\Syz^2k)\simeq\hh^1_{\fm}(X)\simeq\hh^0_{\fm}(\Syz^2k\otimes_R \Syz^1k).
\end{equation}
The exact sequence $(b)$ induces the following exact sequence:
$$0\lo \Tor_1^R(k,\Syz^2k)\to\Syz^1k\otimes_R\Syz^2k \lo \Syz^2k\to k\otimes_R\Syz^2k\lo0.$$
We break it down into the following short exact sequences:
$$(b.1):\ 0\to \Tor_1^R(k,\Syz^2k)\to\Syz^1k\otimes_R\Syz^2k\to Y\to0,$$
$$(b.2):\ 0\to Y\to \Syz^2k\to k\otimes_R\Syz^2k\to0.$$
It follows from $(b.2)$ that $\depth(Y)>0$. Hence, the exact sequence $(b.1)$ induces the isomorphism
\begin{equation}\tag{\ref{e}.3}
\hh^0_{\fm}(\Syz^2k\otimes \Syz^1k)\simeq\hh^0_{\fm}(\Tor_1^R(k,\Syz^2k))\simeq\Tor_1^R(k,\Syz^2k)\simeq\Tor_3^R(k,k).
\end{equation}
By (\ref{e}.2) and (\ref{e}.3), we have
\begin{equation}\tag{\ref{e}.4}
\hh^1_{\fm}(\Syz^2k\otimes_R\Syz^2k)\simeq\hh^0_{\fm}(\Syz^2k\otimes_R \Syz^1k)\simeq\Tor_3^R(k,k).
\end{equation}
Therefore, $\ell(\hh^1_{\fm}(M\otimes_RM))=\beta_3(k)$.
Recall that  $\hh^1_{\fm}(\Syz^2k) =\hh^3_{\fm}(\Syz^2k)=0$ and that $\hh^2_{\fm}(\Syz^2k)=k$ (see \cite[3.3(2)]{goto}).
Therefore, applying the functor $\Gamma_{\fm}(-)$ to the exact sequence $(a.2)$, we get the long exact sequence:
\begin{equation}\tag{\ref{e}.5}
0\to\hh^1_{\fm}(\Syz^2k\otimes_R\Syz^1k)\to\hh^2_{\fm}(X)\to\hh^2_{\fm}(R^{\beta_1}\otimes_R\Syz^2k)\to\hh^2_{\fm}(\Syz^2k\otimes_R\Syz^1k)\to\hh^3_{\fm}(X)\to0.
\end{equation}
Also, the exact sequences $(b.1)$ and $(b.2)$ induce the following isomorphisms:
\begin{equation}\tag{\ref{e}.6}
\hh^i_{\fm}(\Syz^2k\otimes_R\Syz^1k)\simeq\hh^i_{\fm}(Y)\ \text{and} \ \
\hh^{j}_{\fm}(Y)\simeq\hh^{j}_{\fm}(\Syz^2k)\ \text{ for all } i>0\text{ and }\ j>1,
\end{equation}
\begin{equation}\tag{\ref{e}.7}
k^{\oplus\beta_3}\simeq k\otimes_R\Syz^2k\simeq\hh^0_{\fm}(k\otimes_R\Syz^2k)\simeq\hh^1_{\fm}(Y)\simeq\hh^1_{\fm}(\Syz^2k\otimes_R\Syz^1k).
\end{equation}
Therefore, by using (\ref{e}.6) and (\ref{e}.7) one can rewrite the exact sequence (\ref{e}.5) as follows:
\begin{equation}\tag{\ref{e}.8}
0\to k^{\oplus\beta_3}\to\hh^2_{\fm}(X)\to k^{\oplus\beta_1}\stackrel{f}\to k\stackrel{g}\to\hh^3_{\fm}(X)\to0.
\end{equation}
{\bf Claim.} $\hh^3_{\fm}(M\otimes M)\cong\hh^3_{\fm}(X)\neq0$.

To establish the claim, note, by \cite[4.2]{HW}, we have the following isomorphisms:
\begin{equation}\tag{\ref{e}.9}
\hh^3_{\fm}(X)^v\simeq\hh^3_{\fm}(M\otimes M)^v\simeq \hh^2_{\fm}(M^\ast\otimes M^\ast)\simeq\hh^2_{\fm}(\Syz^3k\otimes \Syz^3k),
\end{equation}
where $(-)^v$ is the Matlis dual.
Recall that $\depth(\Syz^3k)+\depth(\Syz^3k)=6\lvertneqq 4+2+1$. In view of \cite[2.4]{HW1} we see that $\hh^2_{\fm}(\Syz^3k\otimes \Syz^3k)\neq 0$. Now the assertion follows from (\ref{e}.9) and the proof of the claim is complete.

Since $\hh^3_{\fm}(X)$ is non-zero and a homomorphic image of $k$,
we conclude that $\hh^3_{\fm}(X)\simeq k$. In particular, $\ell(\hh^3_{\fm}(M\otimes M))=\ell(\hh^3_{\fm}(X))=\beta_{0}(k)$.
It follows from the exact sequence (\ref{e}.8) that $g$ is an isomorphism and $f=0$
so that $\ell(\hh^2_{\fm}(M\otimes M))=\ell(\hh^2_{\fm}(X))=\beta_3(k)+\beta_1(k)= 2 \beta_1(k)$, as required.
\end{proof}


\section{On the depth of tensor powers of modules}

Auslander studied the torsion-freeness of tensor powers of modules and obtained a criteria for freeness. More precisely, over an unramified regular local  ring $R$ of dimension $d$, Auslander \cite{A2} proved that an $R$-module $M$ is free provided that the $d$-fold tensor product $M^{\otimes d}$ of $M$  is torsion-free. The aim of this section is to generalize Auslander's result over more general rings, and find some new criteria for freeness of modules in terms of the local cohomology; see Corollary \ref{t10c}.

We call $M\in \Md(R)$ a \emph{self-test} module provided that the following condition holds:
$$\Tor_i^R(M,M)=0 \text{ for all } i\gg0 \Longleftrightarrow\pd_R(M)<\infty.$$

It is an open question whether or not each module is self-test over local rings. However, it is known that, if $R$ is a complete intersection or Golod, then each module in $\md(R)$ is self-test; see \cite[Theorem IV]{AvBu} and  \cite[3.6]{J2}. We refer the reader to \cite{CH} for further details about the self-test modules.

The next theorem is the main result of this section; to the best of our knowledge, it is new, even if the ring in question is regular.

\begin{thm}\label{gt10} Assume $R$ is local, $\mz\subset\Spec R $ is a specialization-closed subset, and $M\in\md(R)$ is nonzero and Tor-rigid such that  $\NF(M)\subseteq\mz$. 
\begin{enumerate}[\rm(i)]
\item If $\gr_R(\mz,M)<\gr_R(\mz,R)$, then $\hh^{\gr_R(\mz,M)}_{\mz}(\otimes_R^nM)\neq0$ for all $n\geq 1$.
\item Assume, for some $n\geq2$, and for some $i$, where $i<\min\{\gr_R(\mz,M), \gr_R(\mz,R)\}$, that we have $\hh^i_{\mz}(\otimes_R^nM)=0$. Then:
\begin{enumerate}[\rm(a)] 
\item $\Tor_k^R(M,M)=0$ for all $k\geq 1$, and $\hh^j_{\mz}(\otimes_R^mM)=0$ for all $m=1, \ldots, n$ and for all $j=0, \ldots, i$. 
\item If $M$ is self-test, then $\pd_R(M)<(\depth R-i)/n$ and also $\dim_R(\NF(M))<\dim R-n-i$.
\end{enumerate}
\end{enumerate}
\end{thm}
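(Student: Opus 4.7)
My plan is strong induction on $n$, with the engine being a two-step descent: from $\hh^i_\mz(\otimes_R^n M)=0$ one first descends to $\hh^i_\mz(\otimes_R^{n-1}M)=0$ via Lemma~\ref{l1} (this step needs only $i\leq\gr_R(\mz,M)$ and $\gr_R(\mz,R)>i$, not a grade estimate on $\otimes_R^{n-1}M$); the inductive hypothesis then delivers $\gr_R(\mz,\otimes_R^{n-1}M)>i$, which is precisely what is required to re-apply the stronger Theorem~\ref{t2} to the pair $(\otimes_R^{n-1}M,\,M)$. This double move is the principal technical obstacle, as Theorem~\ref{t2} cannot be invoked at the outset for lack of a grade bound on $\otimes_R^{n-1}M$.

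For the base case $n=2$ of part (ii)(a), Theorem~\ref{t2} applies directly to $(M,M)$: $\NF(M)\subseteq\mz$ supplies both the off-$\mz$ total reflexivity and the Tate-Tor vanishing, the strict inequalities in the hypothesis supply the grade conditions, and $M$ is Tor-rigid by assumption; this yields $\Tor_k^R(M,M)=0$ for $k\geq 1$, $\hh^j_\mz(M\otimes_RM)=0$ for $j\leq i$, and $\hh^j_\mz(M)=0$ for $j\leq i$ (automatic from $\gr_R(\mz,M)>i$). For $n\geq 3$, Lemma~\ref{l1} applied to $(\otimes_R^{n-1}M,\,M)$ with vanishing degree $i$ yields $\hh^i_\mz(\otimes_R^{n-1}M)=0$; the induction hypothesis then supplies $\Tor_k^R(M,M)=0$ together with $\hh^j_\mz(\otimes_R^m M)=0$ for $m\leq n-1$ and $j\leq i$, so that $\gr_R(\mz,\otimes_R^{n-1}M)>i$; Theorem~\ref{t2} applied to $(\otimes_R^{n-1}M,\,M)$ now produces $\hh^j_\mz(\otimes_R^n M)=0$ for all $j\leq i$ and, as a bonus needed for (b), $\Tor_k^R(\otimes_R^{n-1}M,M)=0$ for $k\geq 1$. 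Part~(i) is this same induction run as a contradiction: the case $n=1$ is the definition of $s=\gr_R(\mz,M)$, and for $n\geq 2$ the assumption $\hh^s_\mz(\otimes_R^n M)=0$ combined with Lemma~\ref{l1}(a) (using the equality $s\leq\gr_R(\mz,M)$ and the inequality $\gr_R(\mz,R)\geq s+1$) forces $\hh^s_\mz(\otimes_R^{n-1}M)=0$, contradicting the inductive hypothesis.

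For part (ii)(b), (a) together with the self-test property of $M$ forces $\pd_R(M)<\infty$; if $M$ is free the asserted inequalities are immediate since $\NF(M)=\emptyset$, so assume $M$ is non-free, in which case $\fm\in\NF(M)\subseteq\mz$. The chain of Tor-vanishings $\Tor_k^R(\otimes_R^{m-1}M,\,M)=0$ ($2\leq m\leq n$) produced along the induction in (a) feeds Theorem~\ref{t4} (whose finite-CI-dim hypothesis is satisfied since $\pd_R(M)<\infty$), which telescopes to
\[
\depth_R(\otimes_R^n M)=n\depth_R(M)-(n-1)\depth R=\depth R-n\pd_R(M),
\]
the last equality by Auslander--Buchsbaum. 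Because $\fm\in\mz$, the vanishings $\hh^j_\mz(\otimes_R^n M)=0$ for $j\leq i$ give $\depth_R(\otimes_R^n M)>i$, hence $\pd_R(M)<(\depth R-i)/n$. The identical computation localised at any $\fp\in\NF(M)$ (localisation preserves the Tor-vanishing) yields $\depth R_\fp>i+n\pd_{R_\fp}(M_\fp)\geq i+n$, so $\dim R_\fp\geq i+n+1$; the universal inequality $\dim R\geq\dim R_\fp+\dim(R/\fp)$ then gives $\dim(R/\fp)<\dim R-n-i$ for every such $\fp$, whence $\dim_R(\NF(M))<\dim R-n-i$.
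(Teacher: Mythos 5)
Your proof is correct and follows essentially the same route as the paper: the descent $\hh^i_{\mz}(\otimes_R^n M)=0\Rightarrow\hh^i_{\mz}(\otimes_R^{n-1}M)=0$ via Lemma \ref{l1}(a), the grade and Tor-vanishing upgrade via Theorem \ref{t2} applied to $(\otimes_R^{n-1}M,M)$, and the telescoped depth computation with Auslander--Buchsbaum together with Proposition \ref{grz} for both bounds in (ii)(b) (the paper telescopes projective dimensions via Auslander's additivity where you use the Araya--Yoshino depth formula, and it phrases the non-free-locus bound as a contradiction rather than your direct inequality, but these are the same computations). The only organizational difference is that you interleave the descent and the Theorem \ref{t2} step in a single strong induction on $n$, whereas the paper first descends all the way down to get $\hh^i_{\mz}(\otimes_R^k M)=0$ for all $k\le n$ and then climbs back up with a separate inductive claim -- the same argument repackaged.
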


\begin{proof}
We prove the statements simultaneously.	For each $j\geq 1$, set $M_j:=\overset{j}{\otimes}M$. Assume $\hh^i_{\mz}(\overset{n}{\otimes}M)=0$ for some $i\leq\gr_R(\mz,M)$ and $n\geq2$. If $i<\gr_R(\mz,R)$, then by Lemma \ref{l1}(a), we get $\hh^i_{\mz}(M_{n-1})=0$. By repeating this argument, inductively we get the following:
\begin{equation}\tag{\ref{gt10}.1}
\hh^i_{\mz}(M_k)=0 \text{ for all } 1\leq k\leq n.
\end{equation}
In particular, $\hh^i_{\mz}(M)=0$. Hence, $i<\gr_R(\mz,M)$. This prove part (i).
Next we claim the following:\\
{\bf Claim:} $\gr_R(\mz,M_{j+1})>i$ and $\Tor_k^R(M_j,M)=0$ for all $1\leq j\leq n-1$ and $k>0$.\\
Proof of Claim. We argue by induction on $j$. By (\ref{gt10}.1), $\hh^i_{\mz}(M\otimes_RM)=0$. It follows from Theorem \ref{t2} that $\Tor_k^R(M,M)=0$ for $k>0$ and $\gr_R(\mz,M_2)>i$. Hence the case $j=1$ follows. Now assume $j>1$. By induction hypothesis, $\gr_R(\mz,M_{j})>i$ and $\Tor_k^R(M_{j-1},M)=0$ for all $k>0$. By (\ref{gt10}.1), $\hh^i_{\mz}(M_{j}\otimes_RM)=0$. It follows from Theorem \ref{t2} that $\Tor_k^R(M_j,M)=0$ for all $k>0$ and $\gr_R(\mz,M_{j+1})>i$. Thus, the proof of part (a), as well as that of the claim is completed. 

Now assume $M$ is self-test. By the Claim $\Tor_k^R(M_j,M)=0$ for all $1\leq j\leq n-1$ and $k>0$. Hence, $\pd_R(M_j)<\infty$ for all $1\leq j\leq n$ and so by \cite[1.3]{A1} we get the following equality:
\begin{equation}\tag{\ref{gt10}.2}
\pd_R(M_{j+1})=\pd_R(M)+\pd_R(M_{j}) \text{ for all } 1\leq j\leq n-1.
\end{equation}
It follows from Auslander--Buchsbaum formula and (\ref{gt10}.2) that 
$$\depth R-\depth_{R}(M_n)=\pd_R(M_n)=n.\pd_R(M).$$ By the claim, $\gr_R(\mz,M_n)>i$. In particular, by Proposition \ref{grz}, $\depth_R(M_n)>i$. Therefore, $\pd_R(M)<(\depth R-i)/n$. 

Let $\dim_R(\NF(M))=t=\dim(R/\fp)$ for some $\fp\in\NF(M)$. By the Claim  $\Tor_{k}^{R}(M_j,M)_\fp=0$ for  all $k>0$ and $1\leq j\leq n-1$. Thus, we get the equality $\pd_{R_\fp}((M_{j+1})_\fp)=\pd_{R_\fp}((M_{j})_\fp)+\pd_{R_\fp}(M_\fp) \text{ for all } 1\leq j\leq n-1.$ Therefore, we obtain the following equality:
\begin{equation}\tag{\ref{gt10}.3}
n.\pd_{R_{\fp}} (M_{\fp})=\pd_{R_{\fp}}((M_n)_{\fp}).
\end{equation}
Assume contrary that $\dim(R/\fp)=\dim_R(\NF(M))\geq \dim R-n-i$. Hence, we obtain the inequality:
\begin{equation}\tag{\ref{gt10}.4}
\depth R_{\fp}\leq\dim R_{\fp}\leq\dim R-\dim(R/\fp)\leq n+i.
\end{equation}
By the Claim and Proposition \ref{grz}, $\depth_{R_\fp}((M_n)_{\fp})>i$ and so by (\ref{gt10}.4) we have
\begin{equation}\tag{\ref{gt10}.5}
\pd_{R_{\fp}}((M_n)_{\fp})=\depth R_\fp-\depth_{R_\fp}((M_n)_{\fp})<n.
\end{equation}
It follows from (\ref{gt10}.3) and (\ref{gt10}.5) that $\fp\notin\NF(M)$ which is a contradiction.
\end{proof}

Next is a consequence of Theorem \ref{gt10}; part (iii) of Corollary \ref{t10} is a generalization of \cite[3.2]{A2}.

\begin{cor}\label{t10} Assume $R$ is a local complete intersection of dimension $d$ and $M\in\md(R)$ is  Tor-rigid (e.g., $R$ is regular). Then, for $n\geq 2$, the following hold:
\begin{enumerate}[\rm(i)]
\item If $\fa$ is an ideal of $R$, $\NF(M)\subseteq\V(\fa)$ and $\hh^i_{\fa}(\otimes_R^nM)=0$ for some $0\leq i<\gr_R(\fa,M)$, then $\pd_R(M)<(d-i)/n$. Therefore, if $d\leq i+n$, then it follows that $M$ is free.
\item If $\NF(M)\subseteq\V(\fm)$ and $\hh^{i}_{\fm}(\otimes_R^nM)=0$ for some $0\leq i<\depth_R(M)$ and $n\geq d-i$, then $M$ is free.
\item If $M_{\fp}$ is free for each $\fp \in \Ass(R)$ and $\otimes_R^nM$ is torsion-free for some $n\geq d$, then $M$ is free.
\end{enumerate}
\end{cor}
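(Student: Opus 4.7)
The plan is to deduce all three parts from Theorem \ref{gt10}(ii)(b), exploiting two facts: $R$ is Cohen--Macaulay, so $\depth R = d$ and $\gr_R(\fa, R) = \Ht \fa$ by Proposition \ref{grz}; and over the complete intersection $R$, every finitely generated module is self-test by the Avramov--Buchweitz theorem cited earlier.

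For part (i), I would take $\mz = \V(\fa)$. The hypotheses of Theorem \ref{gt10}(ii) are in force: $\NF(M) \subseteq \mz$ and $i < \gr_R(\mz, M)$ are given, while $i < \gr_R(\mz, R) = \Ht \fa$ follows from Cohen--Macaulayness together with the inequality $\depth_{R_\fp}(M_\fp) \leq \Ht \fp$ applied at any $\fp \in \V(\fa) \cap \Supp M$. Part (b) of that theorem delivers $\pd_R M < (d - i)/n$, and the hypothesis $d \leq i + n$ forces this bound to be at most $1$, whence $\pd_R M = 0$ and $M$ is free. Part (ii) is then the immediate specialization of (i) to $\fa = \fm$, using $\gr_R(\fm, M) = \depth_R M$ and noting that $n \geq d - i$ is precisely $d \leq i + n$.

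For part (iii), set $\mz = \{\fp \in \Spec R : \Ht \fp \geq 1\}$. Because $R$ is Cohen--Macaulay and hence unmixed, Proposition \ref{Serre}(i) identifies torsion-freeness of $\otimes_R^n M$ with $\hh^0_{\mz}(\otimes_R^n M) = 0$, while the hypothesis that $M_\fp$ is free for every $\fp \in \Ass R = \Min R$ translates to $\NF(M) \subseteq \mz$. To invoke Theorem \ref{gt10}(ii)(b) at $i = 0$, I need both $\gr_R(\mz, M) \geq 1$ and $\gr_R(\mz, R) \geq 1$. The second bound is immediate from Proposition \ref{grz}. For the first, I argue by contradiction via Theorem \ref{gt10}(i): if $\gr_R(\mz, M) = 0 < \gr_R(\mz, R)$, then $\hh^0_{\mz}(\otimes_R^n M)$ would be nonzero, contradicting torsion-freeness. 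With both grades at least one, Theorem \ref{gt10}(ii)(b) gives $\pd_R M < d/n \leq 1$ (using $n \geq d$), so $M$ is free.

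The main subtlety is the extraction of $\gr_R(\mz, M) \geq 1$ in (iii) from the non-vanishing result of Theorem \ref{gt10}(i), since torsion-freeness of the tensor power does not a priori force torsion-freeness of $M$ itself; this is where the forward direction of Theorem \ref{gt10}(i) supplies the missing input. A small boundary caveat is that Theorem \ref{gt10}(ii) requires $n \geq 2$, so the endpoint $d \leq 1, n = 1$ in (iii) falls outside its scope and must be handled by a small separate argument: when $d = 0$ the hypothesis directly gives $M = M_\fm$ free, and when $d = 1, n = 1$ the module $M$ is maximal Cohen--Macaulay and Tor-rigid, in which case freeness follows from the self-testing property together with the Auslander--Buchsbaum formula.
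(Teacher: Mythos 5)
Your argument follows the paper's proof essentially verbatim: parts (i) and (ii) are read off from Theorem \ref{gt10}(ii)(b) with $\mz=\V(\fa)$ (the paper likewise treats (ii) as the special case $\fa=\fm$), and part (iii) uses $\mz=\{\fp\in\Spec R\mid\Ht\fp\geq 1\}$ together with Proposition \ref{Serre}(i), Proposition \ref{grz}, and the non-vanishing statement of Theorem \ref{gt10}(i) to force $\gr_R(\mz,M)\geq 1$ before invoking Theorem \ref{gt10}(ii), exactly as the paper does. The only divergence is your closing caveat about the case $d\leq 1$, $n=1$, which is unnecessary since $n\geq 2$ is a standing hypothesis of the corollary; apart from that superfluous aside, the proof is correct and coincides with the paper's.
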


\begin{proof}
First we note, by \cite[Theorem IV]{AvBu}, that $M$ is a self-test module. 

(i) The assertion follows from Theorem \ref{gt10} by setting $\mz=\V(\fa)$.

(ii) This is a special case of part (i).
 
(iii) We set $\mz=\{\fp\in\Spec R\mid\Ht(\fp)>0\}$. Then, by Proposition \ref{Serre}(i) and our assumption, it follows that $\Gamma_{\mz}(\otimes_R^nM)=\T(\otimes_R^nM)=0$. Note by Proposition \ref{grz} that $\gr_R(\mz,R)>0$. Hence, by Theorem \ref{gt10}(i), we may assume that $\gr_R(\mz,M)>0$. Now the assertion follows from Theorem \ref{gt10}(ii).
\end{proof}


We give several examples and show that  the conclusions of Corollary \ref{t10} are sharp:

\begin{eg}\label{notvb1} $\phantom{}$  
\begin{enumerate}[\rm(i)]
		
\item  Assume $R$ is a three-dimensional regular local ring and let $M= \Syz^2_Rk$. Then $M$ has depth two, is not free but is locally free on the punctured spectrum of $R$, and is Tor-rigid. In view of \cite[5.3]{finitsup} one has that $ \hh^{0}_{\fm}(M\otimes_RM) =0$. This example shows that the  bound on $n$ in parts (ii) and (iii) of Corollary \ref{t10} is necessary and sharp.

\item Let $R=\mathbb{C}[\![x,y,z]\!]$ and let $M=R/xR$. Then it follows that $M$ is not free, $M\otimes_R M \cong M$ and $\depth_R(M)=2$. Moreover, we have that $\hh^{\leq 1}_{\fm}(\otimes_R^nM)= \hh^{\leq 1}_{\fm}(M\otimes_RM) \cong \hh^{\leq 1}_{\fm}(M)=0$. This shows that, letting $i=1$ and $n=2$, the assumption $\NF(M)\subseteq\V(\fm)$ in Corollary \ref{t10}(ii) cannot be removed.

\item Let $R=k[\![x,y]\!]/(xy)$ and let $M=R/xR$. Then $R$ is reduced, $\dim(R)=1=\depth_R(M)$, $M$ is locally free on the punctured spectrum of $R$, and $\hh^0_{\fm}(\otimes_R^nM)=0$ for all $n\geq 2$. However, $M$ is not Tor-rigid: $\Tor_1^R(M, N)=0\neq \Tor_2^R(M,N)$, where $N=R/yR$. This example shows that the Tor-rigidity assumption in Corollary \ref{t10}(iii)  is needed.
\end{enumerate} 
\end{eg}

We proceed with another consequence of Theorem \ref{gt10}. 

\begin{cor}\label{t10c} Assume $R$ is a local ring of positive depth. Set $d=\dim(R)$. Then, for an integer $n$ with $n\geq\max\{2,d\}$, the following hold:
\begin{enumerate}[\rm(i)]
\item $\depth_R(\otimes_R^n(\Omega^ik))=0$ for all $i=0, \ldots, d-1$. 
\item If $R$ is not  regular, then $\depth_R(\otimes_R^n(\Omega^ik))=0$  for all $i\geq 0$.
\item The sequence $f(n)=\depth_R(\otimes_R^n(\Omega^ik))$ is eventually constant for all $i\geq0$.
\end{enumerate}
\end{cor}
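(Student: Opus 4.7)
The plan is to apply Theorem \ref{gt10}(ii) to $M = \Omega^i k$, exploiting that syzygies of the residue field satisfy its rigidity hypotheses over any local ring. Before doing so I would verify three facts about $\Omega^i k$ valid for an arbitrary local $R$: (a) $\NF(\Omega^i k) \subseteq \{\fm\}$, since for $\fp \neq \fm$ we have $k_\fp = 0$ and the localized minimal resolution of $k$ splits, making $(\Omega^i k)_\fp$ free; (b) $\Omega^i k$ is Tor-rigid, via the dimension shift $\Tor_j^R(\Omega^i k, N) \cong \Tor_{j+i}^R(k, N)$ for $j \geq 1$ together with the classical identity $\pd_R N = \sup\{\ell : \Tor_\ell^R(k, N) \neq 0\}$; (c) $\Omega^i k$ is a self-test module, since $\Tor_j^R(\Omega^i k, \Omega^i k) \cong \Tor_{j+2i}^R(k, k)$ for $j \geq 1$, and by Auslander--Buchsbaum--Serre this vanishes for some $j$ exactly when $R$ is regular.

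For part (i), the case $i = 0$ is immediate since $\otimes_R^n k \cong k$ has depth zero. For $1 \leq i \leq d - 1$ I would argue by contradiction: suppose $\hh^0_\fm(\otimes_R^n \Omega^i k) = 0$. The hypothesis $\depth R \geq 1$ together with the iterated depth lemma applied to the short exact sequences $0 \to \Omega^{s+1} k \to R^{\mu_s} \to \Omega^s k \to 0$ gives $\depth_R(\Omega^i k) \geq \min\{i, \depth R\} \geq 1$, so the condition $0 < \min\{\gr_R(\fm, \Omega^i k), \gr_R(\fm, R)\}$ of Theorem \ref{gt10}(ii) holds with $\mz = \V(\fm)$ and index zero. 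That theorem then yields $\pd_R(\Omega^i k) < \depth R / n$. Being self-test, finiteness of the projective dimension forces $R$ to be regular, whence $\depth R = d$ and $\pd_R(\Omega^i k) = d - i$. The resulting inequality $n(d - i) < d$ combined with $n \geq \max\{2, d\}$ and $d - i \geq 1$ gives $n \leq n(d - i) < d \leq n$, a contradiction.

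Part (ii) proceeds by exactly the same contradiction: for $i \geq 1$, the assumption $\hh^0_\fm(\otimes_R^n \Omega^i k) = 0$ forces $\pd_R(\Omega^i k) < \infty$, which contradicts non-regularity of $R$ via the self-test property; the case $i = 0$ is again trivial. Part (iii) then follows by assembling the prior parts into cases: if $R$ is not regular, part (ii) gives $f(n) = 0$ for all $n \geq \max\{2, d\}$; if $R$ is regular and $0 \leq i \leq d - 1$, part (i) gives the same; and if $R$ is regular and $i \geq d$, then $\Omega^i k$ is free ($\cong R$ stably for $i = d$, and zero for $i > d$), so $\otimes_R^n \Omega^i k$ is itself free or zero and $f(n)$ is constant in $n$.

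The main delicate step is the numerical contradiction $n(d - i) \geq d$ in part (i); the bound $n \geq \max\{2, d\}$ in the statement is precisely calibrated to the admissible range $0 \leq i \leq d - 1$. Apart from this, the argument is a clean specialization of the general machinery developed in Theorem \ref{gt10}, combined with the well-known homological properties of syzygies of the residue field.
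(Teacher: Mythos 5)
Your proof is correct and follows essentially the same route as the paper: specialize Theorem \ref{gt10}(ii) to $\mz=\V(\fm)$ and $M=\Omega^i k$, using that syzygies of $k$ are Tor-rigid, locally free on the punctured spectrum, and self-test, then contradict the vanishing of $\hh^0_\fm$ via the bound $\pd_R(M)<\depth R/n$. The only cosmetic difference is the endgame (the paper notes $\pd_R(M)<d/n\leq 1$ forces $M$ free, contradicting $\depth_R(M)=i<d$ in the regular case, whereas you derive the equivalent numerical contradiction $n(d-i)<d$), which does not change the argument.
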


\begin{proof} (i). Without loss of generality, we may assume that $0<i<d$. Assume that we have $\depth_R(\otimes_R^nM)\geq 1$, where $M=\Omega^ik$. Then $\hh^0_{\fm}(\otimes_R^nM)=0$. Since $M$ is Tor-rigid, locally free on the punctured spectrum of $R$, and self-test, we conclude from Theorem \ref{gt10}(ii) that $\pd_R(M)<\depth R/n\leq d/n\leq 1$. As $d/n\leq 1$, we see $M$ is free, which is a contradiction. Because the freeness of $M$ implies the regularity of $R$, in this case, $M$ is a module of depth $i$ and hence it cannot be free.

The proof of part (ii) is similar to that of part (i). Also, part (iii) is a combination of parts (i) and (ii).	
	
	
	 
\end{proof}

We finish this section by noting that Corollary \ref{t10c} relaxes the regularity hypothesis in \cite[3.2]{HW1}. Furthermore, it removes the restriction on $\Omega^1k$ in \cite[6.9]{finitsup} and gives an
answer to \cite[Problem 1.6]{finitsup} in the nontrivial case.




\section{Applications in prime characteristic}


The aim of this section is to apply our previous results to the study of the vanishing of local cohomology modules of the Frobenius powers. We prove two main results, namely Theorems \ref{fr} and \ref{tt}. As we proceed, we state these two theorems, establish several corollaries of them, give examples, and defer the proofs of Theorems \ref{fr} and \ref{tt} to the end of this section.

In this section, $R$ denotes a local ring of prime characteristic $p>0$, and $\varphi:R\to R$ denotes the \emph{Frobenius endomorphism} given by $\varphi(a)=a^{p}$ for $a\in R$. Each iteration $\varphi^n$ of $\varphi$ defines a new $R$-module structure on the set $R$, and this $R$-module is denoted by $\up{\varphi^n}R$, where $a\cdot b = a^{p^{n}}b$ for $a, b \in R$. More generally, given $M\in\md(R)$, we denote by $\up{\varphi^n}M$ the finitely generated $R$-module $M$ with the $R$-action given by $r \cdot x =\varphi^n(r)x$ for $r\in R$ and $x \in M$. For the proof of several results, we make use of the following well-known result of Kunz \cite{Ku} without reference: $R$ is regular if and only if $\up{\varphi^r}R$ is a flat $R$-module.

Recall that $R$ is said to be \emph{F-finite} if the Frobenius endomorphism makes $R$ into a module-finite $R$-algebra, i.e., if $\up{\varphi}R$ is a finitely generated $R$-module. It is known that, if $R$ is a local complete intersection, then the Frobenius endomorphism $\up{\varphi^r}R$ (not necessarily a finitely generated $R$-module) is Tor-rigid for all $r\geq 1$; see, for example, \cite[5.1.1]{Mi}.

Next is the statement of our first theorem:

\begin{thm}\label{fr} Assume $R$ is a complete intersection ring, $\mz\subseteq \Spec R$ is a specialization-closed subset, and $M\in\md(R)$ is such that $\NF(M)\subseteq\mz$. Then, for an integer $n\geq 1$, the following hold:
	\begin{enumerate}[\rm(i)]
		\item If $\hh^{i+1}_{\mz}(M\otimes_R\up{\varphi^n}R)=0$ for some $i$ with $i<\gr_R(\mz,R)$, then  $\hh^{i}_{\mz}(M\otimes_R\up{\varphi^n}R)\cong\hh^{i}_{\mz}(M)\otimes_R\up{\varphi^n}R$.
		\item $\hh^{\gr(\mz,M)}_{\mz}(M\otimes_R\up{\varphi^n}R)\neq0$. Therefore, it follows that $\gr_R(\mz,M\otimes_R\up{\varphi^n}R)\leq\gr_R(\mz,M)$.
		\item If $\hh^{i}_{\mz}(M\otimes_R\up{\varphi^n}R)=0$ for some $i$ with $0\leq i<\gr_R(\mz,M)$, then it follows that $\pd_R(M)<\infty$. Therefore, one has that $\gr_R(\mz,M)=\gr_R(\mz,M\otimes_R\up{\varphi^n}R)$.
		\item{Either $\gr_R(\mz,M\otimes_R\up{\varphi^n}R)=0$ or $\gr_R(\mz,M\otimes_R\up{\varphi^n}R)=\gr_R(\mz,M)$.}
	\end{enumerate}
\end{thm}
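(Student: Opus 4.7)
The plan is to combine Theorem \ref{t2} and Lemma \ref{l1} with the Tor-rigidity of the Frobenius power $\up{\varphi^n}R$ over the complete intersection $R$. Three preliminary facts will be used repeatedly: first, that a sequence is $R$-regular if and only if it is $\up{\varphi^n}R$-regular (the $R$-action is through $p^n$-th powers and primes are radical), so $\depth_{R_\fp}(\up{\varphi^n}R_\fp)=\depth R_\fp$ for every prime $\fp$ and, in particular, $\gr_R(\mz,\up{\varphi^n}R)=\gr_R(\mz,R)$ by Proposition \ref{grz}; second, that $\NF(M)\subseteq\mz$ forces $M_\fp$ to be free, hence totally reflexive, for $\fp\notin\mz$, so the local Tate-vanishing hypotheses of Lemma \ref{l1} and Theorem \ref{t2} hold vacuously off $\mz$; and third, that each $R_\fp$ is Cohen-Macaulay, so $\depth M_\fp\leq\depth R_\fp$ and hence $\gr_R(\mz,M)\leq\gr_R(\mz,R)$.

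Part (i) is a direct application of Lemma \ref{l1}(c) with $N=\up{\varphi^n}R$ and with the integer $i+1$ playing the role of the $n$ appearing there; the inequality $i<\gr_R(\mz,R)$ delivers the required bound $i+1\leq\gr_R(\mz,\up{\varphi^n}R)$. For part (ii), set $g:=\gr_R(\mz,M)$ and suppose for contradiction that $\hh^g_{\mz}(M\otimes_R\up{\varphi^n}R)=0$; since $g\leq\gr_R(\mz,R)$, Theorem \ref{t2} applies and produces $\Tor^R_j(M,\up{\varphi^n}R)=0$ for every $j\geq 1$. Choose $\fp\in\mz$ with $\depth M_\fp=g$, guaranteed by Proposition \ref{grz}; the Araya-Yoshino depth formula (Theorem \ref{t4}) then yields $\depth(M\otimes_R\up{\varphi^n}R)_\fp=g$, which contradicts the assumed vanishing in view of Proposition \ref{grz}.

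Part (iii) begins analogously: the hypothesis $i<g\leq\gr_R(\mz,R)$ allows Theorem \ref{t2} to deliver $\Tor^R_j(M,\up{\varphi^n}R)=0$ for all $j\geq 1$. The main obstacle of the theorem then arises: we invoke the characterization, valid over complete intersections of prime characteristic, that vanishing of positive Tor against a Frobenius power forces $\pd_R(M)<\infty$. Once this is secured, the Peskine-Szpiro theorem ensures Tor-vanishing against every $\up{\varphi^m}R$, so the depth formula applied locally gives $\depth(M\otimes_R\up{\varphi^n}R)_\fp=\depth M_\fp$ at each $\fp\in\mz$; taking infima over $\mz$ produces the asserted grade equality. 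Finally, part (iv) combines (ii) and (iii): if $h:=\gr_R(\mz,M\otimes_R\up{\varphi^n}R)>0$, then $\hh^0_{\mz}(M\otimes_R\up{\varphi^n}R)=0$, part (ii) forces $g>0$, and part (iii) applied with $i=0$ yields $h=g$; otherwise $h=0$.
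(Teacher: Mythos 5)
Your overall skeleton matches the paper's: part (i) via Lemma \ref{l1}(c) applied with index $i+1$, parts (ii)--(iii) via Theorem \ref{t2} together with the Avramov--Miller theorem that vanishing of positive Tor against a Frobenius power forces $\pd_R(M)<\infty$, and (iv) by combining (ii) and (iii). The genuine gap is that you treat $\up{\varphi^n}R$ throughout as if it were a finitely generated $R$-module, which it need not be unless $R$ is F-finite. Every auxiliary result you invoke with $N=\up{\varphi^n}R$ --- Lemma \ref{l1}, Theorem \ref{t2}, Proposition \ref{grz} (which you apply both to $\up{\varphi^n}R$ and to $M\otimes_R\up{\varphi^n}R$), and especially the Araya--Yoshino depth formula (Theorem \ref{t4}) --- is stated and proved for finitely generated modules only. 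The paper handles this in two explicit steps that are absent from your argument: first, Remark \ref{rl1} observes that Lemma \ref{l1} and Theorem \ref{t2} remain valid for a non-finitely-generated $N$ satisfying the Nakayama property, which $\up{\varphi^n}R$ has by Peskine--Szpiro since $\Supp_R(M\otimes_R\up{\varphi^n}R)=\Supp_R(M)$; second, before any depth or grade computation involving $M\otimes_R\up{\varphi^n}R$, the paper passes to a faithfully flat F-finite extension (Remark \ref{FF}), so that all modules in sight become finitely generated and Proposition \ref{grz} genuinely applies. Your local use of Theorem \ref{t4} at primes of $\mz$ and your two uses of Proposition \ref{grz} on $M\otimes_R\up{\varphi^n}R$ fall outside the scope of those statements as proved, and no reduction is offered; even the quantity $\depth_{R_\fp}\bigl((M\otimes_R\up{\varphi^n}R)_\fp\bigr)$ is only tied to $\gr_R(\mz,-)$ in the paper for finitely generated modules.

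Once the F-finite reduction is in place, your route to (ii) and to the grade equality in (iii) --- Tor-vanishing plus the depth formula at each prime of $\mz$ --- is a legitimate variant; the paper instead uses the Peskine--Szpiro equality $\pd_{R_\fp}(M_\fp)=\pd_{R_\fp}(M_\fp\otimes_{R_\fp}\up{\varphi^n}R_\fp)$ together with Auslander--Buchsbaum and Proposition \ref{grz}, thereby never invoking a depth formula for the Frobenius module. One smaller point: your claim that $\gr_R(\mz,M)\leq\gr_R(\mz,R)$ ``since $R_\fp$ is Cohen--Macaulay'' ignores primes of $\mz$ outside $\Supp_R(M)$, where $\depth_{R_\fp}(M_\fp)=\infty$ by convention; this inequality, which is what lets you check hypothesis (iii) of Theorem \ref{t2} for $N=\up{\varphi^n}R$, deserves a word (the paper is admittedly also terse on this).
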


The first corollary of Theorem \ref{fr} determines a new freeness criteria in terms of the vanishing of local cohomology modules. Recall that $\md_0(R)$ denotes the category of all finitely generated $R$-modules that are locally free on the punctured spectrum of $R$.

\begin{cor}\label{fr1} Assume $R$ is a $d$-dimensional complete intersection, and $M\in\md_0(R)$ is a maximal Cohen-Macaulay $R$-module. If $\hh^{i}_{\fm}(M\otimes_R\up{\varphi^n}R)=0$ for some integer $n\geq 1$ and some $i$ with $0\leq i<d$, then $M$ is free.
\end{cor}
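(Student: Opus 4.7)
The plan is to derive Corollary \ref{fr1} as a direct specialization of Theorem \ref{fr}(iii) to the case of the maximal ideal, combined with the Auslander--Buchsbaum formula. The work is essentially setting up the hypotheses correctly.

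First I would specialize the specialization-closed subset by taking $\mz = \V(\fm) = \{\fm\}$. The hypothesis that $M \in \md_0(R)$ immediately gives $\NF(M) \subseteq \{\fm\} = \mz$, so the standing assumption $\NF(M)\subseteq\mz$ of Theorem \ref{fr} is satisfied. Next I would identify $\gr_R(\mz,M)$ with $\depth_R(M)$: since $\mz$ consists of the single prime $\fm$, Proposition \ref{grz} yields $\gr_R(\mz,M) = \depth_{R_{\fm}}(M_{\fm}) = \depth_R(M)$, which equals $d$ because $M$ is maximal Cohen--Macaulay. In particular, the hypothesis $\hh^{i}_{\fm}(M\otimes_R\up{\varphi^n}R)=0$ for some $0\leq i < d$ is exactly the hypothesis $\hh^{i}_{\mz}(M\otimes_R\up{\varphi^n}R)=0$ for some $0\leq i < \gr_R(\mz,M)$ required by Theorem \ref{fr}(iii).

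Applying that part of Theorem \ref{fr} then gives $\pd_R(M) < \infty$. To finish, I would invoke the Auslander--Buchsbaum formula: since $R$ is local, $M$ is finitely generated with finite projective dimension, and $\depth_R(M) = d = \depth(R)$, we conclude
\[
\pd_R(M) = \depth(R) - \depth_R(M) = 0,
\]
so $M$ is free.

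There is really no main obstacle here beyond verifying that the hypotheses of Theorem \ref{fr}(iii) are met; the nontrivial mathematical content (the Frobenius rigidity input and the structure of $M\otimes_R \up{\varphi^n}R$) has already been absorbed into Theorem \ref{fr}. The only small check worth mentioning is that $M$ is nonzero so that $\depth_R(M) = d$ is genuinely positive when $d > 0$; if $d = 0$, the hypothesis $0 \leq i < d$ is vacuous and there is nothing to prove, while freeness of $M$ would anyway follow trivially from MCM-ness over an Artinian complete intersection only in degenerate situations, so the statement is really about $d \geq 1$.
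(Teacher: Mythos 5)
Your proof is correct and follows the paper's route exactly: the paper also deduces the corollary directly from Theorem \ref{fr}(iii) with $\mz=\V(\fm)$, the finiteness of $\pd_R(M)$ then giving freeness via the Auslander--Buchsbaum formula since $M$ is maximal Cohen--Macaulay. Your setup checks (identifying $\gr_R(\mz,M)=\depth_R(M)=d$ via Proposition \ref{grz}) are precisely the verifications the paper leaves implicit.
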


\begin{proof} This claim is an immediate consequence of Theorem \ref{fr}(iii).
\end{proof}

We now give an example and show that the conclusion of Corollary \ref{fr1} is sharp.

\begin{eg} \label{efr1} $\phantom{}$  
\begin{enumerate}[(i)]
\item Let $R=\mathbb{F}_2[[x,y]]/(x^2)$ and let $M=R/xR$. Then $M$ is maximal Cohen--Macaulay. Also, it follows that $M\otimes_R\up{\varphi }R\cong \frac{R}{x^2R}=R$. Therefore, we have that $\hh^{d-1}_{\fm}(M\otimes_R\up{\varphi }R)=\hh^{0}_{\fm}(R)=0$. As $M$ is not free, this example shows that the locally free hypothesis in Corollary \ref{fr1} is necessary.
\item Let $R=k[\![x,y]\!]$ and let $M=k$. Then it follows that $M$ is locally free over punctured spectrum of $R$, and $\hh^1_{\fm}(M\otimes_R\up{\varphi }R)=\hh^1_{\fm}( R/\fm^{[p]})=0$. However, $M$ is not maximal Cohen-Macaulay. This example shows that the conclusion of Corollary \ref{fr1} may not be true if the module $M$ in question is not maximal Cohen-Macaulay.
\end{enumerate}
\end{eg}

If $\X$ is a subset of $\Spec R$ (where $R$ is a Noetherian ring, not necessarily local or of prime characteristic $p$) and $M\in\md(R)$, we say $M$ is locally free on $\X$ provided that $M_\fp$ is a free $R_\fp$-module for all $\fp\in\X$. For an integer $i\geq 0$, we set $$\X^i(R)=\{\fp\in \Spec(R)\mid\Ht(\fp)\leq i\}.$$

Next, in passing, we make use of Theorem \ref{fr} and obtain a new proof of a result of Celikbas, Iyengar, Piepmeyer and Wiegand \cite{CIPW2}.

\begin{cor} (\cite[3.4]{CIPW2}) Assume $R$ is a complete intersection and $M\in \md(R)$ is locally free on $\X^0(R)$. Then, for an integer $n\geq 1$, the following conditions are equivalent:
\begin{enumerate}[\rm(i)]
\item $M\otimes_R\up{\varphi^n}R$ is a torsion-free $R$-module.
\item $M$ is a torsion-free $R$-module such that $\pd_R(M)<\infty$.
\end{enumerate}
\end{cor}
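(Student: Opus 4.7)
My plan is to convert the torsion-freeness statements into vanishing statements for a local cohomology module with respect to a suitable specialization-closed subset of $\Spec R$, so that Theorem \ref{fr} can be applied directly. To this end I would set $\mz=\{\fp\in\Spec R\mid\Ht(\fp)\geq 1\}$. Since $R$ is a complete intersection it is Cohen--Macaulay, hence unmixed, and Proposition \ref{Serre}(i) identifies $\Gamma_{\mz}$ with the usual torsion submodule; therefore a finitely generated $R$-module $N$ is torsion-free precisely when $\hh^0_{\mz}(N)=0$. The hypothesis that $M$ is locally free on $\X^0(R)$ translates exactly into $\NF(M)\subseteq\mz$, so the setup of Theorem \ref{fr} is in force.

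For (i)$\Rightarrow$(ii), I would argue as follows. Torsion-freeness of $M\otimes_R\up{\varphi^n}R$ amounts to $\hh^0_{\mz}(M\otimes_R\up{\varphi^n}R)=0$, while Theorem \ref{fr}(ii) guarantees $\hh^{\gr_R(\mz,M)}_{\mz}(M\otimes_R\up{\varphi^n}R)\neq 0$; the only way these are compatible is if $\gr_R(\mz,M)\geq 1$, i.e., $M$ itself is torsion-free. Invoking Theorem \ref{fr}(iii) with $i=0<1\leq\gr_R(\mz,M)$ then immediately yields $\pd_R(M)<\infty$.

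For (ii)$\Rightarrow$(i), I plan to combine a classical theorem of Peskine--Szpiro (see also Herzog) with the depth formula of Theorem \ref{t4}. The Peskine--Szpiro theorem asserts that over a Noetherian local ring of positive characteristic, finite projective dimension of $M$ forces $\Tor_i^R(M,\up{\varphi^n}R)=0$ for every $i\geq 1$. Since $\pd_R(M)<\infty$ implies $\CI_R(M)<\infty$, the depth formula can then be applied after localizing at any $\fp\in\mz$; combined with the elementary observation that a sequence in $\fp R_\fp$ is $R_\fp$-regular if and only if it is $\up{\varphi^n}R_\fp$-regular (so that $\depth_{R_\fp}(\up{\varphi^n}R_\fp)=\depth(R_\fp)$), this gives $\depth_{R_\fp}((M\otimes_R\up{\varphi^n}R)_\fp)=\depth_{R_\fp}(M_\fp)$. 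Torsion-freeness of $M$ over the Cohen--Macaulay ring $R$ forces $\Ass_R(M)\subseteq\Min(R)$, so for every $\fp\in\mz$ one has $\fp R_\fp\notin\Ass_{R_\fp}(M_\fp)$ and hence $\depth_{R_\fp}(M_\fp)\geq 1$; by Proposition \ref{grz} this upgrades to $\gr_R(\mz,M\otimes_R\up{\varphi^n}R)\geq 1$, and hence $M\otimes_R\up{\varphi^n}R$ is torsion-free.

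The principal obstacle is the Tor-vanishing step in (ii)$\Rightarrow$(i). The Tor-rigidity of $\up{\varphi^n}R$ over a complete intersection, which is the main external input powering Section~6, only propagates Tor-vanishing upwards, so on its own it cannot eliminate $\Tor_i^R(M,\up{\varphi^n}R)$ in the intermediate range $1\leq i\leq\pd_R(M)$; the more delicate Peskine--Szpiro/Herzog vanishing theorem is therefore the one essential input that steps outside the machinery developed in the paper itself.
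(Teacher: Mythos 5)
Your proof of (i)$\Rightarrow$(ii) is essentially the paper's own: the paper also takes $\mz=\{\fp\in\Spec R\mid \Ht\fp>0\}$, uses Proposition \ref{Serre}(i) to read torsion-freeness as positivity of $\gr_R(\mz,-)$, and extracts both conclusions from Theorem \ref{fr} (the paper quotes parts (iii) and (iv); your use of part (ii) in place of (iv) carries the same information). For (ii)$\Rightarrow$(i) your route is a mild variant: the paper simply reuses the equality $\gr_R(\mz,M)=\gr_R(\mz,M\otimes_R\up{\varphi^n}R)$ already established inside the proof of Theorem \ref{fr}(iii), via Peskine--Szpiro's equality $\pd_{R_\fp}(M_\fp)=\pd_{R_\fp}(M_\fp\otimes_{R_\fp}\up{\varphi^n}R_\fp)$ together with Auslander--Buchsbaum and Proposition \ref{grz}, whereas you rederive the same local depth equality from Peskine--Szpiro Tor-vanishing plus the depth formula. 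These are two faces of the same computation, so your closing claim that the Peskine--Szpiro input ``steps outside the machinery developed in the paper'' is not accurate: the paper's own proof of Theorem \ref{fr} rests on exactly this input (and the corollary's proof in the paper explicitly points back to it).

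The one point you must repair is a finiteness issue in (ii)$\Rightarrow$(i). Unless $R$ is F-finite, neither $\up{\varphi^n}R$ nor $M\otimes_R\up{\varphi^n}R$ is a finitely generated $R$-module, while Theorem \ref{t4} (Araya--Yoshino) is stated for pairs in $\md(R)$ and Proposition \ref{grz} for modules in $\md(R)$; as written, neither citation literally applies to your situation, and ``$\CI_R(M)<\infty$'' does not cure this, since the finiteness hypothesis sits on the second argument. The paper avoids the problem by first passing to the faithfully flat F-finite extension of Remark \ref{FF} (this reduction is carried out inside the proof of Theorem \ref{fr}, which is why the paper can then just quote that proof). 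You should either perform the same reduction, or replace Theorem \ref{t4} by Auslander's depth formula, which is valid for an arbitrary second argument once $\pd_R(M)<\infty$ and the positive Tor modules vanish, and replace the appeal to Proposition \ref{grz} by the direct remark that $\Gamma_{\mz}(N)\neq0$ forces some $\fp\in\mz$ to lie in $\Ass_R(N)$, i.e.\ $\depth_{R_\fp}(N_\fp)=0$. With either fix your argument is correct.
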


\begin{proof}
Set $\mz:=\{\fp\in\Spec R\mid\Ht(\fp)>0\}$. 
Note by Proposition \ref{Serre}(i) that $\Gamma_{\mz}(L)=\T(L)$ for each $L\in\Md(R)$. 
Therefore, $L$ is torsion-free if and only if $\gr_R(\mz,L)>0$. 

(i)$\Rightarrow$(ii). By our assumption $\gr_R(\mz, M\otimes_R\up{\varphi^n}R)>0$. It follows from parts (iii) and (iv) of Theorem \ref{fr} that $\gr_R(\mz,M)=\gr_R(\mz, M\otimes_R\up{\varphi^n}R)>0$ 
and $\pd_R(M)<\infty$. In particular, $M$ is torsion-free.

(ii)$\Rightarrow$(i). Since $M$ has finite projective dimension, as we have seen in the proof of Theorem \ref{fr}(iii), $\gr_R(\mz, M\otimes_R\up{\varphi^n}R)=\gr_R(\mz, M)$. As $M$ is torsion-free, $\gr_R(\mz, M)>0$. Therefore $ M\otimes_R\up{\varphi^n}R$ has a positive grade with respect to $\mz$. This, in turn,  is equivalent to the  torsion-free property of the module. The proof is now complete.
\end{proof}

Recall that the singular locus $\Sing(R)$ of $R$ is $\Sing(R)=\{\fp\in\Spec R\mid R_\fp \text{ is not regular}\}$. Note that $\Sing(R)$ is a Zariski closed set provided that $R$ is excellent.

\begin{cor} Assume $R$ is an $F$-finite complete intersection and let $\fa$ be a proper ideal of $R$ such that $\Sing(R)\subseteq\V(\fa)$. If $\hh^i_{\fa}(\up{\varphi^r}M\otimes_R\up{\varphi^s}R)=0$ for some integers $r\geq 1$, $s\geq 1$ and $i$ with $0\leq i<\gr_R(\fa,M)$, then $R$ is regular.
\end{cor}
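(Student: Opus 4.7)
I would set $\mz := V(\fa)$, which is specialization-closed, and aim to apply the main theorem of Section 6 (Theorem \ref{tt}), which underlies the arguments of Theorem \ref{fr}, in order to deduce that $\up{\varphi^s}R$ has finite projective dimension over $R$; the regularity of $R$ would then follow from the classical Kunz--Herzog criterion in positive characteristic.

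The first preparatory observation is that $\NF(\up{\varphi^s}R)\subseteq\mz$. Indeed, for any prime $\fp\notin\mz$, the hypothesis $\Sing(R)\subseteq V(\fa)$ yields that $R_\fp$ is regular. Since $R$ is F-finite, so is $R_\fp$, and Kunz's theorem gives that $\up{\varphi^s}R_\fp$ is flat; being finitely generated over the local ring $R_\fp$ it is therefore free. This puts $\up{\varphi^s}R$ into a role where the non-free-locus hypothesis of Theorem \ref{fr} is satisfied.

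The second observation is that Frobenius push-forward preserves depth: a sequence $x_1,\ldots,x_k\in R$ is $\up{\varphi^r}M$-regular iff $x_1^{p^r},\ldots,x_k^{p^r}$ is $M$-regular iff $x_1,\ldots,x_k$ is $M$-regular (and this localizes). By Proposition \ref{grz}, one gets $\gr_R(\fa,\up{\varphi^r}M)=\gr_R(\fa,M)$, so the numerical hypothesis $i<\gr_R(\fa,M)$ is exactly the required input for the theorem applied to $\up{\varphi^r}M$ on the other side. Combined with the earlier bound $\gr_R(\fa,\up{\varphi^s}R)=\gr_R(\fa,R)\geq\gr_R(\fa,M)>i$, the grade conditions on both tensor factors are in place.

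Now invoking Theorem \ref{tt} (or Theorem \ref{fr}(iii) together with the Tor-rigidity of $\up{\varphi^r}M$ and $\up{\varphi^s}R$ over the complete intersection $R$), the vanishing $\hh^i_\fa(\up{\varphi^r}M\otimes_R\up{\varphi^s}R)=0$ in the specified grade range forces $\pd_R(\up{\varphi^s}R)<\infty$. Finally, by Kunz's theorem in the F-finite setting (with Herzog's refinement that finite projective dimension of any Frobenius push-forward $\up{\varphi^s}R$ is equivalent to regularity), we conclude that $R$ is regular.

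The hardest step, and the main place where care is needed, is matching the asymmetric tensor product $\up{\varphi^r}M\otimes_R\up{\varphi^s}R$ to the hypothesis format of Theorem \ref{fr}(iii): the latter is stated with $\up{\varphi^n}R$ playing the role of the Tor-rigid partner, so one must either appeal to the more flexible Theorem \ref{tt} of Section 6 (tailored to Frobenius twists on both sides), or perform a role-swap using the non-free-locus property of $\up{\varphi^s}R$ established above together with the fact that every Frobenius push-forward is Tor-rigid over a complete intersection ring.
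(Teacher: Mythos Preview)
Your approach goes off track at the ``role-swap'' step, and the gap is real. The tensor product $\up{\varphi^r}M\otimes_R\up{\varphi^s}R$ already has exactly the form required by Theorem~\ref{fr}(iii): it is $M'\otimes_R\up{\varphi^s}R$ with $M'=\up{\varphi^r}M$, and your own grade computation $\gr_R(\fa,\up{\varphi^r}M)=\gr_R(\fa,M)$ is precisely the numerical input that theorem needs. The paper applies Theorem~\ref{fr}(iii) directly to obtain $\pd_R(\up{\varphi^r}M)<\infty$ and then invokes \cite[Theorem~1.1]{AHIY}, which says that finite projective dimension of $\up{\varphi^r}M$ for \emph{any} nonzero finitely generated $M$ forces $R$ to be regular. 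You reached for Kunz--Herzog (the case $M=R$) instead, and this is what pushed you toward trying to conclude $\pd_R(\up{\varphi^s}R)<\infty$, which the available tools do not give.

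Both of your proposed workarounds fail. Theorem~\ref{tt} is formulated only for $\hh^n_{\fm}$ and Serre conditions on $M$; it says nothing about $\hh^i_{\fa}$ for a general ideal $\fa$, so it cannot be invoked here. More seriously, your claim that ``every Frobenius push-forward is Tor-rigid over a complete intersection ring'' is false for $\up{\varphi^r}M$ with $M\neq R$: take $R=k[\![x,y]\!]/(xy)$ with $k$ perfect, $M=R/(x)$, and test against $N=R/(y)$. Using the periodic resolution $\cdots\to R\xrightarrow{x}R\xrightarrow{y}R\to N\to 0$, one computes $\Tor_1^R(N,\up{\varphi}M)=0$ but $\Tor_2^R(N,\up{\varphi}M)\cong k[\![y]\!]/(y^p)\neq 0$. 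So the role-swap via Theorem~\ref{t2} cannot be executed, and the detour through $\pd_R(\up{\varphi^s}R)$ collapses. The missing ingredient in your argument is the AHIY criterion, which makes the swap unnecessary.
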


\begin{proof}
	This is an immediate consequence of Theorem \ref{fr}(iii) and \cite[Theorem 1.1]{AHIY}.	
\end{proof}

Here is the second main result of this section; see \ref{SL} for the definition of Serre's condition.

\begin{thm}\label{tt} Assume $R$ is a $d$-dimensional complete intersection, where $d\geq 1$, and $M\in\md(R)$ is locally free on $\X^{d-n-1}(R)$ for some $n$ with $0\leq n \leq d-1$. Assume further that the following hold:
	\begin{enumerate}[\rm(i)]
	    \item $\hh^n_{\fm}(M\otimes_R\up{\varphi^r}R)=0$ for some integer $r\geq 1$.
	    \item $M$ satisfies Serre's condition $(S_n)$.    
	\end{enumerate}
	 Then it follows that $\pd_R(M)<d-n$.
\end{thm}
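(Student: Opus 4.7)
The plan is to apply Proposition~\ref{c4} with $N=\up{\varphi^r}R$ and $\mz=V(\fm)$, noting that $N$ is Tor-rigid over $R$ by Miller's theorem (cited just before Theorem~\ref{fr}). The hypothesis $\hh^n_\fm(M\otimes_RN)=0$ is then exactly the local-cohomology vanishing required in part (iii) of that proposition, and I will aim to conclude that $\Tor_j^R(M,N)=0$ for all $j\geq 1$ together with $\hh^i_\fm(M\otimes_RN)=0$ for $0\leq i\leq n$. Note that Serre's condition $(S_n)$ forces $\depth_R M\geq \min(n,d)=n$, while $\depth_R N=d$ since $R$ is Cohen--Macaulay; by Proposition~\ref{grz} these translate to $\gr_R(V(\fm),M)\geq n$ and $\gr_R(V(\fm),N)\geq n$, as needed.

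The delicate step is checking that $M_\fp$ is a maximal Cohen--Macaulay (equivalently, totally reflexive, since $R_\fp$ is complete intersection) $R_\fp$-module for every $\fp\in\Supp(M)\setminus\{\fm\}$. For $\fp$ with $\Ht(\fp)\leq d-n-1$ this is immediate from the local freeness assumption. For $\fp$ with $d-n\leq \Ht(\fp)\leq n$, Serre's condition $(S_n)$ gives $\depth_{R_\fp}M_\fp\geq \Ht(\fp)$, hence the MCM property. The remaining range $n<\Ht(\fp)<d$ (only nonempty when $n\leq d-2$) is the main obstacle; I expect to dispatch it by Noetherian induction on $d$: localizing at such $\fp$ produces a complete intersection $R_\fp$ of strictly smaller dimension $\Ht(\fp)$ in which $M_\fp$ is locally free on $\X^{\Ht(\fp)-n'-1}(R_\fp)$ and satisfies $(S_{n'})$ for $n'=n-(d-\Ht(\fp))$, so an inductive application (after translating the local cohomology hypothesis via Proposition~\ref{grz} and the grade description) produces $\pd_{R_\fp}M_\fp<\infty$, and then the Auslander--Buchsbaum formula forces $M_\fp$ to be MCM.

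Once this local condition is in place, Proposition~\ref{c4} yields both $\Tor_j^R(M,N)=0$ for all $j\geq 1$ and $\hh^i_\fm(M\otimes_RN)=0$ for $0\leq i\leq n$; in particular, $\depth_R(M\otimes_RN)\geq n+1$. The vanishing of all higher $\Tor_j^R(M,\up{\varphi^r}R)$ over the complete intersection $R$ combined with Frobenius rigidity implies $\pd_R M<\infty$, so $\CI_R(M)<\infty$ and Theorem~\ref{t4} (Araya--Yoshino) gives the depth formula
\[
\depth_R M+\depth_R N=\depth R+\depth_R(M\otimes_RN).
\]
Substituting $\depth_R N=d=\depth R$ and $\depth_R(M\otimes_RN)\geq n+1$ yields $\depth_R M\geq n+1$. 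Finally, Auslander--Buchsbaum gives $\pd_R M=d-\depth_R M\leq d-n-1<d-n$, as required. The main technical hurdle is thus the inductive verification of the MCM hypothesis at primes of height strictly between $n$ and $d$; everything else is a routine bookkeeping argument combining Proposition~\ref{c4}, Frobenius rigidity, and the depth formula.
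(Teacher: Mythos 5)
Your overall strategy hinges on verifying hypothesis (i) of Proposition \ref{c4} with $\mz=\V(\fm)$, namely that $M_\fp$ is maximal Cohen--Macaulay for \emph{every} non-maximal prime $\fp\in\Supp_R(M)$, and this is where the argument breaks down. First, the proposed Noetherian induction has no way to get started: the hypothesis $\hh^n_{\fm}(M\otimes_R\up{\varphi^r}R)=0$ concerns local cohomology supported at $\fm$ only, and it does not localize to give the vanishing $\hh^{n'}_{\fp R_\fp}(M_\fp\otimes_{R_\fp}\up{\varphi^r}(R_\fp))=0$ that your inductive application of the theorem over $R_\fp$ would require; neither Proposition \ref{grz} nor any grade argument converts a single vanishing at the closed point into data at a smaller prime. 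Second, and decisively, the intermediate statement you are trying to prove is false under the theorem's hypotheses. Take $R=k[\![x,y,z,w]\!]$ (a complete intersection of characteristic $p$), $M=(x,y,z)$, $n=1$, and any $r\geq 1$. Then $M$ is locally free on $\X^{2}(R)$ and satisfies $(S_1)$, and since $R$ is regular $\up{\varphi^r}R$ is flat, so $\hh^1_{\fm}(M\otimes_R\up{\varphi^r}R)\cong\hh^1_{\fm}(M)\otimes_R\up{\varphi^r}R=0$ because $\depth_R(M)=2$; thus all hypotheses of Theorem \ref{tt} hold (and indeed $\pd_R(M)=2<d-n=3$). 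Yet at the height-three prime $\fp=(x,y,z)$ the module $M_\fp$ is the maximal ideal of a three-dimensional regular local ring and has depth $1$, so it is not maximal Cohen--Macaulay. Hence Proposition \ref{c4} is simply not applicable with $\mz=\V(\fm)$ in valid instances of the theorem, and no refinement of the induction can rescue the claim.

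For comparison, the paper's proof deliberately avoids any such local MCM assertion at intermediate primes. After reducing to the F-finite case via Remark \ref{FF} (a reduction you also need, since otherwise $\up{\varphi^r}R\notin\md(R)$ and results like the depth formula of Theorem \ref{t4} do not apply as stated), it uses local duality (Theorem \ref{Ld}) to turn hypothesis (i) into $\Ext^{d-n}_R(M\otimes_R\up{\varphi^r}R,R)=0$, then the spectral-sequence Lemma \ref{duality} — whose grade hypothesis on $\Tor_i^R(M,\up{\varphi^r}R)$ is exactly what the local freeness on $\X^{d-n-1}(R)$ provides — to deduce $\Ext^{d-n}_R(M,\up{\varphi^r}R)=0$. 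From there the four-term sequence (\ref{a1}.4) together with the Avramov--Miller theorem yields $\pd_R(\Tr\Omega^{d-n}M)<\infty$ and $\Ext^{d-n}_R(M,R)=0$, and the syzygy characterization of $(S_n)$ finishes the proof. If you want to salvage a Tor-rigidity route, you would have to work with a specialization-closed set large enough that hypothesis (i) of Proposition \ref{c4} only demands freeness in heights at most $d-n-1$, but then the given vanishing $\hh^n_{\fm}(-)=0$ no longer matches the required $\hh^n_{\mz}(-)=0$; the duality argument of the paper is what bridges precisely this mismatch.
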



The following example corroborates Theorem \ref{tt} and show that the result is sharp.

\begin{eg} $\phantom{}$  
\begin{enumerate}[(i)]
\item Let $R=k[\![x,y,z, w]\!]$ and let $M=\Syz^2_Rk$. Then $M$ is locally free on the punctured spectrum of $R$, $\depth_R(M)=2$, $M$ satisfies $(S_2)$, and $\hh^2_{\fm}(M)\neq0$. Note, as $R$ is regular, $\up{\varphi^r}R$ is a flat $R$-module. Hence we conclude that $\hh^2_{\fm}(M\otimes_R\up{\varphi^r}R)\cong \hh^2_{\fm}(M)\otimes_R\up{\varphi^r}R \neq0$. Hence, since $\pd(M)=2$, letting $d=\dim(R)=4$ and $n=2$, we deduce from this example that the conclusion of Theorem \ref{tt} may not hold if the condition $\hh^n_{\fm}(M\otimes_R\up{\varphi^r}R)=0$ is removed.

\item Let $R=k[\![x,y]\!]$ and let $M=R\oplus k$. Letting $n=1$ and $d=\dim(R)=2$, we see that $M$ is locally free over $\X^{d-n-1}(R)$. Then it follows that
$$\hh^1_{\fm}(M\otimes_R\up{\varphi }R)=\hh^1_{\fm}( \up{\varphi }R)\oplus \hh^1_{\fm}( R/\fm^{[p]})=0.$$
Note that $\depth_R(M)=0$ so that $M$ $M$ doest not satisfy $(S_1)$. As $\pd(M)=2=d>d-n$, we deduce from this example that the conclusion of Theorem \ref{tt} may not hold if the module $M$ in question does not satisfy $(S_n)$.

\item Let $R=k[\![x,y]\!]$ and let $M=R$. Then, letting $d=\dim(R)=2$ and $n=1$, we see that $M$ satisfies all the hypotheses of Theorem \ref{tt}. Furthermore, $\pd_R(M)=d-n-1$. Hence we deduce from this example that the bound for the projective dimension of $M$ stated in the theorem is sharp.

\end{enumerate}
\end{eg}

Next we give two corollaries of Theorem \ref{tt}. The first one, Corollary \ref{regcr1}, is an immediate consequence of the theorem. We should note that the hypothesis that $R$ is reduced in Corollary \ref{regcr1} is necessary, even in the complete case; see Example \ref{efr1}.

\begin{cor}\label{regcr1} If $R$ is a $d$-dimensional complete intersection, and $M\in\md(R)$ is a maximal Cohen-Macaulay $R$-module such that $M$ is locally free on $\X^{0}(R)$ (e.g., $R$ is reduced) and $\hh^{d-1}_{\fm}(M\otimes_R\up{\varphi^r}R)=0$ for some integer $r\geq 1$, then $M$ is free.
\end{cor}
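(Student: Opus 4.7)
The plan is to deduce the corollary directly from Theorem \ref{tt} by choosing $n = d-1$. First I would verify that this choice of $n$ satisfies the range restriction $0 \leq n \leq d-1$, which is fine (the statement tacitly assumes $d \geq 1$ since $\hh^{d-1}_{\fm}$ appears). With this choice, the condition that $M$ be locally free on $\X^{d-n-1}(R) = \X^{0}(R)$ is exactly the hypothesis of the corollary. The parenthetical aside "e.g., $R$ is reduced" is justified by observing that if $R$ is reduced then the minimal primes coincide with the associated primes, and at each minimal prime $\fp \in \X^0(R)$ the localization $R_\fp$ is a field, so every finitely generated $R_\fp$-module, in particular $M_\fp$, is free.

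Next I would check the Serre condition hypothesis of Theorem \ref{tt}, namely that $M$ satisfies $(S_{d-1})$. Since $M$ is maximal Cohen-Macaulay, one has $\depth_{R_\fp}(M_\fp) = \Ht(\fp)$ for every $\fp \in \Supp_R(M)$ (using that $R$ is itself Cohen-Macaulay as a complete intersection), so $M$ satisfies $(S_k)$ for every $k \geq 0$, including $k = d-1$. The remaining hypothesis, the vanishing of $\hh^{d-1}_{\fm}(M \otimes_R \up{\varphi^r}R)$, is given.

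Applying Theorem \ref{tt} therefore yields $\pd_R(M) < d - n = d - (d-1) = 1$, so $\pd_R(M) = 0$. Since $M$ is finitely generated over the local ring $R$, this forces $M$ to be free, completing the proof. There is no real obstacle here; the corollary is essentially the extremal case $n = d-1$ of Theorem \ref{tt}, and all that the proof needs to do is record that the maximal Cohen-Macaulay hypothesis supplies the Serre condition $(S_{d-1})$ automatically.
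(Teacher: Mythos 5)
Your proof is correct and is exactly the paper's argument: the paper treats Corollary \ref{regcr1} as an immediate consequence of Theorem \ref{tt} with $n=d-1$, and your write-up simply makes explicit the routine verifications (the maximal Cohen--Macaulay hypothesis gives $(S_{d-1})$, reducedness gives local freeness on $\X^0(R)$, and $\pd_R(M)<1$ forces freeness).
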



Next we prove the second corollary of Theorem \ref{tt}, namely Corollary \ref{Fr}; the corollary determines a new criteria for regularity in terms of the Frobenius endomorphism. Furthermore, Theorem \ref{ft} advertised in the introduction is a special case of Corollary \ref{Fr}. First we recall:

Given an integer $n\geq0$, a ring $R$ (where $R$ is a Noetherian ring, not necessarily local, or of characteristic $p$) satisfies Serre's condition $(R_n)$ provided that $R_\fp$ is a regular local ring for all $\fp\in\X^{n}(R)$. Note that $R$ is reduced  if and only if $R$ satisfies Serre's conditions $(R_0)$ and $(S_1)$, and $R$ is normal  if and only if $R$ satisfies Serre's conditions $(R_1)$ and $(S_2)$.

The following remark is well-known, but we record it here as we use it for the proof of Corollary \ref{Fr}, as well as for the proofs of Theorems \ref{fr} and \ref{tt}, to reduce the argument to the F-finite case.

\begin{rmk}\label{FF} There is a local ring extension $(S,\fn)$ of $(R,\fm)$ such that $\fm S=\fn$, $S$ is $F$-finite, $S$ is faithfully flat over $R$, and $S$ has infinite residue field.
For example, letting $\widehat{R}\cong k[[x_1,\cdots, x_m]]/I$ for some ideal $I$, we can pick $S=\bar{k}[[x_1,\cdots,x_m]]/I\bar{k}[[x_1,\cdots, x_m]]$, where $\bar{k}$ denotes the algebraic closure of $k$.
Note, if $M$ is an $R$-module, then it follows that $(M\otimes_R\up{\varphi^n}R)\otimes_R S\cong (M\otimes_RS)\otimes_S\up{\varphi^n}S$.
\end{rmk}


\begin{cor}\label{Fr} If $R$ is a $d$-dimensional complete intersection such that $R$ satisfies Serre's condition $(R_{n-1})$ and $ \hh^{d-n}_{\fm}(\up{\varphi^r}R\otimes_R\up{\varphi^s}R)=0$ for some integers $r \geq 1$, $s \geq 1$ and $n \geq 1$, then $R$ is regular.
\end{cor}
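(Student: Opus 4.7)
The plan is to derive this from Theorem~\ref{tt} applied to $M = \up{\varphi^s}R$, with the integer $n$ of that theorem replaced by $d-n$. Since Theorem~\ref{tt} requires a finitely generated module, I would first reduce to the $F$-finite case via Remark~\ref{FF}: there is a faithfully flat local extension $R \to (S,\fn)$ with $\fn = \fm S$, $S$ being $F$-finite and a $d$-dimensional complete intersection (enlarging the residue field preserves a regular sequence defining $R$ as a quotient of a regular local ring). The hypothesis $(R_{n-1})$ passes to $S$ because $R \to S$ has geometrically regular fibers, and $\hh^{d-n}_{\fn}(\up{\varphi^r}S \otimes_S \up{\varphi^s}S) = 0$ follows from flat base change for local cohomology together with the identity $\up{\varphi^t}R \otimes_R S \cong \up{\varphi^t}S$. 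Since regularity descends along faithfully flat maps, it suffices to establish the claim for $S$; thus I may assume $R$ is $F$-finite and consequently $\up{\varphi^s}R \in \md(R)$.

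Focusing on the interesting range $1 \leq n \leq d$ (the case $n > d$ follows directly from $(R_{n-1})$, which in that range includes $\fm$ itself), I would verify the three hypotheses of Theorem~\ref{tt} for $M = \up{\varphi^s}R$ with $d-n$ in place of $n$. First, by Kunz's theorem, $\up{\varphi^s}R_\fp$ is flat over $R_\fp$ precisely when $R_\fp$ is regular, so the standing assumption $(R_{n-1})$ gives local freeness of $M$ on $\X^{n-1}(R) = \X^{d-(d-n)-1}(R)$. Second, the vanishing $\hh^{d-n}_{\fm}(M \otimes_R \up{\varphi^r}R) = 0$ is exactly the hypothesis of the corollary. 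Third, for Serre's condition $(S_{d-n})$, I would use that $\up{\varphi^s}R_\fp$ is a maximal Cohen--Macaulay $R_\fp$-module: a system of parameters $x_1,\ldots,x_{\Ht(\fp)}$ of $R_\fp$ is an $R_\fp$-regular sequence since $R$ is Cohen--Macaulay, and it remains regular after raising to $p^s$-th powers; combined with the observation that $x_1,\ldots,x_{\Ht(\fp)}$ is $\up{\varphi^s}R_\fp$-regular if and only if $x_1^{p^s},\ldots,x_{\Ht(\fp)}^{p^s}$ is $R_\fp$-regular, this yields $\depth_{R_\fp}(M_\fp) = \Ht(\fp) \geq \min\{d-n,\Ht(\fp)\}$.

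Theorem~\ref{tt} now yields $\pd_R(\up{\varphi^s}R) < d - (d-n) = n$, so $\up{\varphi^s}R$ has finite projective dimension as an $R$-module. By the Kunz--Herzog regularity criterion (finite flat dimension of some Frobenius power of $R$ forces $R$ to be regular), the ring $R$ is regular. The main difficulty I anticipate is the reduction to the $F$-finite setting: one must confirm that the complete intersection property, the condition $(R_{n-1})$, and the local cohomology vanishing all descend along $R \to S$, and that regularity of $S$ implies that of $R$. Once this bookkeeping is handled, the verification of the three hypotheses of Theorem~\ref{tt} is essentially routine, and the closing appeal to the classical regularity criterion for Frobenius completes the argument.
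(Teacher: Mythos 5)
Your proposal follows essentially the same route as the paper's proof: reduce to the $F$-finite case via Remark \ref{FF} so that $\up{\varphi^s}R\in\md(R)$, apply Theorem \ref{tt} to $M=\up{\varphi^s}R$ with the theorem's integer taken to be $d-n$ (local freeness on $\X^{n-1}(R)$ from $(R_{n-1})$ and Kunz's theorem, Serre's condition $(S_{d-n})$ from $\up{\varphi^s}R$ being maximal Cohen--Macaulay), and conclude regularity from $\pd_R(\up{\varphi^s}R)<n<\infty$ by the Kunz--Rodicio criterion; your separate disposal of the range $n>d$ is a harmless extra precaution the paper leaves implicit.

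The one point where you argue differently, and where your justification is not available at this level of generality, is the transfer of $(R_{n-1})$ from $R$ to $S$. You invoke geometric regularity of the fibers of $R\to S$; but these fibers factor through the formal fibers of $R$, and a complete intersection in the sense used here (a condition on $\widehat{R}$) need not be excellent, so there is no reason for those fibers to be regular. The paper transfers the condition without any fiber hypothesis: for $P\in\X^{n-1}(S)$ and $\fp=P\cap R$, flatness of $R\to S$ gives $\Ht\fp\le\Ht P\le n-1$, so $R_\fp$ is regular and $\up{\varphi}(R_\fp)$ is flat by Kunz; base-changing along the flat map $R_\fp\to S_P$ and using the compatibility $\up{\varphi}R\otimes_RS\cong\up{\varphi}S$ recorded in Remark \ref{FF} shows that $\up{\varphi}(S_P)$ is flat over $S_P$, whence $S_P$ is regular by Kunz again. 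If you replace your fiber argument with this base-change-of-Frobenius argument (which uses only what Remark \ref{FF} already provides), your proof coincides with the paper's.
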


\begin{proof} It suffices to prove that we may assume $R$ is F-finite: in that case, $\up{\varphi^r}R$ is a (finitely generated) maximal Cohen-Macaulay $R$-module, and hence the assertion follows from Theorem \ref{tt} and \cite[2.1]{Ku}; see also \cite[Theorem 2]{Rod}.

We consider the local ring extension $(S,\fn)$ of $(R,\fm)$ that follows from Remark \ref{FF}. Then, by the flat base change theorem along with the independence theorem for local cohomology modules, we have 
$$\hh^{d-n}_{\fm}(\up{\varphi^r}R\otimes_R\up{\varphi^s}R)\otimes_RS\cong
\hh^{d-n}_{\fm}((\up{\varphi^r}R\otimes_R\up{\varphi^s}R)\otimes_RS)\cong\hh^{d-n}_{\fm}(\up{\varphi^r}S\otimes_S\up{\varphi^s}S)\cong\hh^{d-n}_{\fn}(\up{\varphi^r}S\otimes_S\up{\varphi^s}S). $$

Now let $P\in \X^{n-1}(S)$ and set $\fp=P\cap R$. Then it follows that $\fp \in \X^{n-1}(R)$. Therefore, $R_{\fp}$ is regular and hence $\up{\varphi}(R_{\fp})$ is flat as an $R_{\fp}$-module. 
As the localization commutes  with the Frobenius map, we see:
$$\up{\varphi}(S_{P})\cong(\up{\varphi}S)_{P} \cong (\up{\varphi}R\otimes_RS)_{P}\cong (\up{\varphi}R)_{\fp}\otimes_{R_{\fp}}  S_{P}\cong  \up{\varphi}(R_{\fp})\otimes_{R_{\fp}}  S_{P}.$$
Hence, since $R_{\fp} \to S_{P}$ is flat, we conclude that $\up{\varphi}(S_{P}) $ is flat as an $S_{P}$-module. Furthermore, if $S$ is regular, then so is $R$. Consequently, as the hypotheses and the conclusion do not change by passing to $S$, we may assume $R$ is F-finite, as claimed. This finishes the proof.
\end{proof}

We now proceed and give proofs of Theorems \ref{fr} and \ref{tt}. First we note:

\begin{rmk}\label{rl1} We say $N\in \Md(R)$ satisfies in the \emph{Nakayama's property} if $\Supp_R(M)\cap\Supp_R(N)=\Supp_R(M\otimes_RN)$ for each $M\in\md(R)$. Clearly, every finitely generated module over a local ring (not necessarily of prime characteristic) satisfies in the Nakayama's  property. We should note that Lemma \ref{l1} and Theorem \ref{t2} holds even if $N$ is not finitely generated, but satisfies in the Nakayama's property.
\end{rmk}

\begin{proof}[Proof of Theorem \ref{fr}] First note, by \cite[I.1.5]{PS}, we have that $\Supp_R(M\otimes_R\up{\varphi^n}R)=\Supp_R(M)$. In particular, $\up{\varphi^n}R$ satisfies in the Nakayama's property; see Remark \ref{rl1}. Also, we have $\gr_R(\mz,\up{\varphi^n}R)=\gr_R(\mz,R)$, and that $\up{\varphi^n}R$ is Tor-rigid.\\	
(i). This  follows from Lemma \ref{l1}(c) and Remark \ref{rl1}.\\			
(ii) and (iii). Assume that $\hh^i_{\mz}(M\otimes_R\up{\varphi^n}R)=0$ for some $i$ with $0\leq i\leq\gr_R(\mz,M)$. Then, by Theorem \ref{t2} and Remark \ref{rl1}, we have
$\Tor_j^R(M,\up{\varphi^n}R)=0$ for all  $j\geq 1$. In view of  \cite[Theorem]{AvMi}, $\pd_R(M)<\infty$, as claimed by (iii). Now, we proceed to give a proof for part (ii).

We are going to reduce to the $F$-finite case. To this end, we pass to the local ring $S$ that follows from Remark \ref{FF} and, without losing of generality, we may assume that $R$ is $F$-finite. 
We know, by \cite[1.7]{PS}, the following equality holds for each $\fp \in \Spec(R)$: 
\begin{equation}\tag{\ref{fr}.2}
\pd_{R_\fp}(M_\fp)=\pd_{R_{\fp}}(M_\fp\otimes_{R_\fp}\up{\varphi^n}R_\fp).
\end{equation}
Therefore, it follows from (\ref{fr}.2) and Proposition \ref{grz}  that $\gr_R(\mz,M)=\gr_R(\mz,M\otimes_R\up{\varphi^n}R)$ and so the assertion follows. \\
(iv). This is clear by parts (ii) and (iii).
\end{proof}


To prove Theorem \ref{tt}, we need:

\begin{dfn}(\cite{AB})\label{gra} If $R$ is Noetherian (not necessarily local, or of prime characteristic) and if $M,N \in \Md(R)$, then the \emph{grade} of the pair $(M,N)$ is defined as:
$$\gr_R(M,N)=\inf\{i\mid\Ext^i_R(M,N)\neq 0\}.$$
\end{dfn}

Note that the grade of $(M, N)$ is not necessarily finite, in general. If $R$ is local and $M,N \in \md(R)$, then we have $\gr_R(M,N)=\gr_R(\ann(M),N)$, which is equal to the length of maximal regular sequence on $N$ in $\ann_R(M)$; in this case the grade is finite. Furthermore, by \cite[4.5]{AB}, we have:
   \begin{equation}\tag{\ref{gra}.1}
   \gr_R(M,N)=\inf\{\depth_{R_{\fp}}(N_\fp)\mid\fp\in\Supp_R(M)\}.
   \end{equation}
For simplicity, we denote  the grade of  $(M,R)$ by $\gr_R(M)$.

The following lemma plays a crucial role in the sequel.

\begin{lem}\label{duality}
	Let $M,N, K \in \Md(R)$ and let $\ell \geq 1$ be an integer. Assume $\Ext^j_R(N,K)=0$ for all $j$ with $1\leq j\leq \ell-1$, and $\gr_R(\Tor_i^R(M,N),K)>\ell-i$ for all $i$ with $1\leq i\leq \ell$. Then the following holds:
	\begin{enumerate}[\rm(i)]
		\item $\Ext^i_R(M,\Hom_R(N,K))\cong\Ext^i_R(M\otimes_RN,K)$ for all $i$ with $0\leq i<\ell$.
		\item There is an injection $\Ext^\ell_R(M,\Hom_R(N,K))\hookrightarrow\Ext^\ell_R(M\otimes_RN,K)$.
	\end{enumerate}	
\end{lem}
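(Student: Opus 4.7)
The plan is to identify both $\Ext^i_R(M, \Hom_R(N, K))$ and $\Ext^i_R(M \otimes_R N, K)$ with the hypercohomology of a single double complex via two different spectral sequences, and then read off the assertions by checking that the relevant $E_\infty$ terms collapse. Concretely, I would take a projective resolution $P_\bullet \to M$ and an injective resolution $K \to I^\bullet$, and form the first-quadrant double complex
$$D^{p,q} := \Hom_R(P_p \otimes_R N,\, I^q) \;\cong\; \Hom_R\bigl(P_p,\, \Hom_R(N, I^q)\bigr).$$
Since $\Hom_R(-, I^q)$ is exact (because $I^q$ is injective) and $\Hom_R(P_p, -)$ is exact (because $P_p$ is projective), the two iterated-cohomology spectral sequences of $\mathrm{Tot}(D)$ take the form
\begin{align*}
E_2^{p,q} &= \Ext^p_R\bigl(M,\, \Ext^q_R(N, K)\bigr) \;\Longrightarrow\; H^{p+q}(\mathrm{Tot}(D)), \\
\widetilde{E}_2^{p,q} &= \Ext^q_R\bigl(\Tor_p^R(M, N),\, K\bigr) \;\Longrightarrow\; H^{p+q}(\mathrm{Tot}(D)).
\end{align*}

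The hypotheses translate into vanishing regions on these $E_2$-pages. The assumption $\Ext^j_R(N, K) = 0$ for $1 \leq j \leq \ell - 1$ yields $E_2^{p, q} = 0$ in the horizontal strip $1 \leq q \leq \ell - 1$, while the grade condition $\gr_R(\Tor_i^R(M, N), K) > \ell - i$ gives $\widetilde{E}_2^{p, q} = 0$ in the triangle defined by $p \geq 1$ and $p + q \leq \ell$. For each $0 \leq n \leq \ell - 1$, the antidiagonal $p + q = n$ in each page then carries a unique potentially nonzero entry, namely $(n, 0)$ in $E_2$ and $(0, n)$ in $\widetilde{E}_2$; inspecting the differentials $d_r \colon E_r^{p,q} \to E_r^{p+r,\, q-r+1}$ against the first-quadrant shape shows both entries are permanent cycles unhit by differentials, and each therefore constitutes the entirety of $H^n(\mathrm{Tot}(D))$. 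This yields the isomorphism $\Ext^n_R(M, \Hom_R(N, K)) \cong \Ext^n_R(M \otimes_R N, K)$ of part (i).

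For part (ii), I would repeat the analysis on $p + q = \ell$. In the first spectral sequence, the differentials into $E_r^{\ell, 0}$ originate from positions $(\ell - r, r - 1)$ with $r - 1 \in \{1, \ldots, \ell - 1\}$, all inside the vanishing strip; consequently $E_\infty^{\ell, 0} = E_2^{\ell, 0} = \Ext^\ell_R(M, \Hom_R(N, K))$ sits at the bottom of the filtration on $H^\ell(\mathrm{Tot}(D))$, giving an inclusion $\Ext^\ell_R(M, \Hom_R(N, K)) \hookrightarrow H^\ell(\mathrm{Tot}(D))$. In the second spectral sequence, the hypothesis kills $\widetilde{E}_2^{p, \ell - p}$ for every $p \geq 1$, collapsing the $H^\ell$-filtration to its top piece $\widetilde{E}_\infty^{0, \ell}$; this top piece has no incoming differentials (by the first-quadrant property) and hence embeds into $\widetilde{E}_2^{0, \ell} = \Ext^\ell_R(M \otimes_R N, K)$. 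Composing the two monomorphisms delivers the required injection.

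The technical heart of the argument is the bookkeeping on the borderline antidiagonal $p + q = \ell$: one must verify that the grade hypothesis is sharp enough (via the strict inequality $>$) to kill exactly those $\widetilde{E}_2$-entries that would obstruct the collapse of the second spectral sequence, and simultaneously that no incoming differentials at $(\ell, 0)$ in the first spectral sequence leak in from outside the vanishing strip. Once this is confirmed, the construction of the double complex, the identification of the two $E_2$-pages via the exactness properties of $P_p$ and $I^q$, and the invocation of first-quadrant convergence are all routine.
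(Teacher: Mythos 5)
Your proposal is correct and is essentially the paper's proof: the paper invokes exactly the same pair of spectral sequences, $\Ext^p_R(\Tor_q^R(M,N),K)\Rightarrow H^{p+q}$ and $\Ext^p_R(M,\Ext^q_R(N,K))\Rightarrow H^{p+q}$, converging to the common hypercohomology of your double complex, and extracts (i) and (ii) from the same two vanishing regions and edge maps. One small bookkeeping remark: in the second spectral sequence the differentials arriving at the position $\Ext^\ell_R(M\otimes_RN,K)$ have sources $\Ext^{\ell-r}_R(\Tor_{r-1}^R(M,N),K)$, which are killed by the grade hypothesis rather than by the first-quadrant shape, so your needed conclusion (in fact an isomorphism $H^\ell\cong\Ext^\ell_R(M\otimes_RN,K)$, as the paper notes) still holds.
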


\begin{proof} There are two spectral sequences converging to the same point:
	\begin{enumerate}[]
		\item$
		E_2^{pq}:=\Ext^p_R(\Tor_q^R(M,N),K) \Longrightarrow H^{p+q}$ and
		$
		F_2^{pq}:=\Ext^p_R(M,\Ext^q_R(N,K)) \Longrightarrow H^{p+q}
		$.
	\end{enumerate}	
	As $\gr_R(\Tor_i^R(M,N),K)>\ell-i$ for all $1\leq i\leq \ell$, we have that $E_2^{pq}=0$ if $1\leq q\leq \ell$ and $p\leq \ell-q$, and obtain an isomorphism $E_2^{i0}\cong H^i$ for all integers $i\le \ell$.
	Since $\Ext^i_R(N,K)=0$ for all $1\le i\le \ell-1$, we have $F_2^{pq}=0$ if $1\le q\le \ell-1$. Hence, we get $F_2^{i0}\cong H^i$ for all integers $i\le \ell-1$ and an injection $F_2^{\ell0}\hookrightarrow H^\ell$. Thus there are isomorphisms $\Ext^i_R(M\otimes_R N,K)=E_2^{i0}\cong H^i\cong F_2^{i0}=\Ext^i_R(M,\Hom_R(N,K))$ for all integers $i\le \ell-1$, and also an injection
	$\Ext^\ell_R(M,\Hom_R(N,K))\cong F_2^{\ell0}\hookrightarrow H^\ell\cong\Ext^\ell_R(M\otimes_RN,K)$.
\end{proof}

We are now ready to prove Theorem \ref{tt}.

\begin{proof}[Proof of Theorem \ref{tt}] We can pass to the local ring $S$ that exists by Remark \ref{FF} and assume $R$ is F-finite, i.e., $\up{\varphi^n}R \in \md(R)$.
Set $\ell=d-n$. Then $\ell\geq 1$. Also, by (ii) and the Theorem \ref{Ld}, we have
	\begin{equation}\tag{\ref{tt}.1}
	\Ext^{\ell}_R(M\otimes_R\up{\varphi^r}R,R)=0.
	\end{equation}
	Let $\fp\in\Supp_R(\Tor_i^R(M,\up{\varphi^r}R))$ for some $i\geq 1$. It follows from (i) that 
	\begin{equation}\tag{\ref{tt}.2}
	\depth(R_{\fp})=\dim(R_{\fp})\geq d-n>d-n-i.
	\end{equation}
    Hence, by (\ref{gra}.1) and (\ref{tt}.2), we deduce that  
    \begin{equation}\tag{\ref{tt}.3}
    \gr_R(\Tor_i^R(M,\up{\varphi^r}R))>d-n-i=\ell-i \text{ for all } i \text{ with } 1\leq i\leq \ell. 
    \end{equation}
    As $R$ is Gorenstein, it follows $\up{\varphi^r}R\cong\Hom_R(\up{\varphi^r}R,R)$ \cite[Theorem 1.1]{gotoo}. Also, since $\up{\varphi^r}R$ is a maximal Cohen-Macaulay $R$-module, we see $\Ext^j_R(\up{\varphi^r}R,R)=0$ for all $j\geq 1$. Hence, in view of Lemma \ref{duality}(ii), (\ref{tt}.1) and (\ref{tt}.3) we conclude that $$\Ext^{\ell}_R(M,\up{\varphi^r}R)=\Ext^{\ell}_R(M,\Hom_R(\up{\varphi^r}R,R))=0.$$
	
Now, by the four-term exact sequence (\ref{a1}.4) we obtain $\Tor_1^R(\Tr\Omega^{\ell}M,\up{\varphi^r}R)=0$. So \cite[Theorem]{AvMi} implies that $\Tor_i^R(\Tr\Omega^{\ell}M,\up{\varphi^r}R)=0$ for all $i\geq 1$ and  $\pd_R(\Tr\Omega^{\ell}M)<\infty$. Another use of (\ref{a1}.4) implies that $\Ext^{\ell}_R(M,R)=0$. On the other hand, as $M$ satisfies $(S_n)$, it is an $n$-th syzygy module; see \cite[4.25]{AB}. Therefore, $\Omega^{\ell}M$ is a $d$-th syzygy module, or equivalently, is a maximal Cohen-Macaulay module. This implies that $\Tr\Omega^{\ell}M$ is also maximal Cohen-Macaulay. As $\Tr \Tr(-)\approx(-)$, we deduce that $\Omega^{\ell}M$ is free and $\pd_R(M)\leq\ell$. Consequently, the fact that $\Ext^{\ell}_R(M,R)=0$ yields $\pd_R(M)<\ell=d-n$, as required.
\end{proof}


\section{A relation between the local cohomology and the Tate homology}

In this section, we determine a new relation between the local cohomology of tensor products of modules and the Tate homology over Gorenstein rings.  Our main result is Theorem \ref{t12}, which will be a new tool in the study of the depth of tensor products of modules. To the best of our knowledge, Theorem \ref{t12} is new, even if the specialization-closed subset $\mz$ considered is a closed subset of $\Spec(R)$.

Theorem \ref{t12} has various applications that contribute to the literature; we state and prove these applications following the proof of the theorem in this section. Several of the applications we give should be of independent interest. For example, Theorem \ref{t12} improves a result that has been initially proved by Dao \cite[7.7]{Daa}, and subsequently studied by Celikbas \cite[3.4]{Olgur} and Celikbas, Iyengar, Piepmeyer and Wiegand \cite[3.14]{CIPW}; see Corollary \ref{c2}. 

Recall that $R$ denotes a commutative Noetherian ring throughout.


\begin{thm}\label{t12} Let  $\mz\subset\Spec R $ be a specialization-closed subset and let $M, N\in\md(R)$. Assume $\G-dim_R(M)<\infty$. Assume further, for an integer $n\geq 0$, the following hold:
\begin{enumerate}[\rm(i)]
\item  $\depth_{R_\fp}(M_\fp)+\depth_{R_\fp}(N_\fp)\geq\depth R_{\fp}+n$ for each $\fp\in\mz$.
 \item $\Supp_R\big(\Tor_i^R(M,N)\big)\bigcap \Supp_R\big(\widehat{\Tor}_j^R(M,N)\big) \subseteq\mz$ for all $i\geq 1$ and for all $j\in\ZZ$ (e.g., $\NF(M)\subseteq\mz$).  
\end{enumerate}
Then it follows:
\begin{enumerate}[\rm(a)]
	\item $\hh^i_{\mz}(M\otimes_RN)\cong\widehat{\Tor}_{-i}^R(M,N)$ for each $i=0, \ldots, n-1$.
	\item There is an injection $\widehat{\Tor}_{-n}^R(M,N)\hookrightarrow\hh^n_{\mz}(M\otimes_RN)$.
\end{enumerate}	
\end{thm}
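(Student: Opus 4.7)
The strategy is to apply Lemma \ref{l2} to convert the local cohomology of $M\otimes_RN$ into $\Ext^{i+1}_R(\Tr M,N)$, and then translate these Ext terms into Tate Tor via the standard relations in Tate (co)homology summarized in Theorem \ref{Tate hom}.

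First I would verify the two hypotheses of Lemma \ref{l2} for the integer $n$. The grade bound $\gr_R(\mz,N)\geq n$ is immediate from hypothesis (i): for each $\fp\in\mz$, the Auslander--Bridger equality $\G-dim_{R_\fp}(M_\fp)=\depth R_\fp-\depth_{R_\fp}(M_\fp)\geq 0$ forces $\depth_{R_\fp}(N_\fp)\geq n$, and Proposition \ref{grz} concludes. For the support condition $\Supp_R(\Ext^i_R(\Tr M,N))\subseteq\mz$ with $1\leq i\leq n+1$, I would invoke Lemma \ref{pr} (applied after replacing $M$ by a totally reflexive syzygy $\Omega^dM$, where $d=\G-dim_R(M)$). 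At each $\fp\notin\mz$, hypothesis (ii) forces a dichotomy: either $\Tor_i^R(M,N)_\fp=0$ for all $i\geq 1$, or else $\widehat{\Tor}_j^R(M,N)_\fp=0$ for all $j\in\ZZ$. In the first case, combining the coincidence $\widehat{\Tor}_j\cong\Tor_j$ for $j>\G-dim_R(M)$ from Theorem \ref{Tate hom}(iv) with the syzygy-shift property from Theorem \ref{Tate hom}(iii) propagates the vanishing to all integer degrees. In either case, Lemma \ref{pr} now delivers the required support containment.

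Lemma \ref{l2} then yields $\hh^i_{\mz}(M\otimes_RN)\cong\Ext^{i+1}_R(\Tr M,N)$ for $0\leq i\leq n-1$ together with an injection $\Ext^{n+1}_R(\Tr M,N)\hookrightarrow\hh^n_{\mz}(M\otimes_RN)$. It remains to establish the identification $\Ext^{i+1}_R(\Tr M,N)\cong\widehat{\Tor}_{-i}^R(M,N)$ for $0\leq i\leq n$. Passing to the totally reflexive syzygy $K=\Omega^dM$, the shift of Theorem \ref{Tate hom}(iii) gives $\widehat{\Tor}_{-i}^R(M,N)\cong\widehat{\Tor}_{-i-d}^R(K,N)$; Theorem \ref{Tate hom}(ii) identifies this with $\widehat{\Ext}^{i+d-1}_R(K^*,N)$; and the stable equivalences $X^*\approx\Omega^2\Tr X$ from (\ref{a1}.1) and $\Tr\Omega^kX\approx\Omega^{-k}\Tr X$ yield $K^*\approx\Omega^{2-d}\Tr M$. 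Applying the Tate-Ext shift once more, the two copies of $d$ cancel and we obtain $\widehat{\Ext}^{i+1}_R(\Tr M,N)$, a formula independent of $\G-dim_R(M)$. Finally, Theorem \ref{Tate hom}(iv) identifies $\widehat{\Ext}^{i+1}_R(\Tr M,N)$ with the ordinary $\Ext^{i+1}_R(\Tr M,N)$ in the relevant range, since the index $i+1\geq 1$ exceeds the Gorenstein dimension of the transpose of a totally reflexive module.

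The main obstacle is the verification of the support condition in the first step: hypothesis (ii) controls only intersections of supports, and extracting from it the uniform vanishing of $\widehat{\Tor}_j^{R_\fp}(M_\fp,N_\fp)$ at every integer degree $j$ (which is what Lemma \ref{pr} demands) requires carefully combining the above dichotomy with both the high-degree coincidence of Tate Tor with ordinary Tor and the syzygy-shift property. A secondary bookkeeping issue is ensuring that the various syzygy shifts in the final identification step cancel correctly; this works because the shifts entering $\widehat{\Tor}$ and $\widehat{\Ext}$ arrive with opposite signs, allowing the proof to sidestep any dependence on the specific value of $\G-dim_R(M)$.
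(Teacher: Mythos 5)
Your route has a genuine gap, and it sits at the heart of the argument rather than in the bookkeeping: the identification $\Ext^{i+1}_R(\Tr M,N)\cong\widehat{\Tor}^R_{-i}(M,N)$ you aim for is false for modules of finite Gorenstein dimension that are not totally reflexive, and, correspondingly, the support hypothesis of Lemma \ref{l2} for $M$ itself cannot be extracted from hypothesis (ii). Take $R=k[\![x,y,z]\!]$, $M=N=R/xR$, $\mz=\V(\fm)$ and $n=1$. Then $\pd_R(M)=1$, hypothesis (i) reads $2+2\geq 3+1$, and hypothesis (ii) is vacuous because $\widehat{\Tor}^R_j(M,N)=0$ for all $j$ by Theorem \ref{Tate hom}(i); the theorem correctly predicts $\hh^0_{\fm}(M\otimes_RN)=0$. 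However $\Tr M\cong R/xR$ and $\Ext^1_R(\Tr M,N)\cong R/xR\neq 0$, whose support $\V(xR)$ is not contained in $\mz$. So neither of your two intermediate steps can hold here: Lemma \ref{l2} is not applicable to $M$ (its support hypothesis fails), and $\Ext^1_R(\Tr M,N)\not\cong\widehat{\Tor}^R_0(M,N)$. Your verification of the support condition breaks at two points. First, in the branch of the dichotomy where $\Tor_i^R(M,N)_\fp=0$ for all $i\geq 1$, Theorem \ref{Tate hom}(iv) only yields $\widehat{\Tor}^R_j(M,N)_\fp=0$ for $j$ above the Gorenstein dimension; the syzygy shift of Theorem \ref{Tate hom}(iii) merely re-indexes against a different module and does not push vanishing down into negative degrees --- that propagation is exactly what finite CI-dimension buys in Theorem \ref{Tate2} and is unavailable under a mere finite Gorenstein dimension hypothesis. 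Second, applying Lemma \ref{pr} to the totally reflexive syzygy $\Omega^{d}M$ controls $\Supp_R(\Ext^i_R(\Tr\Omega^{d}M,N))$, not $\Supp_R(\Ext^i_R(\Tr M,N))$; the two differ precisely by the finite projective dimension part of $M$, which is what the example above detects. Finally, the concluding appeal to Theorem \ref{Tate hom}(iv) for $\Tr M$ is not legitimate: when $M$ is not totally reflexive, $\Tr M$ need not have finite Gorenstein dimension, so $\widehat{\Ext}_R(\Tr M,N)$ need not even be defined, and the claimed cancellation of shifts tacitly assumes total reflexivity of $M$.

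The paper's proof isolates exactly this obstruction. It chooses an exact sequence $0\to M\to X\to G\to 0$ with $\pd_R(X)<\infty$ and $G$ totally reflexive (see \cite[2.17]{cfh}): the finite projective dimension piece contributes nothing to Tate homology, giving $\widehat{\Tor}^R_{j}(M,N)\cong\widehat{\Tor}^R_{j+1}(G,N)$, while $X\otimes_RN$ is shown to be Tor-independent (via hypothesis (i) and a localized use of Auslander's depth formula \cite[1.2]{A2}) and hence of depth at least $n$ at every prime of $\mz$, so it is invisible to $\hh^{<n}_{\mz}$. Lemma \ref{l2} and the Tate identities are then applied only to the totally reflexive module $G$, for which $\Ext^{j}_R(\Tr G,N)\cong\widehat{\Tor}^R_{-j+1}(G,N)$ does hold, and the four-term sequence obtained by tensoring $0\to M\to X\to G\to 0$ with $N$ transfers the computation from $G\otimes_RN$ back to $M\otimes_RN$. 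Any repair of your approach must build in some such separation of the finite projective dimension part of $M$; applying Lemma \ref{l2} directly to $M$ cannot work.
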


\begin{proof}
	Set $\mathcal{W}:=\Supp_R(M)$ and $\mz':=\mz\cap\mathcal{W}$. Clearly, $\mz'$ is specialization-closed. First we prove:
	
	\textbf{Claim I.} $\hh^i_{\mz}(M\otimes_RN)\cong\hh^i_{\mz'}(M\otimes_RN)$ for all $i\geq0$.\\
	Proof of Claim. Set $L=M\otimes_RN$ and let $0\to L\to\E^0(L)\to\E^1(L)\to\cdots$ be the minimal injective resolution of $X$. Note that $\Supp_R(\E^i(L))\subseteq\Supp_R(L)\subseteq\mathcal{W}$ for all $i\geq0$. Therefore, $\Gamma_{\mathcal{W}}(L)=L$ and also $\Gamma_{\mathcal{W}}(\E^i(L))=\E^i(L)$ for all $i\geq0$.
	It follows from Theorem \ref{LC}(iv) that $\Gamma_{\mz'}(L)=\Gamma_{\mz}(\Gamma_{\mathcal{W}}(L))=\Gamma_{\mz}(L)$. Similarly,
	$\Gamma_{\mz'}(\E^i(L))=\Gamma_{\mz}(\Gamma_{\mathcal{W}}(\E^i(L)))=\Gamma_{\mz}(\E^i(L))$ for all $i\geq0$ which implies that $\hh^i_{\mz}(L)\cong\hh^i_{\mz'}(L)$ for all $i\geq0$. Thus, the proof of the claim is completed.
	
	By Claim I, without loss of generality, by replacing $\mz$ with $\mz'$, we may assume that $\mz\subseteq\Supp_R(M)$. Consider the following exact sequence
	\begin{equation}\tag{\ref{t12}.1}
	0\rightarrow M\rightarrow X\rightarrow G\rightarrow0
	\end{equation}
	where $\pd_R(X)<\infty$ and $\G-dim_R(G)=0$ (see \cite[2.17]{cfh}).
	By \cite[2.9]{CJ}, the exact sequence (\ref{t12}.1) induces a
	doubly infinite long exact sequence
	$$\cdots\rightarrow\widehat{\Tor}_{j+1}^R(G,N)\rightarrow\widehat{\Tor}_{j}^R(M,N)\rightarrow\widehat{\Tor}_{j}^R(X,N)\rightarrow
	\widehat{\Tor}_{j}^R(G,N)\rightarrow\cdots,$$
	of stable homology modules. Also by Theorem \ref{Tate hom}(i), $\widehat{\Tor}_{j}^R(X,N)=0$ for all $j\in\ZZ$. Hence we get the following isomorphism
	\begin{equation}\tag{\ref{t12}.2}
	\widehat{\Tor}_{j+1}^R(G,N)\cong\widehat{\Tor}_{j}^R(M,N) \text{ for all } j\in\ZZ.
	\end{equation}
	As $G$ is totally reflexive, by Theorem \ref{Tate hom}(iv), (\ref{t12}.2) and assumption (ii), we have
	\begin{equation}\tag{\ref{t12}.3}
	\Supp_R(\Tor_i^R(G,N))=\Supp_R(\widehat{\Tor}_i^R(G,N))=\Supp_R(\widehat{\Tor}_{i-1}^R(M,N))\subseteq\mz \text{ for all } i>0.
	\end{equation}
	Applying the functor $-\otimes_RN$ to the exact sequence (\ref{t12}.1), we get the following exact sequence
	\begin{equation}\tag{\ref{t12}.4}
	\cdots\rightarrow\Tor_i^R(M,N)\to\Tor_i^R(X,N)\to\Tor_i^R(G,N)\rightarrow\cdots.
	\end{equation}
	In view of assumption (ii), (\ref{t12}.3) and (\ref{t12}.4) we have 
	\begin{equation}\tag{\ref{t12}.5}
	\Supp_R(\Tor_i^R(X,N))\subseteq\Supp_R(\Tor_i^R(M,N))\cup\Supp_R(\Tor_i^R(G,N))\subseteq\mz \text{ for all } i>0.
	\end{equation}
	 On the other hand, by the exact sequence (\ref{t12}.1) and Auslander--Bridger formula, we see that $\depth_{R_{\fp}}(M_\fp)=\depth_{R_{\fp}}(X_\fp)$ for all $\fp\in\mz\subseteq\Supp_R(M)$. Therefore, by assumption (i)		
	\begin{equation}\tag{\ref{t12}.6}
	\depth_{R_{\fp}}(X_\fp)+\depth_{R_{\fp}}(N_\fp)\geq\depth R_{\fp}+ n \quad \forall\fp\in\mz.
	\end{equation}
	Next we claim the following.
	
	{\bf Claim II.} $\Tor_i^R(X,N)=0$ for all $i>0$.\\
	Proof of claim. Set $t=\sup\{j\mid\Tor_{j}^R(X,N))\neq0\}$. Assume contrarily that $t>0$ and that $\fp\in\Ass_R(\Tor_t^R(X,N))$. By (\ref{t12}.5), $\fp\subseteq\mz$. As $\depth_{R_\fp}(\Tor_t^R(X,N)_\fp)=0$, by \cite[Theorem 1.2]{A2} we have the equality
	$\depth_{R_\fp}(X_\fp)+\depth_{R_\fp}(N_\fp)=\depth R_\fp-t,$
which is a contradiction by (\ref{t12}.6). Therefore $t=0$ and the proof of the claim is completed. 

	 By Claim II, the long exact sequence (\ref{t12}.4) induces the following exact sequence
	 \begin{equation}\tag{\ref{t12}.7}
	 0\rightarrow\Tor_1^R(G,N)\rightarrow M\otimes_RN\rightarrow X\otimes_RN\rightarrow G\otimes_RN\rightarrow0.
	 \end{equation}
	 Applying the functor $\Gamma_{\mz}(-)$ to the exact sequence (\ref{t12}.7), we obtain the following injection:
	 \begin{equation}\tag{\ref{t12}.8}
	 \hh^0_{\mz}(\Tor_1^R(G,N))\hookrightarrow\hh^0_{\mz}(M\otimes_RN).
	 \end{equation}
	 Note by (\ref{t12}.3) that $\Tor_1^R(G,N)$ is torsion with respect to $\mz$. Hence, by (\ref{t12}.2), Theorem \ref{Tate hom}(iv) and (\ref{t12}.8) we get the following injection:
	 $$\widehat{\Tor}_0^R(M,N)\cong\widehat{\Tor}_1^R(G,N)\cong\Tor_{1}^R(G,N)\cong\hh^0_{\mz}(\Tor_1^R(G,N))\hookrightarrow\hh^0_{\mz}(M\otimes_RN).$$	
	 Therefore, from now on we may assume that $n>0$. By Theorem \ref{t4} and Claim II,
	 \begin{equation}\tag{\ref{t12}.9}
	 \depth_{R_\fp}(X_\fp)+\depth_{R_\fp}(N_\fp)=\depth R_{\fp}+\depth_{R_\fp}((X\otimes_RN)_\fp)\quad \forall\fp\in\Supp_R(X\otimes_RN).
	 \end{equation}	
	  In view of (\ref{t12}.6) and (\ref{t12}.9), we have
	$\depth_{R_\fp}((X\otimes_RN)_\fp)\geq n$ for all $\fp\in\mz$. By Proposition \ref{grz} 
	\begin{equation}\tag{\ref{t12}.10}
	\hh^i_{\mz}(X\otimes_RN)=0 \text{  for  all  } 0\leq i\leq n-1.
	\end{equation}
	Consider the following two exact sequences, induce from (\ref{t12}.7):
	\begin{equation}\tag{\ref{t12}.11}
	0\rightarrow\Tor_1^R(G,N)\rightarrow M\otimes_RN\to Y\to0\ \ \text{ and }\ \ 0\to Y\rightarrow X\otimes_RN\rightarrow G\otimes_RN\rightarrow0.
	\end{equation}
	Applying the functor $\Gamma_{\mz}(-)$ to the above exact sequences and using Theorem \ref{LC}(ii), (iii), (\ref{t12}.10) and (\ref{t12}.2), one can easily obtain the following isomorphisms:
	\begin{equation}\tag{\ref{t12}.12}
	\hh^0_{\mz}(M\otimes_RN)\cong\hh^0_{\mz}(\Tor_1^R(G,N))\cong\Tor_1^R(G,N)\cong\widehat{\Tor}_1^R(G,N)\cong\widehat{\Tor}_0^R(M,N),
	\end{equation}
	\begin{equation}\tag{\ref{t12}.13}
	\hh^i_{\mz}(M\otimes_RN)\cong\hh^i_{\mz}(Y)\cong\hh^{i-1}_{\mz}(G\otimes_RN) \text{  for  all  } 0<i<n.
	\end{equation}
	Also, we have the following injection
	\begin{equation}\tag{\ref{t12}.14}
	\hh^{n-1}_{\mz}(G\otimes_RN)\hookrightarrow\hh^n_{\mz}(Y)\cong\hh^n_{\mz}(M\otimes_RN).
	\end{equation}
	 Note by (\ref{t12}.2) and assumption (ii) that $\Supp_R(\widehat{\Tor}^R_i(G,N))\subseteq\mz$ for all $i\in\ZZ$. As $G$ is totally reflexive, by Lemma \ref{pr} we have $\Supp_R(\Ext^i_R(\Tr G,N))\subseteq\mz$ for all $i>0$. As $M$ has finite Gorenstein dimension, by assumption (i) and Auslander--Bridger formula, we have $\depth_{R_\fp}(N_\fp)\geq\G-dim_{R_\fp}(M_\fp)+n\geq n$ for all $\fp\in\mz\subseteq\Supp_R(M)$. Therefore, by Proposition \ref{grz} we conclude that $\gr_R(\mz,N)\geq n$. Since $G$ is totally reflexive, so is $\Tr G$ by \cite[Lemma 4.9]{AB}.
	 Hence, by using Theorem \ref{Tate hom} and Lemma \ref{l2} and noting that $G^*\approx\Omega^2\Tr G$, we get the following isomorphisms:
	\[\begin{array}{rl}\tag{\ref{t12}.15}
	\hh^{j-1}_{\mz}(G\otimes_RN)&\cong\Ext^j_R(\Tr G,N)\\
	&\cong\widehat{\Ext}^j_R(\Tr G,N)\\
	&\cong\widehat{\Ext}^{j-2}_R(G^*,N)\\
	&\cong\widehat{\Tor}_{-j+1}^R(G,N),
	\end{array}\]
	for all  $1\leq j\leq n$. Now the first assertion is clear by (\ref{t12}.12), (\ref{t12}.13), (\ref{t12}.15) and (\ref{t12}.2).
	The second assertion follows from (\ref{t12}.14), (\ref{t12}.15) and (\ref{t12}.2).
\end{proof}

Next we start proving several corollaries of Theorem\ref{t12}. For the first corollary, see \ref{cx} for the definition of the complexity.

\begin{cor}\label{t11} Assume $R$ is a local complete intersection and let $\mz\subset\Spec R $ be a specialization-closed. Let $M, N\in\md(R)$, and let $c$ and $n$ be integers such that $c>\cx_R(M,N)$ and $n\geq c-1$. Assume the following hold:
\begin{enumerate}[\rm(i)]
\item{$\depth_{R_\fp}(M_\fp)+\depth_{R_\fp}(N_\fp)\geq \depth R_\fp+n$ for all $\fp\in\mz$.}
\item $\hh^i_{\mz}(M\otimes_RN)=0$ for all $i=n-c+1, \ldots, n$.
\item{$\Supp_R(\Tor_{i}^R(M,N))\subseteq\mz$ for all $i\geq 1$ (e.g., $\NF(M)\cap\NF(N)\subseteq\mz$).}
\end{enumerate}
Then it follows $\gr_R(\mz,M\otimes_RN)\geq n+1$, and $\Tor_i^R(M,N)=0$ for all $i\geq 1$.
\end{cor}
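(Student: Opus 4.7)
The plan is to apply Theorem \ref{t12} to convert the local cohomology hypotheses into Tate Tor vanishings, then use the complexity bound to promote those vanishings across the entire Tate module, and finally invoke the dependency formula to upgrade ``eventual'' Tor vanishing to vanishing in every positive degree. Because $R$ is a complete intersection, every finitely generated $R$-module has finite Gorenstein dimension, and hypothesis (i) of Theorem \ref{t12} is identical to (i) of the corollary; for its support hypothesis, I fix $\fp\notin\mz$ and note that (iii) gives $\Tor^{R_\fp}_i(M_\fp,N_\fp)=0$ for all $i\geq 1$, which Theorem \ref{Tate2} upgrades to $\widehat{\Tor}^{R_\fp}_i(M_\fp,N_\fp)=0$ for every $i\in\ZZ$. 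Theorem \ref{t12} then produces isomorphisms $\hh^i_\mz(M\otimes_RN)\cong\widehat{\Tor}^R_{-i}(M,N)$ for $0\le i\le n-1$ together with an injection at $i=n$; coupled with hypothesis (ii), this yields the $c$ consecutive vanishings $\widehat{\Tor}^R_{-i}(M,N)=0$ for $i=n-c+1,\ldots,n$.

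The second and central step is to propagate those $c$ consecutive vanishings to every degree, exploiting $c>\cx_R(M,N)$. This is a standard rigidity property of Tate homology over complete intersection rings: I replace $M$ by a sufficiently high syzygy $\Omega^k M$ with $k\geq\G-dim_R(M)$ so that $\Omega^k M$ is totally reflexive, shift indices via Theorem \ref{Tate hom}(iii), use Theorem \ref{Tate hom}(ii) to reinterpret the negative Tate Tor of $\Omega^k M$ as the positive Tate Ext of its algebraic dual, and then invoke the classical complexity-rigidity theorem (Jorgensen, Avramov--Buchweitz) for $\Ext$ over complete intersections: $c$ consecutive vanishings of $\Ext^j_R((\Omega^kM)^*,N)$ in a sufficiently high range, combined with $\cx_R((\Omega^kM)^*,N)=\cx_R(M,N)<c$, force $\Ext^j_R((\Omega^kM)^*,N)=0$ for all sufficiently large $j$, which translates back to $\widehat{\Tor}^R_i(M,N)=0$ for every $i\in\ZZ$. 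Feeding this total vanishing into Theorem \ref{t12} delivers $\hh^i_\mz(M\otimes_RN)=0$ for $i=0,\ldots,n-1$; combined with (ii) the vanishing extends to $i=n$, so $\gr_R(\mz,M\otimes_RN)\geq n+1$.

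It remains to strengthen eventual Tor vanishing to vanishing in every positive degree. By Theorem \ref{Tate2} the total Tate vanishing above yields $\Tor^R_i(M,N)=0$ for $i\gg 0$, so the dependency formula (\ref{depen}.1) applies. For $\fp\in\mz$, (i) together with $n\geq c-1\geq 0$ gives $\depth R_\fp-\depth_{R_\fp}(M_\fp)-\depth_{R_\fp}(N_\fp)\leq -n\leq 0$. For $\fp\notin\mz$, (iii) gives $\Tor^{R_\fp}_{\geq 1}(M_\fp,N_\fp)=0$, whence the depth formula (Theorem \ref{t4}) forces $\depth_{R_\fp}(M_\fp)+\depth_{R_\fp}(N_\fp)\geq\depth R_\fp$, i.e., the same inequality. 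Thus the right-hand side of (\ref{depen}.1) is $\leq 0$, so $\Tor^R_i(M,N)=0$ for every $i\geq 1$. The main obstacle is the rigidity step in the second paragraph: one must select the syzygy level carefully so that Tate homology identifies with honest $\Ext$ precisely in the range of interest, and one must verify the stability of complexity under syzygies and algebraic duality for totally reflexive modules over a complete intersection in order to apply the classical rigidity theorem with bound $\cx_R((\Omega^kM)^*,N)<c$.
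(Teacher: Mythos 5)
Your proposal is correct and shares the paper's skeleton: verify the hypotheses of Theorem \ref{t12} (using (iii) and Theorem \ref{Tate2} to see that all Tate homology is supported in $\mz$), read off from (ii) the window $\widehat{\Tor}^R_{-i}(M,N)=0$ for $i=n-c+1,\ldots,n$, promote this to vanishing of all Tate homology using $c>\cx_R(M,N)$, feed the total vanishing back into Theorem \ref{t12} together with (ii) to conclude $\gr_R(\mz,M\otimes_RN)\geq n+1$, and finish with the dependency formula (\ref{depen}.1), handling $\fp\in\mz$ via (i) and $\fp\notin\mz$ via (iii) plus the Araya--Yoshino depth formula --- this last step is exactly the paper's. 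The genuine difference is how you promote the window. The paper first replaces $M$ by a maximal Cohen--Macaulay approximation: it chooses $0\to L\to X\to M\to 0$ with $X$ totally reflexive and $\pd_R(L)<\infty$, so that $\widehat{\Tor}^R_i(M,N)\cong\widehat{\Tor}^R_i(X,N)$ for all $i$, and then passes to the cosyzygy $Y=\Omega^{-(n+1)}X$; the window then becomes honest vanishing $\Tor_i^R(Y,N)=0$ for $1\leq i\leq c$, to which \cite[3.5]{CST} applies, and the only complexity bookkeeping needed is $\cx_R(Y,N)=\cx_R(X,N)=\cx_R(M,N)$, which is immediate from (co)syzygy shifts and $\pd_R(L)<\infty$. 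Your route --- high syzygy $G=\Omega^kM$, the duality $\widehat{\Tor}^R_i(G,N)\cong\widehat{\Ext}^{-i-1}_R(G^*,N)$, and an Ext-gap theorem over complete intersections --- is workable (the window does land in arbitrarily high cohomological degrees once $k\gg 0$), but it rests on two external inputs that you only flag: the gap theorem for $\Ext$ in high degrees (Jorgensen, Avramov--Buchweitz), and, more delicately, the equality $\cx_R(G^*,N)=\cx_R(M,N)$. The latter is true over a complete intersection --- the support variety of a maximal Cohen--Macaulay module agrees with that of its algebraic dual, and the complexity of a pair is the dimension of the intersection of the two varieties --- but that is support-variety machinery the present paper never develops, whereas the cosyzygy trick avoids dualizing altogether; so if you write this up, either import that fact with a precise reference, or take cosyzygies of the totally reflexive approximation as the paper does, which makes the complexity transfer trivial and lets you quote the Tor-version of the rigidity result directly.
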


\begin{proof} We first note, by Theorem \ref{Tate2} and assumption (iii), that we have $\Supp_R(\widehat{\Tor}_i^R(M,N))\subseteq\mz$ for all $i\in\ZZ$. Hence, by Theorem \ref{t12} and assumption (ii), we get $\widehat{\Tor}_{-i}^R(M,N)=0$ for all $n-c+1\leq i\leq n$.
Consider to the following exact sequence $0\to L\to X\to M\to0$ where $X$ is totally reflexive and $\pd_R(L)<\infty$. By Theorem \ref{Tate hom}(i) we have $\widehat{\Tor}_i^R(L,N)=0$ for all $i\in\ZZ$. Therefore, the above exact sequence induces the following isomorphism	$\widehat{\Tor}_i^R(M,N)\cong\widehat{\Tor}_i^R(X,N)$ for all $i\in\ZZ$ (see for example \cite[2.9]{CJ}). Set $Y:=\Omega^{-(n+1)}X$. Note that $Y$ is totally reflexive and $X\approx\Omega^{n+1}Y$. In view of Theorem \ref{Tate hom} we obtain the following isomorphisms:
\begin{equation}\tag{\ref{t11}.1}
\Tor_i^R(Y,N)\cong\widehat{\Tor}_i^R(Y,N)\cong\widehat{\Tor}_{i-n-1}^R(X,N)\cong\widehat{\Tor}_{i-n-1}^R(M,N)=0 \text{ for all } 1\leq i\leq c.
\end{equation}
It follows from (\ref{t11}.1) and \cite[3.5]{CST} that $\widehat{\Tor}_i^R(Y,N)=0$ for all $i\in\ZZ$. Equivalently, it follows that $\widehat{\Tor}_i^R(M,N)\cong\widehat{\Tor}_i^R(X,N)=0$ for all $i\in\ZZ$. Thus, by Theorem \ref{t12}(i) and assumption (ii), we see that $\hh^i_{\mz}(M\otimes_RN)=0$ for all $0\leq i\leq n$. In other words, $\gr_R(\mz,M\otimes_RN)>n$. Also, by Theorem \ref{Tate hom}(iv) we have  $\Tor_{i}^R(M,N)\cong\widehat{\Tor}_i^R(M,N)=0$ for all $i\gg0$. Now the last assertion follows from assumptions (i), (iii) and the dependency formula (\ref{depen}.1).
\end{proof}	

The following is an immediate consequence of Corollary \ref{t11}.

\begin{cor}\label{cxmm} Assume $R$ is a local complete intersection, $M, N\in\md(R)$, and let $c$ and $n$ be integers. Assume $\NF(M)\cap\NF(N)\subseteq\{\fm\}$, $c>\cx_R(M,N)$ and that $n\geq c-1$. Assume further the  following hold:
	\begin{enumerate}[\rm(i)]
		\item $\depth_{R}(M)+\depth_{R}(N)\geq \depth R+n$.
		\item $\hh^i_{\fm}(M\otimes_RN)=0$ for all $i=n-c+1, \ldots, n$.
	\end{enumerate}
	Then it follows $\depth_R(M\otimes_RN)\geq n+1$ and $\Tor_i^R(M,N)=0$ for all $i\geq 1$.
\end{cor}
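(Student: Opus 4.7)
The plan is to reduce Corollary \ref{cxmm} directly to Corollary \ref{t11} by choosing the specialization-closed subset $\mz = \V(\fm) = \{\fm\}$. Since $\fm$ is the unique prime in this $\mz$, for every $X\in\md(R)$ one has $\hh^i_{\mz}(X) = \hh^i_{\fm}(X)$ for all $i$, and Proposition \ref{grz} gives $\gr_R(\mz, X) = \depth_R(X)$. Consequently, hypothesis (i) of Corollary \ref{t11}, which asks the depth inequality to hold at each $\fp\in\mz$, becomes exactly the global inequality $\depth_R(M)+\depth_R(N)\geq\depth R + n$ in hypothesis (i) here, and hypothesis (ii) of Corollary \ref{t11} is verbatim our hypothesis (ii).

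The only point requiring a brief check is hypothesis (iii) of Corollary \ref{t11}, namely that $\Supp_R(\Tor_i^R(M,N))\subseteq\mz=\{\fm\}$ for every $i\geq 1$. For any prime $\fp\neq\fm$, the assumption $\NF(M)\cap\NF(N)\subseteq\{\fm\}$ forces $\fp\notin\NF(M)$ or $\fp\notin\NF(N)$, so at least one of $M_\fp, N_\fp$ is free over $R_\fp$. Since $\Tor_i^R(M,N)_\fp\cong\Tor_i^{R_\fp}(M_\fp,N_\fp)$, this module vanishes for all $i\geq 1$, yielding the required support condition.

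With all three hypotheses of Corollary \ref{t11} established, the conclusion gives $\gr_R(\mz,M\otimes_R N)\geq n+1$ together with $\Tor_i^R(M,N) = 0$ for all $i\geq 1$. Translating the grade statement back via Proposition \ref{grz} produces $\depth_R(M\otimes_R N)\geq n+1$, as claimed. No genuine obstacle arises; the argument is essentially a bookkeeping exercise comparing grade with respect to $\{\fm\}$ with ordinary depth, and identifying the support of Tor modules with the intersection of non-free loci.
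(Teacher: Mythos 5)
Your proposal is correct and coincides with the paper's route: Corollary \ref{cxmm} is stated there as an immediate consequence of Corollary \ref{t11}, obtained exactly as you do by taking $\mz=\V(\fm)=\{\fm\}$, identifying $\hh^i_{\mz}$ with $\hh^i_{\fm}$ and grade with respect to $\{\fm\}$ with depth, and noting that $\NF(M)\cap\NF(N)\subseteq\{\fm\}$ gives the support condition on the Tor modules.
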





Next, in Corollary \ref{c2}, we improve a result of Celikbas, Iyengar, Piepmeyer and Wiegand \cite[3.14]{CIPW}. Note that the conclusion of Corollary \ref{c2} was obtained in \cite{CIPW} for the case where $c$ is at least the codimension of the ring in question; see also \cite[3.4]{Olgur} and \cite[7.7]{Daa}.

\begin{cor}\label{c2}(\cite{CIPW}) Assume $R$ is a local complete intersection, and let $M, N\in\md(R)$. Assume further hat the following conditions hold for some integer $c$ such that  $c>\cx_R(M,N)$.
\begin{enumerate}[\rm(i)]
\item $M$ and $N$ satisfy $(S_{c-1})$.
\item $M\otimes_RN$ satisfies $(S_{c})$.
\item $\Tor_{i}^R(M,N)_\fp=0$ for all $i\geq 1$ and all $\fp \in \X^{c-1}(R)$. 
\end{enumerate}
Then it follows that $\Tor_{i}^R(M,N)=0$ for all $i\geq 1$.
\end{cor}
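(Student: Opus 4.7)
My plan is to apply Corollary~\ref{t11} to the specialization-closed subset $\mz := \{\fp \in \Spec R \mid \Ht \fp \geq c\}$ (whose complement is exactly $\X^{c-1}(R)$) with $n := c-1$; provided its three hypotheses are verified, its conclusion immediately yields $\Tor_i^R(M, N) = 0$ for all $i \geq 1$. Since the statement is local in nature, I may and do reduce to the case where $R$ is a local complete intersection. Hypothesis (iii) of Corollary~\ref{t11} is then precisely condition~(iii) of the present statement, and hypothesis (ii)---namely $\hh^i_{\mz}(M \otimes_R N) = 0$ for $i = 0, \ldots, c - 1$---follows from the $(S_c)$ condition on $M \otimes_R N$: for every $\fp \in \mz$ we have $\depth_{R_\fp}((M\otimes_RN)_\fp) \geq \min\{c, \Ht\fp\} = c$, whence Proposition~\ref{grz} gives $\gr_R(\mz, M\otimes_RN) \geq c$.

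The substantive step is to verify hypothesis (i) of Corollary~\ref{t11}, namely $\depth_{R_\fp}(M_\fp) + \depth_{R_\fp}(N_\fp) \geq \Ht\fp + (c-1)$ for every $\fp \in \mz$. I plan to establish this by Noetherian induction on $\dim R$, carried out simultaneously with the main conclusion. The base case $\dim R < c$ forces $\mz = \emptyset$, and the statement then follows trivially from hypothesis (iii). For the inductive step, fix $\fp \in \mz$; all hypotheses of Corollary~\ref{c2} pass to $R_\fp$ under localization, and $\cx_{R_\fp}(M_\fp, N_\fp) \leq \cx_R(M,N) < c$. When $\Ht\fp < \dim R$, the inductive hypothesis applied to $R_\fp$ produces $\Tor_i^{R_\fp}(M_\fp, N_\fp) = 0$ for $i \geq 1$, so the depth formula (Theorem~\ref{t4}) yields the sharper estimate
\[
\depth_{R_\fp}(M_\fp) + \depth_{R_\fp}(N_\fp) \;=\; \Ht\fp + \depth_{R_\fp}((M\otimes_RN)_\fp) \;\geq\; \Ht\fp + c,
\]
which is strictly stronger than what is required.

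The main obstacle is the case $\fp = \fm$ (where $\Ht\fp = \dim R$ in the local reduction), for which $(S_{c-1})$ delivers only $\depth_R(M) + \depth_R(N) \geq 2(c-1)$; this falls short of the required $\dim R + (c-1)$ when $\dim R \geq c$. At this point of the induction, however, we already know that $\Supp_R(\Tor_i^R(M, N)) \subseteq \{\fm\}$ for $i \geq 1$, and Theorem~\ref{Tate2} upgrades this to $\Supp_R(\widehat{\Tor}^R_j(M, N)) \subseteq \{\fm\}$ for every $j \in \ZZ$. My plan to close the argument at $\fm$ is therefore to invoke Theorem~\ref{t12} applied to $\mz' := \V(\fm) = \{\fm\}$ with the largest integer $n$ permitted by the available depth inequality at $\fm$; combined with the vanishings $\hh^i_\fm(M\otimes_RN) = 0$ for $i < c$ coming from $(S_c)$, the isomorphisms and injection of Theorem~\ref{t12} produce enough consecutive vanishings of $\widehat{\Tor}_{-i}^R(M,N)$ that the inequality $c > \cx_R(M,N)$ propagates them to $\widehat{\Tor}^R_i(M,N) = 0$ for all $i \in \ZZ$ via \cite[3.5]{CST}. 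Theorem~\ref{Tate2} then yields $\Tor_i^R(M,N) = 0$ for $i \gg 0$, and a final appeal to the dependency formula~(\ref{depen}.1) together with $(S_{c-1})$ completes the proof.
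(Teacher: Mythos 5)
Your handling of the non-maximal primes is sound: the base case, the localization of the hypotheses, and the inductive hypothesis combined with the depth formula (Theorem \ref{t4}) do give $\depth_{R_\fp}(M_\fp)+\depth_{R_\fp}(N_\fp)\geq\Ht\fp+c$ for every non-maximal $\fp\in\mz$, hence $\Supp_R(\Tor_i^R(M,N))\subseteq\{\fm\}$ for $i\geq 1$. The genuine gap is the closing step at $\fm$, which is exactly where the content of the statement lies. The largest $n$ for which Theorem \ref{t12} applies with $\mz'=\{\fm\}$ is $n_0=\depth_R(M)+\depth_R(N)-\depth R$, and the hypotheses only guarantee $\depth_R(M),\depth_R(N)\geq c-1$, so $n_0\geq 2(c-1)-\dim R$. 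Theorem \ref{t12}, together with $\hh^i_{\fm}(M\otimes_RN)=0$ for $i<c$, then yields $\widehat{\Tor}_{-i}^R(M,N)=0$ only for $0\leq i\leq\min\{n_0,c-1\}$, i.e.\ at most $n_0+1$ consecutive vanishings, while the propagation step via \cite[3.5]{CST} needs more than $\cx_R(M,N)$ of them. Nothing forces $n_0\geq\cx_R(M,N)$: even in the most favorable case $\dim R=c$ you are only guaranteed $n_0\geq c-2$, one short when $\cx_R(M,N)=c-1$; and once $\dim R>2(c-1)$ you may have $n_0<0$, so Theorem \ref{t12} (which requires $n\geq 0$) is not applicable at all and your argument produces no vanishing of Tate homology whatsoever. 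Thus the phrase ``produce enough consecutive vanishings'' is precisely the unproved point: the inequality you would need at $\fm$, namely $\depth_R(M)+\depth_R(N)\geq\depth R+c-1$, is hypothesis (i) of Corollary \ref{t11} itself, so the argument is circular at the crux.

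For comparison, the paper does not argue by induction: it converts $(S_c)$ of $M\otimes_RN$ into $\hh^i_{\mz}(M\otimes_RN)=0$ for $i<c$ via Proposition \ref{Serre}(ii) and then applies Corollary \ref{t11} with $n=c-1$ directly, the depth hypothesis (i) of \ref{t11} at the primes of $\mz$ being taken as read there. Your (correct) observation that $(S_{c-1})$ alone does not supply that inequality at primes of height at least $c$ is what pushed you to the inductive route, but the substitute you offer at the maximal ideal does not close the case $\dim R\geq c$ with $\depth_R(M)+\depth_R(N)<\depth R+\cx_R(M,N)$. To repair it you need an additional mechanism at $\fm$ that improves the homological position rather than the depths as they stand --- for instance the pushforward/induction-on-$c$ technique of the kind used in \cite{CIPW}, which trades the Serre conditions on $M$ and $N$ for a shift in homological degree --- rather than Theorem \ref{t12} applied with the unimproved value $n_0$.
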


\begin{proof} We consider to the specialization-closed subset $\mz=\{\fp\in\Spec R\mid \Ht\fp\geq c\}$ of $\Spec R$. Then it follows from assumption (iii) that $\Supp_R(\Tor_i^R(M,N))\subseteq\mz$ for all $i\geq 1$. Now the assertion follows from Proposition \ref{Serre}(ii) and Corollary \ref{t11} by letting $n$ equal to $c-1$.
\end{proof}

Corollary \ref{c} is another application of Theorem \ref{t12} which determines a new bound on depth of tensor products of modules satisfying the depth formula; see \ref{df}. As Corollary \ref{c} does not assume any Tor-vanishing and as the depth formula is associated with the Tor-vanishing, the conclusion of Corollary \ref{c} seems quite interesting to us, cf., \cite[3.1]{CST}. Note that Corollary \ref{cd}, advertised in the introduction, follows from Corollary \ref{c} and (\ref{cx}.1).  

\begin{cor}\label{c} Assume $R$ is a local complete intersection. Assume further $\NF(M)\cap\NF(N)\subseteq\{\fm\}$ for some $M, N\in\md(R)$. If $\depth_R(M)+\depth_R(N)-\depth R\geq \cx_R(M,N)$, then it follows that $\depth_R(M\otimes_RN)+ \depth R\leq \depth_R(M)+\depth_R(N)$.
\end{cor}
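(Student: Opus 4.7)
The statement is set up perfectly to be a direct consequence of Corollary \ref{cxmm} (the local version of Corollary \ref{t11}) together with the Araya--Yoshino depth formula (Theorem \ref{t4}). My plan is to argue by contradiction, using the hypothesis $\depth_R(M)+\depth_R(N)-\depth R\ge \cx_R(M,N)$ only to guarantee that the integer parameter in Corollary \ref{cxmm} is large enough for the theorem to apply.

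Set $n:=\depth_R(M)+\depth_R(N)-\depth R$ and $c:=\cx_R(M,N)+1$. Then by the standing hypothesis we have $n\ge c-1$, and also $c>\cx_R(M,N)$ trivially. I would assume for contradiction that $\depth_R(M\otimes_R N)\ge n+1$. This assumption instantly gives $\hh^i_{\fm}(M\otimes_R N)=0$ for all $0\le i\le n$, in particular for $i=n-c+1,\dots,n$ (the range $n-c+1\ge 0$ since $n\ge c-1$), which is exactly hypothesis (ii) of Corollary \ref{cxmm}. Hypothesis (i) of that corollary, namely $\depth_R(M)+\depth_R(N)\ge\depth R+n$, is simply the definition of $n$, so it also holds, and the non-free locus condition is part of our assumption.

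Invoking Corollary \ref{cxmm} then produces $\Tor_i^R(M,N)=0$ for every $i\ge 1$. Since $R$ is a complete intersection we have $\CI_R(M)<\infty$, so Theorem \ref{t4} applies to the Tor-independent pair $(M,N)$ and yields the depth formula
\[
\depth_R(M)+\depth_R(N)=\depth R+\depth_R(M\otimes_R N).
\]
Rearranging, $\depth_R(M\otimes_R N)=n$, which contradicts our contrary assumption $\depth_R(M\otimes_R N)\ge n+1$. Hence $\depth_R(M\otimes_R N)\le n$, which is precisely the conclusion $\depth_R(M\otimes_R N)+\depth R\le \depth_R(M)+\depth_R(N)$.

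I do not anticipate any real obstacle here: the main content has been packaged into Corollary \ref{cxmm}, so the argument is purely a bookkeeping step that translates a depth lower bound into vanishing local cohomology, feeds it into \ref{cxmm}, and then uses the depth formula to close the loop. The only small thing to check is the compatibility of the indices $n$ and $c$, which is immediate from $n\ge \cx_R(M,N)$.
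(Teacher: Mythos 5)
Your proof is correct and follows essentially the same route as the paper: the paper also sets $c=\cx_R(M,N)+1$, $n=\depth_R(M)+\depth_R(N)-\depth R$, assumes $\depth_R(M\otimes_RN)>n$ to get the vanishing of $\hh^i_{\fm}(M\otimes_RN)$ in the range $n-c+1\leq i\leq n$, invokes Corollary \ref{t11} (of which your Corollary \ref{cxmm} is the specialization to $\mz=\V(\fm)$) to obtain Tor-independence, and then contradicts via the Araya--Yoshino depth formula (Theorem \ref{t4}). The only cosmetic difference is your citing \ref{cxmm} instead of \ref{t11} directly, which changes nothing of substance.
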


\begin{proof} Set $c= \cx_R(M,N)+1$ and $n=\depth_R(M)+\depth_R(N)-\depth R$. Then we have $n\geq c-1$. Assume contrarily that $\depth_R(M\otimes_RN)>n$. Therefore, $\hh^i_{\fm}(M\otimes_RN)=0$ for $n-c+1\leq i\leq n$. By Corollary \ref{t11},  it follows $\Tor_i^R(M,N)=0$ for all $i\geq 1$. In view of Theorem \ref{t4}, we have $\depth_R(M)+\depth_R(N)=\depth R+\depth_R(M\otimes_RN).$ Thus, $\depth_R(M\otimes_RN)=n$ which is a contradiction.
\end{proof}


\begin{eg} $\phantom{}$ The first item says that the bound $\depth_R(M\otimes_RN)+ \depth R\leq \depth_R(M)+\depth_R(N)$ presented in Corollary \ref{c} can be achieved.
	The second item shows that the locally free assumption is necessary in Corollary \ref{c}.
	\begin{enumerate}[\rm(i)]
		\item Let $R=k[\![x,y]\!]$ and let $M= N=\fm$. Then it follows $\cx_R(M,N)=0$.   So we have $\depth_R(M)=1$ and $\depth_R(M)+\depth_R(N)=\depth R+n$.
		In  particular, the assumptions of Corollary \ref{c} hold. Moreover, in view of \cite[3.1]{finitsup}, we see $\hh^0_{\fm}(M\otimes M)\simeq k\neq 0$. Therefore, it follows that $$\depth_R(M\otimes_RN)+ \depth R=0+2=1+1= \depth_R(M)+\depth_R(N).$$
	
		\item Let $R$ be a $3$-dimensional complete intersection and $x$ be a non zero-divisor on $R$. We look at $M=N=R/xR$. Then we have $\depth_R(M)+\depth_R(N)-\depth R=2+2-3=1>0= \cx_R(M,N)$.
	The following
	$\depth_R(M\otimes_RN)+ \depth R=2+3\nleqslant2+2=\depth_R(M)+\depth_R(N)$ shows that
the locally free assumption in 	  Corollary \ref{c} is really needed.
	\end{enumerate}
\end{eg}
Recall that $R$ is said to be an \emph{isolated singularity} if $R$ is local and $R_{\fp}$ is regular for each $\fp \in \X^{d-1}(R)$, where $d=\dim(R)$. The following result is immediate from Corollary \ref{c}; cf., \cite[4.7]{CST}.

\begin{cor}\label{+1c} Assume $R$ is a local hypersurface singularity, and let $M, N\in\md(R)$ such that $M$ is maximal Cohen-Macaulay. If $\depth_R(N)\geq 1$, then $\depth_R(N) \geq \depth_R(M\otimes_RN)$. Therefore, if $\depth_R(N)=1$, then $M\otimes_RN$ cannot be reflexive.
\end{cor}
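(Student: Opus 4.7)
The plan is to obtain the conclusion as a direct specialization of Corollary \ref{c}. First I would verify the three hypotheses of that corollary in the present setting. Since a hypersurface singularity is an isolated singularity (so that $R_{\fp}$ is regular for every non-maximal prime $\fp$), every finitely generated $R$-module is free on the punctured spectrum; in particular $\NF(M)\cap\NF(N)\subseteq\{\fm\}$. Because $R$ is a hypersurface, its codimension is at most $1$, and the complexity inequality (\ref{cx}.1) yields $\cx_R(M,N)\leq 1$.

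Next, since $M$ is maximal Cohen--Macaulay and $R$ is Cohen--Macaulay (being a hypersurface), we have $\depth_R(M)=\depth R$. Therefore
\[
\depth_R(M)+\depth_R(N)-\depth R=\depth_R(N)\geq 1\geq\cx_R(M,N),
\]
so the numerical hypothesis of Corollary \ref{c} is satisfied. That corollary then gives
\[
\depth_R(M\otimes_RN)+\depth R\leq\depth_R(M)+\depth_R(N)=\depth R+\depth_R(N),
\]
which rearranges to $\depth_R(M\otimes_RN)\leq\depth_R(N)$, proving the first assertion.

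For the second assertion, suppose $\depth_R(N)=1$, so $\depth_R(M\otimes_RN)\leq 1$. Assume that $M\otimes_RN$ is reflexive; since $R$ is Gorenstein (as it is a hypersurface), reflexive modules satisfy Serre's condition $(S_2)$, and in particular $\depth_R(M\otimes_RN)\geq\min\{2,\dim R\}$. The nontrivial content of the statement is in the case $\dim R\geq 2$, where this lower bound becomes $2$, contradicting the inequality $\depth_R(M\otimes_RN)\leq 1$ established above. Hence $M\otimes_RN$ cannot be reflexive.

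The argument is short and essentially a packaging step; the only potential friction is in noting the codimension bound $\cx_R(M,N)\leq 1$ together with the automatic locally-free condition coming from the isolated-singularity assumption, both of which are standard facts already collected in the paper. No further obstacle is anticipated.
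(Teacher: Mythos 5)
Your route is the paper's own: the paper gives no written argument beyond the remark that the corollary is immediate from Corollary \ref{c}, and your packaging --- $\depth_R(M)=\depth R$ since $M$ is maximal Cohen--Macaulay, $\cx_R(M,N)\leq 1$ by (\ref{cx}.1) because a hypersurface has codimension at most one, then rearranging the inequality of Corollary \ref{c} --- is exactly that derivation, and the numerics are right. One justification you give is false, though easily repaired: from $R_\fp$ being regular for all $\fp\neq\fm$ it does \emph{not} follow that every finitely generated module is free on the punctured spectrum (regularity only gives finite projective dimension locally; for instance $R/\fp$ with $\fp$ a non-minimal, non-maximal prime is not free at $\fp$). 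The condition you actually need, $\NF(M)\cap\NF(N)\subseteq\{\fm\}$, holds instead because $M$ is maximal Cohen--Macaulay: for $\fp\neq\fm$, the module $M_\fp$ is maximal Cohen--Macaulay (or zero) over the regular local ring $R_\fp$, hence free by Auslander--Buchsbaum, so already $\NF(M)\subseteq\{\fm\}$ with no hypothesis on $N$. Note this fix still uses the isolated-singularity reading of ``hypersurface singularity'', which is indeed what the paper's placement of the definition suggests and what Corollary \ref{c} requires. Your treatment of the reflexivity assertion is fine: a reflexive module is a second syzygy, so $\depth_R(M\otimes_RN)\geq\min\{2,\dim R\}$, and, as you correctly flag, the contradiction with $\depth_R(M\otimes_RN)\leq 1$ needs $\dim R\geq 2$ (in dimension one the assertion fails already for $M=N=R$), a caveat left implicit in the paper's statement as well.
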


\begin{cor}\label{ct} Assume $R$ is a Gorenstein local ring, and let $M, N\in\md(R)$ be maximal Cohen-Macaulay. Assume further that $\NF(M)\cap\NF(N)\subseteq\{\fm\}$ (e.g., $R$ is an isolated singularity.) Then at least one of the following conditions holds:
\begin{enumerate}[\rm(i)]
	\item $M\otimes_RN$ is a maximal Cohen-Macaulay $R$-module.
	\item $\depth_R(M\otimes_RN)=\inf\{i\geq0\mid\widehat{\Tor}_{-i}^R(M,N)\neq0\}.$
\end{enumerate}
\end{cor}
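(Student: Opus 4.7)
The plan is to apply Theorem~\ref{t12} with the specialization-closed subset $\mz = \{\fm\}$ and with the parameter $n = d$, where $d := \dim R = \depth R$ (the two agree because a Gorenstein ring is Cohen-Macaulay). Since $M$ is MCM over the Gorenstein ring $R$, it is totally reflexive, hence has finite Gorenstein dimension (in fact zero), so the first hypothesis of Theorem~\ref{t12} holds. The only prime in $\mz$ is $\fm$ itself, at which $\depth_R(M) + \depth_R(N) = 2d = \depth R + n$, so the depth hypothesis (i) is satisfied as an equality. For the support hypothesis (ii), observe that if $\fp \in \Spec R \setminus \{\fm\}$, then by assumption $\fp \notin \NF(M) \cap \NF(N)$, so at least one of $M_\fp$, $N_\fp$ is $R_\fp$-free. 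This forces $\Tor_i^{R_\fp}(M_\fp, N_\fp) = 0$ for all $i \geq 1$ and, by Theorem~\ref{Tate hom}(i), also $\widehat{\Tor}_j^{R_\fp}(M_\fp, N_\fp) = 0$ for every $j \in \ZZ$. Consequently both $\Supp_R(\Tor_i^R(M,N))$ and $\Supp_R(\widehat{\Tor}_j^R(M,N))$ (and hence their intersection) are contained in $\{\fm\} = \mz$.

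Theorem~\ref{t12} then supplies isomorphisms
\[
\hh^i_{\fm}(M \otimes_R N) \;\cong\; \widehat{\Tor}_{-i}^R(M,N) \qquad \text{for } 0 \leq i \leq d-1,
\]
together with an injection $\widehat{\Tor}_{-d}^R(M,N) \hookrightarrow \hh^d_{\fm}(M \otimes_R N)$. Put $s := \inf\{i \geq 0 \mid \widehat{\Tor}_{-i}^R(M,N) \neq 0\}$ and split into two cases. If $s \geq d$, the isomorphisms give $\hh^i_{\fm}(M \otimes_R N) = 0$ for all $0 \leq i \leq d-1$; combined with $\dim_R(M \otimes_R N) \leq d$ this forces $\depth_R(M \otimes_R N) = d$, so $M \otimes_R N$ is MCM and condition (i) holds. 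If instead $s < d$, then the same isomorphisms give $\hh^i_{\fm}(M \otimes_R N) = 0$ for $0 \leq i < s$ and $\hh^s_{\fm}(M \otimes_R N) \cong \widehat{\Tor}_{-s}^R(M,N) \neq 0$ by the definition of $s$; consequently $\depth_R(M \otimes_R N) = s$, which is condition (ii).

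I do not anticipate a genuine obstacle: the corollary is a direct specialization of Theorem~\ref{t12} to the case $\mz = \{\fm\}$. The only thing requiring a moment's care is the verification that the hypothesis $\NF(M) \cap \NF(N) \subseteq \{\fm\}$ simultaneously kills the ordinary $\Tor$ and the Tate $\Tor$ outside $\fm$, which works because freeness at $\fp$ entails finite projective dimension there, and hence the vanishing of all Tate homology at $\fp$ via Theorem~\ref{Tate hom}(i).
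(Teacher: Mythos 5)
Your proof is correct and takes essentially the same route as the paper: both apply Theorem \ref{t12} with $\mz=\{\fm\}$ (the hypotheses holding because $M,N$ are maximal Cohen--Macaulay over the Gorenstein ring and $\NF(M)\cap\NF(N)\subseteq\{\fm\}$) and then read off $\depth_R(M\otimes_RN)$ from the identification $\hh^i_{\fm}(M\otimes_RN)\cong\widehat{\Tor}_{-i}^R(M,N)$. The only cosmetic differences are that you feed $n=\dim R$ into the theorem while the paper assumes $M\otimes_RN$ is not maximal Cohen--Macaulay and uses $n=\depth_R(M\otimes_RN)+1$, and your citation of Theorem \ref{Tate hom}(i) when only $N_\fp$ is free is slightly imprecise but harmless, since tensoring the totally acyclic complex with a free module preserves acyclicity.
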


\begin{proof}
Assume that $M\otimes_RN$ is not maximal Cohen--Macaulay and set $n:=\depth(M\otimes_RN)+1\leq\depth R$. Then
$\depth_R(M)+\depth_R(N)\geq\depth R+n$. In view of Theorem \ref{t12}, we have  
\begin{equation}\tag{\ref{ct}.1}
\hh^i_{\fm}(M\otimes_RN)\cong\widehat{\Tor}_{-i}^R(M,N) \text{ for   all }  0\leq i<n.
\end{equation}
Note $\depth_R(M\otimes_RN)=\inf\{i\geq0\mid\hh^i_{\fm}(M\otimes_RN)\neq0\}.$
Therefore, by (\ref{ct}.1) we have  $\widehat{\Tor}_{-i}^R(M,N)=0$ for all $0\leq i<n-1$ and  $\widehat{\Tor}_{-(n-1)}^R(M,N)\neq0$, which completes the proof.
\end{proof}

\begin{cor} Assume $R$ is a $d$-dimensional hypersurface that has an isolated singularity, where $d\geq 2$, and $M, N\in \md(R)$ are nonfree and maximal Cohen-Macaulay. Then at least one of the following  holds:
\begin{enumerate}[\rm(i)]
\item $M\otimes_RN$ is torsion-free, $\depth_R(M\otimes_RN)=1$, and $\Tor_{2i-1}^R(M,N) \neq 0 = \Tor_{2i}^R(M,N)$ for all $i\geq 1$.
\item $M\otimes_RN$ has torsion, $\depth_R(M\otimes_RN)=0$, and $\Tor_{2i}^R(M,N) \neq 0 $ for all $i\geq 1$.
\end{enumerate}
\end{cor}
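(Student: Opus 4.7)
The plan is to exploit two-periodicity of Tate homology for nonfree MCM modules over a hypersurface, apply Theorem \ref{t12} to $M \otimes_R N$ to read off its low-degree local cohomology, and then branch on $\Tor_1^R(M,N)$ and $\Tor_2^R(M,N)$; the hard step is ruling out the possibility that every $\Tor_i^R(M,N)$ vanishes.

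First I would observe that since $R$ is an isolated singularity and $M,N$ are MCM, $M_\fp$ is free for every $\fp\neq\fm$ (Auslander--Buchsbaum over the regular ring $R_\fp$), so $\NF(M)\cup\NF(N)\subseteq\{\fm\}$. Because $M$ is nonfree MCM over the hypersurface $R$, Eisenbud's periodicity theorem gives a $2$-periodic minimal free resolution, whence $\widehat{\Tor}^R_i(M,N)\cong\widehat{\Tor}^R_{i+2}(M,N)$ for all $i\in\ZZ$; combined with $\widehat{\Tor}^R_i(M,N)\cong\Tor^R_i(M,N)$ for $i\geq 1$ (Theorem \ref{Tate hom}(iv)), each $\widehat{\Tor}^R_{-i}(M,N)$ is identified with $\Tor^R_2(M,N)$ or $\Tor^R_1(M,N)$ according to the parity of $i$.

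Applying Theorem \ref{t12} with $\mz=\{\fm\}$ and $n=d$ (both hypotheses being immediate: the depth inequality is $2d\geq 2d$, and condition (ii) follows from the isolated-singularity reduction), I would obtain $\hh^i_\fm(M\otimes_R N)\cong\widehat{\Tor}^R_{-i}(M,N)$ for $0\leq i\leq d-1$, in particular $\hh^0_\fm(M\otimes_R N)\cong\Tor^R_2(M,N)$ and $\hh^1_\fm(M\otimes_R N)\cong\Tor^R_1(M,N)$. If $\Tor^R_2(M,N)\neq 0$, then $\hh^0_\fm(M\otimes_R N)\neq 0$; since $M\otimes_R N$ is locally free on the punctured spectrum and $R$ is unmixed, Proposition \ref{Serre}(i) gives $\T(M\otimes_R N)=\Gamma_\fm(M\otimes_R N)\neq 0$, so $M\otimes_R N$ has torsion and depth $0$, and periodicity yields $\Tor^R_{2i}(M,N)\neq 0$ for all $i\geq 1$, which is case (ii). If instead $\Tor^R_2(M,N)=0$ but $\Tor^R_1(M,N)\neq 0$, classical Tor rigidity over hypersurfaces forces $\Tor^R_i(M,N)=0$ for $i\geq 2$; then $\hh^0_\fm(M\otimes_R N)=0$ while $\hh^1_\fm(M\otimes_R N)\neq 0$, so $M\otimes_R N$ is torsion-free of depth $1$, and periodicity gives $\Tor^R_{2i-1}(M,N)\neq 0$ for all $i\geq 1$, which is case (i).

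The main obstacle is the remaining subcase $\Tor^R_1(M,N)=0$: by rigidity this forces $\Tor^R_i(M,N)=0$ for every $i\geq 1$, so Theorem \ref{t4} yields $\depth_R(M\otimes_R N)=d\geq 2$, i.e., $M\otimes_R N$ would be MCM and neither (i) nor (ii) would be realized. Excluding this case is the heart of the argument and is where the isolated-singularity assumption is used in full: one invokes a Huneke--Wiegand--type theorem asserting that over a hypersurface isolated singularity of dimension at least two, the vanishing of $\Tor^R_i(M,N)$ for every $i\geq 1$ with both $M$ and $N$ MCM forces one of them to be free, contradicting the hypothesis that both are nonfree.
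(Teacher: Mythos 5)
Your argument is sound in outline and reaches the stated dichotomy, but it takes a partially different route from the paper, and it contains two false intermediate appeals to ``classical Tor rigidity over hypersurfaces'' that should be removed. The paper begins by quoting the depth bound $\depth_R(M\otimes_RN)\leq 1$ from \cite[4.7]{CST}; since $d\geq 2$ this rules out the maximal Cohen--Macaulay case at once, and then Corollary \ref{ct} (itself a consequence of Theorem \ref{t12}) identifies $\depth_R(M\otimes_RN)$ with $\inf\{i\geq 0\mid \widehat{\Tor}^R_{-i}(M,N)\neq 0\}$, after which two-periodicity and Theorem \ref{Tate hom} give the Tor statements exactly as you do. You instead apply Theorem \ref{t12} directly with $\mz=\{\fm\}$ and $n=d$ (the hypotheses check out as you say), identify $\hh^0_{\fm}(M\otimes_RN)\cong\Tor_2^R(M,N)$ and $\hh^1_{\fm}(M\otimes_RN)\cong\Tor_1^R(M,N)$, and then dispose of the residual case $\Tor_1^R(M,N)=\Tor_2^R(M,N)=0$ by combining the depth formula (Theorem \ref{t4}) with the Huneke--Wiegand theorem that Tor-independence over a hypersurface forces $\pd_R(M)<\infty$ or $\pd_R(N)<\infty$; this is \cite[1.9]{HW1}, used elsewhere in this paper (proof of Corollary \ref{cq}), and it does not even need the isolated-singularity hypothesis, which is really what gives $\NF(M)\cup\NF(N)\subseteq\{\fm\}$ and the torsion/torsion-free conclusions. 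So your proof trades the citation of \cite[4.7]{CST} for \cite[1.9]{HW1} plus the depth formula; both are legitimate, and yours has the mild advantage of making explicit where the contradiction with nonfreeness of $M$ and $N$ actually arises.

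The blemish: hypersurfaces are not Tor-rigid in the sense you invoke --- the paper's own Example \ref{notvb1}(iii), with $R=k[\![x,y]\!]/(xy)$, $M=R/xR$, $N=R/yR$, has $\Tor_1^R(M,N)=0\neq\Tor_2^R(M,N)$ --- and in your second branch the claim that $\Tor_i^R(M,N)=0$ for all $i\geq 2$ even contradicts $\Tor_3^R(M,N)\cong\Tor_1^R(M,N)\neq 0$, which you assert in the same sentence via periodicity. Fortunately neither appeal is needed: in the branch $\Tor_2^R(M,N)=0$ and $\Tor_1^R(M,N)\neq 0$, two-periodicity alone gives $\Tor_{2i}^R(M,N)=0$ and $\Tor_{2i-1}^R(M,N)\neq 0$ for all $i\geq 1$; and the genuinely remaining case is $\Tor_1^R(M,N)=\Tor_2^R(M,N)=0$, where periodicity (not rigidity) yields $\Tor_i^R(M,N)=0$ for all $i\geq 1$, after which your exclusion via \cite[1.9]{HW1} and the depth formula goes through. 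Delete the rigidity sentences and the argument is correct.
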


\begin{proof} It follows that $\depth_R(M\otimes_RN)\leq 1$; see \cite[4.7]{CST}. Therefore, since $d\geq 2$, $M\otimes_R N$ is not maximal Cohen-Macaulay. Hence, Corollary \ref{ct} shows that $\depth_R(M\otimes_RN)=\inf\{i\geq0\mid\widehat{\Tor}_{-i}^R(M,N)\neq0\}$. 

If $\depth_R(M\otimes_RN)=0$, then $M\otimes_RN$ has torsion. Moreover, we have that $\widehat{\Tor}_{0}^R(M,N)\neq 0$ and the claim follows from \ref{Tate hom}(iii) and the fact that $M\cong \Omega^2_R(M)$. 

Next assume $\depth_R(M\otimes_RN)=1$. In this case, as $1=\depth_R(M\otimes_RN)=\inf\{i\geq0\mid\widehat{\Tor}_{-i}^R(M,N)\neq0\}$, it follows that $\widehat{\Tor}_{0}^R(M,N)=0 \neq \widehat{\Tor}_{-1}^R(M,N)$. Furthermore, since $M$ and $N$ are locally free on the punctured spectrum of $R$, it follows that $M\otimes_RN$ is torsion-free. 
\end{proof}

For some special cases, we have the following variant of Corollary \ref{ct}.

\begin{cor}\label{atleast} Assume $R$ is a local complete intersection. Assume further $M, N\in\md(R)$ such that $\NF(M)\cap\NF(N)\subseteq\{\fm\}$. If $\depth_R(M)+\depth_R(N)\geq\depth R+\cx_R(M,N)$, then one of the following conditions holds:
\begin{enumerate}[\rm(i)]
\item{$\depth_R(M)+\depth_R(N)=\depth R+\depth_R(M\otimes_RN)$.}
\item{$\depth_R(M\otimes_RN)=\inf\{i\geq0\mid\widehat{\Tor}_{-i}^R(M,N)\neq0\}.$}
\end{enumerate}
\end{cor}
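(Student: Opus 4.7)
The plan is to split on whether the depth formula for the pair $(M,N)$ happens to hold. Set $n := \depth_R(M) + \depth_R(N) - \depth(R)$ and $s := \depth_R(M \otimes_R N)$. The hypothesis reads $n \geq \cx_R(M,N)$, so Corollary \ref{c} applies and forces $s \leq n$. If $s = n$, then conclusion (i) is immediate, and we are done.

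Otherwise $s < n$, and the plan is to invoke Theorem \ref{t12} with the specialization-closed subset $\mz := \V(\fm) = \{\fm\}$ and integer parameter $s+1$. Three hypotheses must be verified: $\G-dim_R(M) < \infty$ is automatic since $R$ is a complete intersection; the pointwise depth inequality at $\fp = \fm$ reduces to $n \geq s+1$, which is exactly the Case 2 assumption; and the support containment $\Supp_R(\Tor_i^R(M,N)) \cap \Supp_R(\widehat{\Tor}_j^R(M,N)) \subseteq \{\fm\}$ follows from the hypothesis $\NF(M) \cap \NF(N) \subseteq \{\fm\}$. Indeed, for $i \geq 1$, nonvanishing $\Tor_i^R(M,N)_\fp$ forces both $M_\fp$ and $N_\fp$ to be non-free over $R_\fp$, so $\fp \in \NF(M) \cap \NF(N)$; the Tate analogue follows similarly using Theorem \ref{Tate hom}(i), which says that finite projective dimension (in particular, freeness at $\fp$) kills Tate homology in all degrees.

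Theorem \ref{t12} will then produce isomorphisms $\hh^i_\fm(M \otimes_R N) \cong \widehat{\Tor}_{-i}^R(M,N)$ for $0 \leq i \leq s$, together with an injection $\widehat{\Tor}_{-(s+1)}^R(M,N) \hookrightarrow \hh^{s+1}_\fm(M \otimes_R N)$. By the definition of depth, $\hh^i_\fm(M \otimes_R N)$ vanishes for $i < s$ and is nonzero at $i = s$. Transporting this through the isomorphisms yields $\widehat{\Tor}_{-i}^R(M,N) = 0$ for $0 \leq i < s$ while $\widehat{\Tor}_{-s}^R(M,N) \neq 0$, which is exactly conclusion (ii). The main thing to watch is the support-containment step in the verification of Theorem \ref{t12}; beyond that, the argument is essentially bookkeeping combining Corollary \ref{c} with Theorem \ref{t12}.
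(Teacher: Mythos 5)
Your proposal is correct and follows essentially the same route as the paper: split on whether the depth formula holds using Corollary \ref{c}, and otherwise apply Theorem \ref{t12} with $\mz=\V(\fm)$ to identify the low local cohomology of $M\otimes_RN$ with negative Tate homology, so that the depth is read off as $\inf\{i\geq 0\mid\widehat{\Tor}_{-i}^R(M,N)\neq0\}$. The only (immaterial) differences are that you run Theorem \ref{t12} with the parameter $s+1$ rather than $n$, and that you spell out the verification that $\NF(M)\cap\NF(N)\subseteq\{\fm\}$ yields the support hypothesis, which the paper leaves implicit.
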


\begin{proof}
Set $n=\depth_{R}(M)+\depth_{R}(N)-\depth R$. By our assumption, we have $n\geq\cx_R(M,N)$. It follows from Corollary \ref{c} that $\depth_R(M\otimes_RN)\leq n$. If $\depth_R(M\otimes_RN)=n$, then (i) holds and we have nothing to prove. So let $\depth_R(M\otimes_RN)<n$. Then, by setting $\mz:=\V(\fm)$ and using Theorem \ref{t12}(i), we see
$\hh^i_{\fm}(M\otimes_RN)\cong\widehat{\Tor}_{-i}^R(M,N)$ for  all $0\leq i<n$. Now it is clear that (ii) holds.  
\end{proof}

As another application, we have the following non-vanishing result.

\begin{cor} Assume $R$ is a local complete intersection, and $M, N\in\md(R)$ such that $M$ is maximal Cohen-Macaulay and $\NF(M)\cap\NF(N)\subseteq\{\fm\}$. Assume further $\cx_R(M,N)<\depth_R(N)$. Then it follows that $\hh^i_{\fm}(M\otimes_RN)\neq0$ for some $i$, where $\depth_R(N)-\cx_R(M,N)\leq i\leq\depth_R(N)$. Therefore, $\depth_R(M\otimes_RN)\leq\depth_R(N)$.
\end{cor}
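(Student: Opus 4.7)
Let $t=\depth_R(N)$ and $c=\cx_R(M,N)$, so by hypothesis $c<t$, and the maximal Cohen--Macaulay assumption on $M$ gives $\depth_R(M)+\depth_R(N)-\depth R = t$. The plan is to establish the non-vanishing claim by contradiction; the depth bound will then follow immediately from the definition of depth.

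Assume, toward a contradiction, that $\hh^i_{\fm}(M\otimes_RN)=0$ for every integer $i$ with $t-c\leq i\leq t$. The strategy is to feed this block of vanishings into Corollary \ref{cxmm} with the parameters $n:=t$ and with $c+1$ playing the role of its $c$. Indeed, $c+1>\cx_R(M,N)$ and $n=t\geq c=(c+1)-1$; hypothesis (i) of that corollary reduces to the equality $\depth_R(M)+\depth_R(N)=\depth R + t$, and hypothesis (ii) is precisely the vanishing range we just assumed. Corollary \ref{cxmm} therefore yields both $\Tor_i^R(M,N)=0$ for all $i\geq 1$ and $\depth_R(M\otimes_RN)\geq t+1$.

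Since $R$ is a complete intersection, $\CI_R(M)<\infty$, so the depth formula (Theorem \ref{t4}) applies to the Tor-independent pair $(M,N)$ and gives
$$\depth_R(M\otimes_RN)=\depth_R(M)+\depth_R(N)-\depth R = t,$$
which contradicts the lower bound $\depth_R(M\otimes_RN)\geq t+1$ obtained in the previous step. This proves the first assertion. The final statement is then immediate: the existence of some $i\leq t$ with $\hh^i_{\fm}(M\otimes_RN)\neq 0$ forces $\depth_R(M\otimes_RN)\leq i\leq t=\depth_R(N)$.

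There is essentially no obstacle here; the entire argument is a matter of lining up the numerical parameters with the hypotheses of Corollary \ref{cxmm}. The only subtlety worth flagging is that the strict inequality $c>\cx_R(M,N)$ in Corollary \ref{cxmm} forces the input to be $c+1$ rather than $c$, which is precisely why the vanishing range in the statement is $[t-c,t]$ rather than something shifted.
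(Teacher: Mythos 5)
Your proof is correct and follows essentially the same route as the paper: the paper also argues by contradiction, sets the parameter to $\cx_R(M,N)+1$, applies Corollary \ref{t11} (of which Corollary \ref{cxmm} is the $\fm$-special case you invoke) to get Tor-vanishing, and then contradicts via the depth formula of Theorem \ref{t4}. The only cosmetic difference is that the paper contrasts the depth-formula value with the assumed vanishing of $\hh^{\depth_R(N)}_{\fm}(M\otimes_RN)$, while you contrast it with the depth lower bound supplied by Corollary \ref{cxmm}; these are the same contradiction.
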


\begin{proof}
Set $c=\cx_R(M,N)+1$ and $n:=\depth_R(N)$. Assume contrarily that $\hh^i_{\fm}(M\otimes_RN)=0$ for all $i$ with $n-c+1\leq i\leq n$. By Corollary  \ref{t11}, $\Tor_i^R(M,N)=0$ for all $i>0$ and so by Theorem \ref{t4} we observe that $\depth_R(M\otimes_RN)=\depth_R(N)=n$ which is a contradiction, because $\hh^{n}_{\fm}(M\otimes_RN)=0$. 
\end{proof}

We proceed by giving some further applications of Theorem \ref{t12} on local cohomology modules of tensor product of modules over hypersurface rings. 

\begin{prop}\label{cx1} Let $\mz\subset\Spec R $ be  a specialization-closed subset and let $M, N\in\md(R)$ . Assume $\CI_R(M)<\infty$ and $\cx_R(M)\leq 1$ (e.g., $R$ is a hypersurface.) Assume, for an integer $n\geq0$, the following conditions hold:
\begin{enumerate}[\rm(i)]
	\item  $\depth_{R_\fp}(M_\fp)+\depth_{R_\fp}(N_\fp)\geq\depth R_{\fp}+n$ for all $\fp\in\mz$.
	\item  $\Supp_R(\Tor_i^R(M,N))\subseteq\mz$ for all $i\geq 1$ (e.g. $\NF(M)\cap\NF(N)\subseteq\mz$).  
\end{enumerate}
Then the following statements hold:
\begin{enumerate}[\rm(a)]
	\item If $n\geq3$, then $\hh^i_{\mz}(M\otimes_RN)\cong\hh^{i+2}_{\mz}(M\otimes_RN)$ for all $i=0, \ldots, n-3$. 
	\item If $\hh^n_{\mz}(M\otimes_RN)=0$, then $\hh^{n-2i}_{\mz}(M\otimes_RN)=0$ for all $i\geq0$ and $\widehat{\Tor}_{-n+2j}^R(M,N)=0$ for all $j\in\ZZ$. In particular, if $n$ is even, then $M\otimes_RN$ is torsion-free.
\end{enumerate}	
\end{prop}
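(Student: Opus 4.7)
The plan is to invoke Theorem \ref{t12} to identify each local cohomology module $\hh^i_{\mz}(M\otimes_R N)$ (in the admissible range $i = 0, \ldots, n-1$) with a Tate Tor module $\widehat{\Tor}_{-i}^R(M,N)$, and then exploit the two-periodicity of Tate homology coming from the hypothesis $\cx_R(M)\leq 1$.

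First I would verify the hypotheses of Theorem \ref{t12}. Since $\CI_R(M)<\infty$ we have $\G-dim_R(M)<\infty$, so Tate homology is available. Hypothesis (i) of the proposition is identical to hypothesis (i) of Theorem \ref{t12}. For the support condition, at each $\fp \notin \mz$ assumption (ii) forces $\Tor_i^{R_\fp}(M_\fp, N_\fp) = 0$ for all $i\geq 1$; since $\CI_{R_\fp}(M_\fp)<\infty$, Theorem \ref{Tate2} upgrades this to $\widehat{\Tor}_i^{R_\fp}(M_\fp, N_\fp)=0$ for all $i\in\ZZ$, so $\Supp_R(\widehat{\Tor}_j^R(M,N))\subseteq \mz$ for every $j\in\ZZ$. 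Hence Theorem \ref{t12} yields isomorphisms $\hh^i_{\mz}(M\otimes_R N)\cong\widehat{\Tor}_{-i}^R(M,N)$ for $0\leq i\leq n-1$ together with an injection $\widehat{\Tor}_{-n}^R(M,N)\hookrightarrow \hh^n_{\mz}(M\otimes_R N)$.

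Next I would invoke the two-periodicity of Tate homology under the assumption $\CI_R(M)<\infty$ and $\cx_R(M)\leq 1$, namely
\[
\widehat{\Tor}_i^R(M,N)\cong\widehat{\Tor}_{i+2}^R(M,N) \quad\text{for every } i\in\ZZ.
\]
Over a hypersurface this is the classical two-periodicity of a complete resolution; in general one reduces to the hypersurface case via a quasi-deformation $R\to A\twoheadleftarrow Q$ witnessing $\CI_R(M)<\infty$ with $\cx_Q(M\otimes_R A)\leq 1$, in the spirit of \cite{AvBu}. Granted this, part (a) is immediate: for $0\leq i\leq n-3$ both indices $i$ and $i+2$ lie in $\{0,\ldots,n-1\}$, so
\[
\hh^i_{\mz}(M\otimes_R N)\cong \widehat{\Tor}_{-i}^R(M,N)\cong\widehat{\Tor}_{-i-2}^R(M,N)\cong\hh^{i+2}_{\mz}(M\otimes_R N).
\]

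For part (b), the vanishing $\hh^n_{\mz}(M\otimes_R N)=0$ together with the injection forces $\widehat{\Tor}_{-n}^R(M,N)=0$, and two-periodicity propagates this to $\widehat{\Tor}_{-n+2j}^R(M,N)=0$ for every $j\in\ZZ$. For each $i\geq 1$ with $n-2i\geq 0$ one has $n-2i\in\{0,\ldots,n-1\}$, so the displayed isomorphism gives $\hh^{n-2i}_{\mz}(M\otimes_R N)\cong\widehat{\Tor}_{-n+2i}^R(M,N)=0$. When $n$ is even, setting $i=n/2$ yields $\Gamma_{\mz}(M\otimes_R N)=\hh^0_{\mz}(M\otimes_R N)=0$, i.e.\ $M\otimes_R N$ is torsion-free with respect to $\mz$. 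The main obstacle I anticipate is making the two-periodicity of Tate homology rigorous under $\CI_R(M)<\infty$ and $\cx_R(M)\leq 1$ without circular reasoning; once that is in hand both assertions drop out as essentially formal consequences of Theorem \ref{t12}.
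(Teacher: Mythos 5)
Your proposal is correct and follows essentially the same route as the paper: apply Theorem \ref{t12} (after observing, via Theorem \ref{Tate2} and hypothesis (ii), that $\Supp_R(\widehat{\Tor}_j^R(M,N))\subseteq\mz$ for all $j\in\ZZ$) and then exploit two-periodicity of Tate homology, exactly as you outline for parts (a) and (b). The only difference is in the periodicity step you flagged as delicate: the paper gets $\widehat{\Tor}_i^R(M,N)\cong\widehat{\Tor}_{i+2}^R(M,N)$ for all $i\in\ZZ$ directly from \cite[Theorem 7.3]{AGP}, which says that since $\CI_R(M)<\infty$ and the Betti numbers of $M$ are bounded, the minimal resolution of $\Omega^{t}M$ with $t=\depth R-\depth_R(M)$ is periodic of period at most two, hence so is the minimal complete resolution of $M$; this citation replaces the quasi-deformation reduction you sketched.
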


\begin{proof}
	Set $t:=\depth R-\depth_R(M)$. As $M$ has a bounded Betti numbers, by \cite[Theorem 7.3]{AGP}, the minimal resolution of
	$\Omega^{t}M$ is periodic of period at most two, hence so is the minimal complete resolution of $M$. Therefore, we get the following isomorphisms:
	\begin{equation}\tag{\ref{cx1}.1}
	\widehat{\Tor}_i^R(M,N)\cong\widehat{\Tor}_{i+2}^R(M,N)  \text{ for all } i\in\ZZ.
	\end{equation}
	 Note that by Theorem \ref{Tate2} and assumption (ii), $\Supp_R(\widehat{\Tor}_i^R(M,N))\subseteq\mz$ for all $i\in\ZZ$. Now the first assertion follows from Theorem \ref{t12}(i) and (\ref{cx1}.1). To prove the second assertion, suppose that $\hh^n_{\mz}(M\otimes_RN)=0$. By Theorem \ref{t12}(ii), $\widehat{\Tor}_{-n}^R(M,N)=0$ and so by (\ref{cx1}.1), $\widehat{\Tor}_{-n+2j}^R(M,N)=0$ for all $j\in\ZZ$. Therefore, by using Theorem \ref{t12}(i) we see that $\hh^{n-2i}_{\mz}(M\otimes_RN)=0$ for all integer $i\geq0$.
\end{proof}

\begin{cor}\label{cq} Assume $R$ is a  local hypersurface ring, and let $M, N\in\md(R)$. Assume $M$ and $N$  are both maximal Cohen-Macaulay modules such that $\NF(M)\cap\NF(N)\subseteq\{\fm\}$. 
If $p$ (respectively, $q$) is an odd (respectively, even) integer such that $\max\{p,q\}<\dim R$ and $\hh^q_{\fm}(M\otimes_RN)=\hh^p_{\fm}(M\otimes_RN)=0$, then either $M$ or $N$ is free.
\end{cor}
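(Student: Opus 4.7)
The plan is to apply Proposition \ref{cx1}(b) separately to the two vanishings—one of even and one of odd index—so as to annihilate every Tate homology module $\widehat{\Tor}_k^R(M,N)$ for $k \in \ZZ$, then translate this Tate vanishing into ordinary Tor vanishing using that $M$ and $N$ are totally reflexive, and finally combine the depth formula with the classical Huneke--Wiegand hypersurface theorem to force one of $M, N$ to be free.

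First I would take $\mz = \V(\fm)$ and verify the hypotheses of Proposition \ref{cx1} for both $n = q$ and $n = p$. Since $R$ is a hypersurface, $\CI_R(M) < \infty$ and $\cx_R(M) \leq 1$ automatically by (\ref{cx}.1). Hypothesis (i) of \ref{cx1}, evaluated at the sole prime $\fm \in \mz$, reads $\depth_R M + \depth_R N \geq \depth R + n$; as $M, N$ are MCM this becomes $2\dim R \geq \dim R + n$, i.e., $n \leq \dim R$, which holds for $n \in \{p,q\}$ because $\max\{p,q\} < \dim R$. Hypothesis (ii), $\Supp_R \Tor_i^R(M,N) \subseteq \{\fm\}$ for $i \geq 1$, follows from $\NF(M) \cap \NF(N) \subseteq \{\fm\}$: at every prime $\fp \neq \fm$ one of $M_\fp$, $N_\fp$ is $R_\fp$-free, so $\Tor_i^R(M,N)_\fp = 0$.

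Applying Proposition \ref{cx1}(b) with $n = q$ (even) yields $\widehat{\Tor}_{-q+2j}^R(M,N) = 0$ for every $j \in \ZZ$, so $\widehat{\Tor}_k^R(M,N) = 0$ for every even integer $k$; applying it with $n = p$ (odd) similarly gives $\widehat{\Tor}_k^R(M,N) = 0$ for every odd $k$. Hence $\widehat{\Tor}_k^R(M,N) = 0$ for all $k \in \ZZ$. Because $M, N$ are MCM over the Gorenstein ring $R$, they are totally reflexive, so Theorem \ref{Tate hom}(iv) gives $\Tor_i^R(M,N) \cong \widehat{\Tor}_i^R(M,N) = 0$ for all $i \geq 1$. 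Then the depth formula of Theorem \ref{t4} applies and produces $\depth_R(M \otimes_R N) = \depth_R M + \depth_R N - \depth R = \dim R$, i.e., $M \otimes_R N$ is MCM. (Alternatively, Corollary \ref{ct} gives the same conclusion, since option (ii) there would force $\depth_R(M\otimes_RN) = \infty$.)

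The final step, and the main obstacle, is to extract freeness. I would invoke the classical theorem of Huneke and Wiegand for hypersurfaces: if $R$ is a local hypersurface and $M, N$ are nonzero finitely generated $R$-modules with $M \otimes_R N$ maximal Cohen--Macaulay, then $\pd_R M < \infty$ or $\pd_R N < \infty$. Whichever of $M$, $N$ has finite projective dimension is MCM and thus free by the Auslander--Buchsbaum formula, completing the proof. Everything up to the depth formula is routine once the hypotheses of \ref{cx1} are in place; the only input not contained in the excerpt is the last hypersurface-specific rigidity theorem.
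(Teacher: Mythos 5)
Your argument is correct and its core coincides with the paper's: with $\mz=\V(\fm)$ you verify the hypotheses of Proposition \ref{cx1} exactly as intended and apply part (b) twice, once with the even index $q$ and once with the odd index $p$, to conclude $\widehat{\Tor}_k^R(M,N)=0$ for every $k\in\ZZ$. The only divergence is the endgame. The paper passes directly from the vanishing of all Tate homology to $\Tor_i^R(M,N)=0$ for $i\gg0$ and then quotes \cite[1.9]{HW1}, which says that over a hypersurface eventual Tor vanishing forces $\pd_R(M)<\infty$ or $\pd_R(N)<\infty$; freeness then follows because the modules are maximal Cohen--Macaulay. You instead upgrade to $\Tor_i^R(M,N)=0$ for all $i\geq1$ via Theorem \ref{Tate hom}(iv) (legitimate, since maximal Cohen--Macaulay modules over the Gorenstein ring $R$ are totally reflexive), feed this into the depth formula of Theorem \ref{t4} to see that $M\otimes_RN$ is maximal Cohen--Macaulay, and then invoke the second rigidity theorem of Huneke--Wiegand \cite{HW} for maximal Cohen--Macaulay tensor products over hypersurfaces. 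Both routes rest on an external Huneke--Wiegand hypersurface theorem; yours costs two extra steps (full Tor-independence and the depth formula) but yields the slightly stronger intermediate fact that $M\otimes_RN$ is maximal Cohen--Macaulay, whereas the paper's appeal to \cite[1.9]{HW1} is the shorter path.
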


\begin{proof} Note, in view of Corollary \ref{cx1}(b), we have $\widehat{\Tor}_{i}^R(M,N)=0$ for all $i\in\ZZ$. Therefore, $\Tor_i^R(M,N)=0$ for all $i\gg0$. Hence, by \cite[1.9]{HW1}, it follows that either $\pd(M)<\infty$ or $\pd(N)<\infty$, i.e., either $M$ or $N$ is free.
\end{proof}

\begin{eg}$\phantom{}$
\begin{enumerate}[(i)]
\item Let $R=k[\![x,y,z,w]\!]/(xy)$ and $M=N=R/xR$. Let $p=2$ and $q=1$. Then $\max\{p,q\}<\dim R$. Note that $M$ is maximal Cohen-Macaulay and 
$\hh^q_{\fm}(M\otimes_RN)=\hh^p_{\fm}(M\otimes_RN)=0$. Neither $M$ nor $N$ is free. Moreover, M is not locally free on the punctured spectrum of $R$. This example shows that the assumption on non-free locus in Corollary \ref{cq} is necessary.
\item Let $R$ be a local Cohen-Macaulay ring of dimension $d\geq 4$, and let $M=N=\fm$. Then $M$ and $N$ are not free, but are locally free over the punctured spectrum of $R$. So, in view of \cite[5.5(i)]{finitsup}, we see that $ \hh^{2}_{\fm}(M\otimes_RN)=\hh^{3}_{\fm}(M\otimes_RN) =0$. This example shows that the maximal Cohen-Macaulay assumption in Corollary \ref{cq} is necessary.
\end{enumerate}
\end{eg}

We end this section by the following result which is an extension of \cite[1.3]{CST}.

\begin{cor}\label{hhyper} Assume $R$ is a $d$-dimensional hypersurface with an isolated singularity, and let $M, N\in\md(R)$ be non-free and maximal Cohen-Macaulay. Then at least one of the following conditions holds:
\begin{enumerate}[\rm(i)]
	\item $\hh^i_{\fm}(M\otimes_RN)\neq 0$ for all $0\leq i\leq d$.
	\item $\hh^i_{\fm}(M\otimes_RN)\neq 0=\hh^j_{\fm}(M\otimes_RN)$ for all even integers $i$ and odd integers $j$ with $0\leq i,j<d$.
	\item $\hh^i_{\fm}(M\otimes_RN)\neq 0=\hh^j_{\fm}(M\otimes_RN)$ for all odd integers $i$ and even integers $j$ with $0\leq i,j<d$.
\end{enumerate}	
\end{cor}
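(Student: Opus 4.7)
The plan is to apply Proposition~\ref{cx1} to $M\otimes_R N$ with $\mz=\V(\fm)$, taking advantage of the fact that a hypersurface $R$ with isolated singularity satisfies $\cx_R(M)\leq 1$, $\CI_R(M)<\infty$, and $\NF(M)\cap\NF(N)\subseteq\{\fm\}$. Because both $M$ and $N$ are maximal Cohen--Macaulay, the hypothesis $\depth_{R_\fp}(M_\fp)+\depth_{R_\fp}(N_\fp)\geq \depth R_\fp+n$ at the unique prime $\fp=\fm\in\mz$ reduces to $n\leq d$, so every $n\in\{0,1,\dots,d\}$ is admissible. Taking $n=d$ in Proposition~\ref{cx1}(a) yields the two-step periodicity $\hh^i_\fm(M\otimes_R N)\cong \hh^{i+2}_\fm(M\otimes_R N)$ for $0\leq i\leq d-3$; in particular, each $\hh^i_\fm(M\otimes_R N)$ with $i\in[0,d-1]$ is determined by the parity of $i$, so it suffices to analyze $\hh^0$ and $\hh^1$. (For $d\leq 2$ the periodicity statement is vacuous, but the range $[0,d-1]$ already lies in $\{0,1\}$, so no periodicity is needed.)

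Next I would rule out the possibility that $\hh^0_\fm(M\otimes_R N)=0=\hh^1_\fm(M\otimes_R N)$. Applying Proposition~\ref{cx1}(b) with $n=0$ and then with $n=1$ (both admissible since $d\geq 1$) forces $\widehat{\Tor}_{2j}^R(M,N)=0=\widehat{\Tor}_{2j-1}^R(M,N)$ for every $j\in\ZZ$, i.e., $\widehat{\Tor}^R_{\ast}(M,N)$ vanishes identically. By Theorem~\ref{Tate2} the ordinary Tor then vanishes eventually, and the Huneke--Wiegand result \cite[1.9]{HW1}, exactly as invoked in the proof of Corollary~\ref{cq}, forces either $M$ or $N$ to have finite projective dimension and hence to be free, contradicting the standing hypothesis.

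Therefore at least one of $\hh^0_\fm(M\otimes_R N)$ and $\hh^1_\fm(M\otimes_R N)$ is nonzero, and the proof splits into three cases. If exactly one of them vanishes, the parity-based periodicity directly delivers case~(ii) or case~(iii) according to which is nonzero. If both are nonzero, then $\hh^i_\fm(M\otimes_R N)\neq 0$ for every $0\leq i\leq d-1$, and it remains only to verify $\hh^d_\fm(M\otimes_R N)\neq 0$ to obtain case~(i).

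The main obstacle lies in this last step, but it also yields to Proposition~\ref{cx1}(b): assuming for contradiction that $\hh^d_\fm(M\otimes_R N)=0$ and applying the proposition with $n=d$ gives $\hh^{d-2i}_\fm(M\otimes_R N)=0$ for every $i\geq 0$; specializing to $i=\lfloor d/2\rfloor$ forces $\hh^0_\fm(M\otimes_R N)=0$ when $d$ is even and $\hh^1_\fm(M\otimes_R N)=0$ when $d$ is odd, either of which contradicts the assumption of case~(i). This closes the remaining case and establishes the trichotomy.
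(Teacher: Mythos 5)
Your proof is correct and takes essentially the same route as the paper, which deduces the trichotomy from Proposition \ref{cx1} (the two-step periodicity for $n=d$ and the vanishing propagation in part (b)) together with Corollary \ref{cq}. The only difference is cosmetic: instead of citing Corollary \ref{cq} you unwind it directly from Proposition \ref{cx1}(b), Theorem \ref{Tate2} and \cite[1.9]{HW1}, which is exactly how that corollary is proved and has the minor advantage of also covering low dimensions where \ref{cq} does not literally apply.
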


\begin{proof}
This claims are consequences of Corollaries \ref{cx1} and \ref{cq}.
\end{proof}

\section{Splitting criteria for vector bundles}

In this section, we are concerned with some splitting criteria for vector bundles on schemes. Recall that a vector bundle is called \emph{trivial} if it is isomorphic to a direct sum of line bundles. Let $\mathcal{E}$ be a vector bundle on the projective scheme $X$ equipped with a very ample line bundle $\mathcal{O}(1)$. The dual of $\mathcal{E}$ is denoted by $\mathcal{E}^*:=\mhom(\mathcal{E},\mathcal{O}_X)$. We say that $\mathcal{E}$ has the \emph{cohomological rigidity property} provided that the the following condition holds: 
\begin{equation}\notag{}
\text{If } \hh^i_\ast(X,\mathcal{\mathcal{E}}\otimes \mathcal{\mathcal{\mathcal{E}}}^\ast):=\bigoplus_{n\in\mathbb{Z}}\hh^i(X,\mathcal{E}\otimes \mathcal{E}^\ast(n))=0 \text{ for some  } i\geq 1 \Longrightarrow \mathcal{E} \text{ is trivial. }
\end{equation}

The cohomological rigidity property, considered as above, was initially studied by Kempf \cite{Kem} and Luk and Yau \cite{Luk} by using different approaches. A commutative algebra interpretation of the cohomological rigidity property has been examined by Huneke and Wiegand \cite{HW1}. More precisely, given a reflexive module $M$ over a hypersurface ring $R$, Huneke and Wiegand determined new criteria for the freeness of the module $M$ in terms of the vanishing of $\hh^i_{\fm}(M\otimes_RM^*)$ for some $i$. Furthermore, Huneke and Wiegand generalized the aforementioned result of Luk and Yau via the machinery of commutative algebra. 

Our work in this section is motivated by the work of Huneke and Wiegand \cite{HW1}. The main purpose of this section is to obtain some new criteria for the freeness of modules in terms of the vanishing of local cohomology. Along the way, as an application, we obtain a new splitting criteria - for an arithmetically Cohen-Macaulay vector bundle on a smooth complete intersection of an odd dimension --  in terms of the cohomological rigidity property; see Corollary \ref{artCM}. 

We start by the following result which is an extension of a result of Huneke and Wiegand; see \cite[4.1(1)]{HW1}, and also \ref{LC} for the terminology.

\begin{thm}\label{t7} Let  $R$ be a ring and let $\mz\subset\Spec R $ be a specialization-closed subset where $\gr_R(\mz,R)\geq1$. Assume $M\in\md(R)$ is such that $\NF(M)\subseteq\mz$. If $\hh^1_{\mz}(M\otimes_RM^*)=0$, then $M=F\oplus T$, where $F$ is free and $T$ is $\mz$-torsion. In particular, if $M$ is torsion-free with respect to $\mz$, then $M$ is free.
\end{thm}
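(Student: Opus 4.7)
The plan is to apply Lemma \ref{l2} to the pair $(M,M^*)$ with $n=1$ in order to obtain the vanishing $\Ext^2_R(\Tr M,M^*)=0$, and then to exploit a preimage of $\id_{M^*}$ in $M\otimes_R M^*$ to simultaneously verify that $M^*$ is projective and construct a section of the natural map $e_M:M\to M^{**}$.

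First I would check the hypotheses of Lemma \ref{l2}. The condition $\NF(M)\subseteq\mz$ forces $(\Tr M)_\fp\approx 0$ for every $\fp\notin\mz$, so $\Supp_R(\Ext^i_R(\Tr M,M^*))\subseteq\mz$ for all $i\geq 1$. Lemma \ref{gr} together with $\gr_R(\mz,R)\geq 1$ gives $\gr_R(\mz,M^*)\geq\min\{2,\gr_R(\mz,R)\}\geq 1$. Hence Lemma \ref{l2}(ii) yields an injection $\Ext^2_R(\Tr M,M^*)\hookrightarrow \hh^1_{\mz}(M\otimes_R M^*)=0$. Feeding this vanishing into $(\ref{a1}.2)$ with $N=M^*$ collapses that four-term sequence to a short exact sequence
$$0\to\Ext^1_R(\Tr M,M^*)\to M\otimes_R M^*\xrightarrow{\sigma}\Hom_R(M^*,M^*)\to 0,$$
in which $\sigma(m\otimes\phi)(\psi)=\psi(m)\phi$.

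Next I would choose $\xi=\sum_{i=1}^k m_i\otimes\phi_i$ with $\sigma(\xi)=\id_{M^*}$. Unwinding, this is the dual-basis identity $\psi=\sum_i\psi(m_i)\phi_i$ for every $\psi\in M^*$, with coefficient functionals $e_M(m_i)\in M^{**}$, so $M^*$ (and therefore $M^{**}$) is finitely generated projective. The same $\xi$ produces a section $s:M^{**}\to M$ of $e_M$ via the formula $s(f)=\sum_i f(\phi_i)m_i$: the dual-basis relation yields $e_M(s(f))(\psi)=\sum_i f(\phi_i)\psi(m_i)=f(\sum_i\psi(m_i)\phi_i)=f(\psi)$, so $e_M\circ s=\id_{M^{**}}$. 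Consequently $M=s(M^{**})\oplus\ker(e_M)$, and by $(\ref{a1}.3)$ the kernel $\ker(e_M)\cong\Ext^1_R(\Tr M,R)$ has support contained in $\NF(M)\subseteq\mz$, hence is $\mz$-torsion. The ``In particular'' clause is immediate: if $\Gamma_{\mz}(M)=0$, then the $\mz$-torsion direct summand of $M$ must vanish, forcing $M\cong M^{**}$.

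The conceptual crux is that the single element $\xi$ pulled back from $\id_{M^*}$ does double duty: its components exhibit a finite dual basis for $M^*$, and contracting it against elements of $M^{**}$ produces the section of $e_M$. The main technical care is simply in verifying that $(M,M^*)$ meets the hypotheses of Lemma \ref{l2}; after that, the rest of the argument is formal manipulation of evaluation pairings.
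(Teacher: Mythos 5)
Your proof is correct, and its first half coincides with the paper's: both arguments use Lemma \ref{gr} (with $\gr_R(\mz,R)\geq 1$) and Lemma \ref{l2}(ii), together with the observation that $\Ext^i_R(\Tr M,M^*)$ is supported in $\mz$ because $\NF(M)\subseteq\mz$, to obtain the key vanishing $\Ext^2_R(\Tr M,M^*)=0$. Where you diverge is in how the splitting is extracted. The paper feeds this vanishing into (\ref{a1}.5), identifies $\Omega^{-2}M\approx\Tr M^*$ to get $\Tor_1^R(\Tr M^*,M^*)=0$, uses (\ref{a1}.4) to see that $M^{**}\otimes_RM^*\to\Hom_R(M^*,M^*)$ is onto, cites \cite[A.1]{ag} to conclude $M^*$ is projective, deduces $\Ext^2_R(\Tr M,R)=0$, and then splits the resulting sequence $0\to\Ext^1_R(\Tr M,R)\to M\to M^{**}\to 0$ using projectivity of $M^{**}$. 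You instead read the surjectivity of the evaluation map $M\otimes_RM^*\to\Hom_R(M^*,M^*)$ directly off (\ref{a1}.2), and a single lift $\xi$ of $\id_{M^*}$ does all the work: its components furnish a finite dual basis, so $M^*$ (hence $M^{**}$) is projective --- this is exactly the content of \cite[A.1]{ag}, proved on the spot --- and contracting $\xi$ against elements of $M^{**}$ gives an explicit section of $e_M$, yielding $M\cong M^{**}\oplus\ker(e_M)$ with $\ker(e_M)\cong\Ext^1_R(\Tr M,R)$ supported in $\NF(M)\subseteq\mz$ by (\ref{a1}.3). Your route is more self-contained (no cosyzygy bookkeeping, no external citation) and makes the direct-sum decomposition explicit, whereas the paper's route isolates the reusable intermediate facts $\Tor_1^R(\Tr M^*,M^*)=0$ and $\Ext^2_R(\Tr M,R)=0$. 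One small point, common to both write-ups: what is actually produced is a projective summand $M^{**}$; since $R$ is not assumed local, ``free'' in the statement should be read as ``projective'' (or one reduces to the local case), and this is no worse in your argument than in the paper's, whose appeal to \cite[A.1]{ag} likewise yields projectivity.
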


\begin{proof}
	In view of Lemma \ref{gr} and our assumption, $\gr_R(\mz,M^*)\geq\min\{2,\gr_R(\mz,R)\}\geq1$. Hence, by using Lemma \ref{l2}(ii) we conclude that
	\begin{equation}\tag{\ref{t7}.1}
	\Ext^2_R(\Tr M,M^*)=0.
	\end{equation}
	 It follows from (\ref{t7}.1) and (\ref{a1}.5) that $\Tor_1^R(\Omega^{-2}M,M^*)=0$. Note that $\Omega^{-2}M\approx\Tr\Omega^2\Tr M\approx\Tr M^*$ (see \ref{a1}). Therefore, $\Tor_1^R(\Tr M^*,M^*)=0$. In other words, by (\ref{a1}.4), the natural map $M^\ast\otimes M^{\ast\ast }\to \Hom(M^{\ast},M^{\ast})$ is surjective. In view of \cite[A.1]{ag},  we conclude that $M^*$ is free. It follows from (\ref{t7}.1) that $\Ext^2_R(\Tr M,R)=0$.
	 Therefore, by (\ref{a1}.3), the natural map $e_M:M\to M^{**}$ is surjective and we get the following exact sequence $0\rightarrow\Ext^1_R(\Tr M,R)\rightarrow M\rightarrow M^{**}\rightarrow0$. As $M^{**}$ is free, the above exact sequence splits. Hence, $M\cong F\oplus T$, where $T=\Ext^1_R(\Tr M,R)$ which is $\mz$-torsion and $F:=M^{**}$.
\end{proof}

\begin{cor}\label{c7} Assume $R$ is local and, $M\in\md(R)$, and $\fa$ is an ideal of $R$ of positive grade. Then the following statements hold:
\begin{enumerate}[\rm(i)]
\item If $\NF(M)\subseteq\V(\fa)$ and $\hh^1_{\fa}(M\otimes_RM^*)=0$, then $M=F\oplus T$, where $F$ is free and $T$ is $\fa$-torsion. Therefore, if $M$ is torsion-free with respect to $\fa$, then $M$ is free.
\item If $\NF(M)\subseteq\V(\fm)$ and $\hh^1_{\fm}(M\otimes_RM^*)=0$, then $M=F\oplus T$, where $F$ is free and $T$ is a finite length module. Therefore, if $\depth_R(M)\geq 1$, then $M$ is free.
\end{enumerate}
\end{cor}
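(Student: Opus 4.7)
The plan is to derive both parts of this corollary as direct specializations of Theorem~\ref{t7}, by taking the specialization-closed subset $\mz$ to be a closed subset of $\Spec R$.

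For part (i), I would set $\mz:=\V(\fa)$, which is closed and hence specialization-closed. The hypothesis that $\fa$ has positive grade translates, via \ref{gsc}, to $\gr_R(\mz,R)=\gr_R(\fa,R)\geq 1$, so the grade requirement of Theorem~\ref{t7} is met. The containment $\NF(M)\subseteq\V(\fa)=\mz$ is in the hypotheses, and the vanishing $\hh^1_{\mz}(M\otimes_RM^\ast)=\hh^1_{\fa}(M\otimes_RM^\ast)=0$ is immediate from the notational convention fixed in \ref{LC}. Thus Theorem~\ref{t7} produces a decomposition $M\cong F\oplus T$ with $F$ free and $T=\Gamma_\mz(T)=\Gamma_\fa(T)$, i.e.\ $T$ is $\fa$-torsion. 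For the final clause, if $M$ is torsion-free with respect to $\fa$, then $\Gamma_\fa(M)=0$, which forces $T=0$ and hence $M\cong F$ is free.

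For part (ii), I would apply exactly the same argument with $\fa$ replaced by $\fm$; here the role of the positive-grade hypothesis is played by $\depth R\geq 1$ (equivalent to $\gr_R(\V(\fm),R)\geq 1$), which is implicit in the setup. Theorem~\ref{t7} again yields $M\cong F\oplus T$ with $F$ free and $T$ an $\fm$-torsion module. Since $T$ is finitely generated with $\Supp_R(T)\subseteq\{\fm\}$, it automatically has finite length. The last assertion then follows by the same mechanism as in part (i): the assumption $\depth_R(M)\geq 1$ gives $\Gamma_\fm(M)=0$, so $T=0$ and $M$ is free.

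There is really no obstacle beyond correctly translating the language: ``$\mz$-torsion'' becomes ``$\fa$-torsion'' when $\mz=\V(\fa)$, and, in the local case with $\mz=\V(\fm)$, finitely generated $\fm$-torsion modules are precisely the modules of finite length. No further input beyond Theorem~\ref{t7} is needed.
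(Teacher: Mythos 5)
Your proposal is correct and follows exactly the paper's route: the paper proves Corollary \ref{c7} simply by specializing Theorem \ref{t7} to $\mz=\V(\fa)$ and $\mz=\V(\fm)$, which is precisely what you do, with the routine translations (positive grade of $\fa$ giving $\gr_R(\mz,R)\geq 1$, finitely generated $\fm$-torsion modules having finite length) spelled out.
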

\begin{proof} This follows immediately from Theorem \ref{t7}.
\end{proof}

To facilitate things, we  bring the following:
\begin{discussion}\label{dis}
\begin{enumerate}[(i)]
	\item	By isolated Gorenstein singularity we mean a non Gorenstein ring $(R,\fm)$ which is Gorenstein over the punctured spectrum.

\item   Let  $(R,\fm)$ be a  Cohen-Macaulay local ring of dimension $d>1$
with isolated Gorenstein singularity and possessing a canonical module.  In view of \cite[5.8]{finitsup}, we know that $\hh^{i}_{\fm}(\omega_R\otimes _R \omega_R^\ast)\neq0$ if and only if $i\leq1$ or $i=d$. 

\item
Let us reprove the claim   $\hh^{i}_{\fm}(\omega_R\otimes _R \omega_R^\ast)=0$ for all $2\leq i<\dim( R)$, by the methods developed  in this paper. Indeed,  $\omega_R$ is locally free on the punctured spectrum of $R$. Let $i\geq 1$. Then, in view of  Lemma \ref{t1}, we have that $\hh^{i}_{\fm}(\omega_R\otimes _R \omega_R^\ast)=\Ext^{i-1}_R(\omega_R,\omega_R)=0$ for all $i$ where $2\leq i<\depth_R(\omega_R)=\dim(R)$.

\item Any one-dimensional integral domain which is not Gorenstein is a Cohen-Macaulay ring with isolated Gorenstein singularity. For example,
let $R:=k[[x^3,x^4,x^5]]$. 

\item Any two-dimensional normal domain which is not Gorenstein is a Cohen-Macaulay ring with isolated Gorenstein singularity. For example, $R:=k[[x^3,x^2y,xy^2,y^3]]$.

\item To see a $3$-dimensional situation, let $S := k[[x, y, z, u, v]]$ and put $ R := S/(yv-zu, yu-xv, xz-y^2)$, where $k$ is an algebraically closed field of characteristic
different from $2$. By \cite[Example 16.2]{lw}, $R$ is a $3$-dimensional Cohen-Macaulay ring with isolated Gorenstein singularity.
In fact, its canonical module is isomorphic to
the ideal $(u, v)$.
\end{enumerate}
\end{discussion}


\begin{eg}\label{h1implies} $\phantom{}$
The first  item shows the importance of the locally free assumption. The  second (and the third) item shows that the spot $1$ in the vanishing of local cohomology module is crucial.
\begin{enumerate}[(i)]
\item Let $R=k[\![x,y,z]\!]/(x^2)$ and let $M=R/xR$. Note that $M \cong M^\ast$ and $\hh^{ 1}_{\fm} (M\otimes_RM^\ast)=0$.
It is clear that $M$ is not of the form $F\oplus T$, where $F$ is free and $T$ has finite length.

\item Let $R$ be a  Cohen--Macaulay local ring with isolated Gorenstein singularity with a canonical module $\omega_R$, and of dimension $d>2$. Such a thing exists,
e.g., look at  $R:=\frac{\mathbb{C}[[x, y, z, u, v]]}{(yv-zu, yu-xv, xz-y^2)}$.
By the above discussion,  $\omega_R$ is locally free on the punctured spectrum of $R$ and  $\hh^{2}_{\fm}(\omega_R\otimes _R \omega_R^\ast)=0$.  It is clear that $\omega_R$ is not of the form $F\oplus T$, where $F$ is free and $T$ has finite length.

\item Let $(R,\fm,k)$ be a $d$-dimensional $(d>2)$  Cohen--Macaulay  local ring
and set $M:=\fm$. Then $\depth_R(M)=1$ and $M$ is  locally free.  We know that $\fm^\ast\simeq R$. It is easy to see $\hh^i_{\fm}(M\otimes_RM^*)\simeq\hh^i_{\fm}(\fm)=0$ for all $2\leq i\leq d-1$. However, $M$ is not free. In fact $\pd_R(M)=\infty$ provided $R$ is singular. Also, $\pd_R(M)=d-1>0$ provided $R$ is nonsingular.

\end{enumerate}
\end{eg}

\begin{cor}\label{c1} Assume $R$ is local complete intersection ring of dimension $d\geq2$ and let $\mz\subset \Spec R$ be   specialization-closed. Let $M\in\md(R)$ be such that $\NF(M)\subseteq\mz$. If $\hh^{n}_{\mz}(M\otimes_RM^*)=0$ for an odd integer $n$ with $1\leq n\leq \gr_R(\mz,M)$, then $\pd_R(M)<\infty$.
\end{cor}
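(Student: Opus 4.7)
The plan is to induct on the odd integer $n\geq 1$, with Theorem~\ref{t7} handling the base case. For $n=1$: since $R$ is Cohen--Macaulay of dimension $\geq 2$ we have $\gr_R(\mz,R)\geq 1$, so Theorem~\ref{t7} applies and gives a decomposition $M = F\oplus T$ with $F$ free and $T$ an $\mz$-torsion module. But the hypothesis $\gr_R(\mz,M)\geq 1$ forces $\Gamma_{\mz}(M)=0$, hence $T=0$ and $M=F$ is free. In particular $\pd_R(M)=0<\infty$.

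For odd $n\geq 3$, the first move is to convert the local cohomology vanishing into a Tate homology vanishing via Theorem~\ref{t12} applied to the pair $(M,M^*)$. The support condition (ii) of that theorem is immediate from $\NF(M)\subseteq \mz$, since then $\Tor_i^{R_\fp}(M_\fp,M_\fp^*)=0=\widehat{\Tor}_j^{R_\fp}(M_\fp,M_\fp^*)$ for $\fp\notin \mz$. For the depth condition (i), I would use that $R_{\fp}$ is Gorenstein, the stable equivalence $M^*\approx \Omega^2\Tr M$ (see \ref{a1}), and the Auslander--Bridger formula applied to $M_\fp$ and $M_\fp^*$, combined with the grade hypothesis $\depth_{R_\fp}(M_\fp)\geq n$, to produce the inequality $\depth_{R_\fp}(M_\fp)+\depth_{R_\fp}(M_\fp^*)\geq \depth R_\fp + n$ for every $\fp\in\mz$. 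Theorem~\ref{t12}(b) then yields the injection
$\widehat{\Tor}_{-n}^R(M,M^*)\hookrightarrow \hh^n_\mz(M\otimes_RM^*)=0$, hence $\widehat{\Tor}_{-n}^R(M,M^*)=0$.

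The hard part, and the main obstacle, will be to bootstrap this single odd-degree vanishing to $\pd_R(M)<\infty$. Here I would exploit both the complete intersection structure and the parity of $n$. Using Theorem~\ref{Tate hom}(ii)--(iii) together with $M^*\approx \Omega^2\Tr M$, I would rewrite $\widehat{\Tor}_{-n}^R(M,M^*)\cong \widehat{\Ext}^{n-1}_R(M^*,M^*)$, so the vanishing becomes a single-degree Tate Ext vanishing for a module of finite CI-dimension (since $\CI_R(M^*)<\infty$). Over the complete intersection $R$, the graded module $\bigoplus_{i\in\ZZ}\widehat{\Ext}^i_R(M^*,M^*)$ carries an action of the Eisenbud cohomology operators in degree $2$, and this action preserves the parity of the cohomological index; one then propagates the vanishing along the odd-degree class and, combined with the complexity bound $\cx_R(M,M^*)\leq \cod R$ from (\ref{cx}.1), concludes that $\widehat{\Tor}_i^R(M,M^*)=0$ for all $i\ll 0$.

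Finally, applying Theorem~\ref{Tate2} gives $\Tor_i^R(M,M^*)=0$ for all $i\gg 0$. Since $M$ and $M^*$ are both locally free on $\Spec R\setminus\mz$, the Huneke--Wiegand rigidity principle for complete intersections (see \cite[1.9]{HW1}), applied to the pair $(M,M^*)$, forces $\pd_R(M)<\infty$ or $\pd_R(M^*)<\infty$; in either case the grade assumption on $M$ and the Auslander--Buchsbaum formula yield $\pd_R(M)<\infty$, completing the induction.
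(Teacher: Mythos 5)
Your reduction to Theorem \ref{t12} does not get off the ground, because hypothesis (i) of that theorem is not available. From $n\leq\gr_R(\mz,M)$ and Proposition \ref{grz} you know only that $\depth_{R_\fp}(M_\fp)\geq n$ for $\fp\in\mz\cap\Supp_R(M)$, while the stable isomorphism $M^*\approx\Omega^2\Tr M$ gives merely $\depth_{R_\fp}(M^*_\fp)\geq\min\{2,\depth R_\fp\}$. Running the Auslander--Bridger formula, the inequality $\depth_{R_\fp}(M_\fp)+\depth_{R_\fp}(M^*_\fp)\geq\depth R_\fp+n$ is equivalent to the Gorenstein dimension of $M^*_\fp$ being at most $\depth R_\fp-n-(\depth R_\fp-\depth_{R_\fp}(M_\fp))$, so whenever $\depth_{R_\fp}(M_\fp)=n$ it forces $M^*_\fp$ to be maximal Cohen--Macaulay --- and nothing in the hypotheses provides that. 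Concretely, for $R$ regular of dimension $5$, $\mz=\V(\fm)$, $M=\Omega^3k$ and $n=3$, your claimed inequality would say $\depth_R(M^*)\geq 5$, i.e.\ $(\Omega^3k)^*$ is free, which fails (it would make $\Omega^3k\cong(\Omega^3k)^{**}$ free). So the injection $\widehat{\Tor}_{-n}^R(M,M^*)\hookrightarrow\hh^n_{\mz}(M\otimes_RM^*)$ is not licensed. (Your base case has a smaller slip of the same flavour: $\gr_R(\mz,R)\geq1$ does not follow from $R$ being Cohen--Macaulay of dimension $\geq2$, since $\mz$ may contain minimal primes; one must first replace $\mz$ by $\mz\cap\Supp_R(M)$, where $\gr_R(\mz,M)\geq1$ excludes minimal primes, before invoking Theorem \ref{t7}.)

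The bootstrapping step is also a genuine gap rather than a technical one to be filled in: there is no result asserting that a single odd-degree vanishing of Tate homology, together with the degree-two Eisenbud operators and a complexity bound, yields $\widehat{\Tor}_i^R(M,M^*)=0$ for all $i\ll0$. The operators preserve parity, so even in the periodic (hypersurface) case one vanishing controls only one parity class, whereas Theorem \ref{Tate2} needs both; the propagation results actually used in this paper (Corollary \ref{t11}, via \cite[3.5]{CST}) require $c>\cx_R(M,N)$ \emph{consecutive} vanishings, and the pair $M=R/xR$, $N=R/yR$ over $k[\![x,y]\!]/(xy)$, whose Tate homology vanishes in every odd degree but no even degree, shows a one-spot vanishing does not spread. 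Your final appeal to \cite[1.9]{HW1} is a hypersurface theorem, not valid over complete intersections of higher codimension, and even "either $\pd_R(M)<\infty$ or $\pd_R(M^*)<\infty$" does not obviously return $\pd_R(M)<\infty$ in the second case. The parity of $n$ has to be exploited quite differently, and much earlier: since $n\geq2$, Proposition \ref{grz} and local freeness off $\mz$ make $M$ reflexive, so $M\approx\Omega^2\Tr M^*$; Lemma \ref{l2}, applied to the pair $(M^*,M)$ (its hypotheses hold because $n\leq\gr_R(\mz,M)$ and the relevant Ext modules are supported in $\mz$), turns $\hh^n_{\mz}(M\otimes_RM^*)=0$ into $\Ext^{n+1}_R(\Tr M^*,M)=0$, i.e.\ $\Ext^{n-1}_R(M,M)=0$ with $n-1$ even, and then \cite[4.2]{AvBu} gives $\pd_R(M)<\infty$ in one stroke, with no Tate homology and no Tor-vanishing needed.
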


\begin{proof}
If $n=1$, then the assertion is clear by Corollary \ref{c7}. Now let $n>1$. In view of Proposition \ref{grz} and \cite[1.4.1(b)]{BH}, $M$ is reflexive and so $M\approx M^{**}\approx\Omega^2\Tr M^*$. It follows from Lemma \ref{l2} that $\Ext^{n-1}_R(M,M)\cong\Ext^{n+1}_R(\Tr M^*,M)=0$. Hence, by \cite[4.2]{AvBu}, $\pd_R(M)<\infty$.
\end{proof}


\begin{eg}  $\phantom{}$
\begin{enumerate}[(i)]
\item Here, we show that the complete-intersection assumption in Corollary \ref{c1} is important. To this end,  let $R$ be of isolated Gorenstein singularity and of dimension at least four. Let
$n:=3$ and set $M:=\omega_R$. In view of Discussion  \ref{dis}(ii) we know $\hh^{3}_{\fm}(M\otimes_RM^*)=0$. In order to complete this item, we need to recall that $\pd(M)=\infty$. 
\item Let $R=k[\![x,y,z,u,v]\!]/(xy)$, $M=R/xR$ and let $n=3$. Then $4=\depth_R(M)\geq n$, $M\simeq M^\ast$  and that $M\otimes_RM\simeq M$. Therefore, 
$\hh^3_{\fm}(M\otimes_RM^\ast)\simeq\hh^3_{\fm}(M)=0$. However, $\pd(M)=\infty$. This example shows the importance of the assumption on the non-free locus in Corollary \ref{c1}.
\item Let $R=k[\![x,y,u,v]\!]/(xy-uv)$ and let $M=(x,u)\subseteq R$. Recall from Example \ref{t3ri} that $M$ is locally free on the punctured spectrum of $R$ and is maximal Cohen-Macaulay. Set $n=2$. Then $n<\depth_R(M)$ and $\hh^{2}_{\fm}(M\otimes_R M^*)\simeq\hh^{2}_{\fm}(\fm)=0$. However, we have $\pd_R(M)=\infty$. This example shows that the oddness of the integer $n$ is necessary in Corollary \ref{c1}.
\end{enumerate}
\end{eg}


\begin{cor}\label{ceo} Let $R$ be a complete intersection local ring of dimension $d\geq 2$, and let $M\in\md_0(R)$. If $M$ is maximal Cohen--Macaulay and $\hh^i_{\fm}(M\otimes M^*)=0$ for an odd integer $i$ with $0<i<d$, then $M$ is free.
\end{cor}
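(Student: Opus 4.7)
The plan is to apply Corollary \ref{c1} directly with the specialization-closed subset $\mz = \V(\fm) = \{\fm\}$. I would first verify each hypothesis of that corollary. Since $M \in \md_0(R)$, the non-free locus $\NF(M)$ is contained in $\{\fm\} = \mz$. Because $M$ is maximal Cohen--Macaulay, we have $\depth_R(M) = d$, and Proposition \ref{grz} then gives
\[
\gr_R(\mz, M) = \inf\{\depth_{R_\fp}(M_\fp) \mid \fp \in \mz\} = \depth_R(M) = d.
\]
Thus the constraint $1 \leq i \leq \gr_R(\mz, M)$ becomes $1 \leq i \leq d$, which is satisfied by our odd integer $i$ with $0 < i < d$. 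The vanishing hypothesis $\hh^i_{\mz}(M \otimes_R M^\ast) = \hh^i_{\fm}(M \otimes_R M^\ast) = 0$ is exactly what is assumed.

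Therefore Corollary \ref{c1} yields $\pd_R(M) < \infty$. Combining this with the Auslander--Buchsbaum formula and the fact that $M$ is maximal Cohen--Macaulay gives
\[
\pd_R(M) = \depth R - \depth_R(M) = d - d = 0,
\]
so $M$ is free, as desired. There is no real obstacle: the corollary is essentially a specialization of Corollary \ref{c1} to the closed point combined with Auslander--Buchsbaum, and the only thing to check carefully is that the grade computation gives $\gr_R(\mz, M) = d$ so that the full range $0 < i < d$ of odd integers is admissible.
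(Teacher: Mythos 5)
Your proof is correct and follows exactly the paper's route: the paper also deduces this as an immediate consequence of Corollary \ref{c1} (applied with $\mz=\V(\fm)$), with the grade computation and the Auslander--Buchsbaum formula supplying the final step, just as you spell out.
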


\begin{proof}
This is an immediate consequence of Proposition \ref{c1}.
\end{proof}

\begin{eg}\label{ceoex} $\phantom{}$The first item shows that oddness of $i$ in Corollary \ref{ceo} is crucial. The second item shows that the maximal Cohen--Macaulay assumption is crucial.
\begin{enumerate}[\rm(i)]
\item	 $R=k[\![x,y,u,v]\!]/(xy-uv)$ and $M=(x,u)$. Then, by Example \ref{t3ri}, it follows that $M$ is  maximal Cohen-Macaulay and  $\hh_{\fm}^2(M\otimes_RM^\ast)=0$. However $M$ is not free.

\item Let $R=k[\![x,y,z,v,w]\!]$ and let $M= \Syz^{d-1}k$. Then, by \cite[5.5(ii)]{finitsup}, we see $ \hh^{3}_{\fm}(M\otimes_RM^\ast)=0$. However, $M$ is not free. As $M$ is not maximal Cohen--Macaulay, this example shows that the depth assumption on $M$ is necessary in Corollary \ref{ceo}.


\end{enumerate}
\end{eg}

\begin{prop}\label{tp3} Assume $R$ is local and a complete intersection of dimension $d$, where $d\geq4$, and let $\mz\subseteq \Spec R $ be a specialization-closed subset. Assume further $M\in\md(R)$ such that $\NF(M)\subseteq\mz$ and $\gr_R(\mz,M)\geq3$. Then the following hold:
	\begin{enumerate}[\rm(i)]
		\item{If $\hh^3_{\mz}(M\otimes_RM^*)=0$, then it follows that $\pd_R(M)\leq1$.}
		\item{If $\hh^2_{\mz}(M\otimes_RM^*)=0=\hh^3_{\mz}(M\otimes_RM^*)$, then it follows that $M$ is free.}
	\end{enumerate}
\end{prop}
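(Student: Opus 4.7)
The plan is to combine the transpose and dimension-shift identities of Lemmas~\ref{l2} and~\ref{l1} with the Avramov--Buchweitz Ext-rigidity theorem over complete intersections, handling both parts together.

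\textbf{Setup and key identification.} We may assume $M$ is nonfree; otherwise there is nothing to prove. Since $\NF(M)\subseteq\mz$, $\fm\in\mz$, and Proposition~\ref{grz} applied to $\gr_R(\mz,M)\geq 3$ gives $\depth_R M\geq 3$; in particular $M$ is reflexive and $M\approx\Omega^{2}\Tr M^{\ast}$ by~\ref{a1}. Since $M$ (hence $M^{\ast}$) is locally free outside $\mz$, $\Tr M^{\ast}$ is stably zero there, and $\Supp_R\Ext^{i}_{R}(\Tr M^{\ast},-)\subseteq\mz$ for every $i\geq 1$. Applying Lemma~\ref{l2} to the pair $(\Tr M^{\ast},M)$ with $n=3\leq\gr_R(\mz,M)$, together with the shift $\Ext^{j+2}_{R}(\Tr M^{\ast},-)\cong\Ext^{j}_{R}(M,-)$ for $j\geq 1$ (which follows from $M\approx\Omega^{2}\Tr M^{\ast}$), we obtain
\[
\hh^{2}_{\mz}(M\otimes_{R}M^{\ast})\cong \Ext^{1}_{R}(M,M), \qquad \Ext^{2}_{R}(M,M)\hookrightarrow \hh^{3}_{\mz}(M\otimes_{R}M^{\ast}).
\]

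\textbf{Proof of (i).} The hypothesis $\hh^{3}_{\mz}(M\otimes_R M^{\ast})=0$ forces $\Ext^{2}_{R}(M,M)=0$. The even-index criterion of Avramov--Buchweitz \cite[4.2]{AvBu} then yields $\pd_R M<\infty$, and hence $M$ is Tor-rigid. We next apply Lemma~\ref{l1}(c) to the pair $(M^{\ast},M)$ with $N=M$ Tor-rigid and $n=3$; all hypotheses hold, and together with the shift $\Ext^{3}_{R}(\Tr M^{\ast},R)\cong\Ext^{1}_{R}(M,R)$ this produces the auxiliary identification
\[
\hh^{2}_{\mz}(M\otimes_{R}M^{\ast})\cong \Ext^{1}_{R}(M,R)\otimes_R M.
\]
Supposing, for contradiction, that $\pd_R M\geq 2$, we will invoke that every finitely generated module of finite projective dimension over a Cohen--Macaulay ring is perfect, so $\gr_R(M)=\pd_R M\geq 2$ and $\Ext^{1}_{R}(M,R)=0$. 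Hence $\hh^{2}_{\mz}(M\otimes_R M^{\ast})=0$, and by the first identification $\Ext^{1}_{R}(M,M)=0$ as well. With both $\Ext^{1}_{R}(M,M)$ and $\Ext^{2}_{R}(M,M)$ vanishing, Avramov--Buchweitz rigidity \cite[4.2]{AvBu} forces $\Ext^{i}_{R}(M,M)=0$ for all $i\geq 1$; minimality of the free resolution then gives $\pd_R M=0$, contradicting $\pd_R M\geq 2$. Therefore $\pd_R M\leq 1$.

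\textbf{Proof of (ii) and main obstacle.} The two hypotheses of (ii) feed directly into the key identification to give $\Ext^{1}_{R}(M,M)=\Ext^{2}_{R}(M,M)=0$, and the same rigidity-plus-minimality argument as in (i) yields $\pd_R M=0$, i.e., $M$ is free. The delicate point will be (i): to manufacture two self-Ext vanishings from only one local cohomology vanishing, one must first bootstrap $\pd_R M<\infty$ via the even-index criterion, then exploit Tor-rigidity of $M$ through Lemma~\ref{l1}(c) to obtain the auxiliary identification of $\hh^{2}_{\mz}(M\otimes_R M^{\ast})$ in terms of $\Ext^{1}_{R}(M,R)$, whose vanishing via perfectness of finite-pd modules over Cohen--Macaulay rings then supplies the second Ext vanishing needed for the Avramov--Buchweitz rigidity theorem to close the argument.
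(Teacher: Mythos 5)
Your setup is exactly the paper's: from $\gr_R(\mz,M)\geq 3$ and Proposition~\ref{grz} you get $(S_3)$, hence reflexivity and $M\approx\Omega^2\Tr M^*$, and Lemma~\ref{l2} applied to $M^*\otimes_R M$ yields $\hh^2_{\mz}(M\otimes_RM^*)\cong\Ext^1_R(M,M)$ and an injection $\Ext^2_R(M,M)\hookrightarrow\hh^3_{\mz}(M\otimes_RM^*)$. At that point the paper simply invokes Jorgensen's theorem \cite[2.5]{Jo}: over a complete intersection, $\Ext^2_R(M,M)=0$ forces $\pd_R(M)\leq 1$, and together with $\Ext^1_R(M,M)=0$ forces freeness. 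You instead try to re-derive this, and that is where your argument breaks down.

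Three steps in your closing argument are unjustified or false. First, "$\pd_RM<\infty$ hence $M$ is Tor-rigid" is precisely Auslander's rigidity conjecture; it is false in general (Heitmann's counterexample) and is not known over complete intersections, so you are not entitled to feed $N=M$ into Lemma~\ref{l1}(c) (note also that if you \emph{could} assume Tor-rigidity, Lemma~\ref{Jo} with $n=2$ would give $\pd_RM<2$ immediately, making the rest of your detour unnecessary). Second, "every finitely generated module of finite projective dimension over a Cohen--Macaulay ring is perfect" is false: $\fm$ over a regular local ring of dimension $\geq 2$, or $\Omega^3k\oplus\Omega^4k$ over a five-dimensional regular local ring (which even satisfies $\depth\geq 3$, is locally free on the punctured spectrum, has $\pd=2$ and $\Ext^1_R(M,R)\neq 0$), so the deduction $\Ext^1_R(M,R)=0$ from $\pd_RM\geq 2$ has no basis. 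Third, \cite[4.2]{AvBu} says that vanishing of some even self-Ext over a complete intersection implies finite projective dimension; it does not say that $\Ext^1_R(M,M)=\Ext^2_R(M,M)=0$ propagates to $\Ext^i_R(M,M)=0$ for all $i\geq 1$ (over codimension $c\geq 2$ one needs longer gaps for Ext-vanishing to propagate), and with only $\pd_RM<\infty$ and the classical nonvanishing $\Ext^{\pd M}_R(M,M)\neq 0$ you can only exclude $\pd_RM\in\{1,2\}$, not $\pd_RM\geq 3$. The content you are missing is exactly the theorem of Jorgensen cited in the paper; without it (or a proof of it) both parts of your argument remain open.
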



\begin{proof} First note, by Proposition \ref{grz}, we have that $M$ satisfies $(S_3)$. Therefore, $M$ is reflexive and so $M\approx M^{**}\approx\Omega^2\Tr M^*$. Assume that $\hh^i_{\mz}(M\otimes_RM^*)=0$ for some $i$ with $2\leq i\leq 3$. Then, in view of Lemma \ref{l2} we conclude that $\Ext^{i-1}_R(M,M)\cong\Ext^{i+1}_R(\Tr M^*,M)=0.$ Now the assertions follow from \cite[2.5]{Jo}.
\end{proof}

Next we prove a generalization of \cite[4.2]{HW1}. In the following, for simplicity, we denote the Matlis dual functor by $(-)^v:=\Hom_R(-,E_R(k))$, where $E_R(k)$ is the injective hull of the residue field $k$.

\begin{prop}\label{tomax} Assume $R$ is Gorenstein and local of dimension $d$, and let $\mz\subset\Spec R $ be a specialization-closed subset. Let $M, N\in\md(R)$ be such that $\NF(N)\cup\NF(M)\subseteq\mz$. Then, if $1< i<\gr(\mz,R)$, it follows that $$\hh_{\mz}^i(M^\ast\otimes_RN^\ast)\cong \hh_{\fm}^{d-i+1}(M \otimes_RN )^v.$$ 
\end{prop}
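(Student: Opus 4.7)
My plan is to build the desired isomorphism as a three-step chain: first translate $\hh^i_\mz(M^\ast\otimes_R N^\ast)$ into the local cohomology of the dual $(M\otimes_R N)^\ast$, then convert the latter into an $\Ext$ group via Lemma \ref{t1}, and finally invoke Gorenstein local duality.

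First I would introduce the canonical evaluation map
\[
\psi \colon M^\ast\otimes_R N^\ast \lra (M\otimes_R N)^\ast,\qquad f\otimes g \longmapsto \bigl(m\otimes n\mapsto f(m)g(n)\bigr).
\]
Since $\NF(M)\cup\NF(N)\subseteq\mz$, for every $\fp\notin\mz$ both $M_\fp$ and $N_\fp$ are free over $R_\fp$, and therefore $\psi_\fp$ is an isomorphism. Consequently $\ker(\psi)$ and $\coker(\psi)$ have support in $\mz$, i.e.\ they are $\mz$-torsion, and by Theorem \ref{LC}(iii) we have $\hh^j_\mz(\ker\psi)=0=\hh^j_\mz(\coker\psi)$ for all $j\geq 1$. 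Factoring $\psi$ through $\im(\psi)$ and chasing the two short exact sequences $0\to\ker\psi\to M^\ast\otimes_R N^\ast\to\im\psi\to 0$ and $0\to\im\psi\to (M\otimes_R N)^\ast\to\coker\psi\to 0$ yields
\[
\hh^i_\mz(M^\ast\otimes_R N^\ast) \cong \hh^i_\mz\bigl((M\otimes_R N)^\ast\bigr) \qquad \text{for every } i\geq 2,
\]
which in particular covers the whole range $1<i<\gr_R(\mz,R)$ of the statement.

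Next I would apply Lemma \ref{t1} to the pair $(M\otimes_R N, R)$ with $t=\gr_R(\mz,R)$. The required support hypothesis is automatic: for $\fp\notin\mz$ the module $(M\otimes_R N)_\fp$ is a tensor product of free $R_\fp$-modules and hence free, so $\Ext^j_{R_\fp}((M\otimes_R N)_\fp,R_\fp)=0$ for every $j\geq 1$, giving $\Supp_R(\Ext^j_R(M\otimes_R N,R))\subseteq\mz$. Part (i) of the lemma then produces
\[
\Ext^{i-1}_R(M\otimes_R N,R) \cong \hh^i_\mz\bigl((M\otimes_R N)^\ast\bigr) \qquad \text{for } 2\leq i\leq \gr_R(\mz,R)-1,
\]
which is exactly the stated range.

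Finally, since $R$ is Gorenstein local of dimension $d$, its canonical module is $R$ itself, and Grothendieck's local duality (Theorem \ref{Ld}) together with Matlis duality supplies the identification
\[
\Ext^{i-1}_R(M\otimes_R N,R) \cong \hh^{d-i+1}_{\fm}(M\otimes_R N)^v.
\]
Composing the three natural isomorphisms completes the argument. The main delicate step will be the verification that $\psi$ is an isomorphism away from $\mz$; this is the single place where the joint hypothesis $\NF(M)\cup\NF(N)\subseteq\mz$ is essential, since weakening it to a condition on only one of $M$ or $N$ would not force $\ker\psi$ and $\coker\psi$ to be $\mz$-torsion. Everything else is a routine long-exact-sequence chase combined with direct appeals to Lemma \ref{t1}, Theorem \ref{LC}, and Theorem \ref{Ld}.
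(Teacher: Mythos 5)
Your argument is correct, and its first and last steps coincide with the paper's: the paper also starts from the evaluation map $\Phi\colon M^\ast\otimes_RN^\ast\to(M\otimes_RN)^\ast$, notes it is an isomorphism off $\mz$, and concludes $\hh^i_{\mz}(M^\ast\otimes_RN^\ast)\cong\hh^i_{\mz}((M\otimes_RN)^\ast)$ for $i>1$, and it also finishes with Theorem \ref{Ld}. Where you differ is the middle step. You apply Lemma \ref{t1} directly to the pair $(M\otimes_RN,R)$, obtaining $\hh^i_{\mz}((M\otimes_RN)^\ast)\cong\Ext^{i-1}_R(M\otimes_RN,R)$ for $1<i<\gr_R(\mz,R)$ in one stroke; the support hypothesis of that lemma is indeed automatic here, since $(M\otimes_RN)_\fp$ is free for $\fp\notin\mz$. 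The paper instead reaches the same Ext module by a detour through the bidual: setting $L=M\otimes_RN$, it uses $L^{\ast\ast}\approx\Omega^2\Tr L^\ast$ and Lemma \ref{l2}(i) to get $\hh^i_{\mz}(L^\ast)\cong\Ext^{i-1}_R(L^{\ast\ast},R)$, and then shows $\Ext^{i-1}_R(L^{\ast\ast},R)\cong\Ext^{i-1}_R(L,R)$ by bounding the grade of $\ker$ and $\coker$ of $L\to L^{\ast\ast}$ via Proposition \ref{grz} and (\ref{gra}.1). Since Lemma \ref{l2} is itself built on Lemma \ref{t1}, your route is shorter and avoids the transpose/bidual bookkeeping, at no loss of generality. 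Two small caveats, both equally present in the paper's own writeup: if $\gr_R(\mz,R)=\infty$ you should take $t$ to be any integer at least $i+1$ rather than $t=\gr_R(\mz,R)$ (otherwise the range is empty and there is nothing to prove); and the final identification $\Ext^{i-1}_R(L,R)\cong\hh^{d-i+1}_{\fm}(L)^v$ via Matlis duality strictly returns the $\fm$-adic completion of the Ext module, which is harmless when that module has finite length (e.g. $\mz=\V(\fm)$) or $R$ is complete — exactly the step the paper also takes without comment.
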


\begin{proof}
There is a natural map $\Phi:M^*\otimes_RN^*\to(M\otimes_RN)^*$ taking $f\otimes g$ to the map
$x\otimes y\mapsto f(x)\cdot g(y)$. It is easy to see that $\Phi$ is an isomorphism if either $M$ or $N$ is free. Hence, by our assumption, $\ker(\Phi)$ and $\coker(\Phi)$ are $\mz$-torsion. It follows from parts (ii) and  (iii) of Theorem \ref{LC} that	
\begin{equation}\tag{\ref{tomax}.1}
\hh_{\mz}^i(M^\ast\otimes_RN^\ast)\cong\hh_{\mz}^i((M\otimes_RN)^\ast) \text{ for  all } i>1.
\end{equation}
Set $L=M\otimes_RN$. Consider the natural map $\Psi:L\to L^{**}$. As $\NF(L)\subseteq\mz$, we deduce that $\Psi_{\fp}$ is an isomorphism for all $\fp\in\Spec R\setminus\mz$. Hence, by using Proposition \ref{grz} and (\ref{gra}.1) we observe that
$\gr_R(\mz,R)\leq\min\{\gr_R(\ker(\Psi)), \gr_R(\coker(\Psi))\}$. In other words, we have that $\Ext^i_R(\ker(\Psi),R)=0=\Ext^i_R(\coker(\Psi),R)$ for $i<\gr_R(\mz,R)$. Hence, the natural map $\Psi$ induces the following isomorphism:
\begin{equation}\tag{\ref{tomax}.2}
\Ext^i_R(L,R)\cong\Ext^i_R(L^{**},R) \text{ for  all } i<\gr_R(\mz,R)-1.
\end{equation}
Note that $L^{**}\approx\Omega^2\Tr L^*$. Therefore, by Lemma \ref{l2}(i), we obtain the following isomorphisms:
\begin{equation}\tag{\ref{tomax}.3}
\hh^i_{\mz}(L^*)\cong\Ext^{i+1}_R(\Tr L^*,R)\cong\Ext^{i-1}_R(L^{**},R) \text{ for all } 1<i<\gr_R(\mz,R).
\end{equation}
Now the assertion is clear, by (\ref{tomax}.1), (\ref{tomax}.2), (\ref{tomax}.3) and the Theorem \ref{Ld}.
\end{proof}

\begin{thm}\label{t3} Assume $R$ is local and a complete intersection of dimension $d$, where $d\geq4$, and let $M\in\md_0(R)$ such that $\depth_R(M)\geq3$. Then the following hold:
\begin{enumerate}[\rm(i)]
\item If $\hh^i_{\fm}(M\otimes_RM^*)=0$ for some $i\in\{3, d-2\}$, then it follows $\pd_R(M)\leq1$.
\item If $\hh^{i}_{\fm}(M\otimes_RM^*)=0=\hh^j_{\fm}(M\otimes_RM^*)$ for some $i\in\{2, d-1\}$ and $j\in\{3,d-2\}$, then $M$ is free.
\end{enumerate}
\end{thm}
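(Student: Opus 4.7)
The plan is to reduce both (i) and (ii) to Proposition \ref{tp3} by exploiting a Matlis self-duality
\[
\hh^{i}_{\fm}(M\otimes_R M^{\ast})\;\cong\;\hh^{d-i+1}_{\fm}(M\otimes_R M^{\ast})^{v}\qquad\text{for } 2\le i\le d-1,
\]
obtained from Proposition \ref{tomax}. This identification pairs the degrees $3$ and $d-2$, and the degrees $2$ and $d-1$, so the four cases in the hypothesis collapse to vanishing of $\hh^{3}_{\fm}$ (for (i)) and vanishing of both $\hh^{2}_{\fm}$ and $\hh^{3}_{\fm}$ (for (ii)) on $M\otimes_R M^{\ast}$, where Proposition \ref{tp3} applies directly.

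First I would set $\mz=\V(\fm)$ and verify the hypotheses of Propositions \ref{tp3} and \ref{tomax}: since $R$ is Cohen--Macaulay, $\gr_R(\mz,R)=d\ge 4$, and Proposition \ref{grz} gives $\gr_R(\mz,M)=\depth_R(M)\ge 3$, while $\NF(M)\subseteq\{\fm\}$ by hypothesis. Because $R$ is Gorenstein and $M\in\md_0(R)$ has depth at least two, $M$ satisfies Serre's condition $(S_2)$ and is therefore reflexive; hence $M^{\ast\ast}\cong M$ and $\NF(M^{\ast})\subseteq\NF(M)\subseteq\{\fm\}$, so the pair $(M,M^{\ast})$ meets the hypotheses of Proposition \ref{tomax}.

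Next I would invoke Proposition \ref{tomax} with that pair. Using $(M^{\ast})^{\ast}\cong M^{\ast\ast}\cong M$ together with commutativity of the tensor product, the resulting isomorphism becomes precisely the displayed self-duality. Because the Matlis dual $(-)^{v}=\Hom_R(-,E_R(k))$ is a faithful functor on $\Md(R)$---every nonzero $R$-module admits a nonzero map to the injective cogenerator $E_R(k)$---vanishing on one side of the isomorphism is equivalent to vanishing on the other. In particular,
\[
\hh^{3}_{\fm}(M\otimes_R M^{\ast})=0\iff\hh^{d-2}_{\fm}(M\otimes_R M^{\ast})=0,\qquad \hh^{2}_{\fm}(M\otimes_R M^{\ast})=0\iff\hh^{d-1}_{\fm}(M\otimes_R M^{\ast})=0.
\]

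With this self-duality in hand, parts (i) and (ii) follow at once. For (i), the assumed vanishing at some $i\in\{3,d-2\}$ forces $\hh^{3}_{\fm}(M\otimes_R M^{\ast})=0$, and Proposition \ref{tp3}(i) delivers $\pd_R(M)\le 1$. For (ii), the additional vanishing at some $i\in\{2,d-1\}$ likewise forces $\hh^{2}_{\fm}(M\otimes_R M^{\ast})=0$, so both $\hh^{2}$ and $\hh^{3}$ of $M\otimes_R M^{\ast}$ vanish and Proposition \ref{tp3}(ii) yields freeness of $M$. The only nontrivial step in the argument is the self-duality; reflexivity of $M$ and faithfulness of the Matlis dual are standard, so the real work is packaged inside Proposition \ref{tomax}, after which the theorem is a clean two-line reduction to Proposition \ref{tp3}.
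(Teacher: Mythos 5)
Your proposal is correct and follows exactly the paper's route: the paper's proof of this theorem is precisely the combination of Proposition \ref{tp3} with the duality of Proposition \ref{tomax} (applied with $\mz=\V(\fm)$ and the pair $(M,M^{\ast})$, using reflexivity of $M$ and faithfulness of Matlis duality to pair the degrees $\{3,d-2\}$ and $\{2,d-1\}$). Your verification of the hypotheses ($\NF(M^{\ast})\subseteq\NF(M)\subseteq\{\fm\}$, $\gr_R(\V(\fm),M)=\depth_R(M)\geq 3$, and $2\leq i\leq d-1=\gr_R(\V(\fm),R)-1$) fills in the details the paper leaves implicit, so nothing is missing.
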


\begin{proof}
This follows easily from Proposition \ref{tp3} and Proposition \ref{tomax}.
\end{proof}

\begin{eg} Let $R=k[\![x,y,z,u,v]\!]$ and let $M=\Syz^4k$. Then $\depth_R(M)=4$. In view of \cite[5.5(ii)]{finitsup}, we know that $\hh^3_{\fm}(M\otimes_RM^*)=0$.
Recall that $\pd_R(M)=1$. Thus the example shows:
\begin{enumerate}[\rm(i)]
\item In Theorem \ref{t3}(i), the bound on the projective dimension is sharp.
\item In Theorem \ref{t3}(ii), the vanishing in two spots is necessary.
\end{enumerate}
\end{eg}

At this point we do not know whether or not the depth assumption on $M$ in Theorem \ref{t3} is necessary. In some cases one can deduce a similar result for reflexive modules.

\begin{thm}\label{t3.1}(\v{C}esnavi\v{c}ius \cite{ces}) Let $R$ be a graded normal complete intersection over a field of dimension $d\geq4$. Assume $M\in\md_0(R)$, where $\depth_R(M)\geq2$. Then $M$ is free provided that at least one of the following conditions holds:
\begin{enumerate}[\rm(i)]
\item $\hh^2_{\fm}(M\otimes_RM^*)=\hh^3_{\fm}(M\otimes_RM^*)=0$.
\item $\hh^{d-2}_{\fm}(M\otimes_RM^*)=\hh^{d-1}_{\fm}(M\otimes_RM^*)=0$.
\end{enumerate}
\end{thm}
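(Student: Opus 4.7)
The plan is to first reduce the two vanishing conditions to one via Matlis duality, then to translate the remaining hypothesis to cohomological vanishing on $X := \Proj R$, and finally to invoke \v{C}esnavi\v{c}ius's splitting criterion for vector bundles. First I would note that $M$ is reflexive: since $R$ is normal (so satisfies $(S_2)$) and $M \in \md_0(R)$ has $\depth_R(M) \geq 2$, we have $M \cong M^{**} \approx \Omega^2 \Tr M^*$. Applying Proposition \ref{tomax} with $N = M^*$ and $\mz = \V(\fm)$---using $\NF(M) \cup \NF(M^*) \subseteq \{\fm\}$ and $N^* \cong M$---yields
\[
\hh^i_\fm(M \otimes M^*) \cong \hh^{d-i+1}_\fm(M \otimes M^*)^{\vee} \quad \text{for every } 1 < i < d.
\]
Since $d \geq 4$, this self-duality swaps the index sets $\{2,3\}$ and $\{d-2, d-1\}$, so conditions (i) and (ii) are equivalent; it suffices to prove (i).

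\textbf{Paragraph 2.} Next I would apply Lemma \ref{l2} with the substitutions $M \leftarrow M^*$, $N \leftarrow M$, $\mz = \V(\fm)$, $n = 2$. The inequality $n \leq \gr_R(\fm, M) = \depth_R(M)$ is our hypothesis, and the support condition $\Supp_R(\Ext^i_R(\Tr M^*, M)) \subseteq \{\fm\}$ follows from $M^*$ being locally free on the punctured spectrum. Combining Lemma \ref{l2}(ii) with the syzygy shift $M \approx \Omega^2 \Tr M^*$, I would derive
\[
\Ext^1_R(M, M) \cong \Ext^3_R(\Tr M^*, M) \hookrightarrow \hh^2_\fm(M \otimes M^*) = 0,
\]
hence $\Ext^1_R(M, M) = 0$, and symmetrically $\Ext^1_R(M^*, M^*) = 0$. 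The companion vanishing $\Ext^2_R(M, M) = 0$ cannot be squeezed out of $\hh^3_\fm(M \otimes M^*) = 0$ by the same method, because that would force $n = 3$ in Lemma \ref{l2} and thus require $\depth_R(M) \geq 3$. This is precisely the depth gap that distinguishes the statement from Proposition \ref{tp3} and that forces a geometric detour.

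\textbf{Paragraph 3.} To bridge the gap I would invoke the graded hypothesis. Via the standard graded-to-sheaf dictionary,
\[
\hh^i_\fm(M \otimes M^*)_n \cong \hh^{i-1}\bigl(X,\, \mathcal{E} \otimes \mathcal{E}^*(n)\bigr) \quad \text{for all } i \geq 2,\; n \in \ZZ,
\]
where $\mathcal{E} := \widetilde M$, which is a vector bundle on $X$ because $M \in \md_0(R)$. Hypothesis (i) is thus equivalent to $\hh^1(X, \mathcal{E} \otimes \mathcal{E}^*(n)) = \hh^2(X, \mathcal{E} \otimes \mathcal{E}^*(n)) = 0$ for every $n \in \ZZ$. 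Since $X$ has dimension $d - 1 \geq 3$ and carries a complete-intersection structure, the splitting criterion of \v{C}esnavi\v{c}ius \cite[1.2]{ces} yields $\mathcal{E} \cong \bigoplus_i \mathcal{O}_X(a_i)$ for some integers $a_i$. This appeal to \cite{ces} is the hard part of the argument: the purely algebraic machinery of the paper culminates in Proposition \ref{tp3}, which is strictly stronger in its depth hypothesis, and the graded-normal assumption is introduced precisely so that both the sheafification and the geometric splitting theorem become available.

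\textbf{Paragraph 4.} It remains to descend the splitting of $\mathcal{E}$ back to a splitting of $M$. Since $X$ is arithmetically Cohen-Macaulay (as $R$ is a graded CI), one has $\Gamma_*(\mathcal{O}_X(a)) = R(a)$ for every $a \in \ZZ$; and the condition $\depth_R(M) \geq 2$ ensures that the natural map $M \to \Gamma_*(\widetilde M)$ is an isomorphism. Combining these,
\[
M \cong \Gamma_*(\mathcal{E}) \cong \bigoplus_i \Gamma_*(\mathcal{O}_X(a_i)) \cong \bigoplus_i R(a_i),
\]
so $M$ is free as a graded $R$-module, as required.
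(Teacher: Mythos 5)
Your proof is correct and takes essentially the same route as the paper's: reduce case (ii) to case (i) via the Matlis-duality isomorphism of Proposition \ref{tomax}, sheafify so that the vanishing of $\hh^2_{\fm}(M\otimes_RM^*)$ and $\hh^3_{\fm}(M\otimes_RM^*)$ becomes $\hh^1(X,\mathcal{E}\otimes\mathcal{E}^*(n))=0=\hh^2(X,\mathcal{E}\otimes\mathcal{E}^*(n))$ for all $n$, invoke \cite[1.2]{ces} to split $\mathcal{E}$, and descend through $\Gamma_\ast$ using $\depth_R(M)\geq 2$. The only divergence is your second paragraph, whose conclusion $\Ext^1_R(M,M)=0$ is correct but never used in the argument.
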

\begin{proof} Note that there is a homogeneous ideal $I$ of $k[X_0,\ldots,X_n]$ such that $R=\frac{k[X_0,\ldots,X_n]}{I}$. Let $X:= \textmd{Proj}(R)$  and $\mathcal{E}:=\widetilde{M}$.
Then $X\subset \mathbb{P}^n$ is globally complete intersection,  $\mathcal{E}$ is  a vector bundle  and that $\bigoplus_{n\in \mathbb{Z}} \hh^0(X,\mathcal{O}_X(n))\simeq R$. Also,
$$\bigoplus_{i\in\mathbb{Z}}\hh^1 (X,\mathcal{E}nd_{\mathcal{O}_X}(\mathcal{\mathcal{E}})(i))=\bigoplus_{i\in\mathbb{Z}}\hh^1 (X,\mathcal{\mathcal{E}}\otimes \mathcal{\mathcal{\mathcal{E}}}^\ast(i)) =\hh^2_{\fm}(M\otimes_RM^*)=0.$$Similarly, one can show that $ \hh^2_\ast(X,\mathcal{E}nd_{\mathcal{O}_X}(\mathcal{\mathcal{E}}))=0$.
In view of \cite[1.2]{ces}, $\mathcal{E}$ is direct sum of powers of $\mathcal{O}(1)$.
 It follows that $M= \hh^0_\ast(X, \mathcal{E}  )$ is free.
The second part follows from the first part and Proposition \ref{tomax}.
\end{proof}

If $R$ is a Gorenstein local ring of even dimension, and $M\in\md_0(R)$ is maximal Cohen--Macaulay such that $\CI_R(M)<\infty$ (e.g., $R$ is complete intersection) and $\depth_R(M\otimes_RM^{\ast})\geq 1$, then $M$ is free; see \cite[3.9]{CeD}. Note that one needs a ring of even dimension for this result to hold; see \cite[3.12]{CeD}. In the following we show that one can replace the condition $\depth_R(M\otimes_RM^{\ast})\geq 1$ with the vanishing of $\hh^n_{\fm}(M\otimes_RM^*)$ for some integer $n$ with $0\leq n<d$, and generalize \cite[3.9]{CeD} to obtain a stronger result.

\begin{thm}\label{gor} Assume $R$ is local and Gorenstein of even dimension $d$. If $M\in\md_0(R)$ is maximal Cohen--Macaulay such that $\CI_R(M)<\infty$ and $\hh^n_{\fm}(M\otimes_RM^*)=0$ for some integer $n$ with $0\leq n<d$, then $M$ is free.
\end{thm}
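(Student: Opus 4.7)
The plan is to use Theorem \ref{t12} to reduce to the case $n=0$, which is exactly \cite[3.9]{CeD}. When $n=0$, the hypothesis $\hh^0_{\fm}(M\otimes_R M^*)=0$ is equivalent to $\depth_R(M\otimes_R M^*)\geq 1$, so \cite[3.9]{CeD} applies directly.

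Assume $n\geq 1$. I apply Theorem \ref{t12} with $\mz=\V(\fm)=\{\fm\}$ and $N=M^*$. Its hypotheses hold: $M,M^*\in\md_0(R)$ (note $M^*\in\md_0(R)$ follows from $M\in\md_0(R)$) forces $\NF(M)\cap\NF(M^*)\subseteq\{\fm\}$ and $\Supp_R(\Tor_i^R(M,M^*))\cap\Supp_R(\widehat{\Tor}_j^R(M,M^*))\subseteq\{\fm\}$ for every $i\geq 1$ and $j\in\ZZ$; and $\depth_R(M)+\depth_R(M^*)=2d\geq d+n$ since $n<d$. Theorem \ref{t12}(a) then gives $\hh^0_{\fm}(M\otimes_R M^*)\cong \widehat{\Tor}_0^R(M,M^*)$ (since $n\geq 1$), and Theorem \ref{t12}(b) forces $\widehat{\Tor}_{-n}^R(M,M^*)=0$ via its injection into $\hh^n_{\fm}(M\otimes_R M^*)=0$.

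Since $M$ is totally reflexive over the Gorenstein ring $R$ with $M\cong M^{**}$, Theorem \ref{Tate hom}(ii) yields the identification $\widehat{\Tor}_i^R(M,M^*)\cong \widehat{\Ext}^{-i-1}_R(M^*,M^*)$ for every $i\in\ZZ$. So my hypothesis translates to $\widehat{\Ext}^{n-1}_R(M^*,M^*)=0$, and the task becomes to show $\widehat{\Ext}^{-1}_R(M^*,M^*)=0$: this would give $\widehat{\Tor}_0^R(M,M^*)=0$, hence $\hh^0_{\fm}(M\otimes_R M^*)=0$, and \cite[3.9]{CeD} concludes. When $n=1$ this is immediate, since $\widehat{\Ext}^0_R(M^*,M^*)=0$ exhibits the identity of $M^*$ as factoring through a free module, forcing $M^*$ (and hence $M\cong M^{**}$) to be free.

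The hard case, and the main obstacle, is $n\geq 2$: propagating the single vanishing $\widehat{\Ext}^{n-1}_R(M^*,M^*)=0$ with $n-1\geq 1$ to $\widehat{\Ext}^{-1}_R(M^*,M^*)=0$. Here the even parity of $d$ is decisive. My strategy is to combine the Auslander-Reiten-type duality for MCM modules of $\md_0(R)$ over the Gorenstein ring $R$, namely $\widehat{\Ext}^k_R(M^*,M^*)\cong \widehat{\Ext}^{d-1-k}_R(M^*,M^*)^{\vee}$, which produces a companion vanishing $\widehat{\Ext}^{d-n}_R(M^*,M^*)=0$, with the fact that, because $d$ is even, the indices $n-1$ and $d-n$ differ by the odd integer $d-2n+1$ and hence lie in opposite parities. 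Combining these two vanishings with the Eisenbud-operator structure on Tate cohomology (available because $\CI_R(M)<\infty$ transfers to $\CI_R(M^*)<\infty$), one propagates the vanishing to $\widehat{\Ext}^{\ast}_R(M^*,M^*)=0$ in every degree, so $\pd_R(M^*)<\infty$ and $M$ is free.
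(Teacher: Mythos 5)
Your reduction via Theorem \ref{t12} (with $\mz=\{\fm\}$, $N=M^*$) is legitimate, and the cases $n=0$ and $n=1$ are fine: $n=0$ is \cite[3.9]{CeD}, and $\widehat{\Ext}^0_R(M^*,M^*)=0$ does force $M^*$, hence $M\cong M^{**}$, to be free. But for $n\geq 2$ — which you yourself flag as the main obstacle — what you offer is a strategy, not a proof, and it rests on three unestablished ingredients. First, the duality $\widehat{\Ext}^k_R(M^*,M^*)\cong\widehat{\Ext}^{d-1-k}_R(M^*,M^*)^{\vee}$ is nowhere proved in the paper and is not a free black box here: $R$ is not assumed to be an isolated singularity, only $M$ is locally free on the punctured spectrum, so this Tate/Auslander--Reiten duality needs a statement and proof in that generality; it is exactly the parity-flipping tool that the paper manufactures for itself as Proposition \ref{tomax} (Matlis duality at the level of local cohomology). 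Second, the transfer $\CI_R(M)<\infty\Rightarrow\CI_R(M^*)<\infty$ is asserted without argument; it is true for maximal Cohen--Macaulay $M$ (via perfectness over the quasi-deformation), but it is not obvious and the paper deliberately avoids needing it by working with $\Ext^{*}_R(M,M)$ rather than with self-extensions of $M^*$. Third, and most seriously, the closing step ``combine the two vanishings with the Eisenbud-operator structure and propagate to all degrees'' is not an argument: cohomology operators of degree $2$ do not let you spread vanishing from two isolated spots. What actually closes the case is that, since $n-1$ and $d-n$ have opposite parities and are both $\geq 1$, one of them is a \emph{positive even} integer $2q$, and then $\Ext^{2q}_R(M^*,M^*)\cong\widehat{\Ext}^{2q}_R(M^*,M^*)=0$ together with \cite[4.2]{AvBu} already gives $\pd_R(M^*)<\infty$; the odd-degree vanishing plays no role at all. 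So your sketch is salvageable, but only by replacing the ``propagation'' step with the even-degree self-Ext criterion and by supplying proofs of the duality and of the CI-dimension transfer.

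For comparison, the paper's route avoids all three issues: for odd $n\geq 3$ no duality is needed at all — reflexivity gives $M\approx\Omega^2\Tr M^*$, and Lemma \ref{l2} converts $\hh^{n}_{\fm}(M\otimes_RM^*)=0$ into $\Ext^{n-1}_R(M,M)=0$ with $n-1$ even, so \cite[4.2]{AvBu} applies to $M$ directly (this is Corollary \ref{c1}); for even $n$ it first flips the index with Proposition \ref{tomax}, $\hh^{n}_{\fm}(M\otimes_RM^*)\cong\hh^{d-n+1}_{\fm}(M\otimes_RM^*)^{v}$, and then runs the same Lemma \ref{l2} argument to get $\Ext^{d-n}_R(M,M)=0$ with $d-n$ even. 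If you want to keep your Tate-homology formulation, the cleanest repair is to translate $\widehat{\Tor}_{-n}^R(M,M^*)=0$ into $\Ext^{n+1}_R(\Tr M^*,M)\cong\Ext^{n-1}_R(M,M)=0$ and then argue on $M$ as the paper does, invoking Proposition \ref{tomax} (rather than an external AR duality) when $n$ is even.
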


\begin{proof} Note that, since $M\in\md_0(R)$, we may assume $d\geq 2$. Note also that, the case where $n=0$ is \cite[3.9]{CeD}. Moreover, the case where $n=1$ is a consequence of Corollary \ref{c7}(ii). Hence we may assume $n\geq 2$.

If $n$ is odd, then the assertion follows from Corollary \ref{c1}. Next assume $n$ is even. Then Proposition \ref{tomax} implies that $\hh^{d-n+1}_{\fm}(M\otimes_RM^*)=0$. As $M$ is reflexive, $M\approx M^{**}\approx\Omega^2\Tr M^*$. Therefore, by Lemma \ref{l2}, we have that $\Ext^{d-n}_R(M,M)\cong\Ext^{d-n+2}_R(\Tr M^*,M)=0$. Consequently, as $d-n$ is even, we conclude that $\pd_R(M)<\infty$, i.e., $M$ is free; see \cite[4.2]{AvBu}. 
\end{proof}



Let $X$ be a  projective variety over an algebraically closed field $k$. For each vector bundle $\mathcal{E}$, we denote $\Gamma_\ast(\mathcal{E}):=\bigoplus_{i\in\mathbb{Z}}\Gamma (X, \mathcal{E} (i))$.
Recall that  $ \mathcal{E}$  is called \emph{arithmetically Cohen--Macaulay} if $\hh^i (X, \mathcal{E}(j))=0$ for all $i = 1, \ldots , \dim X-1$ and all $j\in \mathbb{Z}$. The following result should be compared with \cite[1.5]{D1} and  \cite[1.2]{ces}. In fact, for an arithmetically Cohen-Macaulay vector bundle on a smooth complete intersection of odd dimension, we have a stronger result as we state next:

\begin{cor}\label{artCM} Let $k$ be an algebraically closed  and let $X\subset \mathbb{P}^m_k$ be a globally complete intersection of dimension $d\geq3$. Assume that $\mathcal{E}$ is a vector bundle and that $\hh^{n}(X, \mathcal{E}\otimes\mathcal{E}^*(j))=0$ for all $j\in\ZZ$ and for some  $0<n<d$. The following statements hold:
\begin{enumerate}[\rm(i)]
\item If $n$ is even and $\depth (\Gamma_\ast(\mathcal{E}))\geq n+1$, then $\pd (\Gamma_\ast(\mathcal{E}))<\infty$ over its affine cone. In particular, if $\mathcal{E}$ is arithmetically Cohen--Macaulay, then  it is a direct sum of powers of $\mathcal{O}(1)$.
\item If $d$ is odd and $\mathcal{E}$ is arithmetically Cohen--Macaulay, then $\mathcal{E}$ is a direct sum of powers of $\mathcal{O}(1)$.
\end{enumerate} 
\end{cor}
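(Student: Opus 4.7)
The plan is to reduce Corollary \ref{artCM} to Corollary \ref{c1} and Theorem \ref{gor} via the Serre correspondence. Let $R = k[X_0,\ldots,X_m]/I$ be the homogeneous coordinate ring of $X$; since $X$ is a globally complete intersection of dimension $d$, the ideal $I$ is generated by a regular sequence and $R_{\fm}$ is a local complete intersection of dimension $d+1$, where $\fm$ denotes the irrelevant maximal ideal. Set $M := \Gamma_{\ast}(\mathcal{E})$ and regard it as a graded $R$-module (in the arithmetically Cohen--Macaulay case $M$ is automatically finitely generated). Because $\mathcal{E}$ is a vector bundle, the localization $M_{\fp}$ is free for every non-irrelevant homogeneous prime $\fp$, so $M_\fm$ is locally free on the punctured spectrum of $R_\fm$, and $\widetilde{M \otimes_R M^{\ast}} \simeq \mathcal{E} \otimes \mathcal{E}^{\ast}$.

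The standard exact sequence relating local cohomology at $\fm$ with sheaf cohomology on $X$ gives, for $i \geq 2$, the isomorphism
\[
\hh^{i}_{\fm}(M \otimes_R M^{\ast}) \simeq \bigoplus_{j \in \ZZ} \hh^{i-1}(X, \mathcal{E} \otimes \mathcal{E}^{\ast}(j)),
\]
so the hypothesis translates into the single vanishing $\hh^{n+1}_{\fm}(M \otimes_R M^{\ast}) = 0$ with $n+1 \geq 2$. For part (i), $n+1$ is \emph{odd} and lies in $[1, \depth_R(M)]$, so taking $\mz = V(\fm)$ in Corollary \ref{c1} yields $\pd_{R_{\fm}}(M_{\fm}) < \infty$, which is the first assertion. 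If $\mathcal{E}$ is additionally arithmetically Cohen--Macaulay, the displayed isomorphism at $i = 2, \ldots, d$ together with $\hh^{0}_{\fm}(M) = \hh^{1}_{\fm}(M) = 0$ (automatic for $M = \Gamma_{\ast}(\mathcal{E})$) shows that $M_{\fm}$ is maximal Cohen--Macaulay over $R_\fm$; Auslander--Buchsbaum then upgrades the finite projective dimension to $\pd_{R_{\fm}}(M_{\fm}) = 0$, so $M$ is graded free and $\mathcal{E}$ splits into line bundles, necessarily powers of $\mathcal{O}(1)$. For part (ii), $\dim R_{\fm} = d+1$ is even; the same local cohomology computation makes $M_{\fm}$ maximal Cohen--Macaulay, $\CI_{R_{\fm}}(M_{\fm}) < \infty$ because $R_{\fm}$ is a complete intersection, and $0 < n+1 < d+1$, so Theorem \ref{gor} applies directly and gives that $M_{\fm}$ is free, whence $\mathcal{E}$ is a direct sum of powers of $\mathcal{O}(1)$.

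There is no serious obstacle: the entire content lies in (a) the faithful passage from the projective to the affine-cone setting, and (b) recognising that the parity shift $\hh^{n} \rightsquigarrow \hh^{n+1}_{\fm}$ aligns the hypothesis on $n$ exactly with the parity requirements of Corollary \ref{c1} (odd local-cohomological index) and Theorem \ref{gor} (even ambient dimension). The only minor point requiring a line of justification is that a finitely generated graded $R$-module $M$ with $M_{\fm}$ free is already graded free, so that the splittings produced in both parts are honest decompositions into line bundles rather than merely stable ones.
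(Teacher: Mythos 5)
Your proposal is correct and follows essentially the same route as the paper: pass to the affine cone $R$ with $M=\Gamma_\ast(\mathcal{E})$, identify $\bigoplus_j\hh^{n}(X,\mathcal{E}\otimes\mathcal{E}^\ast(j))$ with $\hh^{n+1}_{\fm}(M\otimes_RM^\ast)$, and then invoke Corollary \ref{c1} (odd index $n+1$) for part (i) and Theorem \ref{gor} (even local dimension $d+1$) for part (ii), finishing with the graded-versus-local freeness comparison that the paper handles via \cite[1.5.15(e)]{BH}. No substantive difference from the paper's argument.
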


\begin{proof} Note that there is a homogeneous ideal $I$ of $k[X_0,\ldots,X_m]$ generated by regular sequence such that  $X= \textmd{Proj}(R)$ where $R=\frac{k[X_0,\ldots,X_m]}{I}$. Recall that $\bigoplus_{n\in \mathbb{Z}} \hh^0(X,\mathcal{O}_X(n))\simeq R$ and that $\dim(R)=\dim(X)+1$.
There is an $R$-module $M$ such that  $\mathcal{E} =\widetilde{M}$. In fact $M=\Gamma_\ast(\mathcal{E})$ which is graded and reflexive. Recall that arithmetically Cohen-Macaulay bundles correspond to maximal Cohen--Macaulay modules over the associated graded ring. From this, $M_{\fm}$ is maximal Cohen--Macaulay if and only if $\mathcal{E}$ is arithmetically Cohen-Macaulay.
Also, we have the following isomorphism:

\begin{equation}\tag{\ref{artCM}.1}
\hh^{n+1}_{\fm}(M\otimes_RM^*)\cong\bigoplus_{i\in\mathbb{Z}}\hh^n (X,\mathcal{\mathcal{E}}\otimes \mathcal{\mathcal{\mathcal{E}}}^\ast(i))=0.
\end{equation}
	\begin{enumerate}[\rm(i)]
	\item  In view of Proposition \ref{c1} and (\ref{artCM}.1), we see $\pd(M_{\fm})$ is finite over $R_{\fm}$. Also, due to \cite[1.5.15(e)]{BH}, we know that $\pd_R(M)<\infty$.	
	\item Note that $M=\Gamma_\ast(\mathcal{E})$ is  maximal Cohen--Macaulay and that $\mathcal{E} =\widetilde{M}$. In view of the Theorem \ref{gor} and (\ref{artCM}.1), $\pd(M_{\fm})$ is zero over $R_{\fm}$. Also, due to\cite[1.5.15(e)]{BH}, we know that $\pd_R(M)=0$.
	\end{enumerate} 
As $M$ is graded, there is a finite set $L\subset \mathbb{Z}$ such that $M=\bigoplus_{ \ell\in L}R(\ell)$. Therefore the following observation completes the proof: $\mathcal{E} =\widetilde{M}=\bigoplus_{ \ell\in L}\widetilde{R(\ell)}=\bigoplus_{ \ell\in L}\mathcal{O}(1)^{\ell}$.
\end{proof}

\begin{eg}$\phantom{}$
\begin{enumerate}[\rm(i)]
\item Let $R=k[x,y,u,v]/(xy-uv)$ and $M=(x,u)$. Let $X=\textmd{Proj}(R)$ and $\mathcal{E}:=\widetilde{M}$. Then, by Example \ref{ceoex}(i), we have $\hh^1_{\ast}(X,\mathcal{E}nd_{\mathcal{O}_X}(\mathcal{\mathcal{E}}))=0$. However, $\mathcal{E}$ is not trivial.
\item Let $X\subset \mathbb{P}^5_k$ be a smooth hypersurface of degree $d \geq 2$,  and let $\mathcal{E}$ be an indecomposable arithmetically Cohen--Macaulay vector bundle of rank two. Then, by \cite[1.1.2]{muhan}, it follows that $\hh^1_{\ast}(X,\mathcal{E}nd_{\mathcal{O}_X}(\mathcal{\mathcal{E}}))=0$. Since $\mathcal{E}$ is indecomposable, this example shows that $n$ needs to be an even integer in Corollary \ref{artCM}(i).
\end{enumerate}
\end{eg}

Next is an application of Lemma \ref{duality}; we use it to prove Theorem \ref{ri}.

\begin{prop}\label{ll} Assume $R$ is Gorenstein and local, and let $M\in\md(R)$. If $M$ is locally free on $\X^{n-1}(R)$ for some integer $n\geq 1$ and satisfies Serre's condition $(S_{n+1})$, then the following hold:
	\begin{enumerate}[\rm(i)]
		\item $\Ext^i_R(M,M)\cong\Ext^i_R(M\otimes_RM^*,R)$ for all $i$ with $1\leq i\leq n-1$.
		\item There is an injection $\Ext^n_{R}(M,M)\hookrightarrow\Ext^n_R(M\otimes_RM^*,R)$.
	\end{enumerate}
\end{prop}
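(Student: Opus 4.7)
\textbf{Proof plan for Proposition \ref{ll}.}

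The strategy is to apply Lemma \ref{duality} with $N=M^\ast$, $K=R$, and $\ell=n$; once the two hypotheses of that lemma are verified, the conclusion will follow immediately after identifying $\Hom_R(M^\ast,R)=M^{\ast\ast}$ with $M$ via reflexivity. So the real work is to check (a) $\Ext^j_R(M^\ast,R)=0$ for $1\le j\le n-1$, (b) $\gr_R(\Tor_i^R(M,M^\ast),R)>n-i$ for $1\le i\le n$, and (c) $M\cong M^{\ast\ast}$.

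First I would establish that $M$ is $(n+1)$-torsion-free by applying Proposition \ref{Serre}(iii) to the specialization-closed set $\mz=\{\fp\in\Spec R\mid\Ht\fp\ge n+1\}$. The key observation is that, although $M$ is only assumed locally free on $\X^{n-1}(R)$, the condition $(S_{n+1})$ forces $\depth_{R_\fp}(M_\fp)\ge\min\{n+1,\Ht\fp\}=\Ht\fp=\dim R_\fp$ for every prime $\fp$ with $\Ht\fp=n$; since $R_\fp$ is Gorenstein, such an $M_\fp$ is maximal Cohen--Macaulay and hence totally reflexive. Consequently $\Supp_R(\Ext^i_R(\Tr M,R))\subseteq\mz$ for all $i\ge 1$, and moreover for $\fp\in\mz$ we have $\depth_{R_\fp}(M_\fp)\ge n+1$, so Proposition \ref{grz} yields $\gr_R(\mz,M)\ge n+1$. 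Proposition \ref{Serre}(iii) then gives $\Ext^i_R(\Tr M,R)=0$ for all $1\le i\le n+1$. The vanishings at $i=1,2$ together with the exact sequence (\ref{a1}.3) deliver $M\cong M^{\ast\ast}$, settling (c). Using $M^\ast\approx\Omega^2\Tr M$ and the shift $\Ext^j_R(\Omega^2\Tr M,R)\cong\Ext^{j+2}_R(\Tr M,R)$ for $j\ge 1$, the remaining vanishings give $\Ext^j_R(M^\ast,R)=0$ for $1\le j\le n-1$, proving (a).

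For hypothesis (b), the local freeness of $M$ on $\X^{n-1}(R)$ makes $\Tor_i^R(M,M^\ast)_\fp=0$ for every $\fp\in\X^{n-1}(R)$ and $i\ge 1$, so that $\Supp_R(\Tor_i^R(M,M^\ast))\subseteq\{\fp:\Ht\fp\ge n\}$. Since $R$ is Cohen--Macaulay, $\depth R_\fp=\Ht\fp\ge n$ for any such $\fp$, and formula (\ref{gra}.1) yields $\gr_R(\Tor_i^R(M,M^\ast),R)\ge n>n-i$ for all $1\le i\le n$.

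With (a) and (b) in hand, Lemma \ref{duality} applied to $(M,M^\ast,R)$ with $\ell=n$ gives the isomorphism $\Ext^i_R(M,\Hom_R(M^\ast,R))\cong\Ext^i_R(M\otimes_RM^\ast,R)$ for $0\le i<n$ and the injection in degree $n$; since $\Hom_R(M^\ast,R)=M^{\ast\ast}\cong M$ by (c), both statements of the proposition follow. The only step that requires genuine care is the jump from $(S_{n+1})$ to $(n+1)$-torsion-freeness, namely the observation that $(S_{n+1})$ combined with local freeness in codimension $\le n-1$ upgrades to total reflexivity in codimension $\le n$ over a Gorenstein ring; this is the ingredient that makes the Tor/Ext book-keeping line up with the index $\ell=n$ required by Lemma \ref{duality}.
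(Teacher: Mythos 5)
Your proposal is correct and follows essentially the same route as the paper: verify the two hypotheses of Lemma \ref{duality} for $(M,M^\ast,R)$ with $\ell=n$ via $(n+1)$-torsion-freeness of $M$ (hence reflexivity and $\Ext^j_R(M^\ast,R)=0$ for $1\le j\le n-1$) together with the grade bound $\gr_R(\Tor_i^R(M,M^\ast),R)\ge n$ coming from local freeness on $\X^{n-1}(R)$, and then identify $\Hom_R(M^\ast,R)\cong M$. The only cosmetic difference is that the paper obtains $(n+1)$-torsion-freeness by citing \cite[4.25]{AB} directly, whereas you re-derive it from Propositions \ref{Serre}(iii) and \ref{grz}; both are valid.
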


\begin{proof} First note that, by \cite[4.25]{AB}, $M$ is $(n+1)$-torsionfree, i.e., $\Ext^i_R(\Tr M,R)=0$ for all $i=1, \ldots, n+1$ Consequently, $M$ is reflexive and $\Ext^i_R(M^*,R)=0$ for all $i=1, \ldots, n-1$. By our assumption, $\gr_R(\Tor_i^R(M,M^*))\geq n$ for all $i\geq 1$. Now the assertion is clear by Lemma \ref{duality}.
\end{proof}

From now on we use the following result of Jothilingam without further reference; see \cite[Theorem]{Joth} and \cite[3.1.2]{Dao}.	

\begin{lem}\label{Jo}(Jothilingam) Assume $R$ is local ring and let $M\in\md(R)$ be Tor-rigid. If $\Ext^n_R(M,M)=0$ for some $n\geq 1$, then $\pd_R(M)<n$.
\end{lem}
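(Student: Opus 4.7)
My strategy is to combine the Auslander--Bridger four-term exact sequence (\ref{a1}.4) with the Tor-rigidity hypothesis, reducing the question to the vanishing of a single Betti number of $M$.

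First I substitute $N=M$ in (\ref{a1}.4) to obtain
\begin{equation*}
\Tor_2^R(\Tr\Omega^nM,M)\longrightarrow\Ext^n_R(M,R)\otimes_RM\longrightarrow\Ext^n_R(M,M)\longrightarrow\Tor_1^R(\Tr\Omega^nM,M)\longrightarrow 0.
\end{equation*}
The hypothesis $\Ext^n_R(M,M)=0$ forces $\Tor_1^R(\Tr\Omega^nM,M)=0$, and by the symmetry of Tor together with the Tor-rigidity of $M$ this upgrades to $\Tor_i^R(\Tr\Omega^nM,M)=0$ for every $i\ge 1$. In particular $\Tor_2^R(\Tr\Omega^nM,M)=0$, so the displayed sequence contracts to $\Ext^n_R(M,R)\otimes_RM=0$. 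Assuming $M\ne 0$ (otherwise $\pd_RM=0<n$ is trivial), Nakayama's lemma---since the tensor product of two nonzero finite-dimensional $k$-vector spaces is nonzero---yields $\Ext^n_R(M,R)=0$.

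Now that $\Ext^n_R(M,R)=0$, rerunning (\ref{a1}.4) with the test module $N=k$ (and again dispatching the residual $\Tor_2$-term via Tor-rigidity and a suitable dimension shift) collapses the sequence to an isomorphism $\Ext^n_R(M,k)\cong\Tor_1^R(\Tr\Omega^nM,k)$. Since $\dim_k\Ext^n_R(M,k)$ equals the $n$-th Betti number of $M$, proving $\pd_RM<n$ is equivalent to showing that $\Tr\Omega^nM$ is free, or equivalently that $\Omega^nM$ is stably free.

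The main obstacle is upgrading the $M$-tested Tor-vanishing into its $k$-tested counterpart. I plan to argue by induction on $n$. The base case $n=1$ follows because $\Ext^1_R(M,R)=0$ combined with (\ref{a1}.3) makes $M\to M^{**}$ surjective, and a short analysis using Tor-rigidity together with the identification $\Tr\Omega M\approx\Omega^{-1}\Tr M$ from \ref{a1} forces $\Tr M$ to be projective, hence $M$ free. The inductive step proceeds by replacing $M$ with its first syzygy $\Omega M$, which inherits Tor-rigidity by a Tor-shift argument, and appealing to the lower value of $n$ after tracking the cohomological vanishings through the short exact sequence $0\to\Omega M\to F\to M\to 0$.
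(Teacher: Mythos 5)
The paper itself does not prove this lemma; it is quoted from Jothilingam \cite{Joth} (see also \cite[3.1.2]{Dao}), so your attempt has to be measured against the standard argument. Your first step is correct and coincides with it: putting $N=M$ in (\ref{a1}.4), the hypothesis $\Ext^n_R(M,M)=0$ gives $\Tor_1^R(\Tr\Omega^nM,M)=0$, Tor-rigidity of $M$ upgrades this to $\Tor_i^R(M,\Tr\Omega^nM)=0$ for all $i\geq1$, and then $\Ext^n_R(M,R)\otimes_RM=0$ forces $\Ext^n_R(M,R)=0$ by Nakayama. The gap is everything after that: the heart of the theorem is left as a ``plan,'' and neither half of the plan works as described. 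In your base case you assert that $\Ext^1_R(M,R)=0$ together with (\ref{a1}.3) makes $M\to M^{**}$ surjective; but (\ref{a1}.3) identifies the cokernel of $M\to M^{**}$ with $\Ext^2_R(\Tr M,R)$, which is not controlled by $\Ext^1_R(M,R)$ at all, and in any case surjectivity of the biduality map would not give freeness -- the unexplained ``short analysis forcing $\Tr M$ to be projective'' is exactly the content that is missing. In the inductive step, to invoke the statement for $n-1$ and $\Omega M$ you would need $\Ext^{n-1}_R(\Omega M,\Omega M)\cong\Ext^n_R(M,\Omega M)=0$; applying $\Hom_R(M,-)$ to $0\to\Omega M\to F\to M\to0$ and using $\Ext^n_R(M,R)=0$ only shows that $\Ext^n_R(M,\Omega M)$ is a homomorphic image of $\Ext^{n-1}_R(M,M)$, which is not assumed to vanish, so the induction never gets started.

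For the record, no induction is needed, and the correct endgame uses only tools already in Section 2 of the paper (it is the same device used in the proof of Theorem \ref{t7}). Since $\Ext^n_R(M,R)=0$, the sequence (\ref{a1}.6) with $i=n$ gives $\Tr\Omega^{n-1}M\approx\Omega\Tr\Omega^nM$, hence $\Tor_i^R(M,\Tr\Omega^{n-1}M)\cong\Tor_{i+1}^R(M,\Tr\Omega^nM)=0$ for all $i\geq1$. Shifting along the resolution of $M$ then yields $\Tor_1^R(\Omega^{n-1}M,\Tr\Omega^{n-1}M)\cong\Tor_n^R(M,\Tr\Omega^{n-1}M)=0$ (read $\Omega^0M=M$ when $n=1$). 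Now the degree-zero instance of (\ref{a1}.4), applied to $X=\Omega^{n-1}M$ with $N=X$, shows the evaluation map $X^*\otimes_RX\to\Hom_R(X,X)$ is surjective, so $X$ is free by \cite[A.1]{ag}, and therefore $\pd_R(M)\leq n-1<n$. Note also that your reformulation ``$\Omega^nM$ stably free'' only says $\pd_R(M)\leq n$; to squeeze out the strict inequality you must again use $\Ext^n_R(M,R)=0$ (a module of finite projective dimension $n$ has $\Ext^n_R(M,R)\neq0$), and finiteness of $\pd_R(M)$ is precisely what your sketch never establishes.
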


In the following $\overline{G}(R)_{\mathbb{Q}}$, i.e., $G(R)_{\mathbb{Q}}$ denotes the reduced Grothendicek group with rational coefficients; we refer the reader to \cite{D1} for the definition and basic properties of this group.

\begin{thm}\label{ri} Assume $R$ is $d$-dimensional, Gorenstein and local, and let $M\in\md(R)$. Assume $M$ is locally free on $\X^{n-1}(R)$ for some integer $n$, where $0<n<d$. Assume further that $M$ satisfies  Serre's condition $(S_{n+1})$ and $\hh^{d-n}_{\fm}(M\otimes_RM^*)=0$. 
\begin{enumerate}[\rm(i)]
\item If $n$ is even and $\CI_R(M)<\infty$, then it follows that $\pd_R(M)<d-n$.
\item If $M$ is Tor-rigid, then it follows that $\pd_R(M)<n$.
\item If $R$ is a hypersurface which is quotient of an unramified regular ring, and the class of $M$ is zero in $\overline{G}(R)_{\mathbb{Q}}$, then it follows that $\pd_R(M)<n$.
\end{enumerate}
\end{thm}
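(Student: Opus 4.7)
The plan is to reduce the vanishing of $\hh^{d-n}_{\fm}(M\otimes_R M^*)$ to the vanishing of $\Ext^n_R(M,M)$, and then extract each conclusion from a separate (largely off-the-shelf) rigidity or finite-projective-dimension criterion.

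First, since $R$ is Gorenstein of dimension $d$, local duality (Theorem \ref{Ld}) yields a Matlis-dual isomorphism
\[
\hh^{d-n}_{\fm}(M\otimes_R M^*)\;\cong\;\Hom_R\!\bigl(\Ext^n_R(M\otimes_R M^*,R),\,\E_R(k)\bigr),
\]
so the hypothesis forces $\Ext^n_R(M\otimes_R M^*,R)=0$. Because $M$ is locally free on $\X^{n-1}(R)$ and satisfies $(S_{n+1})$, Proposition \ref{ll}(ii) supplies an injection $\Ext^n_R(M,M)\hookrightarrow\Ext^n_R(M\otimes_R M^*,R)$, and hence $\Ext^n_R(M,M)=0$. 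This intermediate vanishing is the common input for all three parts.

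For part (i), assuming $\CI_R(M)<\infty$ with $n$ even, the vanishing of a single self-$\Ext$ at an even index forces $\pd_R(M)<\infty$ by \cite[4.2]{AvBu}. Condition $(S_{n+1})$ combined with $\Ht(\fm)=d\geq n+1$ gives $\depth_R(M)\geq n+1$, so the Auslander--Buchsbaum formula yields $\pd_R(M)=d-\depth_R(M)\leq d-n-1<d-n$. Part (ii) is then immediate from Jothilingam's lemma (\ref{Jo}) applied to the Tor-rigid module $M$ with $\Ext^n_R(M,M)=0$. For part (iii), the plan is to reduce to (ii): under the stronger hypothesis that $R$ is a hypersurface which is a quotient of an unramified regular local ring, the Herbrand difference $\theta^R(M,-)$ factors through the reduced Grothendieck group, so $[M]=0$ in $\overline{G}(R)_{\mathbb{Q}}$ implies $\theta^R(M,-)\equiv 0$, which in turn implies that $M$ is Tor-rigid by Dao's work (see \cite{D1}); part (ii) then closes the proof.

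The main obstacle is producing the central vanishing $\Ext^n_R(M,M)=0$ at the precise index $n$ dictated by the local cohomology hypothesis; but this is exactly what Proposition \ref{ll}(ii) together with local duality delivers. Once $\Ext^n_R(M,M)=0$ is in hand, each of the three conclusions is a short appeal to a known criterion, so no further calculation is needed.
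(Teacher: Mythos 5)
Your proposal is correct and follows essentially the same route as the paper: local duality gives $\Ext^n_R(M\otimes_RM^*,R)=0$, Proposition \ref{ll}(ii) then yields $\Ext^n_R(M,M)=0$, and the three parts follow from \cite[4.2]{AvBu}, Lemma \ref{Jo}, and Dao's result, respectively. Your extra details (the Auslander--Buchsbaum/$(S_{n+1})$ step for the bound in (i) and the $\theta$-pairing reduction to Tor-rigidity in (iii)) simply unpack the citations the paper invokes.
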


\begin{proof} We have, by Theorem \ref{Ld}, that $\Ext^n_R(M\otimes_RM^*,R)=0$. Then, in view of Proposition \ref{ll}, it follows that:
\begin{equation}\tag{\ref{ri}.1}
\Ext^n_R(M,M)=0.
\end{equation}
(i) This follows from (\ref{ri}.1) and \cite[4.2]{AvBu}.\\
(ii) This follows from (\ref{ri}.1) and Lemma \ref{Jo}. \\
(iii) This follows from (\ref{ri}.1) and \cite[5.4]{D1}.
\end{proof}

The following is an immediate consequence of Theorem \ref{ri}(ii).

\begin{cor} If $R$ is regular and local of dimension $d\geq1$, and $M\in\md(R)$ is reflexive such that $\hh^{d-1}_{\fm}(M\otimes_RM^*)=0$, then $M$ is free.	
\end{cor}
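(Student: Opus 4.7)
The plan is to deduce the corollary directly from Theorem \ref{ri}(ii) applied with $n=1$. First I would dispose of the trivial case $d=1$: in that case $R$ is a discrete valuation ring, and any reflexive (hence torsion-free) finitely generated module over a DVR is free. So from here on we may assume $d\geq 2$, which in particular makes the numerical hypothesis $0<n<d$ of Theorem \ref{ri}(ii) valid for $n=1$.

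Next I would verify the remaining hypotheses of Theorem \ref{ri}(ii). The ring $R$ is Gorenstein since it is regular. Because $R$ is a regular local ring, hence a domain, $\X^{n-1}(R)=\X^0(R)=\{0\}$ consists of the zero ideal only, and $M_{(0)}$ is automatically free over the field $R_{(0)}$, so $M$ is locally free on $\X^0(R)$. The vanishing hypothesis $\hh^{d-n}_{\fm}(M\otimes_RM^*)=\hh^{d-1}_{\fm}(M\otimes_RM^*)=0$ is exactly what we have assumed. Finally, by the Auslander--Lichtenbaum theorem (Theorem \ref{AuLi}), every finitely generated module over a regular local ring is Tor-rigid, so $M$ is Tor-rigid.

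The one condition that requires a word of explanation is Serre's condition $(S_{n+1})=(S_2)$. I would check this by the standard observation that a finitely generated reflexive module over a Cohen-Macaulay local ring is a second syzygy: starting from a finite free presentation $P_1\to P_0\to M^*\to 0$ and dualizing yields the exact sequence $0\to M^{**}\to P_0^*\to P_1^*$, exhibiting $M\cong M^{**}$ as the second syzygy of $\Tr M^*$; over a Cohen-Macaulay ring every second syzygy satisfies $(S_2)$. Since $R$ is regular, hence Cohen-Macaulay, we conclude that $M$ satisfies $(S_2)$.

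With all hypotheses verified, Theorem \ref{ri}(ii) delivers $\pd_R(M)<n=1$, i.e., $\pd_R(M)=0$, so $M$ is free. The only real subtlety is the verification of $(S_2)$ for reflexive modules, and that is classical; no substantial obstacle is expected.
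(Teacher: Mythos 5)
Your proposal is correct and follows the same route as the paper, which deduces the corollary directly from Theorem \ref{ri}(ii) with $n=1$ (Gorenstein since regular, locally free on $\X^0(R)$ since $R$ is a domain, Tor-rigidity from Auslander--Lichtenbaum, and $(S_2)$ because a reflexive module is a second syzygy). Your separate treatment of $d=1$ and the explicit verification of $(S_2)$ are exactly the checks the paper leaves implicit, so there is nothing to add.
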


The following result is to be compared with \cite[4.1(2)]{HW1}.

\begin{prop}\label{pr3} Assume $R$ is Cohen-Macaulay and local, and let  $\mz\subset\Spec R $ be a specialization-closed subset. Let $M\in\md(R)$ be Tor-rigid and $\NF(M)\subseteq\mz$. If $\hh^n_{\mz}(M\otimes_RM^*)=0$ for some $n$, where $2\leq n\leq\gr_R(\mz,M)$, then $\pd_R(M)<n-1$.
\end{prop}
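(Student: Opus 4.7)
The plan is to reduce the statement to Jothilingam's Lemma \ref{Jo} by establishing the single vanishing $\Ext^{n-1}_R(M,M)=0$ and then invoking Tor-rigidity of $M$. First I would apply Lemma \ref{l2}, with the roles of ``$M$'' and ``$N$'' played by $M^*$ and $M$ respectively (so that the Tor-rigidity assumption on $M$ remains available for the final step). The hypothesis $\NF(M)\subseteq\mz$ ensures that $(M^*)_\fp$ is free, and hence $(\Tr M^*)_\fp$ is projective, for every $\fp\notin\mz$; consequently $\Supp_R(\Ext^i_R(\Tr M^*,M))\subseteq\mz$ for all $i\geq 1$. Combined with $\gr_R(\mz,M)\geq n$, Lemma \ref{l2}(ii) provides an injection $\Ext^{n+1}_R(\Tr M^*,M)\hookrightarrow \hh^n_{\mz}(M\otimes_RM^*)$, and the vanishing hypothesis $\hh^n_{\mz}(M\otimes_RM^*)=0$ therefore forces $\Ext^{n+1}_R(\Tr M^*,M)=0$.

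Next I would establish that $M$ is reflexive, in preparation for a dimension-shift. Since $(\Tr M)_\fp$ is projective for $\fp\notin\mz$, one has $\Supp_R(\Ext^i_R(\Tr M,R))\subseteq\mz$ for every $i\geq 1$. Because $R$ is Cohen-Macaulay and $\gr_R(\mz,M)\geq n\geq 2$, Proposition \ref{Serre}(iii) applies and shows that $M$ is $n$-torsion-free; in particular $\Ext^1_R(\Tr M,R)=0=\Ext^2_R(\Tr M,R)$, and the exact sequence (\ref{a1}.3) then gives $M\cong M^{**}$.

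The stable isomorphism $M^{**}\approx\Omega^2\Tr M^*$, obtained by applying (\ref{a1}.1) to $M^*$ and splitting the resulting four-term exact sequence into two short ones, yields together with reflexivity the relation $M\approx\Omega^2\Tr M^*$. Dimension-shifting along this stable isomorphism produces
\[
\Ext^{n-1}_R(M,M)\;\cong\;\Ext^{n+1}_R(\Tr M^*,M)\;=\;0.
\]
Since $n-1\geq 1$ and $M$ is Tor-rigid, Jothilingam's Lemma \ref{Jo} applies and gives $\pd_R(M)<n-1$, which is the desired conclusion.

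The principal point to check carefully will be the verification of the support hypotheses that permit the use of Lemma \ref{l2} and of Proposition \ref{Serre}(iii); both reduce cleanly to the single assumption $\NF(M)\subseteq\mz$ once one observes that freeness of $M_\fp$ forces projectivity of $(\Tr M)_\fp$ and of $(\Tr M^*)_\fp$. Beyond this, the argument is a formal manipulation of the Auslander--Bridger four-term exact sequences combined with Jothilingam's rigidity lemma.
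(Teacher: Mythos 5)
Your proposal is correct and follows essentially the same route as the paper's proof: both apply Lemma \ref{l2} to the pair $(M^*,M)$ to kill $\Ext^{n+1}_R(\Tr M^*,M)$, use reflexivity together with $M\approx M^{**}\approx\Omega^2\Tr M^*$ to shift this to $\Ext^{n-1}_R(M,M)=0$, and conclude via Jothilingam's Lemma \ref{Jo}. The only (inessential) difference is how reflexivity is obtained: you use Proposition \ref{Serre}(iii) and the exact sequence (\ref{a1}.3), while the paper notes via Proposition \ref{grz} that $M$ is free on $\X^1(R)$ with $\depth_{R_\fp}(M_\fp)\geq 2$ on $\mz$ and cites \cite[1.4.1(b)]{BH}.
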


\begin{proof} First note, by Proposition \ref{grz}, that $\gr_R(\mz,M)=\inf\{\depth_{R_\fp}(M_\fp)\mid\fp\in\mz\}\geq2$. Therefore, $M_\fp$ is free for all $\fp\in\X^{1}(R)$. It follows from \cite[1.4.1(b)]{BH} that $M$ is reflexive and so $M\approx M^{**}\approx\Omega^2\Tr M^*$. Hence, by Lemma \ref{l2} and our assumption, we have $\Ext^{n-1}_R(M,M)\cong\Ext^{n+1}_R(\Tr M^*,M)=0$. Now the assertion is clear by Lemma \ref{Jo}.
\end{proof}

The Tor-rigidity condition is necessary in Proposition \ref{pr3}; see Example \ref{t3ri}. We now give an example to consider the other hypotheses.

\begin{eg} $\phantom{}$
\begin{enumerate}[(i)]

\item Let $R=k[\![x,y,z,u,v]\!]$ and let $M=\Syz^4k$. Then $\depth_R(M)=4$. In view of \cite[5.5(ii)]{finitsup} we know that $\hh^3_{\fm}(M\otimes_RM^*)=0$.
We apply Proposition \ref{pr3} for the case where $n=3<\depth_R(M)$ to see $\pd_R(M)<n-1=2$. Note that $\pd_R(M)=1$. This example shows that the bound on the projective dimension of $M$ in Proposition \ref{pr3} is sharp.

\item Let $R=k[\![x,y]\!]$, $M=\Syz^1k=\fm$ and let $\fa=xR$ for some $0\neq x \in\fm$. Then $\hh^2_{\fa}(M\otimes_RM^*)=0$ because $\fa$ is principal. However, $\pd_R(M)=1=n-1$. This example shows, setting $n=2$, the bound on $n$ is sharp in Proposition \ref{pr3}.
\end{enumerate}
\end{eg}

In the following we establish a non-vanishing result, which is new even for regular local rings.

\begin{cor}\label{ccc} Assume $R$ is Gorenstein and local ring, and let $M\in\md_0(R)$ be  Tor-rigid (e.g. $R$ is regular.) Assume further $R$ has odd dimension $d\geq3$ and $\depth_R(M)=\frac{d+1}{2}$. Then it follows that $\hh^i_{\fm}(M\otimes_RM^*)\neq0$ for all $i$ with $0<i<d$.
\end{cor}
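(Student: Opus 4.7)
The plan is to assemble three tools already developed in the paper: Theorem \ref{t7} handles the case $i=1$; the comparison $\Ext^n_R(M,M) \cong \hh^{n+1}_{\fm}(\Hom_R(M,M))$ from Lemma \ref{t1}, combined with Jothilingam's vanishing (Lemma \ref{Jo}), handles the lower range $2 \leq i \leq (d+1)/2$; and the Matlis-duality isomorphism of Proposition \ref{tomax} reflects the lower range onto the upper range $(d+1)/2 \leq i \leq d-1$. Throughout, set $\mz = \V(\fm)$, so $\NF(M) \subseteq \mz$ and $\gr_R(\fm,R)=d$. First I note two preliminaries: since $d \geq 3$ one has $\depth_R(M)=(d+1)/2\geq 2$, and because $M \in \md_0(R)$, Proposition \ref{Serre}(iii) with $n=2$ shows $M$ is $2$-torsion-free; then (\ref{a1}.3) gives that $e_M$ is an isomorphism, so $M$ is reflexive. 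Also, $M$ is nonfree since $\depth_R(M)<d$.

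For $i=1$, I would invoke Theorem \ref{t7}: if $\hh^1_{\fm}(M \otimes_R M^*)=0$, then $M=F\oplus T$ with $F$ free and $T$ of finite length, but $\depth_R(M)\geq 2$ forces $T=0$, contradicting nonfreeness of $M$.

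For $2 \leq i \leq (d+1)/2$, I use that the evaluation map $\Phi : M \otimes_R M^* \to \Hom_R(M,M)$ is an isomorphism on the punctured spectrum, so $\ker\Phi$ and $\coker\Phi$ have finite length and hence
\[
\hh^i_{\fm}(M \otimes_R M^*) \cong \hh^i_{\fm}(\Hom_R(M,M)) \quad \text{for every } i \geq 2.
\]
Now apply Lemma \ref{t1} with $N=M$ and $t=\depth_R(M)=(d+1)/2$ (the support hypothesis holds since $\Ext^i_R(M,M)$ is supported in $\{\fm\}$, and $t \leq \gr_R(\fm,M)$): this yields
\[
\Ext^i_R(M,M) \cong \hh^{i+1}_{\fm}(\Hom_R(M,M)) \quad \text{for } 1 \leq i \leq (d-3)/2,
\]
together with an injection $\Ext^{(d-1)/2}_R(M,M) \hookrightarrow \hh^{(d+1)/2}_{\fm}(\Hom_R(M,M))$. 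By Lemma \ref{Jo}, Tor-rigidity and nonfreeness of $M$ force $\Ext^n_R(M,M) \neq 0$ whenever $1\leq n \leq \pd_R(M)$; moreover $\pd_R(M)$ is either infinite or equal to $d-\depth_R(M)=(d-1)/2$ by Auslander--Buchsbaum. In either case $\Ext^n_R(M,M)\neq 0$ for all $1 \leq n \leq (d-1)/2$, so $\hh^j_{\fm}(M \otimes_R M^*) \neq 0$ for $2 \leq j \leq (d+1)/2$.

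For $(d+1)/2 \leq j \leq d-1$, I apply Proposition \ref{tomax} with first module $M$ and second module $M^*$. Since $M$ is reflexive, $(M^*)^*=M$, hence $M^* \otimes_R (M^*)^* = M \otimes_R M^*$, and because $\gr_R(\fm,R)=d$ the proposition delivers
\[
\hh^i_{\fm}(M \otimes_R M^*) \cong \hh^{d-i+1}_{\fm}(M \otimes_R M^*)^v \quad \text{for } 1 < i < d.
\]
Because the local cohomology modules involved are Artinian, Matlis duality detects vanishing, giving the equivalence $\hh^i_{\fm}(M\otimes_RM^*)=0 \iff \hh^{d-i+1}_{\fm}(M\otimes_RM^*)=0$ throughout $2 \leq i \leq d-1$. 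The nonvanishing in the lower range therefore transports under $j\mapsto d-j+1$ to all of $(d+1)/2 \leq j \leq d-1$, and combined with the case $i=1$ this gives $\hh^i_{\fm}(M \otimes_R M^*) \neq 0$ for every $0<i<d$. The main point demanding care will be the integer-range bookkeeping, since $d$ odd makes $(d+1)/2$ a boundary index that appears simultaneously as the upper endpoint of the injection in Lemma \ref{t1} and as the fixed point of the duality $i \mapsto d-i+1$; beyond that, the argument is a direct assembly of tools established earlier in the paper.
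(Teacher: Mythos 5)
Your proof is correct and follows essentially the same route as the paper: the case $i=1$ via Theorem \ref{t7} (Corollary \ref{c7}), the range $2\leq i\leq\frac{d+1}{2}$ via Tor-rigidity/Jothilingam together with Auslander--Buchsbaum, and the reflection to the upper range via the Matlis duality of Proposition \ref{tomax}. The only (harmless) difference is that in the middle range you argue nonvanishing directly through Lemma \ref{t1} applied to $\Hom_R(M,M)$ and the evaluation map with finite-length kernel and cokernel, whereas the paper argues by contradiction through Proposition \ref{pr3}, which packages the same comparison $\hh^i_{\fm}(M\otimes_RM^*)\leftrightarrow\Ext^{i-1}_R(M,M)$ via Lemma \ref{l2}.
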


\begin{proof}
Set $t:=\frac{d+1}{2}$. Assume contrarily that $\hh^i_{\fm}(M\otimes_RM^*)=0$ for some $i<d$. First we deal with case $0<i\leq t$. By Corollary \ref{c7} we may assume that $i\neq1$.
Hence, $2\leq i\leq t$. In view of Proposition \ref{pr3} we have $\pd_R(M)<i-1$. It follows from the Auslander--Buchsbaum formula that
$d-t<i-1\leq t-1$. Hence $d<2t-1=2\frac{d+1}{2}-1=d$ which is a contradiction. Now let $i>t$. By Proposition \ref{tomax}, $\hh^{d-i+1}_{\fm}(M\otimes_RM^*)=0$. Note that $d-i+1<d-t+1=t+1$ and so we get a contradiction by the first part of proof.
\end{proof}

The following example shows that the locally free assumption in Corollary \ref{ccc} is crucial.

\begin{eg}\label{notvb2} $\phantom{}$ Assume $R$ is $3$-dimensional, Gorenstein, and local (e.g., $R=k[\![x,y,z]\!]$). Let $M= R/xR$ for a non zero-divisor $x\in \fm$. Then $\depth_R(M)=2=\frac{3+1}{2}$ and $\pd_R(M)=1$. Hence, $M$ is Tor-rigid. Note that, as $M$ is torsion, we have $M^{\ast}=0$. Thus $\hh^i_{\fm}(M\otimes_RM^*)=0$ for all $i$.
\end{eg}

An application of Proposition \ref{ll} is a slight generalization of Theorem \ref{t3}.\\


\begin{thm}\label{tf} Assume $R$ is local, a complete intersection, and has dimension $d\geq 2$. Let $M\in\md(R)$ and assume $M$ is locally free on $\X^{1}(R)$ and it satisfies $(S_{3})$. Then the following hold:
\begin{enumerate}[\rm(i)]
\item If $\hh^{d-2}_{\fm}(M\otimes_RM^*)=0$, then $\pd_R(M)\leq 1$.
\item If $\hh^{d-2}_{\fm}(M\otimes_RM^*)=0=\hh^{d-1}_{\fm}(M\otimes_RM^*)$, then $M$ is free.
\end{enumerate}	
\end{thm}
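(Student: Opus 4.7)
The plan is to apply Proposition \ref{ll} with $n=2$, since our hypotheses -- $M$ locally free on $\X^{1}(R)$ and satisfying $(S_{3})$ -- are precisely what is required. This gives the isomorphism $\Ext^{1}_{R}(M,M) \cong \Ext^{1}_{R}(M\otimes_{R}M^{*},R)$ and an injection $\Ext^{2}_{R}(M,M) \hookrightarrow \Ext^{2}_{R}(M\otimes_{R}M^{*},R)$. Since a complete intersection is Gorenstein, Theorem \ref{Ld} (with $\omega_{R} = R$) yields a natural isomorphism
\[
\Ext^{i}_{R}(M\otimes_{R}M^{*},R) \;\cong\; \Hom_{R}\bigl(\hh^{d-i}_{\fm}(M\otimes_{R}M^{*}),E_{R}(k)\bigr),
\]
so faithfulness of Matlis duality makes vanishing of $\hh^{d-i}_{\fm}(M\otimes_{R}M^{*})$ equivalent to vanishing of $\Ext^{i}_{R}(M\otimes_{R}M^{*},R)$. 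The strategy is then to feed these $\Ext$ vanishings through Proposition \ref{ll} and invoke the rigidity result \cite[2.5]{Jo} of Jorgensen (valid since $\CI_{R}(M)<\infty$ as $R$ is a complete intersection), exactly as in the proof of Proposition \ref{tp3}.

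For part (i), the hypothesis $\hh^{d-2}_{\fm}(M\otimes_{R}M^{*})=0$ gives $\Ext^{2}_{R}(M\otimes_{R}M^{*},R)=0$, hence $\Ext^{2}_{R}(M,M)=0$ via the injection. Applying \cite[2.5]{Jo} (just as in Proposition \ref{tp3}(i)) produces the bound $\pd_{R}(M)\leq 1$. For part (ii) the additional vanishing $\hh^{d-1}_{\fm}(M\otimes_{R}M^{*})=0$ yields $\Ext^{1}_{R}(M\otimes_{R}M^{*},R)=0$, and the isomorphism from Proposition \ref{ll}(i) upgrades this to $\Ext^{1}_{R}(M,M)=0$. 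Combined with part (i), we then have $\pd_{R}(M)\leq 1$ together with $\Ext^{1}_{R}(M,M)=0$, and a direct minimal-resolution argument forces $M$ to be free: if $\pd_{R}(M)=1$, write the minimal free resolution $0 \to R^{r_{1}} \xrightarrow{\phi} R^{r_{0}} \to M \to 0$ with $\phi$ having entries in $\fm$; applying $\Hom_{R}(-,M)$ produces a surjection $\Ext^{1}_{R}(M,M) \twoheadrightarrow (M/\fm M)^{r_{1}}$, which is nonzero since $r_{1}\geq 1$ and $M\neq 0$ -- a contradiction. Alternatively, one can invoke \cite[2.5]{Jo} directly on the pair of consecutive vanishings $\Ext^{1}_{R}(M,M)=\Ext^{2}_{R}(M,M)=0$, exactly as in the proof of Proposition \ref{tp3}(ii).

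The argument is essentially a transplant of the proof of Proposition \ref{tp3}, with the improvement that the grade hypothesis $\gr_{R}(\mz,M)\geq 3$ is traded for the slightly weaker combination of being locally free on $\X^{1}(R)$ together with $(S_{3})$; this trade-in is exactly the content of Proposition \ref{ll}. I expect the routine application of local duality and the minor verification that the injection in Proposition \ref{ll}(ii) suffices (we only need the target to vanish, not the whole $\Ext$) to be the only points requiring care; there is no substantial obstacle beyond assembling the pieces correctly.
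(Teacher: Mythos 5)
Your proposal is correct and follows essentially the same route as the paper: the paper's proof of this theorem is exactly the combination of local duality (Theorem \ref{Ld}), Proposition \ref{ll} with $n=2$, and \cite[2.5]{Jo}, mirroring Proposition \ref{tp3}. Your extra minimal-resolution argument for part (ii) is a harmless alternative to citing \cite[2.5]{Jo} there, and your observation that the injection in Proposition \ref{ll}(ii) suffices is the same (correct) point the paper relies on.
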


\begin{proof}
This follows from the Theorem \ref{Ld}, Proposition \ref{ll} and \cite[2.5]{Jo}.	
\end{proof}

\section*{Acknowledgements}
The authors are grateful to K\c{e}stutis \v{C}esnavi\v{c}ius, Hiroki Matsui and Takeshi Yoshizawa for their suggestions and comments on an earlier version of the manuscript.

\bibliographystyle{amsplain}

\end{document}